\newcommand{\+}{\protect\nobreakdash-}
\renewcommand{\.}{\mskip.5\thinmuskip}
\renewcommand{\:}{\colon}
\DeclareMathOperator{\Spec}{Spec}
\DeclareMathOperator{\PSupp}{PSupp}
\DeclareMathOperator{\Hom}{Hom}
\DeclareMathOperator{\Ext}{Ext}
\DeclareMathOperator{\Tor}{Tor}
\DeclareMathOperator{\id}{id}
\DeclareMathOperator{\im}{im}
\DeclareMathOperator{\coker}{coker}
\newcommand{\ot}{\otimes}
\newcommand{\rarrow}{\longrightarrow}
\newcommand{\larrow}{\longleftarrow}
\newcommand{\bu}{{\text{\smaller\smaller$\scriptstyle\bullet$}}}
\newcommand{\lrarrow}{\.\relbar\joinrel\relbar\joinrel\rightarrow\.}
\newcommand{\llarrow}{\.\leftarrow\joinrel\relbar\joinrel\relbar\.}
\newcommand{\qcoh}{{\operatorname{\mathsf{--qcoh}}}}
\newcommand{\modl}{{\operatorname{\mathsf{--mod}}}}
\newcommand{\modr}{{\operatorname{\mathsf{mod--}}}}
\newcommand{\contra}{{\operatorname{\mathsf{--contra}}}}
\newcommand{\tors}{{\operatorname{\mathsf{-tors}}}}
\newcommand{\ctra}{{\operatorname{\mathsf{-ctra}}}}
\newcommand{\vfl}{{\mathsf{vfl}}}
\newcommand{\fl}{{\mathsf{fl}}}
\newcommand{\lp}{{\mathsf{lp}}}
\newcommand{\qs}{{\mathsf{qs}}}
\renewcommand{\b}{{\mathsf{b}}}
\newcommand{\F}{\mathcal F}
\renewcommand{\O}{\mathcal O}
\newcommand{\J}{\mathfrak J}
\renewcommand{\P}{\mathfrak P}
\newcommand{\sA}{\mathsf A}
\newcommand{\sC}{\mathsf C}
\newcommand{\sD}{\mathsf D}
\newcommand{\sE}{\mathsf E}
\newcommand{\sF}{\mathsf F}
\newcommand{\sG}{\mathsf G}
\newcommand{\sM}{\mathsf M}
\newcommand{\sN}{\mathsf N}
\newcommand{\sS}{\mathsf S}
\newcommand{\sT}{\mathsf T}
\newcommand{\boZ}{\mathbb Z}
\newcommand{\boQ}{\mathbb Q}
\newcommand{\boR}{\mathbb R}
\newcommand{\boL}{\mathbb L}
\renewcommand{\r}{\mathbf r}
\newcommand{\s}{\mathbf s}
\renewcommand{\t}{\mathbf t}
\newcommand{\m}{\mathfrak m}
\newcommand{\p}{\mathfrak p}
\newcommand{\q}{\mathfrak q}
\newcommand{\oR}{{\overline R}}
\newcommand{\oS}{{\overline S}}
\newcommand{\oF}{{\overline F}}
\newcommand{\oI}{{\overline I}}
\newcommand{\oN}{{\overline N}}
\newcommand{\oP}{{\overline P}}
\newcommand{\fR}{{\mathfrak R}}
\newcommand{\Section}[1]{\bigskip\section{#1}\medskip}
\theoremstyle{plain}
\newtheorem{thm}{Theorem}[section]
\newtheorem{lem}[thm]{Lemma}
\newtheorem{prop}[thm]{Proposition}
\newtheorem{cor}[thm]{Corollary}
\newtheorem*{nip}{Noetherian Induction Principle}
\newtheorem{mt}[thm]{Main Theorem}
\newtheorem{ml}[thm]{Main Lemma}
\newtheorem{mpr}[thm]{Main Proposition}
\newtheorem{tml}[thm]{Toy Main Lemma}
\newtheorem{tmp}[thm]{Toy Main Proposition}
\newtheorem{subl}[thm]{Sublemma}
\theoremstyle{definition}
\newtheorem{ex}[thm]{Example}
\newtheorem{rem}[thm]{Remark}
\newtheorem{defn}[thm]{Definition}
\begin{document}

\title{Flat morphisms of finite presentation \\ are very flat}

\author{Leonid Positselski and Alexander Sl\'avik}

\address{L.P.: Institute of Mathematics of the Czech Academy of
Sciences, \v Zitn\'a~25, 115~67 Prague~1, Czech Republic; and
\newline\indent Laboratory of Algebraic Geometry, National Research
University Higher School of Economics, Moscow 119048; and
\newline\indent Laboratory of Algebra and Number Theory, Institute
for Information Transmission Problems, Moscow 127051, Russia; and
\newline\indent Department of Mathematics, Faculty of Natural Sciences,
University of Haifa, Mount Carmel, Haifa 31905, Israel}

\email{positselski@yandex.ru}

\address{A.S.: Charles University, Faculty of Mathematics and Physics,
Department of Algebra, Sokolovsk\'a~83, 186~75 Prague~8,
Czech Republic}

\email{Slavik.Alexander@seznam.cz}

\begin{abstract}
 Principal affine open subsets in affine schemes are an important tool
in the foundations of algebraic geometry.
 Given a commutative ring $R$, \,$R$\+modules built from the rings of
functions on principal affine open subschemes in $\Spec R$ using
ordinal-indexed filtrations and direct summands are called
\emph{very flat}.
 The related class of \emph{very flat quasi-coherent sheaves} over
a scheme is intermediate between the classes of locally free and
flat sheaves, and has serious technical advantages over both.
 In this paper we show that very flat modules and sheaves are ubiquitous
in algebraic geometry: if $S$ is a finitely presented commutative
$R$\+algebra which is flat as an $R$\+module, then $S$ is
a very flat $R$\+module.
 This proves a conjecture formulated in the February~2014 version of
the first author's long preprint on contraherent cosheaves~\cite{Pcosh}.
 We also show that the (finite) very flatness property of a flat module
satisfies descent with respect to commutative ring homomorphisms of
finite presentation inducing surjective maps of the spectra.
\end{abstract}

\maketitle

\tableofcontents

\setcounter{section}{-1}
\section{Introduction}
\medskip

\subsection{{}}
 The abelian category of modules over a ring has enough projective
objects, which can be used in order to construct left derived functors.
 Right derived functors can be constructed using injective modules.

 The abelian category of quasi-coherent sheaves on a scheme also has
enough injective objects but, generally speaking, no projectives.
 On a quasi-projective scheme, there are enough invertible sheaves;
and some other schemes may have enough locally free sheaves, of
finite or infinite rank.

 Locally free sheaves of possibly infinite rank, or what is
the same over a locally Noetherian scheme~\cite[Corollary~4.5]{Ba},
locally projective sheaves, actually form a quite remarkable class.
 In particular, the projectivity of a module over a commutative ring
is a local property, satisfying descent with respect to Zariski
coverings, and in fact, even with respect to faithfully flat
ring morphisms~\cite{RG,Pe}.
 But what if there are not enough locally projective sheaves?

 A Noetherian scheme is said to have the \emph{resolution property} if
it has enough locally free sheaves of finite rank~\cite{To}.
 It seems to be an open question whether there exist algebraic
varieties that do not have the resolution property.

 On the other hand, on any quasi-compact semi-separated scheme there
are enough flat quasi-coherent sheaves~\cite[Section~2.4]{M-n},
\cite[Lemma~A.1]{EP}.
 But arbitrary flat modules over a commutative ring are much more
complicated, from the homological point of view, than
the projective modules.
 In particular, flat modules over a ring that is not necessarily
Noetherian of finite Krull dimension form an exact category of
infinite homological dimension.

\subsection{{}}
 In fact, the derived category of the exact category of flat modules
over an associative ring was studied and shown to be equivalent to
the homotopy category of projective modules~\cite{N-f}.
 Extending this approach to schemes, one can define the \emph{mock
homotopy category of} (nonexistent) \emph{projective quasi-coherent
sheaves} on a quasi-compact semi-separated scheme as the derived
category of the exact category of flat quasi-coherent
sheaves~\cite{M-th}.

 One of the potential shortcomings of this approach is that it is not
directly applicable to matrix factorizations.
 Matrix factorizations are a kind of curved differential modules, so
the conventional construction of the derived category does not make
sense for them.
 Instead, one has to use \emph{derived categories of the second kind},
such as the \emph{coderived} and the \emph{absolute derived}
categories~\cite{Or,EP}.
 However, for an exact category of infinite homological dimension,
the coderived category may differ from the derived
category~\cite[Section~2.1]{Psemi}, \cite[Examples~3.3]{Pkoszul}.

 One of the approaches suggested in~\cite[Remark~2.6]{EP} is to use
the \emph{very flat} quasi-coherent sheaves (which were previously
defined in~\cite[Section~4.1]{Pcosh}) in lieu of the locally free
or flat ones.
 The aim of this paper is to show just how well this approach turns
out to work.

\subsection{{}} \label{very-flat-qcoh-introd}
 For every quasi-compact semi-separated scheme $X$, there is a full
subcategory of \emph{very flat quasi-coherent sheaves}
$X\qcoh_\vfl\subset X\qcoh$ in the abelian category $X\qcoh$ of
quasi-coherent sheaves on $X$ having the following properties:

\begin{enumerate}
\renewcommand{\theenumi}{VF\arabic{enumi}}
\item All very flat quasi-coherent sheaves are flat, that is
$X\qcoh_\vfl\subset X\qcoh_\fl$, where $X\qcoh_\fl\subset X\qcoh$
denotes the full subcategory of flat quasi-coherent sheaves on~$X$.
\item All locally projective quasi-coherent sheaves are very flat,
that is $X\qcoh_\lp\subset X\qcoh_\vfl$, where $X\qcoh_\lp\subset
X\qcoh$ denotes the full subcategory of locally projective
quasi-coherent sheaves on~$X$.
\item The tensor product of any two very flat quasi-coherent sheaves
is very flat, that is $X\qcoh_\vfl\.\ot_{\O_X}X\qcoh_\vfl\subset
X\qcoh_\vfl$.
\item The full subcategory $X\qcoh_\vfl\subset X\qcoh$ is closed under
extensions, infinite direct sums, direct summands, and the passages
to the kernels of surjective morphisms of quasi-coherent sheaves.
\item Every quasi-coherent sheaf on $X$ is a quotient sheaf of
a very flat quasi-coherent sheaf.
\end{enumerate}
 Here we say that a quasi-coherent sheaf $\F$ on $X$ is locally
projective if, for every affine open subscheme $U\subset X$,
the $\O_X(U)$\+module $\F(U)$ is projective.

 The properties (VF4\+-VF5) can be summarized by saying that
$X\qcoh_\vfl$ is a \emph{resolving subcategory} in $X\qcoh$ (closedness
under direct sums is an extra property not covered by the definition
of a resolving subcategory).
 In particular, it follows that the full subcategory $X\qcoh_\vfl$
inherits an exact category structure from the abelian category $X\qcoh$.
\begin{enumerate}
\renewcommand{\theenumi}{VF\arabic{enumi}}
\setcounter{enumi}{5}
\item The exact category $X\qcoh_\vfl$ has finite homological dimension,
not exceeding the number of affine open subschemes in an affine open
covering of a quasi-compact semi-separated scheme~$X$.
\item The embedding of exact categories $X\qcoh_\vfl\rarrow X\qcoh_\fl$
induces an equivalence of their derived categories,
$\sD(X\qcoh_\vfl)\simeq\sD(X\qcoh_\fl)$.
\end{enumerate}
 The properties (VF1\+-VF5) hold for the class of all flat
quasi-coherent sheaves in place of the class of very flat quasi-coherent
sheaves just as well, but the property~(VF6) does not.
 In particular, it means that any very flat module (\,$=$~very flat
quasi-coherent sheaf over an affine scheme) has projective dimension
not exceeding~$1$.
 
 But even flat modules of projective dimension~$1$ can be complicated.
 The following property shows the difference between a very flat module
and an arbitrary flat module of projective dimension~$1$.
 Given a scheme $X$ and a scheme point $x\in X$, let us view~$x$ as
a scheme (namely, the spectrum of the residue field $k_x(X)$ of
the point $x\in X$) and denote by $\iota_x\:x\rarrow X$ the natural
morphism.
 For any quasi-coherent sheaf $\F$ on $X$, the \emph{P\+support}
$\PSupp_X\F\subset X$ of $\F$ is the set of all points $x\in X$ such
that the quasi-coherent sheaf (or, which is the same in this case,
the $k_x(X)$\+vector space) $\iota_x^*\F$ does not vanish.
\begin{enumerate}
\renewcommand{\theenumi}{VF\arabic{enumi}}
\setcounter{enumi}{7}
\item For any very flat quasi-coherent sheaf $\F$ on $X$, the subset
$\PSupp_X\F\subset X$ is open in the Zariski topology of~$X$.
\end{enumerate}
 Notice that the property~(VF8) certainly does not hold for arbitrary
flat modules or quasi-coherent sheaves, as the example of the flat
$\boZ$\+module $\boQ$ (whose P\+support consists of the single
generic point of $\Spec\boZ$) already demonstrates.

 On the other hand, the properties (VF1\+-VF4) and~(VF8) hold
for the class of all locally projective quasi-coherent sheaves in place
of the class of all very flat ones.
 When the analogue of~(VF5) holds for locally projective quasi-coherent
sheaves, it follows that the analogues of~(VF6\+-VF7) also do.

\subsection{{}}
 Further important properties of the class of very flat quasi-coherent
sheaves are related to changing schemes.
 Let $f\:Y\rarrow X$ be a morphism of quasi-compact semi-separated
schemes, and let $X=\bigcup_\alpha U_\alpha$ be an open covering of
the scheme~$X$.
\begin{enumerate}
\renewcommand{\theenumi}{VF\arabic{enumi}}
\setcounter{enumi}{8}
\item The inverse image functor~$f^*$ takes very flat quasi-coherent
sheaves on $X$ to very flat quasi-coherent sheaves on $Y$, that is
$f^*(X\qcoh_\vfl)\subset Y\qcoh_\vfl$.
\item The very flatness property of a quasi-coherent sheaf on $X$
satisfies Zariski descent, that is a quasi-coherent sheaf $\F$ on $X$
is very flat if and only the quasi-coherent sheaf $\F|_{U_\alpha}$ is
very flat on $U_\alpha$ for every~$\alpha$.
\end{enumerate}
 Of course, the classes of flat quasi-coherent sheaves and locally
projective quasi-coherent sheaves also satisfy their similar
versions of~(VF9\+-VF10).

 All of the above having been mentioned, one remaining major property
one would like to have for the class of very flat quasi-coherent
sheaves is the preservation by \emph{direct images} with respect to
good enough affine morphisms.
 For example, the class of flat quasi-coherent sheaves is preserved
by direct images with respect to all \emph{flat} affine morphisms
$f\:Y\rarrow X$, where an affine morphism~$f$ is said to be flat if
the $\O_X(U)$\+module $\O_Y(f^{-1}(U))$ is flat for every affine open
subscheme $U\subset X$.
 Equivalently, a morphism~$f$ is flat if the local ring
$\O_y(Y)$ is a flat module over the local ring $\O_x(X)$ for every
pair of points $x\in X$ and $y\in Y$ such that $f(y)=x$.

 What can one say about the preservation of very flatness by the direct
images?
 What are the \emph{very flat} affine morphisms of schemes?
 Notice first of all that, according to~(VF10), the question reduces
to affine schemes, so it suffices to discuss the \emph{very flat
morphisms of commutative rings}.
 What are these?
 Before we begin to answer this question, let us define, at long last,
the \emph{very flat modules}.

\subsection{{}} \label{very-flat-definition-introd}
 Let $R$ be a commutative ring.
 For every element $r\in R$, we denote by $R[r^{-1}]$ the result of
inverting the element~$r$ in $R$, that is, the localization of $R$ with
respect to the multiplicative subset $\{1,r,r^2,r^3,\dots\}\subset R$.
 So $R[r^{-1}]$ is an $R$\+algebra, and in particular, an $R$\+module.
 Using the telescope construction for the inductive limit, one can
easily produce a two-term free resolution of the $R$\+module $R[r^{-1}]$,
so its projective dimension does not exceed~$1$.

 An $R$\+module $C$ is said to be \emph{$r$\+contraadjusted} if
$\Ext^1_R(R[r^{-1}],C)=0$.
 An $R$\+mod\-ule $C$ is \emph{contraadjusted} if it is
$r$\+contraadjusted for every $r\in R$.
 An $R$\+module $F$ is called \emph{very flat} if $\Ext^1_R(F,C)=0$
for every contraadjusted $R$\+module~$C$.

 A somewhat more explicit or more intuitively clear description of
the class of all very flat $R$\+modules can be given using the notion
of a smooth chain of submodules, or a \emph{transfinitely iterated
extension}.
 Let $F$ be an $R$\+module and $\gamma$~be an ordinal.
 Suppose that for every ordinal $\alpha\le\gamma$ we are given
a submodule $F_\alpha\subset F$ such that the following conditions
are satisfied:
\begin{itemize}
\item $F_0=0$ and $F_\gamma=F$; \par
\item one has $F_\alpha\subset F_\beta$ for all $\alpha\le\beta$;
\item and one has $F_\beta=\bigcup_{\alpha<\beta}F_\alpha$ for all
the limit ordinals $\beta\le\gamma$.
\end{itemize}
 Then the $R$\+module $F$ is said to be a \emph{transfinitely iterated
extension} (``in the sense of the inductive limit'') of
the $R$\+modules $F_{\alpha+1}/F_\alpha$, where $0\le\alpha<\gamma$.

 An $R$\+module $F$ is very flat if and only if it is a direct summand
of a transfinitely iterated extension of (some $R$\+modules isomorphic
to) the $R$\+modules $R[r^{-1}]$, where $r\in R$.
 This is a simple corollary of the main results of the paper~\cite{ET}
applied to the specific situation at hand (see
also~\cite[Corollary~1.1.4]{Pcosh}).

 The very flat modules over commutative rings were originally defined
in the preprint~\cite[Section~1.1]{Pcosh} and further studied in
the paper~\cite{ST}.
 Why are they relevant to algebraic geometry?
 One of the simplest answers to this question is this: let $U$ be
an affine scheme, and $V\subset U$ be an affine open subscheme.
 Then $\O(V)$ is a very flat $\O(U)$\+module~\cite[Lemma~1.2.4]{Pcosh}.

 Thus the class of very flat $R$\+modules, defined above in terms
of the rings of functions $R[r^{-1}]$ on the principal affine open
subschemes $\Spec R[r^{-1}]\subset\Spec R$, \ $r\in R$, can be
equivalently defined in terms of (the underlying $R$\+modules of)
the rings of functions on arbitrary affine open subschemes in $\Spec R$.

 A quasi-coherent sheaf $\F$ on a scheme $X$ is said to be \emph{very
flat} if, for every affine open subscheme $U\subset X$,
the $\O_X(U)$\+module $\F(U)$ is very flat.

\subsection{{}} \label{very-flat-morphisms-introd}
 Now let us discuss the direct images with respect to affine morphisms.
 Let $R\rarrow S$ be a morphism of commutative rings and $G$ be
a flat $S$\+module.
 Assume that $S$ is a flat $R$\+module.
 Then $G$ is a flat $R$\+module.

 What is the analogue of this assertion for very flat modules?
 Let $G$ be a very flat $S$\+module.
 What do we need to know about the ring homomorphism $R\rarrow S$
in order to conclude that $G$ is a very flat $R$\+module?
 The following definition provides the obvious answer: a commutative
ring homomorphism $R\rarrow S$ is called \emph{very flat} if, for
every element $s\in S$, the $R$\+module $S[s^{-1}]$ is very flat.

 The na\"\i ve condition that just the ring $S$ itself should be a very
flat $R$\+module is \emph{not} enough.
 In fact, in Example~\ref{very-flat-algebra-counterex} in this paper
we demonstrate an example of a commutative ring $S$ with an element
$s\in S$ such that $S$ is a free abelian group, but $S[s^{-1}]$ is
\emph{not} a very flat abelian group or $\boZ$\+module.

 This discussion implies that proving that a particular ring
homomorphism is very flat may be not an easy task, even in the most
elementary situations.
 Let $S=R[x]$ be the polynomial ring in one variable over~$R$.
 How does one check that the $R$\+module $S[s^{-1}]$ is very flat
for every $s\in S$?
 A geometric proof of this assertion in the case when $R$ contains
a field can be found in~\cite[Theorem~1.7.13]{Pcosh}.
 It is a complicated argument consisting of many steps.

 How does one prove that a finite flat morphism is very flat?
 Let $S=R[x]/(f)$ be the quotient ring of the polynomial ring $R[x]$
by the ideal generated by a unital polynomial $f(x)=x^n+f_{n-1}x^{n-1}
+\dotsb+f_0$, where $f_i\in R$.
 So $S$ is a finitely generated free $R$\+module of rank~$n$.
 How does one check that the $R$\+module $S[s^{-1}]$ is very flat
for every $s\in S$?
 Our attempts to look into specific simple instances of this question
led to the general theory developed in this paper.

 How does one prove that an \'etale morphism of algebraic varieties
is very flat?
 To all of these questions, we offer one and the same answer: by
applying the theorem formulated in the title of the present paper
(property~(VF15) below, or
Main Theorem~\ref{general-algebra-main-theorem} and
Corollary~\ref{very-flat-morphism-main-cor}).

\subsection{{}}  \label{VF-new-properties-introd}
 The definition of a very flat morphism of schemes in now in order.
 A morphism of schemes $f\:Y\rarrow X$ is called \emph{very flat} if,
for any affine open subschemes $U\subset X$ and $V\subset Y$ such
that $f(V)\subset U$, the $\O_X(U)$\+module $\O_Y(V)$ is very flat.
 An affine morphism~$f$ is very flat if and only if, for every
affine open subscheme $U\subset X$, the morphism of rings
$\O_X(U)\rarrow\O_Y(f^{-1}(U))$ is very flat.
\begin{enumerate}
\renewcommand{\theenumi}{VF\arabic{enumi}}
\setcounter{enumi}{10}
\item For any very flat affine morphism $f\:Y\rarrow X$, the direct
image functor~$f_*$ takes very flat quasi-coherent sheaves on $Y$
to very flat quasi-coherent sheaves on $X$, that is
$f_*(Y\qcoh_\vfl)\subset X\qcoh_\vfl$.
\item Let $f\:Y\rarrow X$ be a morphism of schemes and
$X=\bigcup_\alpha U_\alpha$ be an open covering of the scheme~$X$.
Then the morphism~$f$ is very flat if and only if the morphism
$U_\alpha\times_XY\rarrow U_\alpha$ is very flat for every~$\alpha$.
\item Let $f\:Y\rarrow X$ be a morphism of schemes and
$Y=\bigcup_\beta V_\beta$ be an open covering of the scheme~$Y$.
Then the morphism~$f$ is very flat if and only if the morphism
$V_\beta\rarrow X$ is very flat for every~$\beta$.
\item Any very flat morphism of schemes $f\:Y\rarrow X$ is an open
map between the underlying topological spaces of $Y$ and~$X$.
\end{enumerate}

 One can easily check from the definitions that the composition of
any two very flat morphisms is very flat.
 A morphism of rings $R\rarrow S$ is said to be \emph{universally
very flat} if the induced morphism of rings $T\rarrow S\ot_RT$
is very flat for every ring homomorphism $R\rarrow T$.
 Similarly, a morphism of schemes is said to be universally very flat
if it remains very flat after any base change.
\begin{enumerate}
\renewcommand{\theenumi}{VF\arabic{enumi}}
\setcounter{enumi}{14}
\item All flat morphisms of finite presentation between commutative
rings are (universally) very flat.
All flat morphisms (locally) of finite presentation between schemes
are (universally) very flat.
\item Any morphism from a field to a commutative ring is universally
very flat.
Any morphism from a scheme to the spectrum of a field is universally
very flat.
\end{enumerate}
 In particular, the property~(VF16) means that, for any commutative
algebras $S$ and $T$ over a field~$k$ and any element
$w\in S\ot_k T$, the $T$\+module $(S\ot_k T)[w^{-1}]$ is very flat.
 This is the result of our Corollary~\ref{universally-very-flat-cor}.

 To sum up much of the preceding discussion, it follows from
the properties~(VF11) and~(VF15) that direct images with respect to
flat affine morphisms of finite presentation take very flat
quasi-coherent sheaves to very flat quasi-coherent sheaves.
 The similar assertion, of course, holds for the flat quasi-coherent
sheaves; but the conditions for the direct image to preserve the class
of locally projective quasi-coherent sheaves are much more restrictive.

 In fact, local projectivity is not even preserved by direct images
with respect to affine open embeddings.
 This is the reason why the proof of property~(VF5) for flat or very
flat quasi-coherent sheaves does not apply to locally projective ones.

\subsection{{}}
 Let us now provide supporting references for
the assertions~(VF1\+-VF14).
 The properties~(VF1\+-VF2) hold by the definition, and (VF4)~is easy
to check (see the beginning of~\cite[Section~1.1]{Pcosh}).
 The property~(VF3) is~\cite[Lemma~1.2.1(a)]{Pcosh}.
 The property~(VF5) requires a proof; see~\cite[Lemma~4.1.1]{Pcosh}.

 The property~(VF6) is a reformulation of~\cite[Lemma~4.6.9(a)]{Pcosh}
(as the contraadjusted very flat quasi-coherent sheaves are
the injective objects of the exact category $X\qcoh_\vfl$, and
there are enough of them by~\cite[Corollary~4.1.4(b)]{Pcosh}).

 (VF7)~is a result of the paper~\cite{ES};
see~\cite[Corollary~6.1]{ES}.

 The property~(VF8) is~\cite[Theorem~1.7.6]{Pcosh}.
 The property~(VF9) is~\cite[Lemma~1.2.2(b)]{Pcosh}, and
(VF10)~is~\cite[Lemma~1.2.6(a)]{Pcosh}.
 The property~(VF11) is~\cite[Lemma~1.2.3(b)]{Pcosh}, (VF12)~follows
from~(VF10), and the property~(VF13) is~\cite[Lemma~1.2.7]{Pcosh}.
 The property~(VF14) follows from~(VF8)
(see~\cite[Corollary~1.7.8]{Pcosh}).

 The property~(VF15) is the main result of this paper.
 See Main Theorem~\ref{general-algebra-main-theorem} and
Corollary~\ref{very-flat-morphism-main-cor}.

\subsection{{}}
 Let us say a few more words about~(VF14) and~(VF15).
 It is a classical theorem in algebraic geometry that any flat
morphism of schemes that is locally of finite presentation is an open
map~\cite[Th\'eor\`eme~2.4.6]{Groth}.
 Our results interpret this theorem as the combination of two
implications: any flat morphism locally of finite presentation is
very flat, and any very flat morphism is open.

 Moreover, we have a module version of~(VF15)
(Main Theorem~\ref{general-module-main-theorem}),
providing a similar interpretation of the assertion in~\cite{Groth}
concerning \emph{quasi-flat} morphisms.

 Notice that an arbitrary flat morphism of schemes is, of course,
\emph{not} open: it suffices to consider the morphism
$\Spec\boQ\rarrow\Spec\boZ$.

\subsection{{}} \label{cosheaves-introd}
 Before we finish this introduction, let us explain the connection
with \emph{contraherent cosheaves}, which supplied the original
motivation for introducing the notion of a very flat module in
the preprint~\cite{Pcosh}.
 The concept of a contraherent cosheaf on a scheme is dual to that
of a quasi-coherent sheaf.

 More specifically, a \emph{contraherent cosheaf} $\P$ on a scheme $X$
is a rule assigning to every affine open subscheme $U\subset X$
an $\O_X(U)$\+module $\P[U]$ and to every pair of embedded affine open
subschemes $V\subset U\subset X$ a morphism of $\O_X(U)$\+modules
$\P[V]\rarrow\P[U]$ such that for every triple of embedded affine open
subschemes $W\subset V\subset U\subset X$ the triangle diagram
$\P[W]\rarrow\P[V]\rarrow\P[U]$ is commutative and the following two
conditions are satisfied:
\begin{enumerate}
\renewcommand{\theenumi}{\roman{enumi}}
\item for any pair of embedded affine open subschemes $V\subset U
\subset X$, the morphism of $\O_X(U)$\+modules $\P[V]\rarrow\P[U]$
induces an isomorphism of $\O_X(V)$\+modules
$$
 \Hom_{\O_X(U)}(\O_X(V),\P[U])\simeq\P[V];
$$
\item for any pair of embedded affine open subschemes $V\subset U
\subset X$, one has
$$
 \Ext^1_{\O_X(U)}(\O_X(V),\P[U])=0.
$$
\end{enumerate}

 The $\O_X(U)$\+module $\O_X(V)$ is very flat (see
Section~\ref{very-flat-definition-introd}), so its projective dimension
is at most~$1$; that is why the groups $\Ext^i$ with $i\ge2$ are not
mentioned in the condition~(ii).
 Moreover, it follows that the condition~(ii) is equivalent to
the condition that the $\P[U]$ should be a contraadjusted
$\O_X(U)$\+module.
 In fact, the category of contraherent cosheaves on an affine scheme
$U$ is equivalent to the category of contraadjusted modules over
the ring~$\O(U)$.

 One can also consider more narrow classes of contraherent cosheaves
defined by imposing stricter conditions on
the $\O_X(U)$\+modules~$\P[V]$.
 An $R$\+module $C$ is called (\emph{Enochs}) \emph{cotorsion}~\cite{En}
if $\Ext^1_R(F,C)=0$ for every flat $R$\+module~$C$.
 A contraherent cosheaf $\P$ is said to be \emph{locally cotorsion} if
the $\O_X(U)$\+module $\P[U]$ is cotorsion for every affine open
subscheme $U\subset X$.
 A contraherent cosheaf $\J$ on a scheme $X$ is said to be \emph{locally
injective} if the $\O_X(U)$\+module $\J[U]$ is injective for every
affine open subscheme $U\subset X$ \cite[Section~2.2]{Pcosh}.

\subsection{{}}
 While very flat morphisms of schemes play a role in the theory of
quasi-coherent sheaves in that the direct image of a very flat
quasi-coherent sheaf with respect to a very flat affine morphism
remains very flat, they play an even more important role in
the theory of contraherent cosheaves in that the very flatness of
a morphism is necessary for the inverse images of contraherent
cosheaves to be defined.
 To be more precise, there are two technical problems associated with
the construction of an inverse image of a contraherent cosheaf:
the nonexactness of colocalizations and the nonlocality of
contraherence.
 To avoid the second problem, which is not relevant to our present
discussion, let us assume that a morphism of schemes $f\:Y\rarrow X$
is \emph{coaffine}, that is, for every affine open subscheme
$V\subset Y$ there exists an affine open subscheme $U\subset X$
such that $f(V)\subset U$.

 In this case, given a contraherent cosheaf $\P$ on $X$, one defines
its inverse image $f^!\P$ on $Y$ as the rule assigning to an affine
open subscheme $V\subset Y$ the $\O_Y(V)$\+module
$\Hom_{\O_X(U)}(\O_Y(V),\P[U])$.
 For this definition to work and produce a contraherent cosheaf on $Y$,
the morphism~$f$ needs to be very flat.
 More narrow classes of contraherent cosheaves can be pulled back under
less restrictive conditions: e.~g., when $\P$ is a locally cotorsion
contraherent cosheaf on $X$, it suffices that $f$~be a flat morphism
of schemes; and the inverse image of locally injective contraherent
cosheaves can be taken with respect to an arbitrary scheme
morphism~\cite[Section~2.3]{Pcosh}.

 Locally injective contraherent cosheaves form a narrow class
dual-analogous to the classes of flat or locally projective
quasi-coherent sheaves.
 The class of locally cotorsion contraherent cosheaves is wider,
and one can restrict oneself to these when working over a Noetherian
or a locally Noetherian scheme, particularly if such a scheme has
finite Krull dimension.
 (In fact, the locally cotorsion contraherent cosheaves over locally
Noetherian schemes are particularly pleasant to work with, owing to
the classification of flat cotorsion modules over Noetherian
rings~\cite{En}.)
 But over a non-Noetherian scheme locally cotorsion contraherent
cosheaves are too few, and arbitrary (locally contraadjusted)
contraherent cosheaves are needed.
 Hence the crucial importance of very flat morphisms of schemes,
and consequently of the results of this paper, in the contraherent
cosheaf theory.

 Finally, the abundance of very flat morphisms of schemes opens
the perspective of developing the theory of locally contraadjusted
contraherent cosheaves on \emph{stacks} (e.~g., described in terms of
flat coverings of finite presentation); cf.~\cite[Appendix~B]{Pcosh}.

\subsection{{}}
 Some applications of the methods originally developed for the purposes
of this paper to the now-classical commutative algebra topic of
\emph{strongly flat modules}~\cite{BS,BS2} are worked out in
the companion paper~\cite{PSl}.
 In particular, arbitrary flat modules over a commutative
Noetherian ring with countable spectrum are described as direct summands
of transfinitely iterated extensions of localizations of the ring at
its countable multiplicative subsets, and a related  description of
the Enochs cotorsion modules is obtained.

\medskip
\textbf{Aknowledgements.}
 This paper grew out of the first author's visits to Prague in 2015--17,
and he wishes to thank to Jan Trlifaj for inviting him.
 Our discussions with Jan Trlifaj also played a particularly important
role in the development of this project.
 The first author is grateful to Silvana Bazzoni,
Jan \v St\!'ov\'\i\v cek, and Amnon Yekutieli for helpful conversations.
 The first author's research is supported by research plan
RVO:~67985840, by the Israel Science Foundation grant~\#\,446/15, and
by the Grant Agency of the Czech Republic under the grant P201/12/G028.
 The second author's research is supported by the Grant Agency of
the Czech Republic under the grant 17-23112S and by the SVV project
under the grant SVV-2017-260456.

\Section{Outline of the Arguments}

\subsection{Main theorem}
 Let $R$ be a commutative ring.
 A commutative $R$\+algebra $S$ is said to be \emph{finitely presented}
if it can be presented as the quotient algebra $R[x_1,\dotsc,x_m]/I$
of the algebra of polynomials $R[x_1,\dots,x_m]$ in a finite number of
variables with the coefficients in $R$ by a finitely generated ideal
$I\subset R[x_1,\dotsc,x_m]$.

 We refer to Section~\ref{very-flat-definition-introd} of
the Introduction above and~\cite[Section~1.1 and~C.2]{Pcosh} for
the definitions related to very flat and contraadjusted $R$\+modules
(see also~\cite[Sections~2 and~5]{ST}).
 The following theorem is the main result of this paper.
 Its particular case for the Noetherian rings (cf.\
Main Theorem~\ref{noetherian-module-main-theorem} below) was originally
proposed as a conjecture in~\cite[Conjecture~1.7.2]{Pcosh}.

\begin{mt} \label{general-algebra-main-theorem}
 Let $R$ be a commutative ring and $S$ be a finitely presented
commutative $R$\+algebra.
 Assume that $S$ is a flat $R$\+module.
 Then $S$ is a very flat $R$\+module.
\end{mt} 

 Given a ring $S$, an $S$\+module is called \emph{finitely presented}
if it is isomorphic to the cokernel of a morphism between two finitely
generated free $S$\+modules.
 The following generalization of
Main Theorem~\ref{general-algebra-main-theorem} is useful
in some applications.

\begin{mt} \label{general-module-main-theorem}
 Let $R$ be a commutative ring, $S$ be a finitely presented commutative
$R$\+algebra, and $F$ be a finitely presented $S$\+module.
 Assume that $F$ is a flat $R$\+module.
 Then $F$ is a very flat $R$\+module.
\end{mt} 

\subsection{Noetherian main lemma} \label{noetherian-outline}
 There are two proofs of Main Theorem~\ref{general-module-main-theorem}
given in this paper.
 One of them works for Noetherian rings only.
 Let us formulate this assertion before explaining the key ideas
behind its proof.

\begin{mt} \label{noetherian-module-main-theorem}
 Let $R$ be a Noetherian commutative ring, $S$ be a finitely generated
commutative $R$\+algebra, and $F$ be a finitely generated $S$\+module.
 Assume that $F$ is a flat $R$\+module.
 Then $F$ is a very flat $R$\+module.
\end{mt} 

 The proof of Main Theorem~\ref{noetherian-module-main-theorem} is
based on the following main lemma.
 For any element $r\in R$ and an $R$\+module $M$, we denote by
$M[r^{-1}]$ the $R$\+module $R[r^{-1}]\ot_RM$.

\begin{ml} \label{noetherian-main-lemma}
 Let $R$ be a Noetherian commutative ring, $r\in R$ be an element,
and $F$ be a flat $R$\+module.
 Then the $R$\+module $F$ is very flat if and only if
the $R/rR$\+module $F/rF$ is very flat and the $R[r^{-1}]$\+module
$F[r^{-1}]$ is very flat.
\end{ml}

 The ``only if'' assertion in Main Lemma~\ref{noetherian-main-lemma}
is easy (all extensions of scalars preserve very flatness).
 The ``if'' assertion is of key importance.

 The argument deducing Main Theorem~\ref{noetherian-module-main-theorem}
from Main Lemma~\ref{noetherian-main-lemma} is presented in
Section~\ref{implies-main-theorem-secn}.
 It uses Noetherian induction and Grothendieck's generic freeness lemma.

\subsection{Finitely very flat main lemma}  \label{fvf-outline}
 The other proof of Main Theorem~\ref{general-module-main-theorem}
not only works in full generality, but also provides a stronger result.
 In order to formulate it, we will need the following definition.

 Let $r_1$,~\dots, $r_m\in R$ be a finite collection of elements.
 We will denote it for brevity by a single letter~$\r$.
 Let us say that an $R$\+module $C$ is \emph{$\r$\+contraadjusted} if
it is $r_j$\+contraadjusted for every $j=1$,~\dots,~$m$.
 An $R$\+module $F$ is said to be \emph{$\r$\+very flat} if
$\Ext^1_R(F,C)=0$ for all $\r$\+contraadjusted $R$\+modules~$C$.
 An $R$\+module is $\r$\+very flat if and only if it is a direct summand
of a transfinitely iterated extension (in the sense of the inductive
limit) of $R$\+modules isomorphic to $R$ or $R[r_j^{-1}]$, \
$1\le j\le m$ (see~\cite{ET}, \cite[Corollary~6.14]{GT},
or~\cite[Corollary~1.1.4]{Pcosh}).

 Let us say that an $R$\+module $F$ is \emph{finitely very flat} if
there exists a finite collection of elements $\r=\{r_1,\dots,r_m\}$ in
$R$ such that $F$ is $\r$\+very flat.
 To repeat, it means that $F$ is a direct summand of a module admitting
a filtration indexed by an arbitrarily large ordinal with all
the successive quotients belonging, up to isomorphism, to the finite
set of $R$\+modules $R$ and $R[r_j^{-1}]$.

\begin{mt} \label{fvf-module-main-theorem}
 Let $R$ be a commutative ring, $S$ be a finitely presented
commutative $R$\+algebra, and $F$ be a finitely presented $S$\+module.
 Assume that $F$ is a flat $R$\+module.
 Then $F$ is a finitely very flat $R$\+module.
\end{mt}

 Actually, there is more to be said about $R$\+modules such as
in Main Theorem~\ref{fvf-module-main-theorem}.
 Any such $R$\+module $F$ can be presented by a countable set of
generators with a countable set of relations.
 Using the Hill Lemma~\cite[(proof of) Lemma 7.10(H4)]{GT}, one can
deduce that $F$ is a direct summand of an $R$\+module admitting
a filtration as above indexed by a \emph{countable} ordinal.

 Obviously, Main Theorem~\ref{fvf-module-main-theorem} implies
Main Theorem~\ref{general-module-main-theorem}
(and Main Theorem~\ref{general-module-main-theorem} implies
Main Theorem~\ref{general-algebra-main-theorem}).
 The proof of Main Theorem~\ref{fvf-module-main-theorem} is based on
the next main lemma.

\begin{ml} \label{fvf-main-lemma}
 Let $R$ be a commutative ring, $r\in R$ be an element, and $F$ be
a flat $R$\+module.
 Then the $R$\+module $F$ is finitely very flat if and only if
the $R/rR$\+module $F/rF$ is finitely very flat and the
$R[r^{-1}]$\+module $F[r^{-1}]$ is finitely very flat.
\end{ml}

 Once again, the ``only if'' assertion is easy (all extensions of
scalars preserve finite very flatness).
 The ``if'' assertion is important.

 The argument deducing Main Theorem~\ref{fvf-module-main-theorem}
from Main Lemma~\ref{fvf-main-lemma} is also explained in
Section~\ref{implies-main-theorem-secn}.
 It also uses Noetherian induction (even though the rings involved
are not Noetherian!)
 The point is that any finitely presented algebra over a commutative
ring can be obtained by an extension of scalars from a finitely
presented algebra over a ring finitely generated over the integers.

\subsection{Bounded torsion main lemma} \label{bounded-torsion-outline}
 Notice that Main Lemma~\ref{noetherian-main-lemma} does not follow
from Main Lemma~\ref{fvf-main-lemma}.
 One of the reasons why we present two proofs of the main theorem
in this paper is because we like the very flat version of the main
lemma more than the finitely very flat one.

 We do not know whether the Noetherianity condition can be dropped
in Main Lemma~\ref{noetherian-main-lemma}, but it can be weakened.
 Given an element $r$ in a commutative ring $R$ and an $R$\+module
$M$, we say that \emph{the $r$\+torsion in $M$ is bounded} if there
exists an integer $m\ge1$ such that $r^nx=0$ implies $r^mx=0$ for
all $x\in M$ and $n\ge1$.
 The following result is provable with our methods.

\begin{ml} \label{bounded-torsion-main-lemma}
 Let $R$ be a commutative ring and $r\in R$ be an element such that
the $r$\+torsion in $R$ is bounded.
 Let $F$ be a flat $R$\+module.
 Then the $R$\+module $F$ is very flat if and only if
the $R/rR$\+module $F/rF$ is very flat and the $R[r^{-1}]$\+module
$F[r^{-1}]$ is very flat.
\end{ml}

 The condition on the element $r\in R$ in
Main Lemma~\ref{bounded-torsion-main-lemma} can be relaxed somewhat
further.
 We refer to Remark~\ref{bounded-torsion-weakened-remark}
for the discussion.

\subsection{Obtaining contraadjusted modules} \label{obtaining-outline}
 Let us now say a few words about the proofs of the main lemmas.
 The trick is that these proofs happen on the contraadjusted rather
than on the very flat side.
 One does not really do anything with the given flat module $F$.
 Instead, one works with an arbitrary contraadjusted $R$\+module $C$
(in the case of Main Lemmas~\ref{noetherian-main-lemma}
and~\ref{bounded-torsion-main-lemma}) or with an arbitrary
$\r$\+contraadjusted $R$\+module $C$ (in the case of
Main Lemma~\ref{fvf-main-lemma}).

 The contraadjusted modules are described in terms of a certain
generation procedure.
 Given some class of $R$\+modules $\sE\subset R\modl$, we produce
a possibly larger class of $R$\+modules $\sC$ such that
whenever one has $\Ext_R^{>0}(F,E)=0$ for a given $R$\+module $F$ and
all $E\in\sE$, one also has $\Ext_R^{>0}(F,C)=0$ for all $C\in\sC$.

 In each of the main lemmas, in order to prove that the $R$\+module
$F$ is (finitely) very flat, one needs to show that
$\Ext^{>0}_R(F,C)=0$ for every contraadjusted (or $\r$\+contraadjusted)
$R$\+module~$C$.
 After all such $R$\+modules $C$ are known to be obtainable using
a generation procedure, it remains to check that
$\Ext^{>0}_R(F,E)=0$ for all the $R$\+modules $E$ belonging to
the ``seed class''~$\sE$.
 The latter is chosen in such a way that this follows pretty
straightforwardly from the conditions imposed on the $R$\+module~$F$
in the respective main lemma.

 The discussion of obtainable modules in this paper occupies two
sections.
 A simpler version of obtainability sufficient for the purposes of
Toy Main Lemma~\ref{toy-main-lemma} (see below) is introduced in
Section~\ref{obtainable-I-secn}; and a more sophisticated version
required for Main Lemmas~\ref{noetherian-main-lemma},
\ref{fvf-main-lemma}, and~\ref{bounded-torsion-main-lemma} is
subsequently defined in Section~\ref{obtainable-II-secn}.

\subsection{Toy main lemma}
 As the idea of the proof described in such terms as above may seem
to be too abstract and opaque, we start with a toy example.
 The proof of the following version of the main lemma is technically
much easier, still it demonstrates many of the same key features
that manifest themselves in the proofs of
Main Lemmas~\ref{noetherian-main-lemma},
\ref{bounded-torsion-main-lemma}, and~\ref{fvf-main-lemma}.

 Given a commutative ring $R$ with an element $r\in R$, let us say
that an $R$\+module $F$ is \emph{$r$\+very flat} if
$\Ext^1_R(F,C)=0$ for every $r$\+contraadjusted $R$\+module~$C$.
 This is the special case of the above definition of an $\r$\+very flat
module corresponding to the situation when the finite set of elements
$\r$ consists of a single element, $\r=\{r\}$.

 Since one has $\Ext_R^1(R,\>\bigoplus_{x\in X}R[r^{-1}])=0=
\Ext_R^1(R[r^{-1}],\>\bigoplus_{x\in X}R[r^{-1}])$ for any index set~$X$,
the description of $r$\+very flat $R$\+modules as the direct summands
of transfinitely iterated extensions reduces to the following
particularly simple form.
 An $R$\+module $F$ is $r$\+very flat if and only if it is a direct
summand of an $R$\+module $G$ such that there exists a short exact
sequence of $R$\+modules
$$
 0\lrarrow P\lrarrow G\lrarrow Q\lrarrow 0,
$$
where $P$ is a free $R$\+module and $Q$ is a free
$R[r^{-1}]$\+module~\cite[Corollary~6.13]{GT}.

\begin{tml} \label{toy-main-lemma}
 Let $R$ be a commutative ring, $r\in R$ be an element, and $F$ be
a flat $R$\+module.
 Then the $R$\+module $F$ is $r$\+very flat if and only if
the $R/rR$\+module $F/rF$ is projective and the $R[r^{-1}]$\+module
$F[r^{-1}]$ is projective.
\end{tml}

 The proof of Toy Main Lemma~\ref{toy-main-lemma} is given
in Section~\ref{toy-secn}.

\subsection{Contramodules} \label{contramodules-outline}
 Let $R$ be a commutative ring and $r\in R$ be an element.
 An $R$\+module $C$ is said to be an \emph{$r$\+contramodule}
\cite[Section~C.2]{Pcosh}, \cite{Pmgm}, \cite{Pcta} if
$\Hom_R(R[r^{-1}],C)=0=\Ext^1_R(R[r^{-1}],C)$.
 This is the key technical concept used in the proofs of all
the main lemmas.

 In the case when the $r$\+torsion in $R$ is bounded, $r$\+contramodule
$R$\+modules are also known under the name of \emph{cohomologically
$I$\+adically complete} modules~\cite{PSY,Yek}, where $I=(r)\subset R$
denotes the principal ideal generated by the element $r\in R$.
 For an arbitrary element~$r$ in a commutative ring $R$,
an $R$\+module $C$ is said to be \emph{$r$\+complete} if the natural
map from $C$ to its $r$\+adic completion $C\rarrow\varprojlim_n C/r^nC$
is surjective, and \emph{$r$\+separated} if this map is injective.

 All $r$\+contramodule $R$\+modules (and more generally, all
$r$\+contraadjusted $R$\+mod\-ules) are $r$\+complete, and all
$r$\+separated $r$\+complete $R$\+modules are $r$\+contramodules,
but the converse implications do not hold.
 The existence of nonseparated contramodules is an important technical
issue which we have to deal with throughout this paper (in particular,
in Sections~\ref{toy-secn}, \ref{contramodule-approx-secn}
and~\ref{fvf-mlemma-secn}).

 The category of $r$\+contramodule $R$\+modules is an abelian
subcategory in $R\modl$.
 So is the category of $R[r^{-1}]$\+modules.
 The idea of the proof of Toy Main Lemma~\ref{toy-main-lemma} is to
show that every $r$\+contraadjusted $R$\+module is obtainable
(in the sense discussed in Section~\ref{obtaining-outline}) from
$r$\+contramodule $R$\+modules and $R[r^{-1}]$\+modules.
 Then one shows that all the $r$\+contramodule $R$\+modules are
obtainable from $R/rR$\+modules.

 The proofs of Main Lemmas~\ref{noetherian-main-lemma}
and~\ref{bounded-torsion-main-lemma} are more difficult because one has
to perform the steps staying inside the class of contraadjusted
$R$\+modules.
 So first one shows that every contraadjusted $R$\+module is obtainable
from a contraadjusted $r$\+contramodule $R$\+module and a contraadjusted
$R[r^{-1}]$\+module, and then one needs to obtain a contraadjusted
$r$\+contramodule $R$\+module out of contraadjusted $R/rR$\+modules.
 It is the latter step that depends on the Noetherianity/bounded torsion
assumptions on~$R$.

 The homological techniques of working with contramodules that are
required for this step were originally developed
in~\cite[Sections~C.2 and~D.4]{Pcosh}.
 These were originally intended to prepare ground for developing
the theory of contraherent cosheaves of contramodules over formal
schemes and ind-schemes.
 We discuss these constructions for $r$\+contramodule $R$\+modules
in a longish Section~\ref{contramodule-approx-secn}.
 Then we proceed to prove Main Lemmas~\ref{noetherian-main-lemma}
and~\ref{bounded-torsion-main-lemma} in
Section~\ref{noetherian-mlemma-secn}.

\subsection{Good finite subsets in a ring} \label{good-subsets-outline}
 There is a caveat to the discussion of $\r$\+contraadjusted
$R$\+modules in Section~\ref{fvf-outline}, and particularly to
the mentioning of ``an arbitrary $\r$\+contraadjusted $R$\+module''
in Section~\ref{obtaining-outline}, that we should now explain.
 The $\r$\+contraadjusted $R$\+module, in the context of
the discussion in Section~\ref{obtaining-outline}, is arbitrary,
but the finite subset $\r\subset R$ is not.

 One can say that a finite subset $\r\subset R$ is \emph{good} if,
for every pair of elements $r'$ and $r''\in\r$, there exists
an element $r\in\r$ such that the $R$\+algebras $R[(r'r'')^{-1}]$
and $R[r^{-1}]$ are isomorphic to each other.
 In this paper, we essentially only work with good finite sets of
elements~$\r$ in commutative rings~$R$.

 In more down-to-earth terms, this is means that, given a finite
set of elements $r_1$,~\dots, $r_m\in R$, the first thing we want
to do with it is to embed it into a certain larger finite set
of elements.
 For every subset $J\subset\{1,\dots,m\}$, denote by $r_J$
the product $\prod_{j\in J}r_j$.
 The set of elements $\{r_J\mid J\subset\{1,\dotsc,m\}\,\}$ is good.

 In particular, we can now explain how to produce the specific finite
set of elements in the ring $R$ appearing in
Main Lemma~\ref{fvf-main-lemma}.
 Let $\s=\{s_1,\dots,s_p\}$ be a finite set of elements in
the quotient ring $R/rR$ such that the $R/rR$\+module $F/rF$ is
$\s$\+very flat, and let $\t=\{t_1,\dotsc,t_q\}$ be a finite set of
elements in the ring of fractions $R[r^{-1}]$ such that
the $R[r^{-1}]$\+module $F[r^{-1}]$ is $\t$\+very flat.

 For every $1\le i\le p$, choose a preimage $\tilde s_i\in R$ of
the element $s_i\in R/rR$.
 For every $1\le l\le q$, choose an element $\tilde t_l\in R$ and
an integer $n_l\ge0$ such that $t_l=\tilde t_l/r^{n_l}$ in $R[r^{-1}]$.
 Consider the collection of all elements $r$, $\tilde s_1$,~\dots,
$\tilde s_p$, $\tilde t_1$,~\dots, $\tilde t_q$ in $R$, and form
a good finite subset $\r\subset R$ containing all these elements.

 Then the $R$\+module $F$ is $\r$\+very flat.
 This is what (the proof of) Main Lemma~\ref{fvf-main-lemma}
actually tells.

\subsection{$\r^\times$-very flat theorem} \label{rtimes-vf-thm-outline}
 In order to prove Main Lemma~\ref{fvf-main-lemma}, we reformulate
it in the following form, which is a kind of multi-element version
of Toy Main Lemma~\ref{toy-main-lemma}.

 We will use the following notation.
 Let $r_1$,~\dots, $r_m$ be a finite set of elements in a commutative
ring~$R$.
 As above, for every subset $J\subset\{1,\dotsc,m\}$ we put
$r_J=\prod_{j\in J}r_j$.
 Furthermore, put $K=\{1,\dotsc,m\}\setminus J$ and denote by
$R_J=(R/\sum_{k\in K}r_kR)[r_J^{-1}]$ the ring obtained by annihilating
all the elements $r_k$, $k\in K$, and inverting all the elements
$r_j$, $j\in J$ in the ring~$R$.
 Let us denote the finite set of elements $\{r_J\mid J\subset
\{1,\dotsc,m\}\,\}$ in the ring $R$ by~$\r^\times$.

\begin{thm} \label{r-very-flat-theorem}
 Let $R$ be a commutative ring, $r_1$,~\dots, $r_m\in R$ be a finite
set of its elements, and $F$ be a flat $R$\+module.
 Then the $R$\+module $F$ is\/ $\r^\times$\+very flat if and only if
the $R_J$\+module $R_J\ot_RF$ is projective for every subset
$J\subset\{1,\dotsc,m\}$.
\end{thm}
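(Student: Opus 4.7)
The \emph{only if} direction is direct. By the transfinite-extension description, $F$ being $\r^\times$-very flat means $F$ is a direct summand of a transfinitely iterated extension of copies of $R$ and $R[r_{J'}^{-1}]$ for various $J' \subset \{1,\dotsc,m\}$. Since tensoring with $R_J$ over $R$ preserves direct summands and transfinite extensions, and since $R_J[r_{J'}^{-1}] = R_J$ when $J' \subset J$ (each $r_j$ with $j \in J'$ being already invertible in $R_J$) and $R_J[r_{J'}^{-1}] = 0$ when $J' \not\subset J$ (some $r_k$ with $k \in J' \setminus J$ being zero in $R_J$), we see that $R_J \otimes_R F$ is a direct summand of a transfinite extension of copies of $R_J$, and hence projective over $R_J$.

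For the \emph{if} direction, the central claim is: every $\r^\times$-contraadjusted $R$-module $C$ is obtainable (in the sense of Section~\ref{obtainable-II-secn}) from the class $\sE$ of all $R_J$-modules as $J$ ranges over subsets of $\{1,\dotsc,m\}$. Granting this claim, the flatness of $F$ lets us identify $\Ext^i_R(F,M)$ with $\Ext^i_{R_J}(R_J \otimes_R F,\,M)$ for any $R_J$-module $M$ (a projective resolution of $F$ over $R$ becomes, after tensoring with $R_J$, a projective resolution of $R_J \otimes_R F$ over $R_J$); by the projectivity hypothesis this vanishes in positive degrees, and obtainability propagates the vanishing to every $\r^\times$-contraadjusted~$C$, so $F$ is $\r^\times$-very flat.

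I would establish the obtainability claim by induction on~$m$. The base case $m=0$ is trivial: $\r^\times=\{1\}$, every $R$-module is $\{1\}$-contraadjusted, and $R_\emptyset=R$ already lies in $\sE$. The case $m=1$ is essentially Toy Main Lemma~\ref{toy-main-lemma}, where an $r_1$-contraadjusted module is first obtained from $r_1$-contramodules and $R[r_1^{-1}]$-modules, and the contramodule piece is then obtained from $R/r_1R$-modules. For the inductive step I single out $r=r_m$ and apply the contramodule-approximation techniques of Section~\ref{contramodule-approx-secn} to display an arbitrary $\r^\times$-contraadjusted $C$ as obtainable from an $R[r_m^{-1}]$-module piece and an $r_m$-contramodule piece. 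The $R[r_m^{-1}]$-module piece inherits the appropriate contraadjustedness over $R[r_m^{-1}]$ and, by the induction hypothesis applied to the ring $R[r_m^{-1}]$ with the $m-1$ elements $r_1,\dotsc,r_{m-1}$, is obtainable from $(R[r_m^{-1}])_{J'}$-modules $= R_{J'\cup\{m\}}$-modules with $J'\subset\{1,\dotsc,m-1\}$. The $r_m$-contramodule piece, after a further reduction to $R/r_m R$-modules, falls under the induction hypothesis applied to the ring $R/r_m R$ and yields obtainability from $R_{J'}$-modules with $J'\subset\{1,\dotsc,m-1\}$.

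The principal obstacle is the reduction of the $r_m$-contramodule piece to $R/r_m R$-modules while preserving $\r^\times$-contraadjustedness for the remaining elements $r_1,\dotsc,r_{m-1}$. Because non-separated $r_m$-contramodules can occur (cf.\ Section~\ref{contramodules-outline}), the descent through the contramodule completion is subtle, and it is precisely here that the machinery developed in Sections~\ref{obtainable-II-secn} and~\ref{contramodule-approx-secn} is essential.
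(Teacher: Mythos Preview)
Your ``only if'' direction and the overall obtainability strategy are correct and match the paper. The gap is in the inductive step for the $r_m$-contramodule piece $\Delta_{r_m}(C)$.

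You propose to reduce $\Delta_{r_m}(C)$ to $R/r_mR$-modules and then invoke the induction hypothesis over $R/r_mR$ with the $m-1$ elements $\bar r_1,\dotsc,\bar r_{m-1}$. For that hypothesis to apply, the resulting $R/r_mR$-modules must be $\bar r_J$-contraadjusted for all $J\subset\{1,\dotsc,m-1\}$. But the reduction of an $r_m$-contramodule to $R/r_mR$-modules (via Sublemmas~\ref{delta-lambda-r-sequence}--\ref{derived-projlim-obtainable}) involves passing to kernels and to $\varprojlim^1$, operations that do \emph{not} preserve contraadjustedness with respect to the other elements. You correctly flag this as the principal obstacle, but then appeal to Section~\ref{contramodule-approx-secn}: that section produces contraadjusted approximation sequences only under Noetherian or bounded-torsion hypotheses on $R$ (Corollaries~\ref{noetherian-cokernel-of-contraadjusted-separated} and~\ref{bounded-torsion-cokernel-of-contraadjusted-separated}), and those hypotheses are absent here. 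So the plan as written does not close.

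The paper's way around this is to change the shape of the induction. Rather than an induction on~$m$ that passes to the quotient ring $R/r_mR$, the paper runs an induction on a parameter $0\le n\le m$ over the \emph{same} ring $R$: the inductive statement is that any $R$-module which is $r_J$-contraadjusted for all $J\subset\{1,\dotsc,n\}$ and an $r_k$-contramodule for all $n<k\le m$ is right $1$-obtainable from~$\sE$. At each step the exact sequence~\eqref{r-contraadjusted-sequence} for $r=r_n$ converts one contraadjustedness condition into a contramodule condition; since contramoduleness \emph{is} preserved by kernels, cokernels, and products (Lemma~\ref{hom-into-contramodule-is-contramodule}), no information is lost. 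Only at the base $n=0$, when every $r_k$ has become a contramodule condition, does one cut down to $R/(r_1R+\dotsb+r_mR)$-modules in a single stroke (Lemma~\ref{tuple-r-contramodule-obtainable-lemma}). The $\Hom_R(R[r_n^{-1}]/R,C)$ piece, which does get cut along the way, only needs to be $2$-obtainable, so the loss of contraadjustedness there is harmless. In short: postpone the passage to quotient rings until all contraadjustedness has been upgraded to contramoduleness.
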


 Deducing Main Lemma~\ref{fvf-main-lemma} from
Theorem~\ref{r-very-flat-theorem} is easy, but the proof
of Theorem~\ref{r-very-flat-theorem} is quite involved.
 We present both these arguments in Section~\ref{fvf-mlemma-secn}.

\subsection{Flat lemma}
 Looking into the argument outlined in Section~\ref{obtaining-outline}
suggests that the assumptions in the main lemmas can be weakened.
 Indeed, given that we assume both the $R/rR$\+module $F/rF$ and
the $R[r^{-1}]$\+module $F[r^{-1}]$ to be, at least, very flat, why
do we need to assume the $R$\+module $F$ to be flat, on top of that?

 In fact, the condition that the $R$\+module $F$ is flat can be
replaced in these assertions by the condition of vanishing of
certain specific $\Tor$ modules related to the element~$r$.
 The following lemma provides more information, however.

\begin{lem} \label{flat-lemma}
 Let $R$ be a commutative ring and $r\in R$ be an element.
 Let $F$ be an $R$\+module such that $\Tor^R_1(R/rR,F)=0=
\Tor^R_2(R/rR,F)$.
 Then the $R$\+module $F$ is flat if and only if the $R/rR$\+module
$F/rF$ is flat and the $R[r^{-1}]$\+module $F[r^{-1}]$ is flat.
\end{lem}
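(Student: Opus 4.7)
The ``only if'' direction is immediate, since flatness is preserved by base change: if $F$ is flat over $R$ then $F/rF = R/rR \ot_R F$ and $F[r^{-1}] = R[r^{-1}] \ot_R F$ inherit flatness. For the ``if'' direction, the plan is to show that $\Tor_1^R(M, F) = 0$ for every $R$-module~$M$. The central auxiliary claim is that, under the standing hypotheses, $\Tor_i^R(N, F) = 0$ for $i = 1, 2$ whenever $N$ is an $r$-power-torsion $R$-module.

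I would establish this auxiliary claim in three successive steps. First, for an $R/rR$-module $N$, choose a presentation $0 \to N_1 \to P_0 \to N \to 0$ with $P_0$ a free $R/rR$-module. The hypothesis $\Tor_i^R(R/rR, F) = 0$ for $i = 1, 2$ gives $\Tor_i^R(P_0, F) = 0$ for the same~$i$; combined with the identification $X \ot_R F = X \ot_{R/rR} F/rF$ for any $R/rR$-module $X$, the long exact sequence identifies $\Tor_i^R(N, F)$ with $\Tor_i^{R/rR}(N, F/rF)$ for $i = 1, 2$, both of which vanish by flatness of $F/rF$ over $R/rR$. Second, extend to $R/(r^n)$-modules by induction on $n$ using the short exact sequence $0 \to rN \to N \to N/rN \to 0$, in which $rN$ is an $R/(r^{n-1})$-module and $N/rN$ is an $R/rR$-module. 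Third, an arbitrary $r$-power-torsion module $N$ is the filtered colimit $N = \varinjlim_n N[r^n]$ of $R/(r^n)$-submodules, and $\Tor$ commutes with filtered colimits.

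With the auxiliary claim in hand, handle a general $R$-module $M$ via the canonical four-term exact sequence
$$0 \to K \to M \to M[r^{-1}] \to C \to 0,$$
in which $K$ and $C$ are the kernel and cokernel of the localization map $M \to M[r^{-1}]$, both $r$-power-torsion. Split this into the two short exact sequences $0 \to K \to M \to M' \to 0$ and $0 \to M' \to M[r^{-1}] \to C \to 0$ and apply long exact $\Tor$ sequences. Since $M[r^{-1}]$ is an $R[r^{-1}]$-module and $F[r^{-1}]$ is flat over $R[r^{-1}]$, the flatness of $R \to R[r^{-1}]$ yields $\Tor_i^R(M[r^{-1}], F) = 0$ for $i \geq 1$; combined with the auxiliary claim for $K$ and $C$, the second sequence sandwiches $\Tor_1^R(M', F) = 0$ between $\Tor_2^R(C, F) = 0$ and $\Tor_1^R(M[r^{-1}], F) = 0$, and then the first sequence sandwiches $\Tor_1^R(M, F) = 0$ between $\Tor_1^R(K, F) = 0$ and $\Tor_1^R(M', F) = 0$.

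The main subtlety is the necessity of the hypothesis $\Tor_2^R(R/rR, F) = 0$: it is automatic when $r$ is a nonzerodivisor in $R$, but in general it is a nontrivial assumption needed precisely to secure $\Tor_2^R(C, F) = 0$ for the $r$-power-torsion cokernel $C$ in the final step. Without it, the sandwich for $\Tor_1^R(M', F)$ would collapse and the devissage would break down.
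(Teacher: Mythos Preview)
Your proof is correct and follows essentially the same route as the paper: split an arbitrary module $M$ via the localization map $M\to M[r^{-1}]$ into an $R[r^{-1}]$\+module and two $r$\+torsion pieces, handle the former by change of scalars to $R[r^{-1}]$, and reduce the latter to $R/rR$\+modules where Lemma~\ref{change-of-scalars-partly-vanishing-tor}(b) applies. The only organizational difference is that the paper filters an $r$\+torsion module by the submodules ${}_{r^n}M$ (Lemma~\ref{torsion-modules-obtainable}), whereas you induct on $R/(r^n)$\+modules via $0\to rN\to N\to N/rN\to 0$ and then pass to the filtered colimit; both are equivalent devissages.
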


 Lemma~\ref{flat-lemma} is even easier to prove than
Toy Main Lemma~\ref{toy-main-lemma}, as there are no contramodules
in the proof of Lemma~\ref{flat-lemma}.
 Instead, $r$\+torsion modules are used.
 We prove both these ``toy'' lemmas in Section~\ref{toy-secn}.

\subsection{Surjective descent}
 Notice that for any element $r$ in a commutative ring $R$ the ring
$R/rR\oplus R[r^{-1}]$ is a finitely presented commutative $R$\+algebra.
 Furthermore, the ring homomorphism $R\rarrow R/rR\oplus R[r^{-1}]$
induces a bijective map (but, of course, not a homeomorphism) of
the spectra
$$
 \Spec R/rR\.\sqcup\.\Spec R[r^{-1}]\.=\.\Spec(R/rR\oplus R[r^{-1}])
 \lrarrow \Spec R.
$$
 The following two results generalize
Main Lemmas~\ref{noetherian-main-lemma}
and~\ref{fvf-main-lemma}, respectively.

\begin{thm} \label{noetherian-surj-descent-thm}
 Let $R$ be a Noetherian commutative ring and $S$ be a finitely
generated commutative $R$\+algebra such that the induced
map of the spectra\/ $\Spec S\rarrow\Spec R$ is surjective.
 Let $F$ be a flat $R$\+module.
 Then the $R$\+module $F$ is very flat if and only if
the $S$\+module $S\ot_RF$ is very flat.
\end{thm}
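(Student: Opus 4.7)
The ``only if'' direction is immediate from the base-change property of very flatness. For the ``if'' direction, I will apply Noetherian induction on $\Spec R$ in combination with Main Lemma~\ref{noetherian-main-lemma}, assuming as induction hypothesis that the theorem is known for every Noetherian $R'$ with $\Spec R' \subsetneq \Spec R$. As a preliminary reduction, if the nilradical $\mathfrak{n}$ of $R$ is nonzero, one iteratively applies Main Lemma~\ref{noetherian-main-lemma} with the (finitely many) nilpotent generators of $\mathfrak{n}$: each such generator $n$ satisfies $R[n^{-1}]=0$, so the Main Lemma at that stage degenerates to the statement that very flatness over the current ring is equivalent to very flatness after killing $n$. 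This reduces the problem to very flatness of $F/\mathfrak{n}F$ over $R/\mathfrak{n}$, with the hypothesis transferring by base change; so one may assume $R$ is reduced.

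Now I choose the element $r\in R$: pick a minimal prime $\mathfrak{p}_1$ of $R$ and (by prime avoidance among the finitely many minimal primes) $r_0\in \bigcap_{i\neq 1}\mathfrak{p}_i\setminus\mathfrak{p}_1$, so $R[r_0^{-1}]$ is a Noetherian domain. Grothendieck's generic freeness lemma applied to $S[r_0^{-1}]$ over $R[r_0^{-1}]$ then, after clearing denominators, yields an element $r'\in R$ such that $R[r'^{-1}]$ is a domain and $S[r'^{-1}]$ is a free $R[r'^{-1}]$-module; this free module is nonzero thanks to surjectivity of $\Spec S\to\Spec R$. Writing $1_S=\sum a_ie_i$ in the chosen free basis, the ideal $(a_i)\subseteq R[r'^{-1}]$ is nonzero in this domain; a further localization at a nonzero element of $(a_i)$, equivalently multiplying $r'$ by an additional element of $R$, produces the desired $r\in R$ for which the ring inclusion $R[r^{-1}]\hookrightarrow S[r^{-1}]$ splits as $R[r^{-1}]$-modules. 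Since $r\notin\mathfrak{p}_1$, the element $r$ is not nilpotent and $V(r)\subsetneq\Spec R$.

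Main Lemma~\ref{noetherian-main-lemma} now reduces the task to proving (a) $F/rF$ very flat over $R/rR$ and (b) $F[r^{-1}]$ very flat over $R[r^{-1}]$. Statement (a) is the induction hypothesis applied to the Noetherian ring $R/rR$, with algebra $S/rS$ (surjective $\Spec$ by base change) and flat $F/rF$ (whose tensor product with $S/rS$ over $R/rR$ is very flat by extension of scalars). For (b), since $S[r^{-1}]$ is a finitely presented flat (in fact free) $R[r^{-1}]$-algebra, Main Theorem~\ref{general-algebra-main-theorem} applied to each $S[r^{-1}][s^{-1}]$ (itself flat and finitely presented over $R[r^{-1}]$) shows that $S[r^{-1}][s^{-1}]$ is very flat over $R[r^{-1}]$ for every $s\in S[r^{-1}]$. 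Via the standard adjunction $\Ext^n_{R[r^{-1}]}(M,D)\cong\Ext^n_{S[r^{-1}]}(M,\Hom_{R[r^{-1}]}(S[r^{-1}],D))$ for $S[r^{-1}]$-modules $M$ (valid since $S[r^{-1}]$ is projective over $R[r^{-1}]$), this implies that for any contraadjusted $R[r^{-1}]$-module $D$ the coinduction $\Hom_{R[r^{-1}]}(S[r^{-1}],D)$ is contraadjusted over $S[r^{-1}]$. Since $(S\otimes_RF)[r^{-1}]=S[r^{-1}]\otimes_{R[r^{-1}]}F[r^{-1}]$ is very flat over $S[r^{-1}]$ by base change, applying the adjunction once more yields $\Ext^1_{R[r^{-1}]}((S\otimes_RF)[r^{-1}],D)=0$. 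Hence $(S\otimes_RF)[r^{-1}]$ is very flat over $R[r^{-1}]$, and the splitting realizes $F[r^{-1}]$ as its $R[r^{-1}]$-module direct summand, establishing (b).

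The main obstacle is the combinatorial choice of a single $r\in R$ accommodating the reduction to a domain, generic freeness of $S$, splittability of the inclusion $R[r^{-1}]\hookrightarrow S[r^{-1}]$, and non-nilpotency (so Noetherian induction can be applied on the closed subscheme $V(r)$); nested localizations at several elements of $R$ must be packaged into one element. A secondary subtle ingredient is the transitivity of very flatness through $R[r^{-1}]\to S[r^{-1}]$, handled above via the Hom-tensor adjunction and crucially relying on the universally-very-flat refinement of Main Theorem~\ref{general-algebra-main-theorem} applied to the localized algebra and all its further localizations at single elements.
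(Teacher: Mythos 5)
Your proof is correct, and rests on the same pillars as the paper's: Main Lemma~\ref{noetherian-main-lemma}, Grothendieck's generic freeness (Lemma~\ref{generic-freeness-lemma}), the base-change stability of surjectivity of $\Spec$ maps, Noetherian induction, and the descent of very flatness along a (finitely presented flat, hence) very flat faithfully projective algebra. The route, however, is organized differently. The paper's Proposition~\ref{noetherian-surj-descent-proof-prop} splits into two cases: if $R$ has a pair of zero-divisors $a,b$ with $ab=0$, then $R[a^{-1}]$ is a quotient of $R/bR$, so $F[a^{-1}]$ is automatically very flat and the algebra $S$ plays no role; only in the integral-domain case does generic freeness enter. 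You instead first pass to $R/\mathfrak{n}$ (using that $R[n^{-1}]=0$ degenerates the Main Lemma for nilpotent $n$), then invoke the minimal-prime structure of the reduced ring to produce a localization $R[r_0^{-1}]$ that is a domain, and only then apply generic freeness; you package all the subsequent localizations into a single $r$. Both decompositions are valid; the paper's zero-divisor case is shorter and more elementary (no minimal primes, no prime avoidance), while yours is perhaps more geometric. Two further stylistic points: (1)~you re-derive the restriction-of-scalars fact for very flat algebras (Lemma~\ref{very-flat-restriction-of-scalars-lemma}(a) in the paper, quoted from~\cite{Pcosh}) via the $\Hom$--$\otimes$ adjunction for the projective algebra $S[r^{-1}]$ over $R[r^{-1}]$ --- a self-contained alternative that is correct but duplicates existing machinery; (2)~the careful choice of $a_{i_0}$ to split the unit map $R[r^{-1}]\to S[r^{-1}]$ is unnecessary: once $S[r^{-1}]$ is a nonzero free $R[r^{-1}]$-module, $R[r^{-1}]$ is already an abstract $R[r^{-1}]$-module direct summand of it (project onto one coordinate), which is all that the faithful-projectivity argument in Lemma~\ref{general-faithfully-projective-descent} needs; one does not need the summand to contain~$1_S$.
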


\begin{thm} \label{fvf-surj-descent-thm}
 Let $R$ be a commutative ring and $S$ be a finitely presented
commutative $R$\+algebra such that the induced map of the spectra\/
$\Spec S\rarrow\Spec R$ is surjective.
 Let $F$ be a flat $R$\+module.
 Then the $R$\+module $F$ is finitely very flat if and only if
the $S$\+module $S\ot_RF$ is finitely very flat.
\end{thm}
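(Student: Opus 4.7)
The ``only if'' direction is straightforward: if $F$ is $\r$-very flat over $R$ for a finite set $\r=\{r_1,\dots,r_m\}\subset R$, then tensoring with $S$ converts a witnessing transfinitely iterated extension of copies of $R$ and $R[r_j^{-1}]$ into one of copies of $S$ and $S[\phi(r_j)^{-1}]$, where $\phi\:R\to S$ is the structure map; direct summands are preserved, so $S\otimes_R F$ is $\phi(\r)$-very flat and in particular finitely very flat.

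For the ``if'' direction, the plan parallels the deduction of Main Theorem~\ref{fvf-module-main-theorem} from Main Lemma~\ref{fvf-main-lemma}, with the extra complication that the module $F$ does not descend to a Noetherian subring of~$R$. Fix a presentation $S=R[x_1,\dots,x_n]/(f_1,\dots,f_m)$, let $R_0\subset R$ be the Noetherian $\mathbb{Z}$-subalgebra generated by the coefficients of the $f_j$, and set $S_0=R_0[x_1,\dots,x_n]/(f_1,\dots,f_m)$, so that $S=S_0\otimes_{R_0}R$. The image of $\Spec S_0\to\Spec R_0$ is constructible by Chevalley and contains the image of $\Spec R\to\Spec R_0$, since $\Spec S\to\Spec R$ is surjective. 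Now perform Noetherian induction on $\Spec R_0$, establishing the following base-change form of the theorem: for every ideal $I\subset R_0$ with $IR\ne R$, setting $R'=R/IR$, $S'=S_0/IS_0\otimes_{R_0/I}R'$, $F'=F/IF$, if $S'\otimes_{R'}F'$ is finitely very flat over $S'$ then $F'$ is finitely very flat over $R'$.

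By passing to minimal primes modulo the nilradical of $R_0/I$, and enlarging $I$ when necessary so that the generic point of $R_0/I$ lies in the image of $\Spec S_0/IS_0\to\Spec R_0/I$, the essential case is when $R_0/I$ is a Noetherian domain over which $S_0/IS_0$ has nonzero generic fiber. Grothendieck's generic freeness lemma then yields a nonzero $r\in R_0/I$ such that $(S_0/IS_0)[r^{-1}]$ is free of positive finite rank over $(R_0/I)[r^{-1}]$. After base change to $R'$, the ring $S'[r^{-1}]$ is a free $R'[r^{-1}]$-module of positive rank, and $R'[r^{-1}]\to S'[r^{-1}]$ is a split monomorphism of $R'[r^{-1}]$-modules (the splitting comes from a linear combination witnessing that the coordinates of $1\in S'[r^{-1}]$ generate the unit ideal, which follows from the surjectivity of $\Spec S'[r^{-1}]\to\Spec R'[r^{-1}]$). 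Moreover, for every $b\in S'[r^{-1}]$ the localization $S'[r^{-1}][b^{-1}]$ is a finitely presented flat $R'[r^{-1}]$-algebra; by Main Theorem~\ref{fvf-module-main-theorem} it is very flat over $R'[r^{-1}]$, so $R'[r^{-1}]\to S'[r^{-1}]$ is a very flat ring homomorphism. Consequently, a finitely very flat $S'[r^{-1}]$-module is finitely very flat as an $R'[r^{-1}]$-module (the witnessing finite subset of $R'[r^{-1}]$ is obtained by taking the union of those witnessing finite very flatness of each $S'[r^{-1}][s_j^{-1}]$ over $R'[r^{-1}]$, using closure of the class of $\r$-very flat modules under transfinite extensions and direct summands). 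Applied to $(S'\otimes_{R'}F')[r^{-1}]$, whose $R'[r^{-1}]$-module direct summand along the split inclusion is $F'[r^{-1}]$, this yields $F'[r^{-1}]$ finitely very flat over $R'[r^{-1}]$. For $F'/rF'$ over $R'/rR'$, one invokes the induction hypothesis at the strictly smaller quotient $R_0/(I+rR_0)$, using that $(S'\otimes_{R'}F')/r(S'\otimes_{R'}F')$ is finitely very flat over $S'/rS'$ by the ``only if'' direction of Main Lemma~\ref{fvf-main-lemma}. Main Lemma~\ref{fvf-main-lemma} then combines these two halves to give $F'$ finitely very flat over $R'$.

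The principal obstacle is coordinating the Noetherian induction with the fact that $F$ lives over $R$ rather than $R_0$: the induction hypothesis must be phrased in the base-change form stated above and carried through every quotient of $R_0$. A related technical point is the reduction to the case where the generic point of $R_0/I$ lies in the image of $\Spec S_0/IS_0$, which may require enlarging $I$ by elements nilpotent in $S_0/IS_0$; this is a descending step inside the Noetherian induction that has to be handled with care to keep the base change to $R$ nontrivial.
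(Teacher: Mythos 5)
Your proposal is correct and follows essentially the same strategy as the paper's: descend to a Noetherian subring via Lemma~\ref{finitely-generated-ring-lemma}, set up Noetherian induction, invoke generic freeness (Lemma~\ref{generic-freeness-lemma}) in the domain case to make $S'[r^{-1}]$ free over $R'[r^{-1}]$, use surjectivity of the spectrum map together with Corollary~\ref{fvf-morphism-main-cor} to split $F'[r^{-1}]$ off from the finitely very flat $(S'\ot_{R'}F')[r^{-1}]$, and finish with Main Lemma~\ref{fvf-main-lemma}.

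The one detour you take---passing to minimal primes and ``enlarging $I$'' so that the generic point of $\Spec(R_0/I)$ lies in the image of $\Spec(S_0/IS_0)$, in order to guarantee that $(S_0/IS_0)[r^{-1}]$ has positive rank---is avoidable, and the paper sidesteps it. Proposition~\ref{fvf-surj-descent-proof-prop} (via Lemma~\ref{fin-pres-faithfully-projective-descent}) simply notes that $S'[r^{-1}]$ free over $R'[r^{-1}]$ together with $\Spec S'[r^{-1}]\rarrow\Spec R'[r^{-1}]$ surjective already makes $S'[r^{-1}]$ a \emph{faithfully projective} $R'[r^{-1}]$-module, which covers the degenerate case $S'[r^{-1}]=0$ at no extra cost: surjectivity then forces $R'[r^{-1}]=0$ as well, so $F'[r^{-1}]=0$ is trivially finitely very flat and Main Lemma~\ref{fvf-main-lemma} still closes the argument. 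So no case distinction on the generic fiber is needed, and the Chevalley observation is never used. Two small slips that do not affect correctness: $(S_0/IS_0)[r^{-1}]$ need not have \emph{finite} rank over $(R_0/I)[r^{-1}]$ (consider $S_0=R_0[x]$), though only positivity is used; and the citation for $S'[r^{-1}][b^{-1}]$ being finitely very flat over $R'[r^{-1}]$ should be Corollary~\ref{fvf-morphism-main-cor} rather than Main Theorem~\ref{fvf-module-main-theorem}, since you need finite very flatness to extract a single witnessing finite subset of~$R'[r^{-1}]$.
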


 The proofs of Theorems~\ref{noetherian-surj-descent-thm}
and~\ref{fvf-surj-descent-thm} are based on the main lemmas which
they generalize.
 Theorem~\ref{noetherian-surj-descent-thm} is deduced from
Main Lemma~\ref{noetherian-main-lemma}, and
Theorem~\ref{fvf-surj-descent-thm} is deduced from
Main Lemma~\ref{fvf-main-lemma}.
 This is done in Section~\ref{descent-secn}.

 The arguments use Noetherian induction very similar to the one that
we use in order to deduce the main theorems from the main lemmas, and
the same generic freeness lemma (cf.\ the discussion in
Sections~\ref{noetherian-outline}\+-\ref{fvf-outline}).

\Section{Main Lemma implies Main Theorem}
\label{implies-main-theorem-secn}

 The proofs of the main theorems are based on the following generic
freeness result.

\begin{lem} \label{generic-freeness-lemma}
 Let $R$ be a Noetherian commutative integral domain, $S$ be a finitely
generated commutative $R$\+algebra, and $F$ be a finitely generated
$S$\+module.
 Then there exists a nonzero element $a\in R$ such that
the $R[a^{-1}]$\+module $F[a^{-1}]$ is free.
\end{lem}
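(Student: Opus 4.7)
I plan to prove this as Grothendieck's generic freeness lemma, combining d\'evissage on $F$ with induction on the number of polynomial generators of $S$ over $R$.  First I would factor the structure map through a surjection $R[x_1,\dotsc,x_n]\twoheadrightarrow S$ and regard $F$ as a finitely generated module over the Noetherian ring $R[x_1,\dotsc,x_n]$.  Then $F$ admits a finite filtration whose successive quotients are of the form $R[x_1,\dotsc,x_n]/\p$ for primes $\p$; once the lemma is established for each such subquotient, inverting the product of the elements produced for each $\p$ gives a filtration of $F[a^{-1}]$ with free subquotients, which (being projective over $R[a^{-1}]$) split off to make $F[a^{-1}]$ itself free.

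Hence it suffices to treat $F=R[x_1,\dotsc,x_n]/\p$.  If $\p\cap R$ contains a nonzero element $a$, then $F[a^{-1}]=0$, and if $\p=0$ then $F$ is already free over $R$ on the monomial basis.  The nontrivial case is $\p\neq 0$ with $\p\cap R=0$; rename the quotient domain as $S$, so that $R\hookrightarrow S$ is a finitely generated extension of integral domains, and induct on~$n$ (the case $n=0$ being trivial).  For the inductive step, apply Noether normalization over $K=\operatorname{Frac}(R)$ to the finitely generated $K$\+algebra domain $S\ot_RK$: this yields $y_1,\dotsc,y_d\in S\ot_RK$ that are algebraically independent over~$K$ and over which $S\ot_RK$ is module-finite.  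Since $\p\neq 0$ and $R[x_1,\dotsc,x_n]$ is $R$\+flat, the image $\p_K$ of $\p$ in $K[x_1,\dotsc,x_n]$ is nonzero, so $\dim(S\ot_RK)<n$, i.e., $d<n$.

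Finally I would spread out: choose a single nonzero $a\in R$ such that the $y_i$ can be represented in $S[a^{-1}]$, remain algebraically independent over $R[a^{-1}]$, and each of the finitely many $R$\+algebra generators of $S$ satisfies an integrality equation over $R[a^{-1}][y_1,\dotsc,y_d]$.  Then $S[a^{-1}]$ is finitely generated as a module over the polynomial ring $R[a^{-1}][y_1,\dotsc,y_d]$ in $d<n$ variables over the Noetherian domain $R[a^{-1}]$, so the inductive hypothesis supplies a nonzero $b\in R[a^{-1}]$ with $S[a^{-1}][b^{-1}]$ free over $R[a^{-1}][b^{-1}]$; writing $b=b'/a^k$ with $0\neq b'\in R$ gives $R[a^{-1}][b^{-1}]=R[(ab')^{-1}]$, and the element $ab'\in R$ completes the induction.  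The main obstacle I expect is the spreading-out step---checking that a single element of $R$ can be chosen to simultaneously realize the $y_i$ inside $S$, preserve their algebraic independence over $R$, and clear denominators in the finitely many integrality relations---but this amounts to routine bookkeeping using the finiteness of everything in sight, rather than a genuine conceptual difficulty.
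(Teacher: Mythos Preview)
The paper does not actually prove this lemma: its entire proof is a citation to Grothendieck's EGA~IV, Lemme~6.9.2, with a pointer to Matsumura's Theorem~24.1 for a generalization.  Your sketch is precisely the standard d\'evissage-plus-Noether-normalization argument that those references contain, and it is correct.

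One small remark on the logical structure: as written, you perform the prime-filtration reduction once at the outset and then ``induct on~$n$'' for the special quotients $R[x_1,\dotsc,x_n]/\p$, but in the inductive step you invoke the \emph{full} lemma (for the finitely generated module $S[a^{-1}]$ over the polynomial ring $R[a^{-1}][y_1,\dotsc,y_d]$), not merely the special case.  This is fine provided the induction hypothesis is taken to be the full statement of the lemma for algebras on at most $d<n$ generators, with the prime filtration performed anew at each stage; you should make that explicit.  The base case $n=0$ is then genuinely handled by your case analysis on $\p\cap R$, since for primes of $R$ itself either $\p=0$ (free rank one) or inverting any nonzero element of $\p$ kills the quotient.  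With that clarification the argument goes through without difficulty.
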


\begin{proof}
 This is~\cite[Lemme~6.9.2]{Groth}.
 For a generalization, see~\cite[Theorem~24.1]{Mats}.
\end{proof}

 For ease of reference, let us also formulate here the assertion
about preservation of (finite) very flatness by extensions of scalars.

\begin{lem} \label{very-flatness-extension-of-scalars}
\textup{(a)} Let $R\rarrow R'$ be a morphism of commutative rings
and $F$ be a very flat $R$\+module.
 Then the $R'$\+module $R'\ot_RF$ is also very flat. \par
\textup{(b)} Let $R\rarrow R'$ be a morphism of commutative rings
and $F$ be a finitely very flat $R$\+module.
 Then the $R'$\+module $R'\ot_RF$ is also finitely very flat.
\end{lem}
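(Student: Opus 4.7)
My plan is to reduce both parts to the characterization of (finitely) very flat modules as direct summands of transfinitely iterated extensions, recalled in Sections~\ref{very-flat-definition-introd} and~\ref{fvf-outline}, and then observe that the functor $R'\ot_R(-)$ preserves all of the relevant structure. Write $\phi\:R\rarrow R'$ for the given ring homomorphism. The starting observation is the natural isomorphism $R'\ot_R R[r^{-1}]\simeq R'[\phi(r)^{-1}]$, so that tensoring with $R'$ carries the ``generators'' $R[r^{-1}]$ of very flat $R$\+modules to the corresponding generators of very flat $R'$\+modules; analogously, for a finite set $\r=\{r_1,\dotsc,r_m\}\subset R$, it carries the generators $R,R[r_j^{-1}]$ of $\r$\+very flat $R$\+modules to the generators of $\phi(\r)$\+very flat $R'$\+modules, where $\phi(\r)=\{\phi(r_1),\dotsc,\phi(r_m)\}$.

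Direct summands are preserved by any additive functor, so it suffices to verify that $R'\ot_R(-)$ transforms a transfinite extension of such generators into a transfinite extension of their images. Given a smooth chain $0=F_0\subset F_1\subset\dotsb\subset F_\gamma=F$ with every successive quotient $F_{\alpha+1}/F_\alpha$ isomorphic to some $R[r^{-1}]$ (or to $R$, in the $\r$\+very flat case), I would show by transfinite induction that the modules $R'\ot_R F_\alpha$ assemble into a filtration of $R'\ot_R F$ of the same shape. At successor steps, the only issue is the injectivity of $R'\ot_R F_\alpha\rarrow R'\ot_R F_{\alpha+1}$; here the argument uses that each quotient $F_{\alpha+1}/F_\alpha$ is a flat $R$\+module (being a localization of $R$, or $R$ itself), so that $\Tor^R_1(R',\>F_{\alpha+1}/F_\alpha)=0$ and the short exact sequence $0\rarrow F_\alpha\rarrow F_{\alpha+1}\rarrow F_{\alpha+1}/F_\alpha\rarrow 0$ stays short exact after tensoring with~$R'$. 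At a limit ordinal~$\beta$ I would use that $R'\ot_R(-)$ commutes with directed colimits, combined with the injectivity established at successor steps, to conclude that $R'\ot_R F_\beta=\bigcup_{\alpha<\beta}R'\ot_R F_\alpha$ as submodules of $R'\ot_R F$.

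The only real checkpoint is the flatness of the generators $R$ and $R[r^{-1}]$ being invoked to preserve short exactness at the successor stages; this is the main (though mild) obstacle, and it is the reason the argument proceeds in exactly the same way for~(a) and~(b). Part~(a) corresponds to allowing arbitrary $r\in R$ throughout, while part~(b) requires nothing new, since the transferred finite set $\phi(\r)\subset R'$ witnesses finite very flatness of $R'\ot_R F$.
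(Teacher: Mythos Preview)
Your argument is correct. You work on the ``very flat'' side, using the filtration characterization from Sections~\ref{very-flat-definition-introd} and~\ref{fvf-outline}: tensor with $R'$ takes each generator $R[r^{-1}]$ to $R'[\phi(r)^{-1}]$, preserves direct summands, and preserves the transfinite filtration because the successive quotients are flat. The limit step is fine once one notes (by the same transfinite induction) that all transition maps $R'\ot_R F_\alpha\rarrow R'\ot_R F_\beta$ are injective, since each $F_\beta/F_\alpha$ is itself a transfinitely iterated extension of flat modules and hence flat.

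The paper takes the dual route, working on the ``contraadjusted'' side. Part~(a) is quoted from~\cite[Lemma~1.2.2(b)]{Pcosh}, whose companion Lemma~1.2.2(a) (cited in the proof of Lemma~\ref{contraadjustness-reflected-lem}) says that contraadjustness is preserved under restriction of scalars; combined with the adjunction $\Ext^1_{R'}(R'\ot_RF,\>C')\simeq\Ext^1_R(F,C')$ for flat $F$ (a special case of Lemma~\ref{change-of-scalars-vanishing-tor-lemma}(a)), this immediately gives the result. For~(b) the paper just records the same observation you make, that $\r$\+very flatness becomes $\r'$\+very flatness for $\r'=\phi(\r)$. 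Your approach is a bit more hands-on and makes no appeal to the $\Ext$ definition, while the paper's is shorter once one has the contraadjusted-restriction lemma in hand; both are equally valid and neither requires any extra hypotheses.
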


\begin{proof}
 Part~(a) is~\cite[Lemma~1.2.2(b)]{Pcosh}, and the proof of part~(b)
is similar.
 If $\r=\{r_1,\dotsc,r_m\}$ is a finite set of elements in $R$ such
that the $R$\+module $F$ is $\r$\+very flat, then
the $R'$\+module $R'\ot_RF$ is $\r'$\+very flat, where
$\r'=\{r'_1,\dotsc,r'_m\}$ and $r'_j\in R'$ denotes the image of
the element $r_j\in R$ under the ring homomorphism $R\rarrow R'$.
\end{proof}

 The proofs in this section are also based on the classical technique
of Noetherian induction, which can be formulated as follows.

\begin{nip}
 Let $R$ be a Noetherian commutative ring.
 Then there \emph{cannot} exist a sequence of rings $R_n$, $n\ge0$,
and nonzero elements $r_n\in R_n$ such that $R_0=R$ and
$R_{n+1}=R_n/r_nR_n$ for all $n\ge0$.
\end{nip}

\begin{proof}
 Denote by $\tilde r_n\in R$ some preimages of the elements
$r_n\in R_n$, and let $I_n$ be the ideal in $R$ generated
by the elements $\tilde r_0$,~\dots,~$\tilde r_{n-1}$.
 Then we have $R/I_n\simeq R_n$, and the ideals $I_n\subset R$
form an infinite ascending chain $0\varsubsetneq I_0\varsubsetneq I_1
\varsubsetneq I_2\varsubsetneq\dotsb\subset R$, contradicting
the assumption of Noetherianity of the ring~$R$.
\end{proof}

 We start with explaining the proof of
Main Theorem~\ref{noetherian-module-main-theorem}, which highlights
the key ideas of the argument.
 Then we proceed to prove the stronger
Main Theorem~\ref{fvf-module-main-theorem}.

 The following proposition is a particular case of Main
Theorem~\ref{noetherian-module-main-theorem} from which the general
case is deduced.

\begin{prop} \label{noetherian-main-theorem-proof-prop}
 Let $R$ be a Noetherian commutative ring, $S$ be a finitely generated
commutative $R$\+algebra, and $F$ be a finitely generated $S$\+module.
 Assume that $F$ is a flat $R$\+module, and that for every nonzero
element $r\in R$ the $R/rR$\+module $F/rF$ is very flat.
 Then $F$ is a very flat $R$\+module.
\end{prop}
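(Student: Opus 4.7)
My plan is to apply Main Lemma~\ref{noetherian-main-lemma}, which reduces the problem to producing a single nonzero element $r\in R$ such that the $R[r^{-1}]$-module $F[r^{-1}]$ is very flat; the complementary condition that $F/rF$ be very flat over $R/rR$ is granted by the hypothesis of the proposition for every nonzero~$r$.

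The first step will be to dispose of the non-reduced case. If $R$ contains a nonzero element~$r$ with $r^2=0$, then $R[r^{-1}]=0$ and $F[r^{-1}]=0$ is vacuously very flat over it, so the Main Lemma applied to this~$r$ already yields the conclusion. Hence I may assume that $R$ is reduced.

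For reduced~$R$, the strategy is to localize so as to land in a Noetherian integral domain, where the generic freeness lemma (Lemma~\ref{generic-freeness-lemma}) becomes available. Let $\p_1,\dotsc,\p_n$ be the finitely many minimal primes of~$R$. By prime avoidance combined with the minimality of~$\p_n$, the intersection $\p_1\cap\dotsb\cap\p_{n-1}$ is not contained in~$\p_n$, so I can pick a nonzero element~$r$ lying in this intersection but outside~$\p_n$ (the trivial case $n=1$ being subsumed by taking $r=1$). The minimal primes of $R[r^{-1}]$ are in bijection with those minimal primes of~$R$ not containing~$r$, which leaves only the extension of~$\p_n$; since a localization of a reduced ring is reduced, $R[r^{-1}]$ is a Noetherian integral domain. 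Generic freeness applied to $(R[r^{-1}],S[r^{-1}],F[r^{-1}])$ then produces a nonzero $a\in R[r^{-1}]$ such that $F[r^{-1}][a^{-1}]$ is free over $R[r^{-1}][a^{-1}]$.

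It remains to repackage this as a statement about a single element of~$R$. Writing $a=a_0/r^k$ with $a_0\in R$ and $k\ge 0$, and setting $r':=ra_0\in R$, one checks that $R[r'^{-1}]=R[r^{-1}][a^{-1}]$ and $F[r'^{-1}]=F[r^{-1}][a^{-1}]$; moreover, the nonvanishing of~$a$ in $R[r^{-1}]$ forces $ra_0\ne 0$ in~$R$. Thus $F[r'^{-1}]$ is a free, hence very flat, module over $R[r'^{-1}]$, and Main Lemma~\ref{noetherian-main-lemma} applied to~$r'$ delivers very flatness of~$F$ over~$R$. I expect the conceptually decisive step to be the prime-avoidance reduction from the reduced case to the integral-domain case, where generic freeness can be invoked; the remaining manipulations of iterated localizations are mechanical.
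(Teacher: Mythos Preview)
Your proof is correct, but it takes a different route from the paper's.

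The paper splits into the dichotomy ``$R$ has zero-divisors'' versus ``$R$ is an integral domain''. In the zero-divisor case it picks $a,b\ne 0$ with $ab=0$ and observes that the localization map $R\to R[a^{-1}]$ factors through $R/bR$ (since $b$ becomes zero once $a$ is inverted); hence very flatness of $F/bF$ over $R/bR$ yields very flatness of $F[a^{-1}]$ over $R[a^{-1}]$ by extension of scalars (Lemma~\ref{very-flatness-extension-of-scalars}(a)), and Main Lemma~\ref{noetherian-main-lemma} applies with the element~$a$. In the domain case it invokes generic freeness directly.

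Your dichotomy is instead ``non-reduced'' versus ``reduced''. In the non-reduced case you exploit a square-zero element to make the localization vanish; in the reduced non-domain case you localize away all but one minimal prime to reach a domain, and only then invoke generic freeness. This is perfectly valid, but it costs you the extra bookkeeping of the minimal-prime argument and the repackaging of the two-step localization $R[r^{-1}][a^{-1}]$ as a single localization $R[(ra_0)^{-1}]$. The paper's trick of factoring $R\to R[a^{-1}]$ through a quotient $R/bR$ is shorter precisely because it uses the \emph{hypothesis} of the proposition (very flatness over quotients) to handle the localization side, rather than working to make the localization a domain. Your approach, by contrast, is closer in spirit to the standard algebraic-geometry move of restricting to an irreducible component; both lead to the same destination via Main Lemma~\ref{noetherian-main-lemma}.
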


\begin{proof}
 Case~I: suppose that the ring $R$ has zero-divisors, that is, there
exists a pair of nonzero elements $a$, $b\in R$ such that $ab=0$ in~$R$.
 By assumption, the $R/aR$\+module $F/aF$ is very flat and
the $R/bR$\+module $F/bF$ is very flat.
 Now the localization morphism $R\rarrow R[a^{-1}]$ factorizes as
$R\rarrow R/bR\rarrow R[a^{-1}]$, hence very flatness of
the $R/bR$\+module $F/bF$ implies very flatness of
the $R[a^{-1}]$\+module $F[a^{-1}]$ by
Lemma~\ref{very-flatness-extension-of-scalars}(a).
 Both the $R/aR$\+module $F/aF$ and the $R[a^{-1}]$\+module $F[a^{-1}]$ 
being very flat, very flatness of the $R$\+module $F$ follows by
Main Lemma~\ref{noetherian-main-lemma}.

 Case~II: now suppose that the ring $R$ is an integral domain.
 By Lemma~\ref{generic-freeness-lemma}, there exists a nonzero element
$a\in R$ such that the $R[a^{-1}]$\+module $F[a^{-1}]$ is free.
 By assumption, the $R/aR$\+module $F/aF$ is very flat.
 Once again, very flatness of the $R$\+module $F$ follows by virtue
of Main Lemma~\ref{noetherian-main-lemma}.
\end{proof}

\begin{proof}[Proof of
Main Theorem~\ref{noetherian-module-main-theorem}]
 Assume that the $R$\+module $F$ is \emph{not} very flat.
 By Proposition~\ref{noetherian-main-theorem-proof-prop}, it then
follows that there exists a nonzero element $r\in R$ such that
the $R/rR$\+module $F/rF$ is not very flat.

 Now $R/rR$ is a Noetherian ring, $S/rS$ is a finitely
generated $R/rR$\+algebra, and $F/rF$ is a finitely generated
$S/rS$\+module.
 Also, $F/rF$ is a flat $R/rR$\+module.
 Set $R_1=R/rR$, \ $S_1=S/rS$, and $F_1=F/rF$.
 Applying Proposition~\ref{noetherian-main-theorem-proof-prop} again,
we find a nonzero element $r_1\in R_1$ such that
the $R_1/r_1R_1$\+module $F_1/r_1F_1$ is not very flat.

 Proceeding in this fashion, we produce an infinite sequence of
nonzero elements $r_n\in R_n$ and quotient rings $R_{n+1}=R_n/r_nR_n$,
starting from $R_0=R$ and $r_0=r$.
 According to the Noetherian Induction Principle, this is impossible.
 The contradiction proves that the $R$\+module $F$ is very flat.
\end{proof}

 The proof of Main Theorem~\ref{fvf-module-main-theorem} is based on
the following lemma, which allows to apply Noetherian induction
to some problems involving non-Noetherian rings.

\begin{lem} \label{finitely-generated-ring-lemma}
 Let $R$ be a commutative ring, $S$ be a finitely presented
commutative $R$\+algebra, and $F$ be a finitely presented $S$\+module.
 Then there exist a commutative ring $\oR$ finitely generated over
the ring of integers\/ $\boZ$, a ring homomorphism $\oR\rarrow R$,
a finitely generated commutative $\oR$\+algebra $\oS$, and a finitely
generated $\oS$\+module $\oF$ such that $S=R\ot_\oR\oS$ and
$F=S\ot_\oS\oF=R\ot_\oR\oF$.
\end{lem}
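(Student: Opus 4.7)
The plan is the standard ``spreading out'' argument: find finitely many elements of $R$ that carry all the data defining $S$ and $F$, and let $\oR$ be the $\boZ$\+subalgebra of $R$ they generate.

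First I would pick a finite presentation $S\simeq R[x_1,\dotsc,x_m]/(f_1,\dotsc,f_n)$ of the $R$\+algebra~$S$, where $f_1,\dotsc,f_n\in R[x_1,\dotsc,x_m]$. Next, since $F$ is a finitely presented $S$\+module, I would pick a presentation $S^q\xrightarrow{A}S^p\rarrow F\rarrow 0$, where $A=(a_{ij})$ is a $p\times q$ matrix with entries in~$S$, and lift each entry $a_{ij}\in S$ to a polynomial $\tilde a_{ij}\in R[x_1,\dotsc,x_m]$.

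Now I would let $\oR\subset R$ be the $\boZ$\+subalgebra generated by the (finitely many) coefficients of all the polynomials $f_1,\dotsc,f_n$ and $\tilde a_{ij}$; by construction $\oR$ is a finitely generated $\boZ$\+algebra and each $f_k$ and each $\tilde a_{ij}$ lies in $\oR[x_1,\dotsc,x_m]$. Set
\[
  \oS \;=\; \oR[x_1,\dotsc,x_m]/(f_1,\dotsc,f_n),
\]
a finitely generated commutative $\oR$\+algebra. The images $\overline{a}_{ij}\in\oS$ of the $\tilde a_{ij}$ define a matrix $\overline A$ over~$\oS$; set $\oF=\coker(\oS^q\xrightarrow{\overline A}\oS^p)$, a finitely generated $\oS$\+module.

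It remains to verify the compatibility identities. Since $R\ot_\oR\oR[x_1,\dotsc,x_m]=R[x_1,\dotsc,x_m]$ and the extension of scalars is right exact, applying $R\ot_\oR-$ to $0\to (f_1,\dotsc,f_n)\to\oR[x_1,\dotsc,x_m]\to\oS\to 0$ (or simply using right exactness of tensor) yields $R\ot_\oR\oS\simeq S$. Applying $S\ot_\oS-$ to the presentation $\oS^q\to\oS^p\to\oF\to0$ yields $S^q\xrightarrow{A}S^p\to S\ot_\oS\oF\to0$, so $S\ot_\oS\oF\simeq F$; finally $S\ot_\oS\oF=(R\ot_\oR\oS)\ot_\oS\oF=R\ot_\oR\oF$. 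There is no real obstacle in the argument; the only thing to be a little careful about is that both the coefficients of the defining polynomials of the algebra \emph{and} the coefficients of the lifts of the matrix entries of $A$ must be included in the generators of~$\oR$ before $\oS$ and $\oF$ can be defined over~$\oR$.
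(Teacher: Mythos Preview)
Your argument is correct and essentially identical to the paper's own proof: both choose a finite presentation of $S$ and of $F$, lift the matrix entries to polynomials, and take $\oR$ to be the $\boZ$\+subalgebra of $R$ generated by all the coefficients appearing. The paper also mentions the alternative of taking $\oR$ to be a polynomial ring $\boZ[z_\alpha]$ mapping to $R$ rather than a subring, but this is only a variant of the same idea.
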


\begin{proof}
 Let $I\subset R[x_1,\dotsc,x_m]$ be a finitely generated ideal such
that the $R$\+algebra $R[x_1,\dotsc,x_m]/I$ is isomorphic to~$S$,
and let $N\subset S^n$ be a finitely generated submodule such that
the $S$\+module $S^n/N$ is isomorphic to~$F$.
 Let $f_1(x_1,\dotsc,x_m)$,~\dots, $f_k(x_1,\dotsc,x_m)\in
R[x_1,\dotsc,x_m]$ be a finite set of generators of the ideal~$I$,
and let $(v_{1,1},\dotsc,v_{1,n})$,~\dots, $(v_{l,1},\dotsc,v_{l,n})\in
S^n$ be a finite set of generators of the $S$\+module~$N$.
 Finally, let $q_{i,j}\in R[x_1,\dotsc,x_m]$ be some preimages of
the elements $v_{i,j}\in S$.

 Let $\{r_\alpha\}$~denote the set of all coefficients of
the polynomials $f_h(x_1,\dotsc,x_m)$, \,$1\le h\le k$, and
$q_{i,j}(x_1,\dotsc,x_m)$, \,$1\le i\le l$, \,$1\le j\le n$.
 So $\{r_\alpha\}$~is a finite set of elements of the ring~$R$.
 Set $\oR$ to be the subring in $R$ generated by
the elements~$\{r_\alpha\}$.
 Then the polynomials $f_h$ and~$q_{ij}$ belong to the subring
$\oR[x_1,\dotsc,x_m]\subset R[x_1,\dotsc,x_m]$.

 Now we can set $\oI$ to be the ideal in $\oR[x_1,\dotsc,x_m]$
generated by the polynomials~$f_1$,~\dots, $f_k$, and put
$\overline S=\oR[x_1,\dotsc,x_m]/\oI$.
 Furthermore, let $\bar v_{ij}\in \oS$ denote the images of the elements
$q_{ij}\in\oR[x_1,\dotsc,x_m]$.
 Set $\oN$ to be the submodule in $\oS^n$ generated by the vectors
$(\bar v_{1,1},\dotsc,\bar v_{1,n})$,~\dots,
$(\bar v_{l,1},\dotsc,\bar v_{l,n})$, and put $\oF=\oS^n/\oN$.

 Alternatively, one could set $\oR$ to be the polynomial ring
$\boZ[z_\alpha]$ in the free variables~$z_\alpha$ corresponding
bijectively to the elements $r_\alpha\in R$.
 Then one would have a commutative ring homomorphism $\oR\rarrow R$
taking $z_\alpha$ to~$r_\alpha$.
 It would remain to lift the polynomials $f_h$ and $q_{ij}\in
R[x_1,\dotsc,x_m]$ to some polynomials $\bar f_h$ and $\bar q_{ij}\in
\oR[x_1,\dotsc,x_m]$, and proceed with the constructions of
an $\oR$\+algebra $\oS$ and an $\oS$\+module $\oF$ in the way
similar to the above.
\end{proof}

\begin{rem}
 Notice that, when the $R$\+module $F$ is flat, there is \emph{no}
claim in Lemma~\ref{finitely-generated-ring-lemma}
that the $\oR$\+module $\oF$ is flat.
\end{rem}

 Similarly to the above, we now proceed to formulate a particular
case of Main Theorem~\ref{fvf-module-main-theorem} from which
the general case will be deduced.

\begin{prop} \label{fvf-main-theorem-proof-prop}
 Let $\oR$ be a Noetherian commutative ring, $\oS$ be a finitely
generated commutative $\oR$\+algebra, and $\oF$ be a finitely generated
$\oS$\+module.
 Let $R$ be a commutative $\oR$\+algebra.
 Assume that $F=R\ot_\oR\oF$ is a flat $R$\+module, and that for every
nonzero element $r\in\oR$ the $R/rR$\+module $F/rF$ is finitely
very flat.
 Then $F$ is a finitely very flat $R$\+module.
\end{prop}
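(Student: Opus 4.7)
The strategy is to follow the proof of Proposition~\ref{noetherian-main-theorem-proof-prop} step by step, replacing ``very flat'' with ``finitely very flat'' throughout and invoking Main Lemma~\ref{fvf-main-lemma} in place of Main Lemma~\ref{noetherian-main-lemma}. The key observation is that the Noetherian ingredients, namely the dichotomy into the zero-divisor and integral-domain cases together with Grothendieck's generic freeness, are applied inside $\oR$, while Main Lemma~\ref{fvf-main-lemma} is applied over $R$, where Noetherianity is not required. As a preliminary reduction, I would dispose of the degenerate possibility that some nonzero $r\in\oR$ has zero image in $R$: in that case $R/rR=R$ and $F/rF=F$, so the hypothesis immediately yields that $F$ is finitely very flat. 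Hence I may assume that every nonzero element of $\oR$ has nonzero image in~$R$.

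Case~I: suppose $\oR$ contains nonzero elements $a$, $b$ with $ab=0$. Then their images $a_R$, $b_R$ in $R$ still satisfy $a_Rb_R=0$, so the localization map $R\rarrow R[a_R^{-1}]$ factors through $R/b_RR$. Since $F/bF$ is finitely very flat over $R/bR$ by hypothesis, Lemma~\ref{very-flatness-extension-of-scalars}(b) shows $F[a_R^{-1}]$ is finitely very flat over $R[a_R^{-1}]$. Combined with the hypothesis that $F/aF$ is finitely very flat over $R/aR$, Main Lemma~\ref{fvf-main-lemma} applied to the element $a_R\in R$ and the flat $R$\+module $F$ concludes this case.

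Case~II: suppose $\oR$ is an integral domain. Lemma~\ref{generic-freeness-lemma} applied to $\oF$ produces a nonzero element $a\in\oR$ such that $\oF[a^{-1}]$ is free over $\oR[a^{-1}]$. As the image of $a$ is invertible in $R[a_R^{-1}]$, this ring is canonically an $\oR[a^{-1}]$\+algebra, and a routine base-change identification gives $F[a_R^{-1}]\cong R[a_R^{-1}]\ot_{\oR[a^{-1}]}\oF[a^{-1}]$, which is therefore free, and in particular finitely very flat, over $R[a_R^{-1}]$. Together with the hypothesis on $F/aF$, Main Lemma~\ref{fvf-main-lemma} once more finishes the argument. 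I do not foresee any serious obstacle beyond verifying this base-change identification, which is immediate from the definition $F=R\ot_\oR\oF$; everything else is a literal transcription of the earlier argument with the finite-very-flat main lemma substituted for its Noetherian counterpart.
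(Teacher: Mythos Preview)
Your proposal is correct and follows essentially the same approach as the paper's own proof: split into the zero-divisor and integral-domain cases for $\oR$, use Lemma~\ref{very-flatness-extension-of-scalars}(b) in Case~I and generic freeness in Case~II, and conclude in each case via Main Lemma~\ref{fvf-main-lemma}. Your preliminary reduction (handling the situation where some nonzero $r\in\oR$ maps to zero in $R$) is a harmless clarification; the paper omits it because Main Lemma~\ref{fvf-main-lemma} applies verbatim even when the image of $a$ in $R$ vanishes, and the hypothesis on $F/aF$ already gives the conclusion directly in that degenerate case.
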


\begin{proof}
 Case~I: suppose that the ring $\oR$ has zero-divisors, that is,
there exists a pair of elements $a\ne0\ne b$ in $\oR$ such that
$ab=0$ in~$\oR$.
 By assumption, the $R/aR$\+module $F/aF$ is finitely very flat and
the $R/bR$\+module $F/bF$ is finitely very flat.
 Arguing as in the proof of
Proposition~\ref{noetherian-main-theorem-proof-prop} and using
Lemma~\ref{very-flatness-extension-of-scalars}(b), we see that
the $R[a^{-1}]$\+module $F[a^{-1}]$ is finitely very flat.
 Both the $R/aR$\+module $F/aF$ and the $R[a^{-1}]$\+module
$F[a^{-1}]$ being finitely very flat, finite very flatness of
the $R$\+module $F$ follows by Main Lemma~\ref{fvf-main-lemma}.

 Case~II: now suppose that the ring $\oR$ is an integral domain.
 By Lemma~\ref{generic-freeness-lemma}, there exists a nonzero element
$a\in\oR$ such that the $\oR[a^{-1}]$\+module $\oF[a^{-1}]$ is free.
 It follows that the $R[a^{-1}]$\+module $F[a^{-1}]\simeq
R\ot_\oR\oF[a^{-1}]$ is free as well.
 By assumption, the $R/aR$\+module $F/aF$ is finitely very flat.
 As above, finite very flatness of the $R$\+module $F$ follows
by means of Main Lemma~\ref{fvf-main-lemma}.
\end{proof}

\begin{proof}[Proof of Main Theorem~\ref{fvf-module-main-theorem}]
 Starting from a commutative ring $R$, a finitely presented commutative
$R$\+algebra $S$, and a finitely presented $S$\+module $F$, we first
of all apply Lemma~\ref{finitely-generated-ring-lemma}.
 This produces a Noetherian commutative ring $\oR$ with a ring
homomorphism $\oR\rarrow R$, a finitely generated commutative
$\oR$\+algebra $\oS$, and a finitely generated $\oS$\+module~$\oF$
such that $F\simeq R\ot_\oR\oF$.

 By the assumptions of the theorem, the $R$\+module $F$ is flat.
 Assume that it is \emph{not} finitely very flat.
 By Proposition~\ref{fvf-main-theorem-proof-prop}, it then follows
that there exists a nonzero element $r\in\oR$ such that
the $R/rR$\+module $F/rF$ is not finitely very flat.

 Now $\oR/r\oR$ is a Noetherian ring, $\oS/r\oS$ is a finitely
generated $\oR/r\oR$\+algebra, and $\oF/r\oF$ is a finitely generated
$\oS/r\oS$\+module.
 Furthermore, $R/rR$ is an $\oR/r\oR$\+algebra and $F/rF\simeq
R/rR\ot_{\oR/r\oR}\oF/r\oF$ is a flat $R/rR$\+module.
 Set $\oR_1=\oR/r\oR$, \ $\oS_1=\oS/r\oS$, \ $\oF_1=\oF/r\oF$, \
$R_1=R/rR$, and $F_1=F/rF$.
 Applying Proposition~\ref{fvf-main-theorem-proof-prop}, we find
a nonzero element $r_1\in\oR_1$ such that the $R_1/rR_1$\+module
$F_1/rF_1$ is not very flat.

 Proceeding in this way, we produce an infinite sequence of nonzero
elements $r_n\in\oR_n$ and quotient rings $\oR_{n+1}=\oR_n/r_n\oR_n$,
starting from $\oR_0=\oR$ and $r_0=r$.
 The argument by contradiction with Noetherian Induction Principle
(applied to the Noetherian ring~$\oR$) finishes in the same way as
in the above proof of Main Theorem~\ref{noetherian-module-main-theorem}.
\end{proof}

\Section{Obtainable Modules I}  \label{obtainable-I-secn}

 In this section, we discuss the ``simple obtainability''.
 The discussion of the more complicated ``two-sorted obtainability''
is postponed to Section~\ref{obtainable-II-secn}.

 In this section we work over an associative (not necessarily
commutative) ring~$R$.
 We denote by $R\modl$ the abelian category of left $R$\+modules
and by $\modr R$ the abelian category of right $R$\+modules.

 Let $\sE$ and $\sF\subset R\modl$ be two classes of left $R$\+modules.
 Denote by ${}^{\perp_{\ge1}}\.\sE\subset R\modl$ the class of all
left $R$\+modules $F$ such that $\Ext_R^i(F,E)=0$ for all $E\in\sE$
and $i\ge1$, and by $\sF^{\perp_{\ge1}}\subset R\modl$ the class of all
left $R$\+modules $C$ such that $\Ext_R^i(F,C)=0$ for all $F\in\sF$
and $i\ge1$.

 Let $\sE\subset R\modl$ be a fixed class of left $R$\+modules.
 Set $\sF={}^{\perp_{\ge1}}\.\sE$ and $\sC=\sF^{\perp_{\ge1}}$.
 Our aim is to describe (to the extent possible, which may depend on
the situation at hand) the class $\sC$ in terms of the class~$\sE$.

 The following definition of a transfinitely iterated extension
is dual to that from Section~\ref{very-flat-definition-introd}.
 Let $C$ be a left $R$\+module and $\delta$~be an ordinal.
 Suppose that for every ordinal $\alpha\le\delta$ we are given
a left $R$\+module $C_\alpha$ and for every pair of ordinals
$\alpha<\beta\le\delta$ we are given an $R$\+module morphism
$C_\beta\rarrow C_\alpha$ such that the following conditions
are satisfied:
\begin{itemize}
\item $C_0=0$ and $C_\delta=C$;
\item the triangle diagrams $C_\gamma\rarrow C_\beta\rarrow C_\alpha$
are commutative for all triples of ordinals
$\alpha<\beta<\gamma\le\delta$;
\item the induced morphism into the projective limit
$C_\beta\rarrow \varprojlim_{\alpha<\beta}C_\alpha$ is an isomorphism
for all limit ordinals $\beta\le\delta$;
\item the morphism $C_{\alpha+1}\rarrow C_\alpha$ is surjective for
all ordinals $\alpha<\delta$.
\end{itemize}
 Let $D_\alpha$ denote the kernel of the morphism $C_{\alpha+1}\rarrow
C_\alpha$.
 Then the $R$\+module $C$ is said to be a \emph{transfinitely
iterated extension} (\emph{in the sense of the projective limit}) of
the $R$\+modules $D_\alpha$, where $0\le\alpha<\delta$.

\begin{lem} \label{dual-eklof-lemma}
 Let $F$ and $C$ be left $R$\+modules such that $C$ is a transfinitely
iterated extension (in the sense of the projective limit) of left
$R$\+modules~$D_\alpha$.
 Assume that\/ $\Ext_R^1(F,D_\alpha)=0$ for all~$\alpha$.
 Then\/ $\Ext^1_R(F,C)=0$.
\end{lem}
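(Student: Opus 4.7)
Fix a short exact sequence $0\rarrow K\rarrow P\rarrow F\rarrow 0$ with $P$ projective. Then $\Ext^1_R(F,N)$ vanishes if and only if every $R$\+linear map $K\rarrow N$ extends to $P\rarrow N$. The plan is to prove, by transfinite induction on $\alpha\le\delta$, the stronger statement that every compatible family of maps $(f_\gamma\:K\rarrow C_\gamma)_{\gamma\le\alpha}$ can be lifted to a compatible family $(\tilde f_\gamma\:P\rarrow C_\gamma)_{\gamma\le\alpha}$ (so that $\tilde f_\gamma|_K=f_\gamma$ and the given transition maps $C_{\gamma'}\rarrow C_\gamma$ intertwine $\tilde f_{\gamma'}$ with $\tilde f_\gamma$). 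Applied to $\alpha=\delta$, this yields the desired extension and hence $\Ext^1_R(F,C)=0$.

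The base case $\alpha=0$ is trivial since $C_0=0$. For limit ordinals $\alpha$, the universal property $C_\alpha=\varprojlim_{\gamma<\alpha}C_\gamma$ assembles the previously constructed $\tilde f_\gamma$ into a single $\tilde f_\alpha\:P\rarrow C_\alpha$ for free, with the required restriction to $K$ following from the compatibility of the~$f_\gamma$. The substantive work is at successor ordinals: given $\tilde f_\alpha\:P\rarrow C_\alpha$ extending $f_\alpha$, we must produce $\tilde f_{\alpha+1}\:P\rarrow C_{\alpha+1}$ extending $f_{\alpha+1}$ and projecting to~$\tilde f_\alpha$.

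The successor step is where the two hypotheses enter, and I expect this to be the main (if modest) obstacle to write down cleanly. First, since the transition map $\pi\:C_{\alpha+1}\rarrow C_\alpha$ is surjective and $P$ is projective, we can lift $\tilde f_\alpha$ to some map $g\:P\rarrow C_{\alpha+1}$ with $\pi g=\tilde f_\alpha$. This $g$ need not restrict to $f_{\alpha+1}$ on $K$, but the difference $g|_K-f_{\alpha+1}\:K\rarrow C_{\alpha+1}$ is killed by~$\pi$ (using $\pi f_{\alpha+1}=f_\alpha=\tilde f_\alpha|_K=\pi g|_K$), so it factors through the kernel $D_\alpha$ of~$\pi$. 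The hypothesis $\Ext^1_R(F,D_\alpha)=0$ then lets us extend this difference to a map $h\:P\rarrow D_\alpha$, and setting $\tilde f_{\alpha+1}=g-h$ produces a lift with the required properties: $\pi\tilde f_{\alpha+1}=\tilde f_\alpha$ because $h$ lands in $\ker\pi$, and $\tilde f_{\alpha+1}|_K=f_{\alpha+1}$ by construction.

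Together with the trivial base case and the limit case handled by the inverse limit, this transfinite recursion produces $\tilde f_\delta\:P\rarrow C$ restricting to the originally given $f\:K\rarrow C$, completing the proof that $\Ext^1_R(F,C)=0$.
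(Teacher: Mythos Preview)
Your proof is correct and is essentially the standard argument for the dual Eklof lemma (the one appearing in \cite[Proposition~18]{ET}, which the paper simply cites without reproducing). One small expositional point: you phrase the inductive claim as ``every compatible family $(f_\gamma)_{\gamma\le\alpha}$ can be lifted,'' but what you actually carry out (and what is needed) is a transfinite \emph{recursion} that builds, for one fixed family coming from a given $f\:K\rarrow C$, a single coherent tower of lifts $(\tilde f_\gamma)_{\gamma\le\alpha}$; stating it as a bare existential for each $\alpha$ separately would not let you glue at limit ordinals. Your treatment of the successor and limit steps makes clear you understand this, so the issue is purely one of wording.
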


\begin{proof}
 This is the dual version of the Eklof Lemma~\cite[Lemma~1]{ET}.
 See~\cite[Proposition~18]{ET} (or, for a generalization to arbitrary
abelian categories, \cite[Lemma~4.5]{PR}).
\end{proof}

\begin{lem} \label{positive-ext-right-orthogonal-closedness}
 For any class of left $R$\+modules\/ $\sF\subset R\modl$, the class
of left $R$\+modules\/ $\sC=\sF^{\perp_{\ge1}}$ is closed under
the passages to direct summands, extensions, cokernels of injective
morphisms, infinite products, and transfinitely iterated extensions
in the sense of the projective limit.
\end{lem}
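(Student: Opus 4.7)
The plan is to verify each of the five closure properties in turn. The first four are routine consequences of the additivity of $\Ext$ and the long exact sequence; only the last one—closure under transfinitely iterated extensions in the sense of the projective limit—requires substantive input, which is supplied by Lemma \ref{dual-eklof-lemma} combined with a dimension-shifting argument.

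For direct summands, the additivity $\Ext^i_R(F, C_1\oplus C_2)\cong\Ext^i_R(F,C_1)\oplus\Ext^i_R(F,C_2)$ immediately shows that summands of modules in $\sC$ lie in $\sC$. For a short exact sequence $0\to C'\to C\to C''\to0$, the long exact sequence
\[
\cdots\rarrow\Ext^i_R(F,C')\rarrow\Ext^i_R(F,C)\rarrow\Ext^i_R(F,C'')\rarrow\Ext^{i+1}_R(F,C')\rarrow\cdots
\]
forces $\Ext^i_R(F,C)=0$ for all $i\ge1$ when $C',C''\in\sC$ (the case of extensions), and forces $\Ext^i_R(F,C'')=0$ for all $i\ge1$ when $C',C\in\sC$ (the case of cokernels of injective morphisms). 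For infinite products, the natural isomorphism
\[
\Ext^i_R\bigl(F,\textstyle\prod_\alpha C_\alpha\bigr)\cong\prod_\alpha\Ext^i_R(F,C_\alpha),
\]
which follows from the fact that $\Hom_R(F,-)$ commutes with products and that a product of injective resolutions is an injective resolution of the product, reduces the claim to componentwise vanishing.

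For the remaining item, suppose $C$ is a transfinitely iterated extension (in the sense of the projective limit) of modules $D_\alpha\in\sC$; we must show $\Ext^i_R(F,C)=0$ for every $F\in\sF$ and $i\ge1$. For $i=1$ this is precisely Lemma \ref{dual-eklof-lemma} applied to $F$. For $i\ge2$ I would argue by dimension shifting: fix a projective resolution of $F$ and let $\Omega^{i-1}F$ denote its $(i-1)$st syzygy, so that $\Ext^i_R(F,M)\cong\Ext^1_R(\Omega^{i-1}F,M)$ for every left $R$-module $M$. The hypothesis $\Ext^i_R(F,D_\alpha)=0$ then becomes $\Ext^1_R(\Omega^{i-1}F,D_\alpha)=0$ for all $\alpha$, and Lemma \ref{dual-eklof-lemma} applied to the module $\Omega^{i-1}F$ (which is not required to lie in $\sF$, since the dual Eklof Lemma imposes no such restriction on its first argument) gives $\Ext^1_R(\Omega^{i-1}F,C)=0$, i.e., $\Ext^i_R(F,C)=0$. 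No serious obstacle is anticipated: the only non-formal ingredient is the already-established Lemma \ref{dual-eklof-lemma}, and the passage from $i=1$ to arbitrary $i\ge1$ is achieved by the standard syzygy trick.
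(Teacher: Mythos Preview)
Your proof is correct and uses essentially the same ingredients as the paper's: the dual Eklof Lemma~\ref{dual-eklof-lemma} for transfinitely iterated extensions, together with a syzygy/dimension-shifting argument to pass from $\Ext^1$ to $\Ext^{\ge1}$. The only organizational difference is that the paper packages the syzygy trick up front---replacing $\sF$ by its closure $\sG$ under syzygies so that $\sF^{\perp_{\ge1}}=\sG^{\perp_1}$, and then proving all properties (except cokernels of monomorphisms, which it treats separately as you do) for $\perp_1$-classes at once---whereas you verify the first four properties directly for $\perp_{\ge1}$ and invoke dimension shifting only for the last one.
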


\begin{proof}
 Notice that, for any class of left $R$\+modules $\sG\subset R\modl$,
the class $\sG^{\perp_1}\subset R\modl$ consisting of all the left
$R$\+modules $C$ such that $\Ext^1_R(G,C)=0$ for all $G\in\sG$
is closed under all the mentioned operations with the possible
exception of the cokernels of injective morphisms.
 This follows from Lemma~\ref{dual-eklof-lemma} (notice also that
extensions and infinite products are particular cases of
the transfinitely iterated extensions in the sense of the projective
limit).
 Now let $\sG$ be the closure of $\sF$ with respect to the operation of
the passage to a syzygy module.
 Then $\sF^{\perp_{\ge1}}=\sG^{\perp_1}$.
 Checking that $\sF^{\perp_{\ge1}}$ is closed under the cokernels of
injective morphisms is easy.
\end{proof}

 The following definition plays a key role.

\begin{defn} \label{simply-right-obtainable-def}
 The class of all left $R$\+modules \emph{simply right obtainable} from
a given class $\sE\subset R\modl$ is defined as the (obviously, unique)
minimal class of left $R$\+modules containing $\sE$ and closed under
the operations of the passage to a direct summand, an extension,
the cokernel of an injective morphism, an infinite product, or
a transfinitely iterated extension in the sense of the projective limit.
\end{defn}

\begin{lem} \label{simply-right-obtainable-orthogonal-lem}
 For any class of left $R$\+modules\/ $\sE\subset R\modl$, all the left
$R$\+modules simply right obtainable from\/ $\sE$ belong to the class\/
$\sC=({}^{\perp_{\ge1}}\.\sE)^{\perp_{\ge1}}\subset R\modl$.
\end{lem}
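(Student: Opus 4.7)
The plan is to observe that the class $\sC = ({}^{\perp_{\ge 1}}\sE)^{\perp_{\ge 1}}$ is itself one of the candidates for the minimal class described in Definition~\ref{simply-right-obtainable-def}, so the minimal class must be contained in $\sC$. Concretely, I would verify two things: (i) $\sE \subset \sC$, and (ii) $\sC$ is closed under all five operations listed in Definition~\ref{simply-right-obtainable-def}. Then the conclusion is immediate from the minimality of the simply right obtainable class.

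For step~(i), given any $E \in \sE$, every $F \in \sF = {}^{\perp_{\ge 1}}\sE$ satisfies $\Ext^i_R(F,E) = 0$ for all $i \ge 1$ by the very definition of~$\sF$. Hence $E \in \sF^{\perp_{\ge 1}} = \sC$, which gives $\sE \subset \sC$.

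For step~(ii), I would appeal directly to Lemma~\ref{positive-ext-right-orthogonal-closedness}, which is stated precisely for classes of the form $\sF^{\perp_{\ge 1}}$ (with $\sF$ an arbitrary class of left $R$-modules) and asserts closure under direct summands, extensions, cokernels of injective morphisms, infinite products, and transfinitely iterated extensions in the sense of the projective limit. This matches exactly the list of operations in Definition~\ref{simply-right-obtainable-def}, so the hypotheses for $\sC$ to qualify as a class containing~$\sE$ and stable under all the operations of the definition are met.

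There is no real obstacle here: the lemma is a formal consequence of the preceding two results, as the minimal class of $R$-modules containing $\sE$ and closed under the listed operations is contained in every class with those properties. The one small subtlety is to make sure one does not need to re-derive the closure of $\sC$ under cokernels of injective morphisms separately, but this is already part of Lemma~\ref{positive-ext-right-orthogonal-closedness}. Hence the proof consists only of invoking Lemma~\ref{positive-ext-right-orthogonal-closedness} together with the definitional observation $\sE \subset \sC$, and concluding by minimality.
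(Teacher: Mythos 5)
Your proof is correct and is essentially the paper's own argument, just spelled out: the paper's proof consists of the single citation of Lemma~\ref{positive-ext-right-orthogonal-closedness}, leaving implicit the two observations you make explicit ($\sE\subset\sC$ and the minimality conclusion).
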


\begin{proof}
 Follows from Lemma~\ref{positive-ext-right-orthogonal-closedness}.
\end{proof}

 Now let us formulate the dual definition.
 For any class of left $R$\+modules $\sM$, we denote by
${}^{\sT_{\ge1}}\sM\subset\modr R$ the class of all right $R$\+modules
$N$ such that $\Tor_i^R(N,M)=0$ for all $M\in\sM$ and $i\ge1$.
 Similarly, for any class of right $R$\+modules $\sN$, we denote by
$\sN^{\sT_{\ge1}}\subset R\modl$ the class of all left $R$\+modules
$M$ such that $\Tor_i^R(N,M)=0$ for all $N\in\sN$ and $i\ge1$.

\begin{defn} \label{simply-left-obtainable-def}
 The class of all left $R$\+modules \emph{simply left obtainable} from
a given class $\sM\subset R\modl$ is defined as the (obviously, unique)
minimal class of left $R$\+modules containing $\sM$ and closed under
the operations of the passage to a direct summand, an extension,
the kernel of a surjective morphism, an infinite direct sum, or
a transfinitely iterated extension in the sense of
the inductive limit.
\end{defn}

\begin{lem}
\textup{(a)} For any class of left $R$\+modules\/ $\sM\subset R\modl$,
all the left $R$\+modules simply left obtainable from\/ $\sM$ belong
to the class\/ ${}^{\perp_{\ge1}}(\sM^{\perp_{\ge1}})\subset R\modl$. \par
\textup{(b)} For any class of left $R$\+modules\/ $\sM\subset R\modl$,
all the left $R$\+modules simply left obtainable from\/ $\sM$ belong
to the class $({}^{\sT_{\ge1}}\.\sM)^{\sT_{\ge1}}\subset R\modl$.
\end{lem}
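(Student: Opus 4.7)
The plan is to mimic the proof of Lemma~\ref{simply-right-obtainable-orthogonal-lem}: namely, to establish a dual version of Lemma~\ref{positive-ext-right-orthogonal-closedness} stating that for any class $\sC\subset R\modl$ the class ${}^{\perp_{\ge1}}\sC$, and for any class $\sN\subset\modr R$ the class $\sN^{\sT_{\ge1}}$, are closed under direct summands, extensions, kernels of surjective morphisms, infinite direct sums, and transfinitely iterated extensions in the sense of the inductive limit. Granting this, both parts of the lemma follow by a straightforward transfinite induction along the construction of a simply left obtainable module from $\sM$ (Definition~\ref{simply-left-obtainable-def}), using the trivial containments $\sM\subset{}^{\perp_{\ge1}}(\sM^{\perp_{\ge1}})$ and $\sM\subset({}^{\sT_{\ge1}}\sM)^{\sT_{\ge1}}$ as the base case.

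For part~(a), direct summands, extensions, and direct sums are immediate from the additivity and long-exact-sequence properties of $\Ext^i_R(-,C)$. Closure under kernels of surjective morphisms uses a dimension shift: given $0\to K\to F\to F'\to 0$ with $F,F'\in{}^{\perp_{\ge1}}\sC$, the long exact sequence sandwiches $\Ext^i_R(K,C)$ between $\Ext^i_R(F,C)=0$ and $\Ext^{i+1}_R(F',C)=0$ for every $i\ge1$ and every $C\in\sC$. The subtle step is the transfinitely iterated extension case, where the Eklof Lemma~\cite[Lemma~1]{ET} only delivers $\Ext^1$ vanishing directly; to upgrade it to $\Ext^i$ vanishing for all $i\ge1$, I would employ the trick from the proof of Lemma~\ref{positive-ext-right-orthogonal-closedness}, but on the contravariant side: letting $\sC'\supset\sC$ denote the closure of $\sC$ under the passage to cosyzygy modules in a fixed choice of injective resolutions, the dimension-shifting isomorphism $\Ext^i_R(F,C)\simeq\Ext^1_R(F,C^{(i-1)})$ for the $(i-1)$-st cosyzygy $C^{(i-1)}$ gives ${}^{\perp_{\ge1}}\sC={}^{\perp_1}\sC'$, so the Eklof Lemma applied to $\sC'$ finishes the argument.

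Part~(b) is structurally the same but cleaner, because $\Tor^R_i(N,-)$ commutes with both arbitrary direct sums \emph{and} filtered colimits. Closure under direct summands, direct sums, and extensions is immediate; closure under kernels of surjections uses the long exact sequence $\Tor_{i+1}^R(N,F')\to\Tor_i^R(N,K)\to\Tor_i^R(N,F)$ and the same dimension-shift argument. For a transfinitely iterated extension $F=\bigcup_{\alpha<\gamma}F_\alpha$ of modules $F_{\alpha+1}/F_\alpha\in\sN^{\sT_{\ge1}}$, a transfinite induction shows each intermediate $F_\alpha$ already lies in $\sN^{\sT_{\ge1}}$: the successor step uses the extension-closure just verified, while the limit step uses $\Tor_i^R(N,\varinjlim_{\beta<\alpha}F_\beta)\simeq\varinjlim_{\beta<\alpha}\Tor_i^R(N,F_\beta)=0$. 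The principal obstacle is therefore concentrated in the transfinite-extension step of part~(a), where tracking all higher $\Ext$ groups simultaneously forces the cosyzygy reduction; part~(b) avoids Eklof altogether thanks to the exactness of filtered colimits.
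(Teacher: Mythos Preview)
Your proposal is correct and follows essentially the same route as the paper. For part~(a) the paper simply says it is provable in the same way as Lemma~\ref{simply-right-obtainable-orthogonal-lem}, using the classical Eklof Lemma in place of its dual version; your cosyzygy reduction ${}^{\perp_{\ge1}}\sC={}^{\perp_1}\sC'$ is precisely the dualization of the syzygy trick in Lemma~\ref{positive-ext-right-orthogonal-closedness} that this sentence is pointing to. For part~(b) the paper mentions two options: deduce it from part~(a) via the character-module functor $\Hom_\boZ({-},\boQ/\boZ)$, or argue directly from the fact that $\Tor$ preserves filtered inductive limits; you have chosen the second option and carried it out correctly.
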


\begin{proof}
 Part~(a) is provable in the same way as
Lemma~\ref{simply-right-obtainable-orthogonal-lem}
(using the classical Eklof Lemma~\cite[Lemma~1]{ET} in place of its
dual version which we formulated as Lemma~\ref{dual-eklof-lemma}).
 Part~(b) can be deduced from part~(a) using the functor
$\Hom_\boZ({-},\boQ/\boZ)$, or proved directly using the fact that
the functor $\Tor$ preserves filtered inductive limits.
\end{proof}

\begin{rem} \label{simple-obtainability-limitations-of-use-remark}
 In the context of the arguments in this paper (see the discussion
in Section~\ref{obtaining-outline}), we will not nearly use
the full strength of Definitions~\ref{simply-right-obtainable-def}
and~\ref{simply-left-obtainable-def}.
 In particular, the passage to direct summands will not be used, and
we will only use countable products/sums, and the transfinitely iterated
extensions indexed by the ordinal $\omega=\boZ_{\ge0}$ of nonnegative
integers only.
 We will call the latter ``infinitely iterated extensions''.
\end{rem}

\Section{Toy Examples of a Main Lemma}  \label{toy-secn}

 Part~(a) of the following lemma will be used in the proofs of all
the main lemmas.

\begin{lem} \label{change-of-scalars-vanishing-tor-lemma}
 Let $R\rarrow R'$ be a morphism of commutative rings and $F$ be
an $R$\+module.
 Assume that $\Tor^R_i(R',F)=0$ for all $i\ge1$.
 Then \par
\textup{(a)} for any $R'$\+module $C$ and all $i\ge0$ there is a natural
isomorphism\/ $\Ext^i_R(F,C)\simeq\Ext^i_{R'}(R'\ot_RF,\>C)$; \par
\textup{(b)} for any $R'$\+module $N$ and all $i\ge0$ there is a natural
isomorphism\/ $\Tor_i^R(N,F)\simeq\Tor_i^{R'}(N,\>R'\ot_RF)$.
\end{lem}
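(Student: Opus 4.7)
The plan is to construct a single projective resolution that simultaneously computes both sides of each isomorphism. Pick a projective resolution $P_\bu\rarrow F$ of $F$ in $R\modl$, and consider the complex $R'\ot_RP_\bu$. The hypothesis that $\Tor_i^R(R',F)=0$ for $i\ge 1$ says precisely that this complex is acyclic in positive degrees; combined with the right-exactness of $R'\ot_R{-}$ in degree $0$, this means $R'\ot_RP_\bu$ is a resolution of $R'\ot_RF$. Each term $R'\ot_RP_k$ is a projective $R'$-module, since extension of scalars carries direct summands of free $R$-modules to direct summands of free $R'$-modules. Hence $R'\ot_RP_\bu$ is a projective resolution of $R'\ot_RF$ in $R'\modl$.

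For part~(a), given any $R'$-module $C$, the tensor-Hom adjunction for extension of scalars supplies a natural isomorphism of complexes
$$
 \Hom_R(P_\bu,C)\simeq\Hom_{R'}(R'\ot_RP_\bu,\>C),
$$
valid termwise because $C$ carries an $R'$-module structure. Passing to cohomology and using the two resolutions identified above yields the desired isomorphism $\Ext_R^i(F,C)\simeq\Ext_{R'}^i(R'\ot_RF,\>C)$ for all $i\ge 0$.

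For part~(b), given any $R'$-module $N$, associativity of the tensor product provides a natural isomorphism of complexes
$$
 N\ot_RP_\bu\simeq N\ot_{R'}(R'\ot_RP_\bu).
$$
The left-hand side computes $\Tor_i^R(N,F)$, while the right-hand side, by virtue of the projective $R'$-resolution constructed in the first paragraph, computes $\Tor_i^{R'}(N,\>R'\ot_RF)$. Taking homology yields the claim.

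This is essentially a textbook change-of-rings computation, and I anticipate no genuine obstacle; the ingredients used (preservation of projectivity by extension of scalars, tensor-Hom adjunction, associativity of tensor product) are all standard. The only substantive observation is that the $\Tor$-vanishing hypothesis is exactly what promotes $R'\ot_RP_\bu$ from a mere complex to a resolution of $R'\ot_RF$, which is what makes both computations run.
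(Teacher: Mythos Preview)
Your proof is correct and follows essentially the same approach as the paper: the paper's proof simply observes that under the $\Tor$-vanishing hypothesis, $R'\ot_RP_\bu$ is a projective resolution of $R'\ot_RF$, and leaves the remaining adjunction and associativity steps implicit. You have spelled out those steps explicitly, but the core idea is identical.
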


\begin{proof} 
 It suffices to notice that, in the assumptions of the lemma, for
any projective resolution $P_\bu\rarrow F$ of the $R$\+module $F$,
the complex $R'\ot_RP_\bu$ is a projective resolution of
the $R'$\+module $R'\ot_RF$.
\end{proof}

 Part~(b) of the following generalization of
Lemma~\ref{change-of-scalars-vanishing-tor-lemma} will be used in
the proof of Lemma~\ref{flat-lemma} in this section.

\begin{lem} \label{change-of-scalars-partly-vanishing-tor}
 Let $R\rarrow R'$ be a morphism of commutative rings, $F$ be
an $R$\+module, and $n\ge1$ be an integer.
 Assume that $\Tor^R_i(R',F)=0$ for all\/ $1\le i\le n$.
 Then \par
\textup{(a)} for any $R'$\+module $C$ and all\/ $0\le i\le n$ there is
a natural isomorphism\/ $\Ext^i_R(F,C)\simeq\Ext^i_{R'}(R'\ot_RF,\>C)$;
\par
\textup{(b)} for any $R'$\+module $N$ and all\/ $0\le i\le n$ there is
a natural isomorphism\/ $\Tor_i^R(N,F)\simeq\Tor_i^{R'}(N,\>R'\ot_RF)$.
\end{lem}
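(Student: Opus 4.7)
My plan is to prove both parts simultaneously by induction on~$n$, using dimension shifting via a first syzygy. The base case $n=0$ involves no Tor-vanishing hypothesis and concerns only $i=0$: part~(a) is then the extension-restriction of scalars adjunction $\Hom_R(F,C)\cong\Hom_{R'}(R'\ot_RF,\,C)$ valid for any $R'$-module $C$, and part~(b) is the associativity isomorphism $N\ot_RF\cong N\ot_{R'}(R'\ot_RF)$ valid for any $R'$-module $N$; both hold unconditionally.

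For the inductive step from $n-1$ to $n\ge1$, I would fix a short exact sequence of $R$-modules $0\to\Omega\to P\to F\to 0$ with $P$ free over $R$. The hypothesis $\Tor_1^R(R',F)=0$ ensures that $0\to R'\ot_R\Omega\to R'\ot_RP\to R'\ot_RF\to 0$ is exact over $R'$, and the long exact Tor-sequence of the original triple together with the projectivity of $P$ yields $\Tor_i^R(R',\Omega)\cong\Tor_{i+1}^R(R',F)$ for every $i\ge 1$. Consequently $\Tor_i^R(R',\Omega)=0$ for $1\le i\le n-1$, so the inductive hypothesis applied to $\Omega$ delivers the required isomorphisms in both parts for $\Omega$ in the range $0\le i\le n-1$.

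For part~(a), I would compare the two long exact Ext-sequences obtained by applying $\Hom_R(-,C)$ and $\Hom_{R'}(-,C)$ to the two short exact sequences above. Projectivity of $P$ over $R$ and of $R'\ot_RP$ over $R'$ kills $\Ext^i_R(P,C)$ and $\Ext^i_{R'}(R'\ot_RP,\,C)$ for $i\ge1$, which produces the dimension-shifting isomorphisms $\Ext^i_R(F,C)\cong\Ext^{i-1}_R(\Omega,C)$ and $\Ext^i_{R'}(R'\ot_RF,\,C)\cong\Ext^{i-1}_{R'}(R'\ot_R\Omega,\,C)$ for $i\ge 2$. Composing these with the inductive isomorphism on $\Omega$ settles the range $2\le i\le n$; the remaining cases $i=0,1$ are obtained by applying the five-lemma to the initial six-term segments of the two long exact sequences, in which the three $\Hom$-terms already correspond via adjunction. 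Part~(b) is carried out by an entirely parallel argument, with $\Hom$ replaced by $\ot$, $\Ext$ by $\Tor$, and the adjunction replaced by associativity of tensor product $N\ot_{R'}(R'\ot_RM)\cong N\ot_RM$.

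The main thing requiring genuine attention is the naturality of the adjunction/associativity isomorphisms with respect to the connecting maps of the long exact sequences, since this is what legitimises the five-lemma step at $i=0,1$. That verification is a standard compatibility check; once it is in place, the induction closes without further difficulty.
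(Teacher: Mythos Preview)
Your argument is correct. The paper takes a more direct route: choose an initial fragment $P_{n+1}\to P_n\to\dotsb\to P_0$ of a projective resolution of $F$ over $R$; the hypothesis $\Tor^R_i(R',F)=0$ for $1\le i\le n$ says precisely that $R'\ot_R P_{n+1}\to\dotsb\to R'\ot_R P_0$ is an initial fragment of a projective resolution of $R'\ot_R F$ over $R'$, and then adjunction (resp.\ associativity) identifies the complexes $\Hom_R(P_\bu,C)\cong\Hom_{R'}(R'\ot_R P_\bu,\,C)$ (resp.\ $N\ot_R P_\bu\cong N\ot_{R'}(R'\ot_R P_\bu)$) term by term, giving the isomorphisms on cohomology in degrees $0\le i\le n$ at once. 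Your induction via a single syzygy is exactly this argument unrolled one step at a time; the five-lemma step at $i=1$ and the dimension-shift for $i\ge 2$ together amount to comparing the two Hom (or tensor) complexes built from the same resolution. The paper's presentation is shorter and makes the naturality concern you raise disappear, since one is literally comparing isomorphic complexes rather than tracking connecting maps.
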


\begin{proof}
 Similarly to the previous proof, let $P_{n+1}\rarrow P_n\rarrow\dotsb
\rarrow P_1\rarrow P_0$ be an initial fragment of a projective
resolution of the $R$\+module~$F$.
 Then $R'\ot_R P_{n+1}\rarrow R'\ot_R P_n\rarrow\dotsb\rarrow
R'\ot_R P_1\rarrow R'\ot_R P_0$ is an initial fragment of a projective
resolution of the $R'$\+module $R'\ot_RF$.
\end{proof}

 We start with proving Lemma~\ref{flat-lemma} before proceeding to
prove the slightly more complicated Toy Main Lemma~\ref{toy-main-lemma}.

 Let $R$ be a commutative ring and $r\in R$ be an element.
 An $R$\+module $M$ is said to be \emph{$r$\+torsion} if for every
$x\in M$ there exists $n\ge1$ such that $r^nx=0$ in~$M$.
 We denote the full subcategory of $r$\+torsion submodules
by $R\modl_{r\tors}\subset R\modl$.

 For any $R$\+module $M$ and an element $t\in R$, denote by ${}_tM
\subset M$ the submodule formed by all the elements $x\in M$
such that $tx=0$ in~$M$.
 Furthermore, denote by $\Gamma_r(M)\subset M$ the maximal $r$\+torsion
submodule in $M$; so $\Gamma_r(M)=\bigcup_{n\ge1}\.{}_{r^n}M$.
 The functor $\Gamma_r\:R\modl\rarrow R\modl_{r\tors}$ is right adjoint
to the embedding functor $R\modl_{r\tors}\rarrow R\modl$.

\begin{lem} \label{torsion-modules-obtainable}
 Any $r$\+torsion $R$\+module can be obtained as an infinitely iterated
extension, in the sense of the inductive limit, of $R/rR$\+modules
(viewed as $R$\+modules via the restriction of scalars).
\end{lem}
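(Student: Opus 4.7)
The plan is to construct an explicit $\omega$-indexed filtration of $M$ by its $r^n$-torsion submodules, with successive quotients killed by~$r$. Specifically, for each integer $n\ge0$ set
\[
 M_n = {}_{r^n}M = \{x\in M\mid r^nx=0\},
\]
so that $M_0=0$ and $M_n\subset M_{n+1}$ for all~$n$. Since $M$ is $r$-torsion by hypothesis, we have $\bigcup_{n\ge0}M_n=\Gamma_r(M)=M$, so setting $M_\omega=M$ satisfies the limit-ordinal condition $M_\omega=\bigcup_{n<\omega}M_n$ required by the definition of a transfinitely iterated extension in the sense of the inductive limit (as recalled in Section~\ref{very-flat-definition-introd}).

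It remains to identify the successive quotients. If $x\in M_{n+1}={}_{r^{n+1}}M$, then $r\cdot(rx)=r^{n+1}x=0$ already at the stronger level, but more to the point $r^n(rx)=r^{n+1}x=0$, so $rx\in M_n$. Hence multiplication by~$r$ on $M_{n+1}$ factors through $M_n$, and the induced action of~$r$ on $M_{n+1}/M_n$ is zero. In other words, each successive quotient $M_{n+1}/M_n$ is annihilated by~$r$, and therefore carries a canonical $R/rR$-module structure.

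This exhibits $M$ as an infinitely iterated extension (in the sense of Remark~\ref{simple-obtainability-limitations-of-use-remark}, i.e., indexed by the ordinal $\omega$) of the $R/rR$-modules $M_{n+1}/M_n$ regarded as $R$-modules via the restriction of scalars along $R\rarrow R/rR$, as desired. There is no real obstacle here: the argument is entirely formal once one writes down the correct filtration, and serves mainly as the input for the next step in Section~\ref{obtainable-I-secn}-style reasoning where $r$-torsion modules are to be handled via $R/rR$-modules.
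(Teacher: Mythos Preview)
Your proof is correct and takes essentially the same approach as the paper: the paper also filters $M$ by the submodules ${}_{r^n}M$ and observes that the successive quotients ${}_{r^{n+1}}M/{}_{r^n}M$ are $R/rR$-modules. (The aside ``$r\cdot(rx)=r^{n+1}x=0$'' is a slip, but you immediately give the correct computation $r^n(rx)=r^{n+1}x=0$, which is all that is needed.)
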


\begin{proof}
 Let $M$ be an $r$\+torsion $R$\+module.
 Then we have $M=\bigcup_{n\ge0}\.{}_{r^n}M$, and the successive quotients
${}_{r^{n+1}}M/{}_{r^n}M$ are $R/rR$\+modules.
\end{proof}

\begin{proof}[Proof of Lemma~\ref{flat-lemma}]
 Let $N$ be an $R$\+module.
 We have to show that $\Tor_1^R(N,F)=0$.

 Denote by~$l_N$ the $R$\+module morphism $N\rarrow N[r^{-1}]$.
 There are two short exact sequences of $R$\+modules
\begin{gather}
 0\lrarrow\ker(l_N)\lrarrow N\lrarrow\im(l_N)\lrarrow0,
 \label{torsion-first-sequence} \\
 0\lrarrow\im(l_N)\lrarrow N[r^{-1}]\lrarrow\coker(l_N)\lrarrow0,
 \label{torsion-second-sequence}
\end{gather}
where $\ker(l_N)=\Gamma_r(N)$ and $\coker(l_N)=\coker(l_R)\ot_RN$.
 From the related long exact sequences of $\Tor_*^R({-},F)$
\begin{gather*}
 \dotsb\lrarrow\Tor_1^R(\ker(l_N),F)\lrarrow\Tor_1^R(N,F)\lrarrow
 \Tor_1^R(\im(l_N),F)\lrarrow\dotsb, \\
 \dotsb\lrarrow\Tor_2^R(\coker(l_N),F)\lrarrow\Tor_1^R(\im(l_N),F)
 \lrarrow\Tor_1^R(N[r^{-1}],F)\lrarrow\dotsb
\end{gather*}
we see that, in order to prove that $\Tor_1^R(N,F)=0$, it suffices to
check that $\Tor_1^R(\ker(l_N),F)=0$, \ $\Tor_1^R(N[r^{-1}],F)=0$, and
$\Tor_2^R(\coker(l_N),F)=0$.

 Now $N[r^{-1}]$ is an $R[r^{-1}]$\+module and the $R[r^{-1}]$\+module
$F[r^{-1}]$ is flat by assumption, so $\Tor_1^R(N[r^{-1}],F)=0$ by
Lemma~\ref{change-of-scalars-vanishing-tor-lemma}(b).
 Furthermore, $\ker(l_N)$ and $\coker(l_N)$ are $r$\+torsion
$R$\+modules.

 For any fixed $i\ge1$ and an $r$\+torsion $R$\+module $M$, proving
that $\Tor^R_i(M,F)=0$ reduces to showing that $\Tor^R_i(D_n,F)=0$
for some $R/rR$\+modules $D_n$, $n\ge0$ (by
Lemma~\ref{torsion-modules-obtainable}).
 Finally, in view of the assumptions of Lemma~\ref{flat-lemma}, for
any $R/rR$\+module $D$ and $i=1$ or~$2$ one has $\Tor^R_i(D,F)=0$
by Lemma~\ref{change-of-scalars-partly-vanishing-tor}(b).
\end{proof}

 Along the way, we have essentially proved the following result.

\begin{prop} \label{flat-obtainable-proposition}
 Let $R$ be a commutative ring and $r\in R$ be an element.
 Then all $R$\+modules are simply left obtainable from $R/rR$\+modules
and $R[r^{-1}]$\+modules.
\end{prop}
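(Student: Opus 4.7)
The plan is simply to extract the argument from the proof of Lemma~\ref{flat-lemma} above, reading it not as a $\Tor$\+vanishing statement but as one about obtainability via the closure operations in Definition~\ref{simply-left-obtainable-def}.

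For an arbitrary $R$\+module $N$, I would use the localization morphism $l_N\:N\rarrow N[r^{-1}]$ together with its two associated short exact sequences
\begin{gather*}
0\lrarrow\ker(l_N)\lrarrow N\lrarrow\im(l_N)\lrarrow0,\\
0\lrarrow\im(l_N)\lrarrow N[r^{-1}]\lrarrow\coker(l_N)\lrarrow0.
\end{gather*}
The first presents $N$ as an extension of $\im(l_N)$ by $\ker(l_N)$, and the second presents $\im(l_N)$ as the kernel of a surjection onto $\coker(l_N)$. A routine check shows that $\ker(l_N)=\Gamma_r(N)$ is $r$\+torsion by definition, and that $\coker(l_N)\cong(R[r^{-1}]/R)\ot_RN$ is $r$\+torsion because the $R$\+module $R[r^{-1}]/R$ itself is $r$\+torsion (every element $y/r^n+R$ is killed by $r^n$).

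To finish, I would invoke Lemma~\ref{torsion-modules-obtainable} to conclude that both $\ker(l_N)$ and $\coker(l_N)$, being $r$\+torsion, are infinitely iterated extensions of $R/rR$\+modules in the sense of the inductive limit, hence are simply left obtainable from the seed class; meanwhile $N[r^{-1}]$ lies in the seed class itself as an $R[r^{-1}]$\+module. Applying the closure of simply left obtainable classes under kernels of surjective morphisms to the second sequence yields that $\im(l_N)$ is simply left obtainable, and applying closure under extensions to the first sequence yields the same for $N$. There is no genuine obstacle here: every step is either an elementary manipulation of these two exact sequences or a direct appeal to a closure operation built into Definition~\ref{simply-left-obtainable-def}. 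In particular, neither direct summands nor uncountable transfinite extensions are needed, in agreement with Remark~\ref{simple-obtainability-limitations-of-use-remark}.
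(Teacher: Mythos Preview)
Your proposal is correct and follows essentially the same approach as the paper's proof: both use the two short exact sequences from the proof of Lemma~\ref{flat-lemma}, invoke Lemma~\ref{torsion-modules-obtainable} for the $r$\+torsion pieces $\ker(l_N)$ and $\coker(l_N)$, and then apply closure under kernels of surjections and extensions. Your write-up simply spells out a few details (e.g.\ why $\coker(l_N)$ is $r$\+torsion) that the paper leaves implicit.
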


\begin{proof}
 All $r$\+torsion $R$\+modules are simply left obtainable from
$R/rR$\+modules by Lemma~\ref{torsion-modules-obtainable}.
 Furthermore, according to the exact
sequences~(\ref{torsion-first-sequence}\+-%
\ref{torsion-second-sequence}) from the above proof of
Lemma~\ref{flat-lemma}, every $R$\+module $N$ is an extension
of a torsion $R$\+module $\ker(l_N)$ and an $R$\+module $\im(l_N)$
obtainable as the kernel of a surjective morphism from
an $R[r^{-1}]$\+module $N[r^{-1}]$ onto an $r$\+torsion $R$\+module
$\coker(l_N)$.
\end{proof}

 The proof of Toy Main Lemma~\ref{toy-main-lemma} is based on
a dual version of Lemma~\ref{torsion-modules-obtainable}, which
we will now formulate and prove.

 We recall (cf.\ Sections~\ref{very-flat-definition-introd}
and~\ref{contramodules-outline}) that an $R$\+module $C$
is said to be an \emph{$r$\+contramodule} if
$\Hom_R(R[r^{-1}],C)=0=\Ext^1_R(R[r^{-1}],C)$.
 The full subcategory of $r$\+contramodule $R$\+modules
$R\modl_{r\ctra}$ is closed under the kernels, cokernels, extensions,
and infinite products in $R\modl$ \cite[Proposition~1.1]{GL},
\cite[Theorem~1.2(a)]{Pcta}.
 The embedding functor $R\modl_{r\ctra}\rarrow R\modl$ has a left
adjoint functor $\Delta_r\:R\modl\allowbreak\rarrow R\modl_{r\ctra}$,
which can be constructed as follows.

 For any complex of $R$\+modules $L^\bu$ and an $R$\+module $C$,
we will use the simplified notation $\Ext_R^i(L^\bu,C)$ for
the $R$\+modules $\Hom_{\sD(R\modl)}(L^\bu,C[i])$, \ $i\in\boZ$
of morphisms in the derived category $\sD(R\modl)$.
 Denote by $K^\bu=K^\bu(R;r)$ the two-term complex of $R$\+modules
$R\overset{l_R}\rarrow R[r^{-1}]$, where the term $R$ is placed in
the cohomological degree~$-1$ and the term $R[r^{-1}]$ is placed
in the cohomological degree~$0$.

 According to~\cite[Theorem~6.4 and Remark~6.5]{Pcta}, we have
a natural isomorphism $\Delta_r(C)=\Ext_R^1(K^\bu,C)$ for any
$R$\+module~$C$.

\begin{lem} \label{r-contramodule-obtainable} \hbadness=1900
 Any $r$\+contramodule $R$\+module is simply right obtainable from
$R/rR$\+modules (viewed as $R$\+modules via the restriction of scalars).
\end{lem}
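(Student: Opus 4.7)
The plan is to build any $r$-contramodule $C$ as a transfinitely iterated extension, in the sense of the projective limit, whose successive layers are $r$-adic completions of contramodule submodules of~$C$. First I would observe that for any $R$-module $M$, the $r$-adic completion $\Lambda_r(M) := \varprojlim_n M/r^nM$ is simply right obtainable from $R/rR$-modules: each quotient $M/r^nM$ is a finite iterated extension of the $R/rR$-modules $r^iM/r^{i+1}M$, and the tower $\cdots\to M/r^{n+1}M\to M/r^nM$ with surjective transitions and $R/rR$-module kernels exhibits $\Lambda_r(M)$ as a transfinitely iterated extension of length~$\omega$ in the projective-limit sense.

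Next I would define by transfinite recursion a decreasing chain of submodules of~$C$: $F^0=C$, \ $F^{\alpha+1}=\bigcap_n r^nF^\alpha$, and $F^\beta=\bigcap_{\alpha<\beta}F^\alpha$ at limit ordinals~$\beta$. The invariant to maintain is that every $F^\alpha$ is itself an $r$-contramodule. At a successor step, $F^{\alpha+1}$ is the kernel of the natural map $F^\alpha\rarrow\prod_n F^\alpha/r^nF^\alpha$; each factor $F^\alpha/r^nF^\alpha$ is an $R/r^nR$-module, hence an $r$-contramodule (as a finite iterated extension of $R/rR$-modules, which are easily seen to be $r$-contramodules), and since the full subcategory of $r$-contramodule $R$-modules is closed under kernels, cokernels, and infinite products inside $R\modl$, the kernel $F^{\alpha+1}$ is again a contramodule. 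At a limit step, $F^\beta$ is the kernel of $C\rarrow\prod_{\alpha<\beta}C/F^\alpha$, which is likewise a kernel of a morphism between contramodules since each $C/F^\alpha$ is a cokernel of contramodules.

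The chain $(F^\alpha)$ is decreasing in the poset of submodules of~$C$, so it stabilizes at some ordinal~$\gamma$; at that stage $F^\gamma=\bigcap_n r^nF^\gamma$, which forces $rF^\gamma=F^\gamma$, and the contramodule Nakayama lemma then gives $F^\gamma=0$. Setting $D_\alpha:=C/F^\alpha$, the family $(D_\alpha)_{\alpha\le\gamma}$ satisfies $D_0=0$, \ $D_\gamma=C$, the transitions $D_{\alpha+1}\rarrow D_\alpha$ are surjective with kernels $F^\alpha/F^{\alpha+1}\cong\Lambda_r(F^\alpha)$ (using that $F^\alpha$, being a contramodule, is $r$-complete, so the map $F^\alpha\rarrow\Lambda_r(F^\alpha)$ is surjective with kernel $\bigcap_n r^nF^\alpha=F^{\alpha+1}$), and surjective transitions give the Mittag-Leffler property, hence $D_\beta\cong\varprojlim_{\alpha<\beta}D_\alpha$ at limit ordinals. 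By the first paragraph each layer $\Lambda_r(F^\alpha)$ is simply right obtainable from $R/rR$-modules, so $C=D_\gamma$ is as well.

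The main obstacle will be maintaining the contramodule property along the entire transfinite filtration so that the contramodule Nakayama lemma may be applied to force termination at zero; once this is in hand via the abelian-subcategory closure properties of $r$-contramodule $R$-modules together with $r$-completeness, the identification $F^\alpha/F^{\alpha+1}\cong\Lambda_r(F^\alpha)$ is immediate, and the rest is formal bookkeeping with the closure of the simply-right-obtainable class under transfinite projective-limit extensions.
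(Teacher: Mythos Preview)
Your first paragraph is correct and matches the paper's Sublemma~\ref{derived-projlim-obtainable}(a). The transfinite recursion in the second and third paragraphs is a natural idea, and the successor steps (including the contramodule Nakayama argument forcing $F^\gamma=0$ once the chain stabilizes) are fine. The gap is at limit ordinals. You claim that surjectivity of the transitions $D_{\alpha'}\to D_\alpha$ gives $D_\beta\cong\varprojlim_{\alpha<\beta}D_\alpha$; but Mittag--Leffler for $(D_\alpha)$ only yields $\varprojlim^1_{\alpha<\beta}D_\alpha=0$, which is not what you need. The natural map $D_\beta=C/\bigcap_{\alpha<\beta}F^\alpha\hookrightarrow\varprojlim_{\alpha<\beta}C/F^\alpha$ has cokernel $\varprojlim^1_{\alpha<\beta}F^\alpha$, computed for the inverse system of \emph{inclusions} $F^{\alpha'}\hookrightarrow F^\alpha$, and there is no reason for this to vanish. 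Equivalently, you need $C\to\varprojlim_{\alpha<\beta}C/F^\alpha$ to be surjective, and surjectivity of each $C\to C/F^\alpha$ does not imply this (compare $\boZ\to\boZ/p^n\boZ$ versus $\boZ\to\boZ_p$). Since nothing forces your filtration to terminate before the first limit ordinal, this is a genuine obstruction, and it is precisely the $\varprojlim^1$ phenomenon you were trying to avoid by iterating.

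The paper sidesteps the transfinite recursion entirely. Using $C=\Delta_r(C)$ and the short exact sequence of Sublemma~\ref{delta-lambda-r-sequence},
\[
0\lrarrow\varprojlim\nolimits^1_{n}\,{}_{r^n}C\lrarrow C\lrarrow\Lambda_r(C)\lrarrow0,
\]
it handles the nonseparated part $F^1=\ker(C\to\Lambda_r(C))\cong\varprojlim^1_n\,{}_{r^n}C$ in one stroke: Sublemma~\ref{derived-projlim-obtainable}(b) shows that $\varprojlim^1$ of any projective system of $R/r^nR$\+modules is simply right obtainable from $R/rR$\+modules, via the four-term sequence $0\to\varprojlim D_n\to\prod D_n\to\prod D_n\to\varprojlim^1 D_n\to0$ (two passages to cokernels of injections, starting from products and a $\varprojlim$ already known to be obtainable). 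So the lemma follows from a single extension, with no transfinite filtration needed.
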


\begin{proof}
 This result goes back to~\cite[proof of Theorem~9.5]{Pcta}, where
a similar assertion was essentially proved for a finitely generated
ideal (rather than just a single element) in a commutative ring.

 As one has $\Delta_r(C)=C$ for any $r$\+contramodule $R$\+module $C$,
it suffices to show that the $R$\+module $\Delta_r(A)$ is simply
right obtainable from $R/rR$\+modules for any $R$\+module~$A$.
 For any $R$\+module $A$, consider two projective systems of
$R$\+modules (indexed by the positive integers):
$$
 A/rA\llarrow A/r^2A\llarrow A/r^3A\llarrow\dotsb,
$$
where the maps $A/r^{n+1}A\rarrow A/r^nA$ are the natural surjections,
and
$$
 {}_rA\llarrow {}_{r^2}A\llarrow {}_{r^3}A\llarrow\dotsb,
$$
where the maps ${}_{r^{n+1}}A\rarrow {}_{r^n}A$ are provided by
the operator of multiplication with~$r$.

\begin{subl} \label{delta-lambda-r-sequence}
 Let $R$ be a commutative ring and $r\in R$ be an element.
 Then for any $R$\+module $A$ there is a natural short exact sequence
of $R$\+modules
$$
 0\lrarrow\varprojlim\nolimits_{n\ge1}^1\.{}_{r^n}A\lrarrow\Delta_r(A)
 \lrarrow\varprojlim\nolimits_{n\ge1}A/r^nA\lrarrow0.
$$
\end{subl}

\begin{proof}
 This is~\cite[Lemma~6.7]{Pcta}.
 Essentially, the reason is that the complex $K^\bu$ is the inductive
limit of the complexes $R\overset{r^n}\rarrow R$ over $n\ge1$, so
the complex $\boR\Hom_R(K^\bu,A)$ is the homotopy projective limit of
the complexes $A\overset{r^n}\rarrow A$.
\end{proof}

 We denote the $r$\+adic completion $\varprojlim_{n\ge1}A/r^nA$ of
an $R$\+module $A$ by~$\Lambda_r(A)$.
 Sublemma~\ref{delta-lambda-r-sequence} describes the kernel of
the natural surjective morphism $\Delta_r(A)\rarrow\Lambda_r(A)$.

\begin{subl} \label{derived-projlim-obtainable}
 Let $R$ be a commutative ring and $r\in R$ be an element.
 Let $D_1\larrow D_2\larrow D_3\larrow\dotsb$ be a projective system of
$R$\+modules such that $D_n$ is an $R/r^nR$\+module for every $n\ge1$.
 Then the $R$\+modules \textup{(a)}~$\varprojlim_nD_n$ and
\textup{(b)}~$\varprojlim_n^1 D_n$ are simply right obtainable
from $R/rR$\+modules.
\end{subl}

\begin{proof}
 First of all we notice that, for any $n\ge2$, any $R/r^nR$\+module is
simply right obtainable (as a finitely iterated extension) from
$R/rR$\+modules.

 Part~(a): denote by $D'_n\subset D_n$ the image of the projection map
$\varprojlim_m D_m\rarrow D_n$.
 Then we have $\varprojlim_n D_n=\varprojlim_n D'_n$, and the maps
$D'_{n+1}\rarrow D'_n$ are surjective.
 Hence the $R$\+module $\varprojlim_n D_n$ is a infinitely iterated
extension of the $R$\+modules $D'_1$ and $\ker(D'_{n+1}\to D'_n)$, \
$n\ge1$.
 The former is, obviously, an $R/rR$\+module, and the latter
are $(R/r^{n+1}R)$\+modules.

 Part~(b): by the definition of $\varprojlim_{n\ge1}^1 D_n$, we have
an exact sequence
$$
 0\lrarrow\varprojlim\nolimits_{n\ge1}D_n\lrarrow
 \prod\nolimits_{n\ge1} D_n\lrarrow\prod\nolimits_{n\ge1} D_n
 \lrarrow\varprojlim\nolimits_{n\ge1}^1 D_n\lrarrow0.
$$
 Hence the $R$\+module $\varprojlim_n^1D_n$ is obtainable from
the $R$\+modules $\varprojlim_n D_n$ and $\prod_nD_n$ by two
passages to the cokernel of an injective morphism.
\end{proof}

 The assertion of Lemma~\ref{r-contramodule-obtainable} follows from
Sublemmas~\ref{delta-lambda-r-sequence}
and~\ref{derived-projlim-obtainable}(a\+b).
\end{proof}

 We recall (cf.\ Section~\ref{very-flat-definition-introd}) that
an $R$\+module $C$ is said to be \emph{$r$\+contraadjusted} if
$\Ext^1_R(R[r^{-1}],C)=0$.
 The following result is a dual version of
Proposition~\ref{flat-obtainable-proposition}.

\begin{tmp} \label{toy-main-proposition}
 Let $R$ be a commutative ring and $r\in R$ be an element.
 Then an $R$\+module $C$ is $r$\+contraadjusted if and only if it is
simply right obtainable from $R/rR$\+modules and $R[r^{-1}]$\+modules.
\end{tmp}
 
\begin{proof}
 ``If'': denote by $\sE\subset R\modl$ the class of all $R/rR$\+modules
and $R[r^{-1}]$\+modules (viewed as $R$\+modules via the restriction of
scalars).
 All the $R$\+modules from $\sE$ are $r$\+contraadjusted, so one has
$R[r^{-1}]\in\sF={}^{\perp_{\ge1}}\.\sE$.
 By Lemma~\ref{simply-right-obtainable-orthogonal-lem}, it follows that
all the $R$\+modules simply right obtainable from $\sE$ belong
to $\sF^{\perp_{\ge1}}\subset\{R[r^{-1}]\}^{\perp_{\ge1}}$, i.~e., they
are $r$\+contraadjusted.

 ``Only if'': there is a distinguished triangle
$$
 R\lrarrow R[r^{-1}]\lrarrow K^\bu\lrarrow R[1]
$$
in the derived category $\sD(R\modl)$.
 Applying the functor $\Hom_{\sD(R\modl)}({-},C[*])$, we obtain
a $5$\+term exact sequence
\begin{setlength}{\multlinegap}{0pt}
\begin{multline} \label{r-matlis-sequence}
 0\lrarrow\Hom_R(R[r^{-1}]/R,C)\lrarrow\Hom_R(R[r^{-1}],C)\lrarrow C \\
 \lrarrow\Delta_r(C)\lrarrow\Ext^1_R(R[r^{-1}],C)\lrarrow0,
\end{multline}
where $R[r^{-1}]/R$ is a shorthand notation for the cokernel of
the map $l_R\:R\rarrow R[r^{-1}]$, the derived category Hom module
$\Ext^0_R(K^\bu,C)=\Hom_{\sD(R\modl)}(K^\bu,C)$ is isomorphic to
$\Hom_R(R[r^{-1}]/R,C)$, and $\Ext^1_R(K^\bu,C)=\Delta_r(C)$.

 When the $R$\+module $C$ is $r$\+contraadjusted, the rightmost term
vanishes, so we are reduced to a $4$\+term exact sequence
\begin{equation} \label{r-contraadjusted-sequence}
 0\lrarrow\Hom_R(R[r^{-1}]/R,C)\lrarrow\Hom_R(R[r^{-1}],C)\lrarrow C \\
 \lrarrow\Delta_r(C)\lrarrow0.
\end{equation}
 Now $\Hom_R(R[r^{-1}],C)$ is an $R[r^{-1}]$\+module, $\Delta_r(C)$
is an $r$\+contramodule $R$\+module (computed in
Sublemma~\ref{delta-lambda-r-sequence}), and $\Hom_R(R[r^{-1}]/R,C)$ is
also an $r$\+contramodule (by~\cite[Lemma~6.1(a)]{Pcta}).
 In fact, $M=R[r^{-1}]/R$ is an $r$\+torsion $R$\+module, so
$\Hom_R(M,C)$ is the projective limit of the sequence of
$R/r^nR$\+modules $\Hom_R({}_{r^n}M,C)$ (cf.\
Sublemma~\ref{derived-projlim-obtainable}(a)).

 To sum up, the $R$\+module $C$ is obtainable from two
$r$\+contramodule $R$\+modules and one $R[r^{-1}]$\+module using
one passage to the cokernel of an injective morphism and one
passage to an extension of two modules.
 In view of Lemma~\ref{r-contramodule-obtainable}, the assertion
of the toy main proposition follows.
\end{setlength}
\end{proof}

\begin{proof}[Proof of Toy Main Lemma~\ref{toy-main-lemma}]
 The ``only if'' assertion holds, because the functors $F\longmapsto
F[r^{-1}]$ and $F\longmapsto F/rF$ preserve transfinitely iterated
extensions, in the sense of the inductive limit, of flat $R$\+modules,
and both of them take the $R$\+module $R[r^{-1}]$ to a free module
(with $1$ or~$0$ generators) over the respective ring.
 The ``if'' assertion is the nontrivial part.

 To show that the $R$\+module $F$ is $r$\+very flat, we need to check
that $\Ext^1_R(F,C)=0$ for all $r$\+contraadjusted $R$\+modules~$C$.
 As in the proof of Toy Main Proposition~\ref{toy-main-proposition},
we denote by $\sE\subset R\modl$ the class of all $R/rR$\+modules
and $R[r^{-1}]$\+modules.

 By Lemma~\ref{change-of-scalars-vanishing-tor-lemma}(a), it follows
from the assumptions of flatness of the $R$\+module $F$ and
projectivity of the $R/rR$\+module $F/rF$ and the $R[r^{-1}]$\+module
$F[r^{-1}]$ that $\Ext^i_R(F,E)=0$ for all $E\in\sE$ and $i\ge1$.
 So $F\in{}^{\perp_{\ge1}}\.\sE$.
 By Toy Main Proposition~\ref{toy-main-proposition}, the $R$\+module
$C$ is simply right obtainable from~$\sE$.
 By Lemma~\ref{simply-right-obtainable-orthogonal-lem}, it follows
that $C\in({}^{\perp_{\ge1}}\.\sE)^{\perp_{\ge1}}\subset
\{F\}^{\perp_{\ge1}}$, hence $\Ext_R^1(F,C)=0$.
\end{proof}

\Section{Contramodule Approximation Sequences}
\label{contramodule-approx-secn}

 The proofs of Main Lemmas~\ref{noetherian-main-lemma}
and~\ref{bounded-torsion-main-lemma} are based on the constructions
of certain short exact sequences of $r$\+contramodule $R$\+modules,
which are called the \emph{approximation sequences}.
 In Section~\ref{cotorsion-theories-abelian-categories-subsecn} below,
we present the related general background material about cotorsion
theories and approximation sequences in abelian categories.
 Then we proceed to construct the needed short exact sequences
in Sections~\ref{noetherian-flat-cotorsion-theory-subsecn},
\ref{bounded-torsion-veryflat-cotorsion-theory-subsecn},
and~\ref{veryflat-cotorsion-theory-quotseparated-subsecn}.
 The argument deducing the main lemmas from the existence of
approximation sequences is based on the discussion of
\emph{separated $r$\+contramodule $R$\+modules} in
Section~\ref{separated-contramodules-subsecn}; and the important
category of \emph{quotseparated $r$\+contramodule
$R$\+modules} is introduced in Section~\ref{quotseparated-subsecn}.

\subsection{Cotorsion theories in abelian categories}
\label{cotorsion-theories-abelian-categories-subsecn}
 Let $\sA$ be an abelian category and $\sF$, $\sC\subset\sA$ be two
classes of objects in~$\sA$.
 We will denote by $\sF^{\perp_1}\subset\sA$ the class of all objects
$C\in\sA$ such that $\Ext_\sA^1(F,C)=0$ for all $F\in\sF$, and by
${}^{\perp_1}\sC\subset\sA$ the class of all objects $F\in\sA$ such that
$\Ext_\sA^1(F,C)=0$ for all $C\in\sC$.

 A pair of classes of objects $(\sF,\sC)$ in an abelian category $\sA$
is called a \emph{cotorsion theory} (or a~\emph{cotorsion pair}) if
$\sC=\sF^{\perp_1}$ and $\sF={}^{\perp_1}\sC$ (see, e.~g.,
\cite[Section~5]{St} or~\cite[Section~8]{Pcta}).
 A cotorsion theory $(\sF,\sC)$ is called \emph{complete} if for every
object $X\in\sA$ there exist short exact sequences in the category~$\sA$
\begin{gather}
 0\lrarrow C'\lrarrow F\lrarrow X\lrarrow 0
 \label{special-precover-sequence} \\
 0\lrarrow X\lrarrow C\lrarrow F'\lrarrow 0
 \label{special-preenvelope-sequence}
\end{gather}
with some objects $F$, $F'\in\sF$ and $C$, $C'\in\sC$.
 The short exact sequences~(\ref{special-precover-sequence}\+-%
\ref{special-preenvelope-sequence}) are called \emph{approximation
sequences} for an object $X\in\sA$ with respect to the cotorsion
theory~$(\sF,\sC)$.
 The morphism $F\rarrow X$ in a short exact
sequence~\eqref{special-precover-sequence} is called a \emph{special\/
$\sF$\+precover} of an object $X$, while the morphism $X\rarrow C$
in a short exact sequence~\eqref{special-preenvelope-sequence} is called
a \emph{special\/ $\sC$\+preenvelope} of~$X$.

 Conversely, let $\sF$ and $\sC\subset\sA$ be two classes of objects
such that $\Ext^1_\sA(F,C)=0$ for all $F\in\sF$ and $C\in\sC$, and
short exact sequences~(\ref{special-precover-sequence}\+-%
\ref{special-preenvelope-sequence}) exist for all objects $X\in\sA$.
 Assume that the classes of objects $\sF$ and $\sC$ are closed with
respect to the passages to direct summands in~$\sA$.
 Then the pair of classes of objects $(\sF,\sC)$ is a (complete)
cotorsion theory in~$\sA$.

 A complete cotorsion theory $(\sF,\sC)$ in $\sA$ is said to be
\emph{hereditary} if one of the following equivalent conditions
holds~\cite[Lemma~6.17]{St}:
\begin{enumerate}
\renewcommand{\theenumi}{\roman{enumi}}
\item $\Ext_\sA^i(F,C)=0$ for all $F\in\sF$, \ $C\in\sC$, and
$i\ge1$;
\item $\Ext_\sA^2(F,C)=0$ for all $F\in\sF$ and $C\in\sC$;
\item the full subcategory $\sF\subset\sA$ is closed under
the passages to the kernels of epimorphisms;
\item the full subcategory $\sC\subset\sA$ is closed under
the passages to the cokernels of monomorphisms.
\end{enumerate}
 So, in a hereditary complete cotorsion theory, one has
$\sC=\sF^{\perp_{\ge1}}$ and $\sF={}^{\perp_{\ge1}}\sC$ (in the notation
of Section~\ref{obtainable-I-secn}).

 Let $\sS\subset\sA$ be a class of objects.
 Clearly, the pair of classes $\sF=\sS^{\perp_1}$ and
$\sC={}^{\perp_1}\sF$ is a cotorsion theory in~$\sA$.
 One says that the cotorsion theory $(\sF,\sC)$ is \emph{generated}
by the class of objects~$\sS\subset\sA$.

 Any complete cotorsion theory generated by a class of objects of
projective dimension not exceeding~$1$ in an abelian category~$\sA$
is hereditary (as the above condition~(iv) is clearly satisfied).
 Moreover, in such a cotorsion theory the class $\sC$ is closed
under the passages to arbitrary quotient objects, while the class
$\sF$ consists of objects of projective dimension not exceeding~$1$.

 A classical result of the paper~\cite[Theorem~10]{ET} claims that,
in the abelian category $\sA=R\modl$ of modules over an associative
ring $R$, any cotorsion theory generated by a \emph{set} of objects
is complete.
 Among many applications of this result, it allows to prove completeness
of the following cotorsion theories in $R\modl$, which are of relevance
to us.

 A left module $C$ over an associative ring $R$ is said to be
\emph{cotorsion}~\cite{En} (cf.\ Section~\ref{cosheaves-introd})
if $\Ext^1_R(F,C)=0$ for all flat left $R$\+modules~$C$.
 The pair of full subcategories (flat left $R$\+modules, cotorsion
left $R$\+modules) forms a complete cotorsion theory in $R\modl$
\cite[Proposition~2]{BBE}, which is called the \emph{flat} cotorsion
theory.
 The flat cotorsion theory in $R\modl$ is also hereditary
(as the condition~(iii) is satisfied).

 The definitions of very flat and contraadjusted modules over
a commutative ring $R$ were given in
Section~\ref{very-flat-definition-introd}.
 The pair of full subcategories (very flat $R$\+modules, contraadjusted
$R$\+modules) forms a complete cotorsion theory in $R\modl$
\cite[Section~1.1]{Pcosh}, which is called the \emph{very flat}
cotorsion theory.
 The very flat cotorsion theory is hereditary, because it is generated
by a set of modules of projective dimension not exceeding~$1$
(namely, the $R$\+modules $R[r^{-1}]$, \,$r\in R$).

 In this section, we are interested in certain (hereditary complete)
cotorsion theories in the abelian category $\sA=R\modl_{r\ctra}$ of
$r$\+contramodule $R$\+modules, where $R$ is a commutative ring
and $r\in R$ is an element.
 One thing one can say about the abelian category $R\modl_{r\ctra}$ is
that it is \emph{locally presentable}, or more precisely,
locally $\aleph_1$\+presentable~\cite[Examples~4.1]{PR}.
 The following result from the paper~\cite{PR} provides a generalization
of~\cite[Theorem~10]{ET} to locally presentable abelian categories.

\begin{thm}
 Let\/ $\sA$ be a locally presentable abelian category, $\sS\subset\sA$
be a set of objects, and\/ $(\sF,\sC)$ be the cotorsion theory
generated by\/ $\sS$ in\/~$\sA$.
 Assume that every object of\/~$\sA$ is a quotient object of an object
from\/~$\sF$ and a subobject of an object from\/~$\sC$.
 Then the cotorsion theory $(\sF,\sC)$ in the category\/~$\sA$ is
complete.
\end{thm}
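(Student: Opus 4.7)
The plan is to adapt the small object argument of Eklof and Trlifaj~\cite{ET} from the module-category setting to that of a locally presentable abelian category. First I would choose a regular cardinal~$\lambda$ such that $\sA$ is locally $\lambda$-presentable and every object $S\in\sS$ is $\lambda$-presentable; such a $\lambda$ exists by general principles of locally presentable categories.

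The heart of the proof is the construction of a special $\sC$-preenvelope for an arbitrary object $X\in\sA$. I would build a smooth chain of monomorphisms $X=X_0\hookrightarrow X_1\hookrightarrow\dotsb\hookrightarrow X_\lambda$ indexed by ordinals $\alpha\le\lambda$ in the standard way. At a successor stage, $X_{\alpha+1}$ is the pushout that simultaneously kills every extension class in $\Ext^1_\sA(S,X_\alpha)$ with $S\in\sS$, obtained by taking coproducts of representative extensions $0\to X_\alpha\to E_{(S,\xi)}\to S\to 0$ and pushing out along the fold map to $X_\alpha$. At a limit stage one takes the filtered colimit. Since $\sA$ is an abelian locally presentable category, filtered colimits are exact, so the transition maps remain monomorphisms throughout. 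The cokernel $X_\lambda/X$ then acquires a canonical $\lambda$-filtration whose successive quotients are direct sums of objects of~$\sS$, hence $X_\lambda/X\in\sF$ by the Eklof lemma in abelian categories~\cite[Lemma~4.5]{PR}.

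The main technical step is to verify that $X_\lambda\in\sC$, i.e., that $\Ext^1_\sA(S,X_\lambda)=0$ for every $S\in\sS$. Using $\lambda$-presentability of~$S$, every short exact sequence $0\to X_\lambda\to E\to S\to 0$ should arise as a pushout of some $0\to X_\alpha\to E'\to S\to 0$ with $\alpha<\lambda$; but that extension class was killed at stage~$\alpha+1$ by construction, so the original sequence splits. I expect the principal obstacle to lie in making this precise: one needs the canonical comparison map $\varinjlim_{\alpha<\lambda}\Ext^1_\sA(S,X_\alpha)\rarrow\Ext^1_\sA(S,X_\lambda)$ to be surjective, which requires care with Yoneda-Ext calculations and relies on both local $\lambda$-presentability of $\sA$ and exactness of $\lambda$-filtered colimits in~$\sA$.

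Finally, to produce a special $\sF$-precover of~$X$, I would invoke the hypothesis that $X$ is a quotient of some $F_0\in\sF$. Letting $K_0$ denote the kernel of $F_0\twoheadrightarrow X$ and applying the preenvelope construction just built to $K_0$, we obtain $0\to K_0\to C'\to F_1\to 0$ with $C'\in\sC$ and $F_1\in\sF$. The pushout $F=F_0\sqcup_{K_0}C'$ then fits into two short exact sequences $0\to F_0\to F\to F_1\to 0$, which shows $F\in\sF$ since $\sF$ is closed under extensions, and $0\to C'\to F\to X\to 0$, which is the desired special $\sF$-precover. The parallel hypothesis that every object embeds into an object of~$\sC$ plays a symmetric role, ensuring both that the small object argument has enough room to operate and that the classes $\sF$ and $\sC$ are closed under direct summands, so that $(\sF,\sC)$ is indeed a complete cotorsion theory in~$\sA$.
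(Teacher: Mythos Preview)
The paper's proof is just a citation to \cite[Corollary~3.6 and Theorem~4.8]{PR}; your sketch is essentially what one finds there---the Eklof--Trlifaj small object argument transported to a locally presentable abelian category, followed by Salce's trick using the hypothesis that every object is a quotient of an object of~$\sF$.

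Two minor inaccuracies are worth flagging. First, it is not true that all filtered colimits are exact in an arbitrary locally presentable abelian category; what holds is that $\lambda$-directed colimits commute with finite limits when $\sA$ is locally $\lambda$-presentable, and this is what actually suffices, since the terminal colimit $X_\lambda=\varinjlim_{\alpha<\lambda}X_\alpha$ is $\lambda$-directed for regular~$\lambda$. Second, the classes $\sF={}^{\perp_1}\sC$ and $\sC=\sS^{\perp_1}$ are automatically closed under direct summands by virtue of being $\Ext$-orthogonal classes, so the hypothesis that every object embeds into an object of~$\sC$ is not needed for that; in \cite{PR} it enters elsewhere in the argument, and you should pin down exactly where it is used in yours.
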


\begin{proof}
 This is~\cite[Corollary~3.6]{PR} (see~\cite[Theorem~4.8]{PR} for
further details).
\end{proof}

\subsection{Flat cotorsion theory in $r$-contramodule $R$-modules
for a Noetherian commutative ring~$R$}
\label{noetherian-flat-cotorsion-theory-subsecn}
 In this section we will consider $r$\+contramodule $R$\+modules that
are flat, cotorsion, or contraadjusted \emph{as $R$\+modules}.
 To emphasize this aspect, we will call such $r$\+contramodule
$R$\+modules \emph{$R$\+flat}, \emph{$R$\+cotorsion}, or
\emph{$R$\+contraadjusted}.

 The aim of this section is to present a sketch of proof of
the following theorem (which is a particular case of the results
of~\cite[Section~C.2]{Pcosh}).

\begin{thm} \label{noetherian-r-contra-flat-cotorsion-theory-thm}
 Let $R$ be a Noetherian ring and $r\in R$ be an element.
 Then the pair of full subcategories $(R$\+flat $r$\+contramodule
$R$\+modules, $R$\+cotorsion $r$\+contramodule $R$\+modules) is
a hereditary complete cotorsion theory in the abelian category
$R\modl_{r\ctra}$.  \hbadness=1275
\end{thm}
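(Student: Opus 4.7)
The ambient category $\sA=R\modl_{r\ctra}$ is a locally $\aleph_1$-presentable abelian category, so my strategy is to apply the completeness criterion for cotorsion theories in locally presentable abelian categories of~\cite{PR} recalled above. Setting $\sF$ and $\sC$ to be the $R$-flat and the $R$-cotorsion $r$-contramodule $R$-modules respectively, the tasks are: (a) to produce a generating set $\sS\subset\sF$ for the cotorsion pair; (b) to show every $X\in\sA$ is a quotient of an object of~$\sF$; and (c) to show every $X\in\sA$ is a subobject of an object of~$\sC$. Heredity will then follow from closure of $\sF$ under the passage to kernels of epimorphisms.

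\textbf{Ext comparison, generators, and the quotient side.} Since $\sA$ is closed under kernels, cokernels, extensions, and products in $R\modl$, the Yoneda $\Ext^1_\sA$ agrees with $\Ext^1_R$ on pairs of objects of~$\sA$, so the orthogonality $\Ext^1_\sA(F,C)=0$ for $F\in\sF$, $C\in\sC$ is inherited from the classical Bican--El Bashir--Enochs flat cotorsion theory on $R\modl$~\cite{BBE}. The Noetherian hypothesis enters through the Artin--Rees consequence that the $r$-adic completion $\Lambda_r$ preserves $R$-flatness; combined with the Mittag--Leffler vanishing of $\varprojlim^1_n\, {}_{r^n}F$ for any flat $F$ (the ascending chain of annihilators ${}_{r^n}R$ stabilizes in the Noetherian case) and Sublemma~\ref{delta-lambda-r-sequence}, this yields $\Delta_r(F)=\Lambda_r(F)\in\sF$ for every flat $F$. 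Taking $\sS_0$ to be a set generating the flat cotorsion theory on $R\modl$ (e.g., flat modules of cardinality bounded by $|R|+\aleph_0$), the image $\sS:=\{\Delta_r(S):S\in\sS_0\}$ lies in $\sF$ and serves as the candidate generating set for (a). For (b), any surjection $R^{(I)}\twoheadrightarrow X$ in $R\modl$ with $X\in\sA$ remains surjective under the right-exact left adjoint $\Delta_r$, producing a surjection $\Lambda_r(R^{(I)})\twoheadrightarrow\Delta_r(X)=X$ whose source lies in~$\sF$.

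\textbf{Subobject side --- the main obstacle.} The crux is (c): embedding an arbitrary $X\in\sA$ into an $R$-cotorsion $r$-contramodule. A cotorsion preenvelope of $X$ computed in $R\modl$ need not be an $r$-contramodule, so a dedicated construction is needed. Since $\sA$ is a locally presentable abelian category it has injective cogenerators, hence $X$ embeds into a power $J^\lambda$ of some injective $J\in\sA$, and the task reduces to showing that every such $J$ is automatically $R$-cotorsion. Using the adjunction $\Hom_R({-},J)=\Hom_\sA(\Delta_r({-}),J)$, the vanishing of $\Ext^1_R(F,J)$ for a flat $F$ reduces to the vanishing of $\Ext^1_\sA(\Delta_r F,J)$ (immediate from injectivity of $J$ in $\sA$) \emph{provided} that the first left derived functor $L_1\Delta_r$ vanishes on flat $R$-modules; equivalently, that $\Delta_r$ is exact on short exact sequences $0\to K\to P\to F\to 0$ with $P$ projective and $F$ flat. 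Alternative ingredients --- e.g.\ the observation that for any $r$-torsion $R$-module~$M$ the Pontryagin dual $\Hom_\boZ(M,\boQ/\boZ)$ is both $R$-cotorsion and an $r$-contramodule (by tensor-hom adjunction with flatness of $R[r^{-1}]$) --- are obstructed from cogenerating $\sA$ directly by the non-separated $r$-contramodules flagged in Section~\ref{contramodules-outline}. Establishing the derived-functor vanishing (or an equivalent exactness statement for $\Lambda_r$ on flat resolutions) via Artin--Rees-type control is where the Noetherian hypothesis is essential, and is the technical heart of the theorem.

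\textbf{Completeness and heredity.} With orthogonality, closure of both $\sF$ and $\sC$ under direct summands (immediate), the generating set $\sS$, and the two side conditions in place, the PR theorem delivers approximation sequences for every $X\in\sA$, proving completeness of the cotorsion pair $(\sF,\sC)$. Heredity follows from criterion~(iii) of Section~\ref{cotorsion-theories-abelian-categories-subsecn}: the kernel of a surjection $F'\twoheadrightarrow F''$ between two objects of $\sF$ is $R$-flat by heredity of the flat cotorsion theory on $R\modl$ and is an $r$-contramodule by closure of $\sA$ under kernels in $R\modl$, hence again lies in~$\sF$.
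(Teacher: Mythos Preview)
Your route is genuinely different from the paper's. The paper does not invoke the locally-presentable machinery of~\cite{PR} at all; it constructs the approximation sequences in $R\modl_{r\ctra}$ by hand from those of the flat cotorsion theory in $R\modl$, dividing out the maximal $r$-divisible submodule~$h_r({-})$. Concretely (Lemmas~\ref{noetherian-r-contra-special-flat-precover}--\ref{noetherian-r-contra-special-cotorsion-preenvelope}), given an $r$-contramodule $C$ and a special flat precover $0\to K\to F\to C\to 0$ in $R\modl$ with $K$ cotorsion, one checks that $h_r(K)\simeq h_r(F)$ (using bounded $r$-torsion in flat modules) and passes to $0\to K/h_r(K)\to F/h_r(F)\to C\to 0$; the key input is that $F/h_r(F)$ remains $R$-flat (Lemma~\ref{flat-r-contraadjusted-remains-flat}). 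The preenvelope side is symmetric. This direct construction is shorter and sidesteps both the question of injectives in $R\modl_{r\ctra}$ and the identification of the left orthogonal class.

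Your plan can be completed, but two points are left open. First, the step you single out as ``the technical heart'' --- exactness of $\Delta_r$ on short exact sequences of flat modules --- is actually immediate from ingredients you already have: in the long exact sequence~\eqref{derived-delta-sequence}, the obstruction term is $\Hom_R(R[r^{-1}]/R,F)$, which vanishes because a flat module over a Noetherian ring has bounded $r$-torsion (your own observation). This gives $L_1\Delta_r(F)=0$ for flat~$F$, whence $\Ext^1_R(S,C)\simeq\Ext^1_{\sA}(\Delta_r(S),C)$ for every flat $S$ and every $C\in\sA$, so $\sS^{\perp_1}=\sC$ and condition~(c) is then automatic (injectives of $\sA$ lie in $\sS^{\perp_1}$ trivially). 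Second --- and this is a genuine gap you do not address --- the theorem of~\cite{PR} only delivers completeness of the pair $({}^{\perp_1}\sC,\sC)$, and you still owe the identification ${}^{\perp_1}\sC=\sF$. One way to close it: by~\cite[Theorem~4.8]{PR} the left class consists of summands of transfinitely iterated extensions (in~$\sA$) of objects from $\sS$ together with the projectives of~$\sA$; since colimits in $\sA$ are $\Delta_r$ applied to colimits in $R\modl$, and you have shown that $\Delta_r$ preserves $R$-flatness, such iterated extensions of $R$-flat $r$-contramodules remain $R$-flat.
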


 The idea of the proof is to produce approximation sequences for
an $r$\+contramodule $R$\+module from its approximation sequences
in the flat cotorsion theory in the category of arbitrary
$R$\+modules.

 Let $R$ be a commutative ring and $r\in R$ be an element.
 For any $R$\+module $M$, we denote by $b_M$ the $R$\+module morphism
$\Hom_R(R[r^{-1}],M)\rarrow M$ obtained by applying the functor
$\Hom_R({-},M)$ to the $R$\+module morphism $R\rarrow R[r^{-1}]$.
 According to the exact sequence~\eqref{r-matlis-sequence} from
the proof of Toy Main Proposition~\ref{toy-main-proposition},
one has $\ker(b_M)=\Hom_R(R[r^{-1}]/R,M)$.

 Let $h_r(M)\subset M$ denote the image of the morphism~$b_M$.
 Equivalently, $h_r(M)$ is the maximal $r$\+divisible $R$\+submodule
in~$M$.
 According to the exact sequence~\eqref{r-contraadjusted-sequence},
for any $r$\+contraadjusted $R$\+module $K$ the adjunction morphism
$K\rarrow\Delta_r(K)$ is surjective with the kernel~$h_r(K)$; so
we have $\Delta_r(K)=K/h_r(K)$.

 We recall that the class of $r$\+contraadjusted $R$\+modules is
closed under the passages to arbitrary quotient objects, while
the class of cotorsion $R$\+modules is closed under the cokernels
of injective morphisms.

\begin{lem} \label{divisible-part-contraadjusted-cotorsion}
 Let $R$ be a commutative ring and $r$, $s\in R$ be two elements.
 Then \par
\textup{(a)} for any $r$\+contraadjusted $R$\+module $K$,
the $R$\+module $h_r(K)$ is $r$\+contraadjusted; \par
\textup{(b)} for any $(rs)$\+contraadjusted $R$\+module $K$,
the $R$\+module $h_r(K)$ is $s$\+contraad\-justed; \par
\textup{(c)} for any cotorsion $R$\+module $K$, the $R$\+module\/
$\Hom_R(R[r^{-1}],K)$ is cotorsion.
\end{lem}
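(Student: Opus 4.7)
My plan is to reduce each claim to a vanishing of $\Ext^1_R$ that is essentially forced by the hypothesis on $K$, using the tensor--hom adjunction
\[
\Hom_R(M,\Hom_R(R[r^{-1}],K))\simeq\Hom_R(M\ot_R R[r^{-1}],K)
\]
and its derived enhancement.

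For part~(a), the most efficient observation is that $h_r(K)$ is automatically $r$\+divisible: every element of $h_r(K)=\im b_K$ has the form $b_K(\phi)=\phi(1)=r\phi(r^{-1})$ for some $\phi\in\Hom_R(R[r^{-1}],K)$, and $\phi(r^{-1})=b_K(r^{-1}\cdot\phi)$ again lies in $h_r(K)$. The two-term telescope resolution $R^{(\omega)}\xrightarrow{d}R^{(\omega)}\twoheadrightarrow R[r^{-1}]$, with $d(e_n)=e_n-re_{n+1}$, identifies $\Ext^1_R(R[r^{-1}],N)$ with the cokernel of $(k_n)\mapsto(k_n-rk_{n+1})$ on $N^{\omega}$; for $r$\+divisible $N=h_r(K)$ any cocycle $(c_n)$ is killed by choosing $k_0$ arbitrarily and reading $k_{n+1}$ off from $r$\+divisibility. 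Hence $\Ext^1_R(R[r^{-1}],h_r(K))=0$, and in fact no hypothesis on $K$ is used.

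For parts~(b) and~(c) I would invoke the Grothendieck spectral sequence for the composition $\Hom_R(M,{-})\circ\Hom_R(R[r^{-1}],{-})$. Its validity rests on the assertion that $\Hom_R(R[r^{-1}],J)$ is $R$\+injective whenever $J$ is: adjunction makes it $R[r^{-1}]$\+injective, and flatness of $R\to R[r^{-1}]$ makes restriction of scalars preserve injectivity. The resulting spectral sequence
\[
E_2^{p,q}=\Ext^p_R(M,\Ext^q_R(R[r^{-1}],K))\;\Longrightarrow\;\Ext^{p+q}_R(M\ot_R R[r^{-1}],K)
\]
has lower edge inclusion $\Ext^1_R(M,\Hom_R(R[r^{-1}],K))\hookrightarrow\Ext^1_R(M\ot_R R[r^{-1}],K)$. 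Part~(c) now follows by letting $M$ be an arbitrary flat $R$\+module: $M\ot_R R[r^{-1}]$ is again flat and the right-hand side vanishes by heredity of the flat cotorsion theory (Section~\ref{cotorsion-theories-abelian-categories-subsecn}). For~(b), taking $M=R[s^{-1}]$ makes $M\ot_R R[r^{-1}]=R[(rs)^{-1}]$, so the $(rs)$\+contraadjustedness of $K$ forces $\Ext^1_R(R[s^{-1}],\Hom_R(R[r^{-1}],K))=0$; that is, $\Hom_R(R[r^{-1}],K)$ is itself $s$\+contraadjusted.

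To finish part~(b), I would transfer $s$\+contraadjustedness to the quotient $h_r(K)$ by applying $\Hom_R(R[s^{-1}],{-})$ to the short exact sequence
\[
0\lrarrow\Hom_R(R[r^{-1}]/R,K)\lrarrow\Hom_R(R[r^{-1}],K)\lrarrow h_r(K)\lrarrow 0
\]
extracted from~\eqref{r-matlis-sequence}. Since $R[s^{-1}]$ has projective dimension at most~$1$, the connecting term $\Ext^2_R(R[s^{-1}],\Hom_R(R[r^{-1}]/R,K))$ vanishes, and the previously established $\Ext^1_R(R[s^{-1}],\Hom_R(R[r^{-1}],K))=0$ propagates to $\Ext^1_R(R[s^{-1}],h_r(K))=0$. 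The main technical hurdle, and essentially the only nontrivial input beyond standard long-exact-sequence bookkeeping, is the injectivity-preservation check that underwrites the Grothendieck spectral sequence used in~(b) and~(c).
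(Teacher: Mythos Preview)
Your proof is correct and follows essentially the same route as the paper: for~(b) and~(c) the paper likewise uses a spectral-sequence form of derived tensor--hom adjunction (its Lemma~\ref{hom-s-contraadjusted} and the citation~\cite[Lemma~1.3.2(a)]{Pcosh}) to show that $\Hom_R(R[r^{-1}],K)$ is $s$\+contraadjusted, respectively cotorsion, and then passes to the quotient $h_r(K)$ using that $R[s^{-1}]$ has projective dimension at most~$1$. The one pleasant deviation is your part~(a), where you observe directly that $h_r(K)$ is always $r$\+divisible and hence $r$\+contraadjusted with no hypothesis on $K$; the paper simply records~(a) as the special case $r=s$ of~(b).
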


\begin{proof}
 Part~(b): the $R$\+module $\Hom_R(R[r^{-1}],K)$ is $s$\+contraadjusted
for any $(rs)$\+con\-traadjusted $R$\+module $K$ by
Lemma~\ref{hom-s-contraadjusted} below
(cf.~\cite[proof of Lemma~1.2.1]{Pcosh}); hence its quotient $R$\+module
$h_r(K)$ is also $s$\+contraadjusted.
 Part~(a) is part~(b) for $r=\nobreak s$.
 Part~(c): the $R$\+module $\Hom_R(F,K)$ is cotorsion for any flat
$R$\+module $F$ and cotorsion $R$\+module $K$
\cite[Lemma~1.3.2(a)]{Pcosh}.
\end{proof}

 We refer to Section~\ref{bounded-torsion-outline} for the definition
of what it means that an $R$\+module has \emph{bounded $r$\+torsion}.

\begin{lem} \label{noetherian-flat-bounded-torsion}
 Let $R$ be a Noetherian commutative ring and $r\in R$ be an element.
 Then any flat $R$\+module has bounded $r$\+torsion.
\end{lem}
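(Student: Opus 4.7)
The plan is to reduce the question to a statement about ascending chains of ideals in $R$, using flatness to identify the $r^n$-torsion in $F$ with a submodule coming from an ideal of~$R$.

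First, I would consider the ascending chain of annihilator ideals
\[
 \mathrm{Ann}_R(r)\subset\mathrm{Ann}_R(r^2)\subset\mathrm{Ann}_R(r^3)\subset\dotsb\subset R.
\]
Since $R$ is Noetherian, this chain stabilizes: there exists $m\ge1$ such that $\mathrm{Ann}_R(r^n)=\mathrm{Ann}_R(r^m)$ for all $n\ge m$. Set $I_n=\mathrm{Ann}_R(r^n)$.

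Next, let $F$ be a flat $R$\+module. I would tensor the short exact sequence
\[
 0\lrarrow I_n\lrarrow R\xrightarrow{\,r^n\,} R
\]
(which is exact by the definition of $I_n$) with $F$ over $R$. Since $F$ is flat, tensoring preserves injectivity, so the resulting sequence
\[
 0\lrarrow I_n\ot_R F\lrarrow F\xrightarrow{\,r^n\,} F
\]
is exact, and moreover the canonical map $I_n\ot_R F\rarrow F$ is injective with image~$I_nF$. Thus the $r^n$\+torsion submodule ${}_{r^n}F\subset F$ equals~$I_nF$.

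Combining the two observations, for $n\ge m$ one has ${}_{r^n}F=I_nF=I_mF={}_{r^m}F$. Consequently, if $x\in F$ and $r^nx=0$ for some $n\ge1$, then either $n\le m$, in which case $r^mx=r^{m-n}\cdot r^nx=0$, or $n>m$, in which case $x\in{}_{r^n}F={}_{r^m}F$ and again $r^mx=0$. Either way $r^mx=0$, which is precisely the bounded $r$\+torsion condition with bound~$m$. There is no real obstacle here; the only point requiring care is the identification ${}_{r^n}F=I_nF$, which is where Noetherianity is \emph{not} used but flatness is essential.
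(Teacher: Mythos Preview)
Your proof is correct. The paper follows the same two-step strategy (via Lemma~\ref{flat-modules-bounded-torsion}): first use Noetherianity to bound the $r$\+torsion in $R$ itself---exactly your stabilization of the chain $I_n=\mathrm{Ann}_R(r^n)$---and then transfer that bound to an arbitrary flat module~$F$. The only difference is in the transfer step: the paper invokes the Govorov--Lazard description of flat modules as filtered inductive limits of finitely generated free modules, so that an $r$\+torsion element of $F$ comes from an $r$\+torsion element of some free module and hence is killed by~$r^m$; you instead tensor the exact sequence $0\rarrow I_n\rarrow R\xrightarrow{\,r^n\,}R$ with $F$ to identify ${}_{r^n}F=I_nF$ directly. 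Your route is more elementary in that it avoids the Govorov--Lazard theorem and uses only the definition of flatness; the paper's route is perhaps more intuitive for readers who think of flat modules as limits of free ones.
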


\begin{proof}
 This is a particular case of~\cite[Lemma~C.2.1]{Pcosh}, which can be
also obtained as a particular case of
Lemma~\ref{flat-modules-bounded-torsion} below.
\end{proof}

 The following lemma plays a key role.

\begin{lem} \label{flat-r-contraadjusted-remains-flat}
 Let $R$ be a Noetherian commutative ring and $r\in R$ be an element.
 Let $F$ be a flat $r$\+contraadjusted $R$\+module $F$.
 Then the $R$\+module $\Delta_r(F)=F/h_r(F)$ is also flat.
\end{lem}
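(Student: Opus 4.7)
The plan is to identify the contramodule $\Delta_r(F)$ with the ordinary $r$-adic completion $\Lambda_r(F)=\varprojlim_n F/r^nF$, and then invoke the classical fact that the $r$-adic completion of a flat module over a Noetherian ring is again flat. For the first identification, Lemma~\ref{noetherian-flat-bounded-torsion} tells us that $F$ has bounded $r$-torsion: there is some $m\ge1$ with ${}_{r^n}F={}_{r^m}F=:T$ for all $n\ge m$, and $r^m$ annihilates~$T$. In the tower $\dotsb\to{}_{r^{n+1}}F\to{}_{r^n}F\to\dotsb$, whose transition maps are multiplication by~$r$, the image of ${}_{r^{n+k}}F\to{}_{r^n}F$ eventually becomes $r^kT$, which vanishes once $k\ge m$. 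Thus the tower satisfies Mittag--Leffler with eventually trivial images, yielding $\varprojlim^1_n{}_{r^n}F=0$; Sublemma~\ref{delta-lambda-r-sequence} then supplies the natural isomorphism $\Delta_r(F)\simeq\Lambda_r(F)$.

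For the remaining step I would verify that $\Tor_1^R(R/I,\Lambda_r(F))=0$ for every finitely generated ideal $I\subset R$, which suffices for flatness. The key input is the natural isomorphism $N\otimes_R\Lambda_r(F)\simeq\Lambda_r(N\otimes_R F)$ for every finitely generated $R$-module~$N$, obtained by choosing a finite presentation $R^a\to R^b\to N\to 0$, using that $\Lambda_r$ commutes with finite direct sums, and invoking exactness of $\Lambda_r$ on finitely generated $R$-modules, which is guaranteed by Artin--Rees (this is the essential use of Noetherianity). Applying this to $N=R/I$, combining with flatness of~$F$ (so that tensoring a finite free resolution of $R/I$ with~$F$ stays exact) and the Mittag--Leffler vanishing $\varprojlim^1_n(N\otimes_R F)/r^n(N\otimes_R F)=0$ from surjectivity of the transitions, gives the desired Tor vanishing.

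The principal obstacle is this second step: the flatness of the $r$-adic completion of a flat module over a Noetherian ring is a genuine piece of commutative-algebra input, essentially relying on the Artin--Rees lemma, and is the content of the classical results discussed in~\cite{Yek} and~\cite{PSY}; in a self-contained treatment, the $\Tor$-vanishing argument sketched above is the non-trivial content. By contrast, the bounded-torsion identification $\Delta_r(F)\simeq\Lambda_r(F)$ in the first step is merely a formal consequence of Sublemma~\ref{delta-lambda-r-sequence} combined with Lemma~\ref{noetherian-flat-bounded-torsion}.
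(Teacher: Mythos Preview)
Your approach is correct and essentially follows the second alternative mentioned in the paper's proof, which simply cites~\cite[Corollary~10.4(b)]{Pcta} for the fact that $\Delta_r(F)$ is flat for any flat $R$\+module $F$ over a Noetherian ring.  You supply an actual argument where the paper only gives references: first reduce $\Delta_r(F)$ to $\Lambda_r(F)$ via bounded torsion, then show $\Lambda_r(F)$ is flat via Artin--Rees.  Note that, like the paper's second alternative, your argument does not use the $r$\+contraadjustedness hypothesis at all for the flatness conclusion (that hypothesis is only needed for the identification $\Delta_r(F)=F/h_r(F)$).

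One point deserves sharpening.  You write that the isomorphism $N\otimes_R\Lambda_r(F)\simeq\Lambda_r(N\otimes_RF)$ follows by ``invoking exactness of $\Lambda_r$ on finitely generated $R$\+modules.''  But the modules $F^a$, $F^b$, $N\otimes_RF$ in your presentation are not finitely generated, so Artin--Rees does not apply to them directly.  The correct route is to apply Artin--Rees to the finitely generated submodule $K'=\ker(R^b\to N)\subset R^b$, obtaining $r^nR^b\cap K'\subset r^{n-c}K'$ for $n\ge c$, and then tensor this inclusion with the flat module $F$ to deduce $r^nF^b\cap(K'\otimes_RF)\subset r^{n-c}(K'\otimes_RF)$.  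From this one gets injectivity of $\widehat{K'\otimes_RF}\to\widehat{F^b}$, hence exactness of the completed sequence and the desired $\Tor$ vanishing.  This is the standard argument (and is indeed what underlies the results in~\cite{Yek,PSY} that you cite), so the gap is in exposition rather than in strategy.
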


\begin{proof}
 There are many ways to prove this basic result.
 In particular, one can deduce it from~\cite[Lemmas~1.6.2
and~C.2.1]{Pcosh}, as explained in~\cite[proof of
Corollary~C.2.3(a)]{Pcosh}.
 Alternatively, one can observe that the $R$\+module $\Delta_r(F)$ is
flat for any flat $R$\+module $F$, which is a particular case
of~\cite[Corollary~10.4(b)]{Pcta}.
\end{proof}

 Now we can produce the promised approximation sequences in
$R\modl_{r\ctra}$.

\begin{lem} \label{noetherian-r-contra-special-flat-precover}
 Let $R$ be a Noetherian commutative ring and $r\in R$ be an element.
 Let $C$ be an $r$\+contramodule $R$\+module, and let\/
$0\rarrow K\rarrow F\rarrow C\rarrow0$ be a short exact sequence of
$R$\+modules with a flat $R$\+module $F$ and an $r$\+contraadjusted
$R$\+module~$K$.
 Then\/ $0\rarrow K/h_r(K)\rarrow F/h_r(F)\rarrow C\rarrow0$ is
a short exact sequence of $r$\+contramodule $R$\+modules with
a flat $R$\+module $F/h_r(F)$.
 If $K$ is a cotorsion $R$\+module, then so are the $R$\+modules
$h_r(K)$ and $K/h_r(K)$.
\end{lem}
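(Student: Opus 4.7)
The plan is to exploit that $C$ is an $r$-contramodule to identify the maximal $r$-divisible submodules of $K$ and $F$, then extract the claimed short exact sequence and verify its further properties using Lemma~\ref{flat-r-contraadjusted-remains-flat} and the hereditary structure of the flat cotorsion theory.

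The first step is to show $h_r(K)=h_r(F)$ inside $F$ (where $K$ is identified with its image). Since $C$ is an $r$-contramodule, $h_r(C)=0$, so every $r$-divisible submodule of $F$ maps to zero in $C$ and hence lies in $K$; thus $h_r(F)\subseteq K$. Applying $\Hom_R(R[r^{-1}],{-})$ to the given sequence and using $\Hom_R(R[r^{-1}],C)=0=\Ext^1_R(R[r^{-1}],C)$, the natural map $\Hom_R(R[r^{-1}],K)\rarrow\Hom_R(R[r^{-1}],F)$ is an isomorphism; taking images under $b_K$ and $b_F$ then gives $h_r(K)=h_r(F)$. The snake lemma (or a direct check) applied to the inclusion $h_r(K)=h_r(F)$ into $K\subseteq F$ immediately yields $0\rarrow K/h_r(K)\rarrow F/h_r(F)\rarrow C\rarrow 0$.

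For the $r$-contramodule property of the two left-hand terms, note that $F$ is an extension of the two $r$-contraadjusted modules $K$ and $C$, hence $r$-contraadjusted itself; the four-term sequence~\eqref{r-contraadjusted-sequence} then identifies $K/h_r(K)=\Delta_r(K)$ and $F/h_r(F)=\Delta_r(F)$, which are $r$-contramodule $R$-modules by construction of $\Delta_r$. Flatness of $F/h_r(F)=\Delta_r(F)$ is precisely Lemma~\ref{flat-r-contraadjusted-remains-flat}, which is where the Noetherianity of $R$ enters this half of the argument.

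For the final assertion, assume $K$ is cotorsion. Split sequence~\eqref{r-contraadjusted-sequence} into
$$
 0\lrarrow\Hom_R(R[r^{-1}]/R,K)\lrarrow\Hom_R(R[r^{-1}],K)\lrarrow h_r(K)\lrarrow0
$$
and
$$
 0\lrarrow h_r(K)\lrarrow K\lrarrow K/h_r(K)\lrarrow0.
$$
The middle term of the first sequence is cotorsion by Lemma~\ref{divisible-part-contraadjusted-cotorsion}(c); once the leftmost term is known to be cotorsion, the hereditary property of the flat cotorsion theory (closure of cotorsion under cokernels of injections) propagates cotorsion first to $h_r(K)$ and then, via the second sequence, to $K/h_r(K)$. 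To handle the remaining Hom module I would use the presentation $R[r^{-1}]/R=\varinjlim_n R/r^nR$ (with transition maps given by multiplication by $r$) to rewrite $\Hom_R(R[r^{-1}]/R,K)=\varprojlim_n{}_{r^n}K$, and invoke closure of cotorsion under projective limits (Lemma~\ref{positive-ext-right-orthogonal-closedness}), reducing the problem to showing each ${}_{r^n}K=\Hom_R(R/r^nR,K)$ is cotorsion. Using the length-one free resolution $0\to R\overset{r^n}\to R\to R/r^nR\to0$ and two applications of the hereditary property to the short exact sequences arising from $r^n\colon K\rarrow K$, cotorsion of ${}_{r^n}K$ reduces in turn to cotorsion of $K/r^nK$. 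This last reduction is the main technical obstacle of the proof; it is here that the Noetherian hypothesis on $R$, together with the bounded $r$-torsion of flat $R$-modules given by Lemma~\ref{noetherian-flat-bounded-torsion}, should enter decisively.
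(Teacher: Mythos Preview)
Your argument for exactness of the quotient sequence, the $r$\+contramodule property of the two left-hand terms, and flatness of $F/h_r(F)$ is correct and matches the paper's proof in substance.

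The cotorsion assertion, however, is where you take a wrong turn. You try to prove that $\Hom_R(R[r^{-1}]/R,K)$ is cotorsion by writing it as $\varprojlim_n{}_{r^n}K$ and then reducing cotorsion of ${}_{r^n}K$ to cotorsion of $K/r^nK$. That reduction does not go through: from the two short exact sequences extracted from $0\to{}_{r^n}K\to K\xrightarrow{r^n}K\to K/r^nK\to0$, the hereditary property (closure of cotorsion under cokernels of monomorphisms) requires \emph{both} endpoints of a monomorphism to be cotorsion before you can conclude anything about the cokernel, so neither $r^nK$ nor $K/r^nK$ nor ${}_{r^n}K$ is accessible this way starting from cotorsion of $K$ alone. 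You yourself flag this step as ``the main technical obstacle,'' and indeed the proposal does not close it.

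The paper sidesteps the whole difficulty with a single observation you missed: by Lemma~\ref{noetherian-flat-bounded-torsion} the flat $R$\+module $F$ has bounded $r$\+torsion, hence so does its submodule~$K$. Since $R[r^{-1}]/R$ is $r$\+divisible and $r$\+torsion, every homomorphism from it into a module with bounded $r$\+torsion vanishes; thus $\Hom_R(R[r^{-1}]/R,K)=0$. The map $b_K$ is therefore injective, giving $h_r(K)\simeq\Hom_R(R[r^{-1}],K)$, which is cotorsion directly by Lemma~\ref{divisible-part-contraadjusted-cotorsion}(c). Then $K/h_r(K)$ is cotorsion as the cokernel of a monomorphism between cotorsion modules. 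This is exactly the place where Noetherianity (via bounded torsion) enters the cotorsion half of the argument, and it dissolves your ``main technical obstacle'' in one line rather than forcing a reduction that does not work.
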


\begin{proof}
 This is~\cite[Lemma~C.2.6]{Pcosh}.
 By Lemma~\ref{noetherian-flat-bounded-torsion}, we have
$\Hom_R(R[r^{-1}]/R,F)=0$, and consequently $\Hom_R(R[r^{-1}]/R,K)=0$.
 Therefore, $h_r(K)=\Hom_R(R[r^{-1}],K)$ and $h_r(F)=\Hom_R(R[r^{-1}],F)$.
 Since $C$ is an $r$\+contramodule $R$\+module, it follows that
the morphism $K\rarrow F$ induces an isomorphism $h_r(K)\simeq h_r(F)$.
 Thus the short sequence $0\rarrow K/h_r(K)\rarrow F/h_r(F)\rarrow C
\rarrow0$ is exact.

 The $R$\+module $F$ is $r$\+contraadjusted as an extension of two
$r$\+contraadjusted $R$\+modules $C$ and~$K$.
 The $R$\+modules $K$ and $F$ being $r$\+contraadjusted, it follows
that the $R$\+modules $K/h_r(K)$ and $F/h_r(F)$ are $r$\+contramodules.
 The $R$\+module $F/h_r(F)$ is flat by
Lemma~\ref{flat-r-contraadjusted-remains-flat}.
 If the $R$\+module $K$ is cotorsion, then the $R$\+module $h_r(K)$ is
cotorsion by Lemma~\ref{divisible-part-contraadjusted-cotorsion}(c),
hence the $R$\+module $K/h_r(K)$ is cotorsion as the cokernel of
an injective morphism of cotorsion $R$\+modules.
\end{proof}

\begin{lem} \label{noetherian-r-contra-special-cotorsion-preenvelope}
 Let $R$ be a Noetherian commutative ring and $r\in R$ be an element.
 Let $C$ be an $r$\+contramodule $R$\+module, and let\/
$0\rarrow C\rarrow K\rarrow F\rarrow0$ be a short exact sequence of
$R$\+modules with a flat $R$\+module $F$ and an $r$\+contraadjusted
$R$\+module~$K$.
 Then\/ $0\rarrow C\rarrow K/h_r(K)\rarrow F/h_r(F)\rarrow0$ is
a short exact sequence of $r$\+contramodule $R$\+modules with
a flat $R$\+module $F/h_r(F)$.
 If $K$ is a cotorsion $R$\+module, then so are the $R$\+modules
$h_r(K)$ and $K/h_r(K)$.
\end{lem}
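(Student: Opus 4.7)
The proof can be obtained by dualizing the argument of Lemma~\ref{noetherian-r-contra-special-flat-precover}; since $C$ now sits as a submodule of $K$ with quotient $F$, the key vanishings in the relevant long exact sequences occur at the left rather than the right end, but otherwise the structure is the same. The objective is to produce an isomorphism $h_r(K)\simeq h_r(F)$ with both $h_r$'s equal to $\Hom_R(R[r^{-1}],-)$; then the snake lemma applied to the natural commutative diagram comparing $h_r(K)\hookrightarrow K$ with $h_r(F)\hookrightarrow F$ yields the required short exact sequence of quotients.

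To begin, I would observe that $F$ is itself $r$-contraadjusted: in the long exact sequence of $\Ext^\bullet_R(R[r^{-1}],-)$ applied to $0\to C\to K\to F\to0$, the group $\Ext^1_R(R[r^{-1}],F)$ is squeezed between $\Ext^1_R(R[r^{-1}],K)=0$ and $\Ext^2_R(R[r^{-1}],C)=0$, the latter vanishing because $R[r^{-1}]$ has projective dimension at most~$1$. By Lemma~\ref{noetherian-flat-bounded-torsion} the flat $R$-module $F$ has bounded $r$-torsion, so $\Hom_R(R[r^{-1}]/R,F)=0$ and $h_r(F)=\Hom_R(R[r^{-1}],F)$. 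Since $C$ is an $r$-contramodule, applying $\Hom_R(-,C)$ to the exact sequence $0\to R\to R[r^{-1}]\to R[r^{-1}]/R\to0$ shows $\Hom_R(R[r^{-1}]/R,C)=0$ as well. Feeding both vanishings into the long exact sequence obtained from $\Hom_R(R[r^{-1}]/R,-)$ applied to $0\to C\to K\to F\to0$ forces $\Hom_R(R[r^{-1}]/R,K)=0$, hence $h_r(K)=\Hom_R(R[r^{-1}],K)$. Finally, applying $\Hom_R(R[r^{-1}],-)$ to the same short exact sequence and using the vanishings $\Hom_R(R[r^{-1}],C)=0=\Ext^1_R(R[r^{-1}],C)$ yields the sought isomorphism $h_r(K)\simeq h_r(F)$.

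The snake lemma applied to the diagram with rows $0\to h_r(K)\to K\to K/h_r(K)\to0$ and $0\to h_r(F)\to F\to F/h_r(F)\to0$, whose first vertical arrow is an isomorphism and whose middle vertical arrow has kernel $C$ and is surjective, delivers the short exact sequence $0\to C\to K/h_r(K)\to F/h_r(F)\to0$. Since $K$ and $F$ are $r$-contraadjusted, the quotients $K/h_r(K)=\Delta_r(K)$ and $F/h_r(F)=\Delta_r(F)$ are $r$-contramodules, and $F/h_r(F)$ is flat by Lemma~\ref{flat-r-contraadjusted-remains-flat}. If $K$ is cotorsion, then $h_r(K)=\Hom_R(R[r^{-1}],K)$ is cotorsion by Lemma~\ref{divisible-part-contraadjusted-cotorsion}(c), and $K/h_r(K)$ is cotorsion as the cokernel of a monomorphism between cotorsion modules (the flat cotorsion theory on $R\modl$ being hereditary). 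No genuine obstacle arises: the argument is a routine adaptation of Lemma~\ref{noetherian-r-contra-special-flat-precover}, the only bookkeeping difference being the position of the vanishings contributed by $C$.
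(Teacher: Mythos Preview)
Your proof is correct and follows essentially the same approach as the paper's own proof: both establish $\Hom_R(R[r^{-1}]/R,F)=0$ via bounded torsion, $\Hom_R(R[r^{-1}]/R,C)=0$ via the contramodule hypothesis, deduce the same for $K$, and then conclude $h_r(K)\simeq h_r(F)$ to obtain the quotient sequence. The only cosmetic difference is that the paper gets $r$-contraadjustedness of $F$ in one line (as a quotient of the $r$-contraadjusted module $K$, the class being closed under quotients), whereas you reprove this via the long exact sequence; both are valid.
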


\begin{proof}
 This is~\cite[Lemma~C.2.4]{Pcosh}.
 By Lemma~\ref{noetherian-flat-bounded-torsion}, we have
$\Hom_R(R[r^{-1}]/R,F)=0$.
 Since $\Hom_R(R[r^{-1}]/R,C)\subset\Hom_R(R[r^{-1}],C)=0$,
it follows that $\Hom_R(R[r^{-1}]/R,\allowbreak K)=0$.
 The proof of exactness of the short sequence
$0\rarrow C\rarrow K/h_r(K)\rarrow F/h_r(F)\rarrow0$ finishes
as in the proof of
Lemma~\ref{noetherian-r-contra-special-flat-precover}.

 The $R$\+module $F$ is $r$\+contraadjusted as a quotient module
of an $r$\+contraadjusted $R$\+module~$K$.
 The rest of the argument is the same as in
Lemma~\ref{noetherian-r-contra-special-flat-precover}.
\end{proof}

\begin{proof}[Proof of
Theorem~\ref{noetherian-r-contra-flat-cotorsion-theory-thm}]
 One has $\Ext_{R\modl_{r\ctra}}^1(F,C)=0$ for any $R$\+flat
$r$\+contramodule $R$\+module $F$ and any $R$\+cotorsion
$r$\+contramodule $R$\+module $C$, because $\Ext_R^1(F,C)=0$ and
$R\modl_{r\ctra}\subset R\modl$ is a full subcategory closed under
kernels, cokernels, and extensions.
 The approximation sequences~(\ref{special-precover-sequence}\+-%
\ref{special-preenvelope-sequence}) in $R\modl_{r\ctra}$ can be
produced from the similar sequences in $R\modl$ using
Lemmas~\ref{noetherian-r-contra-special-flat-precover}\+-%
\ref{noetherian-r-contra-special-cotorsion-preenvelope}.
 It remains to observe that the classes of $R$\+flat $r$\+contramodule
$R$\+modules and $R$\+cotorsion $r$\+contramodule $R$\+modules are
closed under direct summands.
 The flat cotorsion theory in $R\modl_{r\ctra}$ is hereditary,
because the conditions~(iii) and/or~(iv) from
Section~\ref{cotorsion-theories-abelian-categories-subsecn} are
satisfied (as the flat cotorsion theory in $R\modl$ is hereditary).
\end{proof}

\subsection{Very flat cotorsion theory in the bounded torsion case}
\label{bounded-torsion-veryflat-cotorsion-theory-subsecn}
 The notion of a left contramodule over a complete, separated
topological associative ring $\fR$ with a base of neighborhoods of
zero formed by open right ideals was introduced
in~\cite[Remark~A.3]{Psemi}, \cite[Section~1.2]{Pweak}, and
studied in~\cite[Appendix~D]{Pcosh}, \cite[Sections~5\+-7]{PR}.
 In the context of this paper, we set $\fR=\varprojlim_{n\ge1}R/r^nR$
to be the $r$\+adic completion of a commutative ring $R$, endowed with
the projective limit (\,$=$~$r$\+adic) topology.
 We denote the abelian category of $\fR$\+contramodules by
$\fR\contra$.

 In this section we are interested in commutative rings $R$ with
an element $r\in R$ such that the $r$\+torsion in $R$ is
bounded (cf.\ Section~\ref{bounded-torsion-outline}).

\begin{lem} \label{flat-modules-bounded-torsion}
 Let $R$ be a commutative ring and $r\in R$ be an element such that
the $r$\+torsion in $R$ is bounded.
 Then the $R$\+torsion in any flat $R$\+module is bounded, too.
\end{lem}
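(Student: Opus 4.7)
The plan is to exploit the flatness of $F$ by identifying the $r^n$-torsion submodule ${}_{r^n}F$ with a tensor product involving the $r^n$-torsion of $R$ itself, and then to transport the stabilization of torsion chains from $R$ to $F$.

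First I would reformulate ``bounded $r$-torsion in $M$'' as the statement that the ascending chain of submodules
\[
 {}_rM\subset{}_{r^2}M\subset{}_{r^3}M\subset\dotsb\subset M
\]
stabilizes: there exists $m\ge1$ with ${}_{r^n}M={}_{r^m}M$ for all $n\ge m$. The hypothesis therefore furnishes such an integer $m$ for $M=R$.

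Next, for any $n\ge1$, consider the left exact sequence of $R$\+modules
\[
 0\lrarrow{}_{r^n}R\lrarrow R\overset{r^n}\lrarrow R.
\]
Tensoring with the flat $R$\+module $F$ preserves left exactness, yielding
\[
 0\lrarrow{}_{r^n}R\ot_RF\lrarrow F\overset{r^n}\lrarrow F,
\]
which identifies ${}_{r^n}F$ canonically with ${}_{r^n}R\ot_RF$. From this identification, the equalities ${}_{r^n}R={}_{r^m}R$ valid for all $n\ge m$ immediately propagate to the equalities ${}_{r^n}F={}_{r^m}F$ for all $n\ge m$. Thus the same integer $m$ bounds the $r$\+torsion in~$F$, and the lemma is proved.

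I do not anticipate any serious obstacle: the argument is a one-line consequence of flatness once one notices the identification ${}_{r^n}F\simeq{}_{r^n}R\ot_RF$. The only point worth stressing is that the bound $m$ transfers \emph{verbatim} from $R$ to $F$, which will be convenient in the subsequent applications (in particular for invoking $\Hom_R(R[r^{-1}]/R,F)=0$, as used in the proofs of Lemmas~\ref{noetherian-r-contra-special-flat-precover} and~\ref{noetherian-r-contra-special-cotorsion-preenvelope}).
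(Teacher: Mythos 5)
Your proof is correct, but it takes a genuinely different route from the paper's. The paper invokes the Govorov--Lazard theorem, writing a flat module $F$ as a filtered inductive limit of finitely generated free modules $F_i$; an $r^n$\+torsion element $y\in F$ then lifts (after passing up the filtered system) to an $r^n$\+torsion element $y_i$ of some $F_i\simeq R^{k_i}$, where the coordinatewise bound $r^m y_i=0$ holds, and this descends back to $F$. You instead exploit flatness directly through exactness of the tensor functor: from the left exact sequence $0\to{}_{r^n}R\to R\overset{r^n}\to R$, flatness yields the identification ${}_{r^n}F={}_{r^n}R\ot_RF$ (as submodules of $F$, by functoriality of the canonical map ${}_{r^n}R\ot_RF\to R\ot_RF=F$), and the stabilization ${}_{r^m}R={}_{r^n}R$ for $n\ge m$ propagates to ${}_{r^m}F={}_{r^n}F$. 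Both arguments produce the same bound $m$. Your argument is self-contained and uses only the definition of flatness, whereas the paper's quotes a structural theorem about flat modules; on the other hand, the Govorov--Lazard pattern is a standard and reusable technique in this circle of ideas, which is likely why the paper reaches for it. One small stylistic point: ``tensoring with a flat module preserves left exactness'' deserves a word of justification (factor the sequence through the image of $r^n$ and use that $-\ot_RF$ preserves both short exact sequences and injectivity of the inclusion $r^nR\hookrightarrow R$), but this is routine.
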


\begin{proof}
 Let $m\ge1$ be an integer such that one has $r^mx=0$ for any
$r$\+torsion element $x\in R$.
 Using the Govorov--Lazard description of flat $R$\+modules
as filtered inductive limits of projective ones, one easily shows
that for any flat $R$\+module $F$ and an $r$\+torsion element
$y\in F$ one also has $r^my=0$.
\end{proof}

 The Noetherian particular case of the following result goes back
to~\cite[Theorem~B.1.1]{Pweak}.
 For a generalization to arbitrary commutative rings $R$ with
an element~$r$, see
Theorem~\ref{quotseparated-contramodule-category-equivalence} below.

\begin{thm} \label{bounded-torsion-contramodule-category-equivalence}
 Let $R$ be a commutative ring and $r\in R$ be an element such that
the $r$\+torsion in $R$ is bounded.
 Then the forgetful functor\/ $\fR\contra\rarrow R\modl$ induces
an equivalence of abelian categories\/ $\fR\contra\simeq
R\modl_{r\ctra}$.
\end{thm}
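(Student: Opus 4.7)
The plan is to show that the forgetful functor $U\:\fR\contra\rarrow R\modl$ factors through $R\modl_{r\ctra}$ and becomes an equivalence onto this subcategory, by constructing a quasi-inverse $E\:R\modl_{r\ctra}\rarrow\fR\contra$ via the functor $\Delta_r$. The key observation is that $\Delta_r(A)=\Ext^1_R(K^\bu,A)$ naturally carries an $\fR$-contramodule structure for any $R$-module $A$: since $K^\bu$ is quasi-isomorphic to the telescope of the system $R\xrightarrow{r}R\xrightarrow{r}\dotsb$, the derived Hom from $K^\bu$ is a homotopy projective limit that inherits an action of $\fR=\varprojlim_nR/r^nR$ together with summability of sequences valued in the resulting module. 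For $C\in R\modl_{r\ctra}$, the adjunction unit $C\rarrow\Delta_r(C)$ is an isomorphism, and I would define $E(C)$ to be $C$ endowed with the $\fR$-contraaction transported back along this isomorphism.

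Next I would verify both directions. For $U$ landing in $R\modl_{r\ctra}$: given $D\in\fR\contra$, the two-term telescope free resolution of $R[r^{-1}]$ makes $\Hom_R(R[r^{-1}],D)$ and $\Ext^1_R(R[r^{-1}],D)$ into explicit kernel and cokernel expressions involving infinite sums $\sum_{n\ge0}r^nd_n$; these vanish exactly by the defining $\fR$-contraaction of $D$. The equation $UE\simeq\mathrm{id}_{R\modl_{r\ctra}}$ is immediate from the construction of $E$. The equation $EU\simeq\mathrm{id}_{\fR\contra}$ reduces to checking that the $\fR$-contramodule structure on $\Delta_r(U(D))$ agrees with the original $\fR$-contraaction on $D$, which follows from naturality of the construction of $\Delta_r$ applied to the identity $U(D)\rarrow U(D)$ and the universal property of the reflection $\Delta_r$.

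The main obstacle, and precisely where the bounded torsion hypothesis enters, is ensuring that the $\fR$-contramodule structure on $\Delta_r(A)$ is well-defined and functorial in a way compatible with arbitrary pre-existing $\fR$-contraactions. By Sublemma~\ref{delta-lambda-r-sequence}, there is a short exact sequence
$$0\lrarrow\varprojlim\nolimits^1_n\.{}_{r^n}A\lrarrow\Delta_r(A)\lrarrow\Lambda_r(A)\lrarrow0.$$
Under bounded $r$-torsion in $R$, Lemma~\ref{flat-modules-bounded-torsion} guarantees that flat $R$-modules have bounded $r$-torsion, so the $\varprojlim^1$ term vanishes for every free $R$-module $F$; this gives $\Delta_r(F)\simeq\Lambda_r(F)$, and the latter visibly carries the evident $\fR$-contramodule structure. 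A resolution argument then extends this to arbitrary $A$, yielding a canonical $\fR$-contramodule structure on $\Delta_r(A)$ that matches the hypothetical one on $A=U(D)$. Without bounded torsion the $\varprojlim^1$ contribution introduces nonseparated pathologies that obstruct this identification, which is why the general case is postponed to Theorem~\ref{quotseparated-contramodule-category-equivalence} with its refined quotseparated variant.
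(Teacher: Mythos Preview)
Your approach differs from the paper's, which is a one-line monad argument: both $\fR\contra$ and $R\modl_{r\ctra}$ are the categories of algebras over the additive monad $X\longmapsto\Delta_r(R[X])=\Lambda_r(R[X])$ on sets (the equality holding precisely because bounded torsion in $R$ kills the $\varprojlim^1$ term of Sublemma~\ref{delta-lambda-r-sequence} for free modules $R[X]$). Your resolution construction is essentially computing the left adjoint $L$ to $U\:\fR\contra\rarrow R\modl$ and observing that $U L(R[X])=\Lambda_r(R[X])=\Delta_r(R[X])$; this is the same underlying observation, unpacked.

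The substantive gap is in your verification of $EU\simeq\id_{\fR\contra}$. You say it ``follows from naturality of the construction of $\Delta_r$ \dots\ and the universal property of the reflection $\Delta_r$,'' but the universal property of $\Delta_r$ lives entirely in $R\modl$ and says nothing about $\fR$-contraactions. What must actually be checked is that for $D\in\fR\contra$ and a free $R$-module presentation $R[X_1]\rarrow R[X_0]\rarrow U(D)\rarrow0$, the induced maps $\fR[[X_1]]\rarrow\fR[[X_0]]\rarrow D$ obtained from $\Delta_r$ (equivalently $\Lambda_r$) coincide with the $\fR$-contramodule maps determined by $D$'s original contraaction via the free-object property of $\fR[[X_i]]$. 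This identification is exactly the statement that the monad multiplications on $X\mapsto\Delta_r(R[X])$ and $X\mapsto\fR[[X]]$ agree, not merely the underlying functors---and it is the heart of the theorem. Your ``resolution argument then extends this to arbitrary $A$, yielding a canonical $\fR$-contramodule structure \dots\ that matches the hypothetical one on $A=U(D)$'' asserts the conclusion without supplying this comparison. The monad formulation in the paper packages the same verification more transparently, since once the free objects and their structure maps are identified, the equivalence of algebra categories is formal.
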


\begin{proof}
 This is~\cite[Example~2.2(5)]{Pper}.
 Essentially, both $\fR\contra$ and $R\modl_{r\ctra}$ are isomorphic to
the category of algebras/modules over the additive monad $X\longmapsto
\Delta_r(R[X])=\Lambda_r(R[X])$ on the category of sets, where $R[X]$
denotes the free $R$\+module generated by a set~$X$
(cf.\ Sublemma~\ref{delta-lambda-r-sequence}).
\end{proof}

 So, in the bounded torsion case, ``contramodules over the topological
ring~$\fR$'' is just another name for $r$\+contramodule $R$\+modules.
 The point is that this interpretation of $r$\+contramodule $R$\+modules
allows to apply the results of~\cite[Section~7]{PR} about cotorsion
theories in $\fR\contra$.

 An $\fR$\+contramodule $F$ is called \emph{flat} if
the $R/r^nR$\+module $F/r^nF$ is flat for every $n\ge1$
\cite[Section~D.1]{Pcosh}, \cite[Sections~5\+-6]{PR}.
 Unlike in the Noetherian case of
Section~\ref{noetherian-flat-cotorsion-theory-subsecn}, there is
\emph{no} claim that flat $\fR$\+contramodules are flat
$R$\+modules here.

 In fact, when the $r$\+torsion in $R$ is bounded,
an $\fR$\+contramodule or an $r$\+contramod\-ule $R$\+module is the same
thing as a cohomologically $I$\+adically complete $R$\+module in
the sense of~\cite{PSY,Yek}, where $I=(r)\subset R$ is the principal
ideal generated by the element $r\in R$ \cite[Section~0.9]{Pmgm}.
 Furthermore, in the same assumption, an $\fR$\+contramodule is flat
if and only if it is an $I$\+adically flat $R$\+module in
the sense of the paper~\cite{Yek2} (see~\cite[Theorems~4.3
and~6.9]{Yek2} and~\cite[Lemma~10.1]{Pcta}).
 The counterexample in~\cite[Theorem~7.2]{Yek2} shows that a flat
$\fR$\+contramodule does not need to be a flat $R$\+module.

\begin{thm} \label{generated-by-set-of-flat-contra-thm}
 The cotorsion theory generated by any set of flat\/
$\fR$\+contramodules is complete in\/ $\fR\contra$.
\end{thm}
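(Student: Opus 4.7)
The plan is to invoke the general existence theorem for complete cotorsion pairs in locally presentable abelian categories cited earlier in this section (from \cite[Corollary~3.6]{PR}). Taking $\sA=\fR\contra$, a set $\sS$ of flat $\fR$\+contramodules as the generating set, and writing $\sC=\sS^{\perp_1}$ and $\sF={}^{\perp_1}\sC$ for the generated cotorsion pair, completeness follows as soon as three hypotheses are verified: that $\fR\contra$ is locally presentable, that every object is a quotient of an object of $\sF$, and that every object is a subobject of an object of $\sC$.

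The first hypothesis is a standard fact: $\fR\contra$ is the category of algebras over the monad $X\longmapsto\fR[[X]]$ on the category of sets, and is locally $\aleph_1$\+presentable (cf.\ \cite[Examples~4.1]{PR}). For the second, $\fR\contra$ has enough projectives, namely the free contramodules $\fR[[X]]$ for sets~$X$; since any projective object $P\in\fR\contra$ satisfies $\Ext^1_{\fR\contra}(P,C)=0$ for every $C\in\fR\contra$, all projectives lie in $\sF$, and every $\fR$\+contramodule admits a surjection from such a projective.

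The main obstacle is the third hypothesis: producing, for each $\fR$\+contramodule $C$, a monomorphism $C\hookrightarrow C'$ into some $C'\in\sC$. The strategy is to iteratively kill $\Ext^1$\+classes coming from the set $\sS$. Namely, for each $F\in\sS$ and each class $\xi\in\Ext^1_{\fR\contra}(F,C)$, represent $\xi$ by a short exact sequence $0\rarrow C\rarrow C_\xi\rarrow F\rarrow0$; the resulting embedding $C\hookrightarrow C_\xi$ kills~$\xi$. Forming the pushout of all these extensions simultaneously over $F\in\sS$ and $\xi\in\Ext^1_{\fR\contra}(F,C)$, one obtains an enlargement of $C$ in which the initial Ext\+classes have been killed. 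Iterating transfinitely up to a cardinal depending on $\sS$ and on the presentability rank of $\fR\contra$, and passing to the appropriate colimit in $\fR\contra$ at limit steps, one arrives at the desired $C\hookrightarrow C'$ with $C'\in\sC$.

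The delicate points --- controlling the exactness of the transfinite colimit construction inside the contramodule category (where filtered colimits need not be exact) and verifying the vanishing of $\Ext^1_{\fR\contra}(F,C')$ for all $F\in\sS$ at the end --- are exactly what is handled by the machinery developed in \cite[Section~7]{PR}, and this citation can be invoked to close the argument.
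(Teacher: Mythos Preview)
Your approach and the paper's converge on the same external reference~\cite{PR}; the paper simply cites the specific result \cite[Corollary~7.11]{PR} directly, while you attempt to unpack it through the more general \cite[Corollary~3.6]{PR}. That is a legitimate route, and hypotheses~(1) and~(2) are handled correctly.

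The weakness is in your treatment of hypothesis~(3). The transfinite ``kill the $\Ext^1$-classes'' construction you sketch is precisely the special $\sC$\+preenvelope construction---that is, it already \emph{is} one half of completeness, not merely a verification of an embedding hypothesis. More importantly, you never explain where the flatness assumption on the objects of~$\sS$ enters. Your sketch, as written, would apply to an arbitrary set~$\sS$; but in $\fR\contra$ filtered colimits are not exact, and it is exactly the flatness of the generators that allows the transfinite process to be controlled (this is the content of \cite[Sections~6\textendash7]{PR}, in particular the behaviour of flat $\fR$\+contramodules under contramodule colimits). You acknowledge this gap and defer to \cite[Section~7]{PR} at the end---but at that point you are citing the same machinery that proves \cite[Corollary~7.11]{PR}, so the detour through \cite[Corollary~3.6]{PR} has not bought anything. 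The paper's one-line citation is the more honest presentation: the substantive work lives in~\cite{PR}, and the flatness hypothesis is essential there, not incidental.
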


\begin{proof}
 This is a particular case of~\cite[Corollary~7.11]{PR}.
\end{proof}

\begin{lem} \label{delta-modulo-r-exp-n}
 For any $R$\+module $A$ and any integer $n\ge1$, the adjunction
morphism $A\rarrow\Delta_r(A)$ induces an isomorphism
$A/r^nA\simeq\Delta_r(A)/r^n\Delta_r(A)$.
\end{lem}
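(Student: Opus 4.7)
The plan is to factor the adjunction morphism $A \to \Delta_r(A)$ as $A\twoheadrightarrow A'\hookrightarrow\Delta_r(A)$, where $A' = A/h_r(A)$, and then to show that both the kernel $h_r(A)$ of this adjunction and its cokernel $E$ are annihilated (in both $\Tor^R_0$ and $\Tor^R_1$) by tensoring with $R/r^nR$. The $5$-term exact sequence~\eqref{r-matlis-sequence} established in the proof of the Toy Main Proposition identifies $h_r(A)$ with the image of $b_A\colon \Hom_R(R[r^{-1}],A)\to A$ and identifies $E$ with $\Ext^1_R(R[r^{-1}],A)$, so we obtain two short exact sequences of $R$-modules
$$
 0\lrarrow h_r(A)\lrarrow A\lrarrow A'\lrarrow 0\quad\text{and}\quad
 0\lrarrow A'\lrarrow\Delta_r(A)\lrarrow E\lrarrow 0.
$$

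The claim will follow once I verify that $h_r(A)/r^n h_r(A)=0$, that $E/r^n E=0$, and that $\Tor^R_1(R/r^nR,E)=0$. The first vanishing expresses the fact that $h_r(A)$ is $r$-divisible in itself, not merely inside $A$: given $y=\phi(1)$ for some $\phi\in\Hom_R(R[r^{-1}],A)$, the shifted homomorphism $\psi\colon R[r^{-1}]\to A$ defined by $\psi(r^{-k})=\phi(r^{-k-1})$ satisfies $\psi(1)\in h_r(A)$ and $r\psi(1)=y$; hence $rh_r(A)=h_r(A)$. For the other two vanishings, the key observation is that $E=\Ext^1_R(R[r^{-1}],A)$ inherits a natural $R[r^{-1}]$-module structure from the first argument, so multiplication by $r$ acts on $E$ as an automorphism. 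Using the free resolution $0\to R\overset{r^n}\lrarrow R\to R/r^nR\to0$, one immediately reads off $E/r^n E=0$ and $\Tor^R_1(R/r^nR,E)=\ker(r^n\colon E\to E)=0$.

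Tensoring the first short exact sequence with $R/r^nR$ and using $h_r(A)/r^n h_r(A)=0$ then yields $A/r^n A\simeq A'/r^n A'$ (the kernel of the surjection $A/r^nA\to A'/r^nA'$ being the image of the zero module $h_r(A)/r^nh_r(A)$). Applying the derived tensor product $R/r^nR\ot^{\boL}_R-$ to the second short exact sequence and using the vanishing of both $E\ot_R R/r^nR$ and $\Tor^R_1(R/r^nR,E)$ yields $A'/r^n A'\simeq \Delta_r(A)/r^n\Delta_r(A)$. Composing the two isomorphisms gives the desired identification, and the composite is manifestly the map induced by the adjunction morphism $A\to\Delta_r(A)$. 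No substantial obstacle is anticipated: the only subtlety is the direct check that $h_r(A)$ is $r$-divisible as an abstract $R$-module (not merely that it sits inside $A$ as a divisible submodule), which is handled by the shift construction on $\Hom_R(R[r^{-1}],A)$ described above.
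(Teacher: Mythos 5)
Your proof is correct, but it takes a genuinely different route from the paper's. The paper's proof is a two-line Yoneda argument: since $\Delta_r$ is left adjoint to the embedding $R\modl_{r\ctra}\rarrow R\modl$ and every $R/r^nR$\+module $D$ is an $r$\+contramodule, one has $\Hom_{R/r^nR}(A/r^nA,\.D)=\Hom_R(A,D)=\Hom_R(\Delta_r(A),D)=\Hom_{R/r^nR}(\Delta_r(A)/r^n\Delta_r(A),\.D)$, and the conclusion follows by Yoneda. Your proof instead works directly with the explicit description of $\Delta_r(A)$ coming from the five-term exact sequence~\eqref{r-matlis-sequence}: you factor $A\rarrow\Delta_r(A)$ through $A'=A/h_r(A)$, check that the kernel $h_r(A)$ is $r$\+divisible in itself (the shift $\psi=\phi\circ(r^{-1}\cdot{-})$ is the clean way to see this), and check that the cokernel $E=\Ext^1_R(R[r^{-1}],A)$ is an $R[r^{-1}]$\+module so that both $E/r^nE$ and $\Tor^R_1(R/r^nR,E)$ vanish; then two applications of the long exact sequence for $R/r^nR\ot^\boL_R{-}$ finish the argument. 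The paper's proof is shorter and requires nothing beyond the universal property of $\Delta_r$, but it keeps $\Delta_r(A)$ a black box; your approach is more computational but exhibits concretely why passing from $A$ to $\Delta_r(A)$ changes nothing modulo $r^n$ (the kernel is $r$\+divisible and the cokernel carries an invertible $r$\+action), which ties in nicely with the surrounding discussion of $h_r$ and contramodules in the paper. Both are fully rigorous.
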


\begin{proof}
 All $R/r^nR$\+modules are $r$\+contramodule $R$\+modules.
 Since the functor $\Delta_r$ is left adjoint to the embedding
$R\modl_{r\ctra}\rarrow R\modl$, for any $R/r^nR$\+module $D$ we have
\begin{multline*}
\Hom_{R/r^nR}(A/r^nA,\.D)=\Hom_R(A,D) \\ =\Hom_R(\Delta_r(A),D)=
\Hom_{R/r^nR}(\Delta_r(A)/r^n\Delta_r(A),\.D),
\end{multline*}
implying the desired isomorphism.
\end{proof}

 Consider the set of all the $r$\+contramodule $R$\+modules
$\Delta_r(R[s^{-1}])$, where $s\in R$.
 One has $\Delta_r(R[s^{-1}])/r^n\Delta_r(R[s^{-1}])=
R[s^{-1}]/r^nR[s^{-1}]=(R/r^nR)[s^{-1}]$ by
Lemma~\ref{delta-modulo-r-exp-n}, so $\Delta_r(R[s^{-1}])$
is a flat $\fR$\+contramodule. {\hbadness=1750\par}

 The cotorsion theory $(\sF,\sC)$ generated by the set of all
$r$\+contramodule $R$\+modules $\Delta_r(R[s^{-1}])$ is called
the \emph{very flat} cotorsion theory in $R\modl_{r\ctra}$.
 The objects from the class $\sF$ are called the \emph{very flat}
$r$\+contramodule $R$\+modules, and the objects of the class $\sC$
are called the \emph{contraadjusted} $r$\+contramodule $R$\+modules.

 Once again, there is \emph{no} claim that very flat $r$\+contramodule
$R$\+modules are very flat $R$\+modules (even for a Noetherian ring $R$;
cf.~\cite[Section~C.3]{Pcosh}).
 On the other hand, the following results hold.

\begin{prop} \label{very-flat-contramodules-are-flat}
 Let $R$ be a commutative ring and $r\in R$ be an element such that
the $r$\+torsion in $R$ is bounded.
 Then all very flat $r$\+contramodule $R$\+modules are flat\/
$\fR$\+contramodules.
\end{prop}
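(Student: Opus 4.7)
The strategy is to compare the very flat cotorsion theory $(\sF,\sC)$ in $\fR\contra$, generated by the set $\sG=\{\Delta_r(R[s^{-1}]):s\in R\}$, with the \emph{flat} cotorsion theory $(\sF_0,\sC_0)$ in $\fR\contra$, whose left class $\sF_0$ consists of all flat $\fR$-contramodules. The key observation is purely formal: if I can show $\sG\subseteq\sF_0$, then applying right orthogonals reverses inclusions, giving $\sC=\sG^{\perp_1}\supseteq\sF_0^{\perp_1}=\sC_0$, and applying left orthogonals again yields $\sF={}^{\perp_1}\sC\subseteq{}^{\perp_1}\sC_0=\sF_0$. This containment is exactly the statement of the proposition.

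The first ingredient, verification that $\sG\subseteq\sF_0$, is straightforward given Lemma~\ref{delta-modulo-r-exp-n}. Indeed, for every $s\in R$ and every $n\ge1$,
\[
\Delta_r(R[s^{-1}])/r^n\Delta_r(R[s^{-1}])\simeq R[s^{-1}]/r^nR[s^{-1}]=(R/r^nR)[s^{-1}],
\]
which is a flat (even very flat) $R/r^nR$-module, being the localization of $R/r^nR$ at a single element. By the definition of flatness for $\fR$-contramodules, this means $\Delta_r(R[s^{-1}])\in\sF_0$ for every $s\in R$.

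The main obstacle is the second ingredient, which I would simply invoke: one needs that the class $\sF_0$ of flat $\fR$-contramodules is in fact the left class of a complete cotorsion theory $(\sF_0,\sC_0)$ in $\fR\contra$, so that the identity $\sF_0={}^{\perp_1}\sC_0$ used above makes sense. This is nontrivial because $\sF_0$ is defined via a condition on the quotients $F/r^nF$ rather than by an Ext-vanishing condition in $\fR\contra$ itself. The existence and completeness of the flat cotorsion theory in the contramodule category over a complete, separated topological ring with a countable base of open right ideals is established in \cite[Sections~5--7]{PR}. The ring $\fR$ with its $r$-adic topology fits into this framework, and under the standing bounded $r$-torsion hypothesis we have the equivalence $\fR\contra\simeq R\modl_{r\ctra}$ by Theorem~\ref{bounded-torsion-contramodule-category-equivalence}, so everything takes place in the category under consideration. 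Granting this input, the formal comparison in the first paragraph completes the argument.
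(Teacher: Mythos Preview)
Your proof is correct and follows essentially the same route as the paper's primary argument: both rest on the existence of the flat cotorsion theory $(\sF_0,\sC_0)$ in $\fR\contra$ (the paper cites~\cite[Corollary~7.8]{PR}, you cite~\cite[Sections~5--7]{PR}), together with the observation that the generators $\Delta_r(R[s^{-1}])$ are flat $\fR$\+contramodules, which the paper also notes just before the proposition. You have simply made the formal orthogonality comparison $\sG\subseteq\sF_0\Rightarrow\sC\supseteq\sC_0\Rightarrow\sF\subseteq\sF_0$ explicit, whereas the paper leaves it implicit in the phrase ``follows from the existence of the flat cotorsion theory.''

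The paper also records an alternative argument via~\cite[Theorem~4.8(a,d)]{PR}: describe very flat $r$\+contramodule $R$\+modules as direct summands of transfinitely iterated extensions of the objects $\Delta_r(R[s^{-1}])$, then invoke closure of flat $\fR$\+contramodules under such extensions~\cite[Lemma~5.6 and Corollary~7.1(b)]{PR}. That route avoids explicitly appealing to the flat cotorsion pair identity $\sF_0={}^{\perp_1}\sC_0$, at the cost of invoking the filtration description of the left class in a set-generated cotorsion theory. Both approaches ultimately draw on the same body of results from~\cite{PR}.
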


\begin{proof}
 This follows from the existence of the \emph{flat} cotorsion theory
in $\fR\contra$; see~\cite[Corollary~7.8]{PR}.
 Alternatively, one can apply~\cite[Theorem~4.8(a,d)]{PR} to show
that an $r$\+contramodule $R$\+module is very flat if and only if
it is a direct summand of a transfinitely iterated extension of
the objects $\Delta_r(R[s^{-1}])$ in the category $R\modl_{r\ctra}$
(in the sense of~\cite[Definition~4.3]{PR}).
 All such transfinitely iterated extensions are flat
$\fR$\+contramodules by~\cite[Lemma~5.6 and Corollary~7.1(b)]{PR}.
\end{proof}

\begin{thm} \label{contraadjusted-r-contramodules-described}
 Let $R$ be a commutative ring and $r\in R$ be an element such that
the $r$\+torsion in $R$ is bounded.
 Then an $r$\+contramodule $R$\+module is contraadjusted if and only
if it is contraadjusted as an $R$\+module.
\end{thm}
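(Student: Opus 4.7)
The plan is to reduce the equivalence of the two notions of contraadjustedness to a single natural isomorphism
\[
 \Ext^1_R(\Delta_r(R[s^{-1}]),C)\simeq\Ext^1_R(R[s^{-1}],C)
\]
for every $s\in R$ and every $r$-contramodule $R$-module~$C$. Granted this, both implications follow at once. By construction (Section~\ref{bounded-torsion-veryflat-cotorsion-theory-subsecn}), a contraadjusted $r$-contramodule $R$-module is precisely an $r$-contramodule $C$ with $\Ext^1_{R\modl_{r\ctra}}(\Delta_r(R[s^{-1}]),C)=0$ for every $s\in R$; since the full subcategory $R\modl_{r\ctra}\subset R\modl$ is closed under extensions, this Yoneda group coincides with $\Ext^1_R(\Delta_r(R[s^{-1}]),C)$, which via the displayed isomorphism equals $\Ext^1_R(R[s^{-1}],C)$. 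The simultaneous vanishing of the latter over all $s\in R$ is the very definition of $C$ being contraadjusted as an $R$-module.

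To produce the displayed isomorphism, I would apply the exact sequence~\eqref{r-matlis-sequence} with $A=R[s^{-1}]$: this presents the adjunction morphism $\phi\:R[s^{-1}]\rarrow\Delta_r(R[s^{-1}])$ as having kernel $K=h_r(R[s^{-1}])$ and cokernel $Q=\Ext^1_R(R[r^{-1}],R[s^{-1}])$. Splitting $\phi$ into the two short exact sequences
\[
 0\rarrow K\rarrow R[s^{-1}]\rarrow\im\phi\rarrow 0
 \quad\text{and}\quad
 0\rarrow\im\phi\rarrow\Delta_r(R[s^{-1}])\rarrow Q\rarrow 0,
\]
and taking the long exact sequences of $\Hom_R({-},C)$, the isomorphism drops out once one shows that $\Ext^i_R(K,C)=0=\Ext^i_R(Q,C)$ for $i=0$, $1$,~$2$.

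I would reduce this vanishing to two ingredients: that $K$ and $Q$ are $R[r^{-1}]$-modules, and that $\Ext^i_R(M,C)=0$ for every $R[r^{-1}]$-module $M$, every $r$-contramodule $R$-module~$C$, and every $i\ge0$. The second point follows from the change-of-rings spectral sequence
\[
 E_2^{p,q}=\Ext^p_{R[r^{-1}]}(M,\Ext^q_R(R[r^{-1}],C))\Rightarrow\Ext^{p+q}_R(M,C),
\]
every $E_2^{p,q}$ of which vanishes: for $q=0$,~$1$ by the $r$-contramodule property of~$C$, and for $q\ge2$ because $R[r^{-1}]$ has projective dimension at most~$1$ as an $R$-module. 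The $R[r^{-1}]$-module structure on $Q$ is tautological, coming from the first argument of~$\Ext$.

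The main obstacle is equipping $K=h_r(R[s^{-1}])$ with a natural $R[r^{-1}]$-module structure extending its $R$-module structure, and this is precisely where the bounded $r$-torsion hypothesis intervenes. By Lemma~\ref{flat-modules-bounded-torsion}, the $r$-torsion in the flat $R$-module $R[s^{-1}]$ is likewise bounded, say by $r^m$. An elementary check then shows that $h_r(R[s^{-1}])$ is $r$-torsion-free: given $x\in h_r(R[s^{-1}])$ with $rx=0$, use $r$-divisibility to write $x=r^mx_m$; then $r^{m+1}x_m=rx=0$, so by boundedness $r^mx_m=0$, whence $x=0$. Since $r$ acts surjectively (by $r$-divisibility) and injectively on $h_r(R[s^{-1}])$, it acts invertibly, furnishing the required $R[r^{-1}]$-module structure and completing the argument.
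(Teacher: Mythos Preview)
Your proof is correct, and it arrives at the same key isomorphism
\[
 \Ext^1_R(R[s^{-1}],C)\;\simeq\;\Ext^1_{R\modl_{r\ctra}}(\Delta_r(R[s^{-1}]),C)
\]
as the paper, but via a different route.  The paper works directly with Yoneda classes: given an extension $0\to C\to B\to R[s^{-1}]\to 0$ in $R\modl$, it applies~$\Delta_r$ and uses the bounded-torsion hypothesis in the form $\Hom_R(R[r^{-1}]/R,\,R[s^{-1}])=0$ to conclude that $0\to C\to\Delta_r(B)\to\Delta_r(R[s^{-1}])\to 0$ is again short exact; pullback along the adjunction map $R[s^{-1}]\to\Delta_r(R[s^{-1}])$ gives the inverse correspondence.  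You instead dissect the adjunction map~$\phi$ into its kernel $K=h_r(R[s^{-1}])$ and cokernel $Q=\Ext^1_R(R[r^{-1}],R[s^{-1}])$, show both are $R[r^{-1}]$\+modules (this is where you invoke bounded torsion, to make $K$ $r$\+torsion-free), and then kill all $\Ext^i_R({-},C)$ for $R[r^{-1}]$\+modules via the change-of-rings spectral sequence and the $r$\+contramodule condition on~$C$.  Your approach is a bit more computational but has the virtue of giving the isomorphism in all degrees~$i$ at once and making the precise role of the hypothesis (turning $h_r$ into an honest localization) very transparent; the paper's functorial bijection is shorter and avoids the spectral sequence.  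The two uses of bounded torsion---``no $r$\+divisible $r$\+torsion in $R[s^{-1}]$'' versus ``$h_r(R[s^{-1}])$ is $r$\+torsion-free''---are of course equivalent reformulations of the same fact.
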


\begin{proof}
 Recall that the functor $\Delta_r$ can be computed as
$\Delta_r(A)=\Ext^1_R(K^\bu,A)$, where $K^\bu=K^\bu(R;r)$ denotes
the two-term complex $R\rarrow R[r^{-1}]$ (see Section~\ref{toy-secn}).
 Now let $0\rarrow C\rarrow B\rarrow A\rarrow 0$ be a short exact
sequence of $R$\+modules.
 Applying the functor $\Ext_R^*(K^\bu,{-})$, we obtain a long exact
sequence
\begin{equation} \label{derived-delta-sequence}
 \dotsb\lrarrow\Hom_R(R[r^{-1}]/R,A)\lrarrow\Delta_r(C)\lrarrow
 \Delta_r(B)\lrarrow\Delta_r(A)\lrarrow0.
\end{equation}

 Let $C$ be an $r$\+contramodule $R$\+module.
 Suppose that we have a short exact sequence of $R$\+modules
\begin{equation} \label{contraadjusted-over-R-extension-sequence}
 0\lrarrow C\lrarrow B\lrarrow R[s^{-1}]\lrarrow0.
\end{equation}
 Applying the functor $\Delta_r$, we get a short exact sequence
\begin{equation} \label{induced-R-mod-r-ctra-ext-sequence}
 0\lrarrow C\lrarrow \Delta_r(B)\lrarrow \Delta_r(R[s^{-1}])\lrarrow0,
\end{equation}
because $\Delta_r(C)=C$ and $\Hom_R(R[r^{-1}]/R,\.R[s^{-1}])=0$ by
Lemma~\ref{flat-modules-bounded-torsion}.
 There is a natural (adjunction) morphism from the short exact
sequence~\eqref{contraadjusted-over-R-extension-sequence} into
the short exact sequence~\eqref{induced-R-mod-r-ctra-ext-sequence},
hence it follows that
the sequence~\eqref{contraadjusted-over-R-extension-sequence} is
the pullback of the sequence~\eqref{induced-R-mod-r-ctra-ext-sequence}
with respect to the morphism $R[s^{-1}]\rarrow\Delta_r(R[s^{-1}])$.

 Conversely, given a short exact sequence of $r$\+contramodule
$R$\+modules
\begin{equation} \label{contraadjusted-r-contramod-ext-sequence}
 0\lrarrow C\lrarrow B'\lrarrow \Delta_r(R[s^{-1}])\lrarrow0,
\end{equation}
one can take the pullback with respect to the morphism
$R[s^{-1}]\rarrow\Delta_r(R[s^{-1}])$ in order to produce a short
exact sequence~\eqref{contraadjusted-over-R-extension-sequence}.
 Then the adjunction provides an isomorphism
from the exact sequence~\eqref{induced-R-mod-r-ctra-ext-sequence}
into the exact sequence~\eqref{contraadjusted-r-contramod-ext-sequence}.

 We have constructed an isomorphism of the Ext modules
$$
 \Ext_R^1(R[s^{-1}],C)\simeq\Ext_{R\modl_{r\ctra}}^1(\Delta_r(R[s^{-1}]),C)
$$
for any $r$\+contramodule $R$\+module $C$, implying the assertion
of the theorem.
\end{proof}

 Using the sequence~\eqref{derived-delta-sequence}, one can also show
that the objects $\Delta_r(R[s^{-1}])$ have projective dimension at
most~$1$ in $R\modl_{r\ctra}$.
 Indeed, the functor $\Delta_r\:R\modl\rarrow R\modl_{r\ctra}$ is left
adjoint to an exact functor, so it takes projectives to projectives.
 Applying it to a projective resolution $0\rarrow Q\rarrow P\rarrow
R[s^{-1}]\rarrow0$ of the $R$\+module $R[s^{-1}]$, one obtains
a projective resolution $0\rarrow\Delta_r(Q)\rarrow\Delta_r(P)\rarrow
\Delta_r(R[s^{-1}])\rarrow0$ of the $r$\+contramodule $R$\+module
$\Delta_r(R[s^{-1}])$.

\begin{rem} \label{two-very-flat-cotorsion-theories-remark}
 Alternatively, one could define a \emph{very flat\/
$\fR$\+contramodule} as an $\fR$\+contramodule $F$ such that
the $R/r^nR$\+module $F/r^nF$ is very flat for all $n\ge1$.
 This also leads to a complete cotorsion theory in $\fR\contra$
\cite[Example~7.12(3)]{PR}.
 In fact, such a definition of a very flat $r$\+contramodule $R$\+module
for a commutative ring $R$ with bounded $r$\+torsion is equivalent to
the above, so the two ``very flat cotorsion theories in
$R\modl_{r\ctra}$'' coincide.
 This can be shown by comparing
Theorem~\ref{contraadjusted-r-contramodules-described} with
the results of~\cite[Section~D.4]{Pcosh}, or more
specifically~\cite[Corollary~D.4.8]{Pcosh} (implying that the
classes of contraadjusted objects in the two ``very flat cotorsion
theories'' coincide).
 We will discuss the theory of~\cite[Section~D.4]{Pcosh} below in
Section~\ref{veryflat-cotorsion-theory-quotseparated-subsecn}.
\end{rem}

\subsection{Separated $r$-contramodule $R$-modules}
\label{separated-contramodules-subsecn}
 Let $R$ be a commutative ring and $r\in R$ be an element.
 An $R$\+module $C$ is said to be \emph{$r$\+complete}, if the natural
map
$$
 \lambda_{r,C}\:C\lrarrow \Lambda_r(C)=
 \varprojlim\nolimits_{n\ge1}C/r^nC
$$
is surjective, and \emph{$r$\+separated}, if the map~$\lambda_{r,C}$
is injective (cf.\ Section~\ref{contramodules-outline}).

 Clearly, an $R$\+module $C$ is $r$\+separated if and only if
the intersection $\bigcap_{n\ge1}r^nC\subset C$ vanishes.
 It follows that any $R$\+submodule of an $r$\+separated $R$\+module
is $r$\+separated.

 Any $r$\+contraadjusted $R$\+module is
$r$\+complete~\cite[Theorem~2.3(a)]{Pcta}.
 Any $r$\+sepa\-rated and $r$\+complete $R$\+module is
an $r$\+contramodule~\cite[Theorem~2.4(b)]{Pcta}.
 However, an $r$\+contramodule $R$\+module does not need to be
$r$\+separated~\cite[Example~4.33]{PSY}, 
\cite[Example~2.7(1)]{Pcta}.
 The kernel $\bigcap_{n\ge1}r^nC\subset C$ of the $r$\+completion
morphism~$\lambda_{r,C}$ for an $r$\+contramodule $R$\+module $C$
was computed in Sublemma~\ref{delta-lambda-r-sequence}.

 The following corollaries pick out the aspects of the results of
Sections~\ref{noetherian-flat-cotorsion-theory-subsecn}\+-%
\ref{bounded-torsion-veryflat-cotorsion-theory-subsecn} relevant
for the proofs of Main Lemmas~\ref{noetherian-main-lemma}
and~\ref{bounded-torsion-main-lemma} in
Section~\ref{noetherian-mlemma-secn} below.
 We recall that the terms ``$R$\+flat'' and ``$R$\+contraadjusted'' are
being used as shorthands for ``flat as an $R$\+module'' and
``contraadjusted as an $R$\+module''.

\begin{cor} \label{noetherian-cokernel-of-contraadjusted-separated}
 Let $R$ be a Noetherian commutative ring, $r\in R$ be an element,
and $C$ be an $R$\+contraadjusted $r$\+contramodule $R$\+module.
 Then the $R$\+module $C$ can be presented as the cokernel of
an injective morphism of $R$\+contraadjusted $r$\+separated
$r$\+complete $R$\+modules.
\end{cor}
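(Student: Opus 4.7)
My plan is to obtain the required approximation sequence by combining the classical very flat / contraadjusted cotorsion pair in $R\modl$ with the ``$h_r$-quotient'' construction of Lemma~\ref{noetherian-r-contra-special-flat-precover}. First, I will take a special flat precover of $C$ in the very flat cotorsion theory on $R\modl$, producing a short exact sequence
$$
 0\lrarrow K\lrarrow F\lrarrow C\lrarrow 0
$$
with $F$ very flat (hence flat) and $K$ contraadjusted. The key preliminary observation is that $F$ itself is then $R$-contraadjusted: indeed, $C$ is contraadjusted by hypothesis, $K$ is contraadjusted by construction, and the right-hand class of a cotorsion theory is always closed under extensions.

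Next, since $F$ is flat and $K$ is $r$-contraadjusted, Lemma~\ref{noetherian-r-contra-special-flat-precover} applies and produces a short exact sequence
$$
 0\lrarrow K/h_r(K)\lrarrow F/h_r(F)\lrarrow C\lrarrow 0
$$
of $r$-contramodule $R$-modules, with $F/h_r(F)$ flat as an $R$-module. Both outer terms are $R$-contraadjusted as quotients of the contraadjusted modules $K$ and $F$; this uses the fact that the very flat cotorsion theory is generated by modules of projective dimension at most~$1$, so the class of contraadjusted $R$-modules is closed under arbitrary quotients (as recalled in Section~\ref{cotorsion-theories-abelian-categories-subsecn}). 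Moreover, every $r$-contramodule $R$-module is $r$-complete (Section~\ref{contramodules-outline}), so both $K/h_r(K)$ and $F/h_r(F)$ are $r$-complete.

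The main step is $r$-separatedness, and this is precisely where the Noetherian hypothesis enters. The flat $R$-module $F/h_r(F)$ has bounded $r$-torsion by Lemma~\ref{noetherian-flat-bounded-torsion}; if $m$ is such a bound, then the projective system $\{{}_{r^n}(F/h_r(F))\}_{n\ge 1}$ stabilizes in cardinality for $n\ge m$, and the composition of any $m$ consecutive transition maps equals multiplication by $r^m$, which is zero on ${}_{r^m}(F/h_r(F))$. Hence the projective system is pro-zero, so $\varprojlim^1_{n\ge 1}{}_{r^n}(F/h_r(F))=0$. Applying Sublemma~\ref{delta-lambda-r-sequence} to $A=F/h_r(F)$, noting that $\Delta_r(A)=A$ since $A$ is an $r$-contramodule, we obtain that the natural map $F/h_r(F)\rarrow\Lambda_r(F/h_r(F))$ is injective, i.e., $F/h_r(F)$ is $r$-separated. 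Finally, $K/h_r(K)$ is $r$-separated as a submodule of the $r$-separated module $F/h_r(F)$.

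I expect no serious obstacle beyond bookkeeping. The only nonautomatic point is the choice to invoke the \emph{very flat}/contraadjusted cotorsion pair rather than the flat/cotorsion pair in $R\modl$: the former choice is what makes the middle term $F$ of the precover automatically contraadjusted via extension-closure, so that contraadjustedness descends to the $h_r$-quotients. Everything else is a direct application of Lemma~\ref{noetherian-r-contra-special-flat-precover}, Lemma~\ref{noetherian-flat-bounded-torsion}, and Sublemma~\ref{delta-lambda-r-sequence}.
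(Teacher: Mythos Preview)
Your proof is correct and follows essentially the same route as the paper's. The paper starts from the flat/cotorsion approximation in $R\modl$ (so that $K/h_r(K)$ remains cotorsion by Lemma~\ref{noetherian-r-contra-special-flat-precover}), whereas you start from the very flat/contraadjusted approximation and use closure of contraadjusted modules under quotients; both choices feed into Lemma~\ref{noetherian-r-contra-special-flat-precover} identically. The only other difference is that the paper cites the external fact that $R$-flat $r$-contramodule $R$-modules are $r$-separated, while you supply a direct argument via bounded torsion, the pro-zero system $\{{}_{r^n}(F/h_r(F))\}$, and Sublemma~\ref{delta-lambda-r-sequence}; your argument is a valid in-paper proof of that cited fact in the case at hand.
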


\begin{proof}
 By Theorem~\ref{noetherian-r-contra-flat-cotorsion-theory-thm},
or more specifically by
Lemma~\ref{noetherian-r-contra-special-flat-precover},
there exists a short exact sequence of $r$\+contramodule $R$\+modules
$0\rarrow K\rarrow F\rarrow C\rarrow 0$, where the $R$\+module $F$
is flat and the $R$\+module $K$ is cotorsion.
 What is important for us is that the $R$\+module $K$ is
contraadjusted; since the $R$\+module $C$ is contraadjusted by
assumption, it follows that the $R$\+module $F$ is contraadjusted, too.

 Furthermore, any $R$\+flat $r$\+contramodule $R$\+module is
$r$\+separated~\cite[Corollary~10.3(b)]{Pcta}.
 The $R$\+module $F$ being $r$\+separated, it follows that its submodule
$K$ is $r$\+separated, too.
 Thus $K\rarrow F$ is an injective morphism of $R$\+contraadjusted
$r$\+separated $r$\+contramodules with the cokernel~$C$.
\end{proof}

\begin{cor} \label{bounded-torsion-cokernel-of-contraadjusted-separated}
 Let $R$ be a commutative ring and $r\in R$ be an element such that
the $r$\+torsion in $R$ is bounded.
 Then any $R$\+contraadjusted $r$\+contramodule $R$\+module can be
presented as the cokernel of an injective morphism of
$R$\+contraadjusted $r$\+separated $r$\+complete $R$\+modules.
\end{cor}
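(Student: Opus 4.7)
The plan is to mirror the proof of Corollary~\ref{noetherian-cokernel-of-contraadjusted-separated}, with the very flat cotorsion theory in $R\modl_{r\ctra}$ introduced in Section~\ref{bounded-torsion-veryflat-cotorsion-theory-subsecn} playing the role of the flat cotorsion theory in $R\modl_{r\ctra}$ used in the Noetherian argument.

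Given the $R$\+contraadjusted $r$\+contramodule $C$, I would first invoke completeness of the very flat cotorsion theory in $R\modl_{r\ctra}$ (a consequence of Theorem~\ref{generated-by-set-of-flat-contra-thm}, since this cotorsion theory is generated by the set of flat $\fR$\+contramodules $\Delta_r(R[s^{-1}])$, \,$s\in R$) to produce a special precover approximation sequence
\[
 0\lrarrow K\lrarrow F\lrarrow C\lrarrow 0
\]
in $R\modl_{r\ctra}$, with $F$ a very flat $r$\+contramodule and $K$ a contraadjusted $r$\+contramodule.

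By Theorem~\ref{contraadjusted-r-contramodules-described}, under the bounded-torsion hypothesis the notion of contraadjustedness inside $R\modl_{r\ctra}$ coincides with that of $R$\+contraadjustedness; hence $K$ is $R$\+contraadjusted, and since the class of $R$\+contraadjusted $R$\+modules is closed under extensions, $F$ is $R$\+contraadjusted as well. By Proposition~\ref{very-flat-contramodules-are-flat}, $F$ is moreover a flat $\fR$\+contramodule. Under the bounded-torsion assumption, flat $\fR$\+contramodules coincide with Yekutieli's $I$\+adically flat $R$\+modules (as recalled in the discussion in Section~\ref{bounded-torsion-veryflat-cotorsion-theory-subsecn}), and in particular $F$ is $r$\+separated. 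The submodule $K\subset F$ is therefore $r$\+separated too, and both $F$ and $K$ are $r$\+complete by virtue of being $r$\+contraadjusted. Thus the short exact sequence above exhibits $C$ as the cokernel of an injective morphism of $R$\+contraadjusted $r$\+separated $r$\+complete $R$\+modules, as required.

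The main obstacle, and the real content of the argument beyond the formal parallel with the Noetherian case, is the verification that flat $\fR$\+contramodules are $r$\+separated in the bounded-torsion setting. The cleanest route is through the identification with $I$\+adically flat $R$\+modules cited above. A more self-contained alternative would start from the description of very flat $r$\+contramodules as direct summands of transfinitely iterated extensions (in the projective-limit sense, inside $R\modl_{r\ctra}$) of copies of the objects $\Delta_r(R[s^{-1}])$: each of these is isomorphic to $\Lambda_r(R[s^{-1}])$ by Sublemma~\ref{delta-lambda-r-sequence} (the $\varprojlim^1$ term vanishes because $R[s^{-1}]$ has bounded $r$\+torsion by Lemma~\ref{flat-modules-bounded-torsion}), hence is $r$\+separated, after which one would have to check that $r$\+separatedness is inherited by the relevant transfinite extensions and direct summands inside $R\modl_{r\ctra}$.
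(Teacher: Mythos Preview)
Your proposal is correct and follows essentially the same approach as the paper's own proof: obtain a special precover sequence from the very flat cotorsion theory in $R\modl_{r\ctra}\simeq\fR\contra$, use Theorem~\ref{contraadjusted-r-contramodules-described} to see that $K$ (and hence $F$) is $R$\+contraadjusted, use Proposition~\ref{very-flat-contramodules-are-flat} to see that $F$ is a flat $\fR$\+contramodule, and conclude that $F$ and its submodule $K$ are $r$\+separated. The one minor difference is in the justification of the last step: the paper simply cites the general fact that flat $\fR$\+contramodules are $r$\+separated (\cite[Corollary~D.1.7]{Pcosh} or \cite[Corollary~6.15]{PR}), valid for any commutative $R$ and $r\in R$ without the bounded-torsion hypothesis, whereas you route through the identification with $I$\+adically flat modules and sketch an alternative via the building blocks $\Delta_r(R[s^{-1}])$; both are legitimate, but the direct citation is shorter and shows that the bounded-torsion assumption is not needed at this particular point.
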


\begin{proof}
 By Theorem~\ref{generated-by-set-of-flat-contra-thm}, the very flat
cotorsion theory in $\fR\contra$ (as defined in
Section~\ref{bounded-torsion-veryflat-cotorsion-theory-subsecn})
is complete.
 By Theorem~\ref{bounded-torsion-contramodule-category-equivalence},
the categories $\fR\contra$ and $R\modl_{r\ctra}$ coincide.
 Hence for any $r$\+contramodule $R$\+module $C$ there exists
a short exact sequence $0\rarrow K\rarrow F\rarrow C\rarrow0$, where
$K$ is a contraadjusted $r$\+contramodule $R$\+module and
$F$ is a very flat $r$\+contramodule $R$\+module.

 By Theorem~\ref{contraadjusted-r-contramodules-described}, $K$ is
a contraadjusted $R$\+module.
 Assuming that $C$ is a contraadjusted $R$\+module, we can conclude
that the $R$\+module $F$ is contraadjusted, too.
 By Proposition~\ref{very-flat-contramodules-are-flat}, $F$ is
a flat $\fR$\+contramodule.
 By~\cite[Corollary~D.1.7]{Pcosh} or~\cite[Corollary~6.15]{PR},
all flat $\fR$\+contramodules are $r$\+separated.
 So the $R$\+module $F$ is $r$\+separated, and it follows that its
submodule $K$ is $r$\+separated, too.

 Thus $K\rarrow F$ is an injective morphism of $R$\+contraadjusted
$r$\+separated $r$\+contramodule $R$\+modules with the cokernel~$C$.
\end{proof}

\begin{rem} \label{bounded-torsion-weakened-remark}
 Corollary~\ref{bounded-torsion-cokernel-of-contraadjusted-separated}
is a generalization of
Corollary~\ref{noetherian-cokernel-of-contraadjusted-separated}, and
Corollary~\ref{bounded-torsion-cokernel-of-contraadjusted-separated}
as it is stated can be, in turn, generalized a bit further, though
admittedly not much.

 As above, let $h_r(M)\subset M$ denote the maximal $r$\+divisible
submodule in an $R$\+module~$M$. 
 In particular, $h_r(R)$ is an ideal in~$R$.
 Let $C$ be an $r$\+contramodule $R$\+module.
 For any element $c\in C$, consider the $R$\+module morphism
$f_c\:R\rarrow C$ taking~$1$ to~$c$.
 Since there are no $r$\+divisible submodules in $C$, we have
$f_c(h_r(R))=0$.
 Hence the ideal $h_r(R)\subset R$ acts by zero in $C$, so $C$ is
an $R/h_r(R)$\+module.

 Furthermore, an $R/h_r(R)$\+module is contraadjusted if and only if
it is contraadjusted as an $R$\+module~\cite[Lemma~1.6.6(a)]{Pcosh},
and the property of an $R$\+module to be an $r$\+contramodule only
depends on its underlying abelian group structure and the action of
the operator~$r$ \cite[Remark~5.5]{Pcta}.
 So any $R$\+contraadjusted $r$\+contramodule $R$\+module $C$ is at
the same time an $R/h_r(R)$\+contraadjusted $\bar r$\+contramodule
$R/h_r(R)$\+module, where $\bar r\in R/h_r(R)$ denotes the image of
the element $r\in R$.

 Assume that the $r$\+torsion (\,$=$~$\bar r$\+torsion) in $R/h_r(R)$
is bounded.
 Applying
Corollary~\ref{bounded-torsion-cokernel-of-contraadjusted-separated}
to the $R/h_r(R)$\+module $C$, we conclude that it is the cokernel of
an injective morphism of $R/h_r(R)$\+contraadjusted $\bar r$\+separated
$\bar r$\+contramodule $R/h_r(R)$\+modules, or, which is the same,
$R$\+contraadjusted $r$\+separated $r$\+contramodule $R$\+modules.

 For any $R$\+module $M$ and integer $n\ge1$, any element
$\bar x\in M/h_r(M)$ such that $r^n\bar x=0$ in $M/h_r(M)$ can be
lifted to an element $x\in M$ such that $r^nx=0$ in~$M$.
 Hence the $r$\+torsion in $M/h_r(M)$ is bounded whenever
the $r$\+torsion in $M$ is bounded, but the converse is not true.
 In fact, the $r$\+torsion in $M/h_r(M)$ is bounded if and only if
the $r$\+torsion in $M$ is a sum of bounded $r$\+torsion and
$r$\+divisible $r$\+torsion.

 To sum up, the condition that the $r$\+torsion in $R$ is bounded
in Corollary~\ref{bounded-torsion-cokernel-of-contraadjusted-separated}
can be replaced with the weaker condition that the $r$\+torsion in
$R/h_r(R)$ is bounded.
 In other words, $r$\+divisible $r$\+torsion in $R$ does not yet
present a problem for
Corollary~\ref{bounded-torsion-cokernel-of-contraadjusted-separated},
but any non-$r$-divisible unbounded $r$\+torsion does
(cf.\ Proposition~\ref{all-quotseparated-if-and-only-if} below).
 Consequently, the same applies to
Main Lemma~\ref{bounded-torsion-main-lemma}
(cf.\ the proof in Section~\ref{noetherian-mlemma-secn}).
 So we can prove the conclusion of
Main Lemma~\ref{bounded-torsion-main-lemma} with our methods
assuming that the $r$\+torsion in $R$ is a sum of bounded
$r$\+torsion and $r$\+divisible $r$\+torsion.
\end{rem}

\subsection{Quotseparated $r$-contramodule $R$-modules}
\label{quotseparated-subsecn}
 The rest of Section~\ref{contramodule-approx-secn} is devoted to
the discussion of a partial extension of
Corollary~\ref{bounded-torsion-cokernel-of-contraadjusted-separated}
to arbitrary commutative rings $R$ with an element $r\in R$.

 Notice first of all that, dropping the contraadjustness requirements,
a given $r$\+contramodule $R$\+module needs to be a quotient of
an $r$\+separated $r$\+contramodule $R$\+module for the conclusion
Corollary~\ref{bounded-torsion-cokernel-of-contraadjusted-separated}
to have a chance to hold for it.
 This idea is captured in the following definition.

\begin{defn}
 Let $R$ be a commutative ring and $r\in R$ be an element.
 An $r$\+contramodule $R$\+module is called \emph{quotseparated}
if it is a quotient $R$\+module of an $r$\+separated
$r$\+contramodule $R$\+module.
 We denote the full subcategory of quotseparated $r$\+contramodule
$R$\+modules by $R\modl_{r\ctra}^\qs\subset R\modl_{r\ctra}$.
\end{defn}

\begin{lem}
 Let $R$ be a commutative ring and $r\in R$ be an element.
 Then \par
\textup{(a)} the full subcategory $R\modl_{r\ctra}^\qs$ is closed
under subobjects, quotient objects, and infinite products
in $R\modl_{r\ctra}$; \par
\textup{(b)} any $r$\+contramodule $R$\+module is an extension
of two quotseparated $r$\+contra\-module $R$\+modules.
\end{lem}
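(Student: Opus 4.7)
\emph{Proof plan.} For part~(a), closure under quotients is immediate: if $D\in R\modl_{r\ctra}^\qs$ is presented by a surjection $\pi\:S\twoheadrightarrow D$ with $S$ an $r$\+separated $r$\+contramodule, and $q\:D\twoheadrightarrow C$ is any surjection onto an $r$\+contramodule, then $q\circ\pi$ presents $C$ as quotseparated. (Along the way one notes that an epimorphism in $R\modl_{r\ctra}$ between $r$\+contramodules is automatically an $R$\+surjection, via the cokernel formula $\Delta_r(\coker_R f)$ together with the absence of nonzero $r$\+divisible submodules in an $r$\+contramodule.) Closure under products is formal: $R\modl_{r\ctra}\subseteq R\modl$ is closed under products by~\cite[Theorem~1.2(a)]{Pcta}; if $S_i\twoheadrightarrow C_i$ are presentations with $S_i$ $r$\+separated, then $\prod_iS_i$ remains $r$\+separated because $\bigcap_n r^n\prod_iS_i=\prod_i\bigcap_n r^nS_i=0$, and $\prod_iS_i\twoheadrightarrow\prod_iC_i$ is a surjection.

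\emph{Subobjects} form the subtle case. Let $C\hookrightarrow D$ be a subobject in $R\modl_{r\ctra}$ with $D$ quotseparated via $\pi\:S\twoheadrightarrow D$, set $K:=\ker\pi$, and put $\tilde C:=\pi^{-1}(C)\subseteq S$. As a submodule of the $r$\+separated module $S$, \ $\tilde C$ is $r$\+separated. To finish I need to show $\tilde C$ lies in $R\modl_{r\ctra}$: since $R\modl_{r\ctra}$ is closed under kernels in $R\modl$ by~\cite[Theorem~1.2(a)]{Pcta}, the module $K=\ker\pi$ is an $r$\+contramodule, and then the short exact sequence $0\to K\to\tilde C\to C\to 0$ forces $\tilde C\in R\modl_{r\ctra}$ by extension\+closedness. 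Thus $\tilde C\twoheadrightarrow C$ presents $C$ as quotseparated.

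For part~(b), I apply Sublemma~\ref{delta-lambda-r-sequence} to $A=C$. Since $C$ is already an $r$\+contramodule, $\Delta_r(C)=C$, and the sublemma produces the short exact sequence
$$
 0\lrarrow\varprojlim\nolimits_{n\ge1}^1{}_{r^n}C\lrarrow C\lrarrow\Lambda_r(C)\lrarrow 0.
$$
Both outer terms are quotseparated. The completion $\Lambda_r(C)=\varprojlim_n C/r^nC$ embeds into $\prod_n C/r^nC$; each factor $C/r^nC$ is an $R/r^nR$\+module, hence $r^n$\+annihilated and in particular $r$\+separated, and the product argument from~(a) makes $\prod_n C/r^nC$ an $r$\+separated $r$\+contramodule, so $\Lambda_r(C)$ is itself $r$\+separated and trivially quotseparated (a quotient of itself). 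The derived limit $\varprojlim^1_n{}_{r^n}C$ is by definition the cokernel of the map $(\mathrm{id}-\mathrm{shift})\:\prod_n{}_{r^n}C\to\prod_n{}_{r^n}C$; since each ${}_{r^n}C$ is $r^n$\+annihilated, the same product argument makes $\prod_n{}_{r^n}C$ an $r$\+separated $r$\+contramodule, exhibiting $\varprojlim^1_n{}_{r^n}C$ as quotseparated.

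The main obstacle is the subobject case in~(a): submodules of $r$\+contramodules are not $r$\+contramodules in general, so one cannot naively descend the presentation of $D$ to one of $C$. The argument succeeds because the candidate submodule $\tilde C$ is sandwiched between the two modules $K$ and $C$ that automatically lie in $R\modl_{r\ctra}$, letting extension\+closedness do the work.
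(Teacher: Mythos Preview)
Your proof is correct and follows essentially the same approach as the paper's: the paper is terser, simply noting for part~(a) that closure under subobjects and products follows because the full subcategory of $r$\+separated $r$\+contramodules is closed under those operations in $R\modl_{r\ctra}$ (your pullback $\tilde C=\pi^{-1}(C)$ is exactly how one unpacks that claim, and your extension argument for $\tilde C\in R\modl_{r\ctra}$ is the right justification), while for part~(b) both you and the paper invoke Sublemma~\ref{delta-lambda-r-sequence} and observe that $\Lambda_r(C)$ is $r$\+separated and $\varprojlim^1_n{}_{r^n}C$ is a quotient of the $r$\+separated $r$\+contramodule $\prod_n{}_{r^n}C$.
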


\begin{proof}
 Part~(a): the full subcategory $R\modl_{r\ctra}^\qs\subset
R\modl_{r\ctra}$ is closed under quotient objects by definition,
and it is closed under subobjects and infinite products because
the full subcategory of $r$\+separated $r$\+contramodules is
closed under subobjects and infinite products in $R\modl_{r\ctra}$.
 Part~(b): for any $r$\+contramodule $R$\+module $C$, one has
$\Delta_r(C)=C$, hence $C$ is the middle term of the short exact
sequence of $r$\+contramodule $R$\+modules provided by
Sublemma~\ref{delta-lambda-r-sequence}.
 Now $\Lambda_r(C)=\varprojlim_{n\ge1}C/r^nC$ is an $r$\+separated
$r$\+contramodule, and $\varprojlim_{n\ge1}^1\.{}_{r^n}C$ is
a quotient $r$\+contramodule $R$\+module of an $r$\+separated
$r$\+contramodule $R$\+module $\prod_{n\ge1}\.{}_{r^n}C$.
\end{proof}

 The following theorem is a generalization of
Theorem~\ref{bounded-torsion-contramodule-category-equivalence}
to the unbounded torsion case.
 As in Section~\ref{bounded-torsion-veryflat-cotorsion-theory-subsecn},
we set $\fR=\varprojlim_{n\ge1} R/r^nR$.

\begin{thm} \label{quotseparated-contramodule-category-equivalence}
 Let $R$ be a commutative ring and $r\in R$ be an element.
 Then the forgetful functor\/ $\fR\contra\rarrow R\modl$ induces
an equivalence of abelian categories\/ $\fR\contra\simeq
R\modl_{r\ctra}^\qs$.
\end{thm}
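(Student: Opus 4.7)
The plan is to construct a quasi-inverse to the forgetful functor $\Phi\colon\fR\contra\rarrow R\modl$, extending the bounded-torsion equivalence of Theorem~\ref{bounded-torsion-contramodule-category-equivalence} to the general setting.

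First, I verify that $\Phi$ lands inside $R\modl_{r\ctra}^\qs$. Every $\fR$\+contramodule is a quotient (in the category $\fR\contra$) of a free $\fR$\+contramodule $\fR[[X]]$ on some set $X$. As an $R$\+module, $\fR[[X]]$ identifies with the projective limit $\varprojlim_{n\ge1}(R/r^nR)[X]$, i.e., the $r$\+adic completion of the free $R$\+module $R[X]$; in particular it is $r$\+separated and $r$\+complete, so it is an $r$\+separated $r$\+contramodule $R$\+module. Invoking the exactness of the forgetful functor $\fR\contra\rarrow R\modl$ (see~\cite[Section~5]{PR}), the underlying $R$\+module of any $\fR$\+contramodule $C$ is a quotient of the underlying $R$\+module of some $\fR[[X]]$, and so it belongs to $R\modl_{r\ctra}^\qs$.

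Next, I construct the quasi-inverse $\Psi\colon R\modl_{r\ctra}^\qs\rarrow\fR\contra$. If $M$ is $r$\+separated and $r$\+complete, then $M\cong\varprojlim_{n\ge1}M/r^nM$, each quotient $M/r^nM$ is naturally an $\fR$\+module via the surjection $\fR\rarrow R/r^nR$, and the projective limit inherits a canonical $\fR$\+contramodule structure (the monad $\fR[[{-}]]$ acts on $M$ by term-by-term action followed by $r$\+adically convergent summation). For a general quotseparated $r$\+contramodule $C$, I pick a surjection $\pi\colon E\rarrow C$ with $E$ an $r$\+separated $r$\+contramodule and set $K=\ker\pi$; then $K$ is a sub-$r$\+contramodule of $E$ (since $R\modl_{r\ctra}$ is closed under kernels) and is $r$\+separated (as a submodule of an $r$\+separated module), so $K$ likewise carries a canonical $\fR$\+contramodule structure. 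I define $\Psi(C)$ as the cokernel in $\fR\contra$ of $\Psi(K)\rarrow\Psi(E)$; independence of the chosen presentation and functoriality follow from universal properties.

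It remains to verify that $\Phi\Psi\cong\id$ and $\Psi\Phi\cong\id$. The identity $\Phi\Psi(M)=M$ on $r$\+separated $r$\+complete modules holds by construction, and extends to all quotseparated modules precisely when $\Phi$ preserves the cokernel defining $\Psi(C)$. For $\Psi\Phi\cong\id$, the main point is fully faithfulness of $\Phi$: when $B$ is $r$\+separated, any $R$\+linear map $\Phi A\rarrow\Phi B$ is determined by its reductions modulo $r^n$, each of which is automatically $(R/r^nR)$\+linear, hence $\fR$\+linear; the general quotseparated case propagates through a two-term presentation $K\rarrow E\rarrow B$.

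The main obstacle is establishing the exactness of $\Phi\colon\fR\contra\rarrow R\modl$ in the form needed, or equivalently, identifying $\fR$\+contramodule cokernels with ordinary $R$\+module cokernels inside the quotseparated subcategory. The restriction to quotseparated modules is essential here: without it, cokernels in $\fR\contra$ would typically involve an additional $r$\+adic completion step relative to the $R$\+module cokernel, obstructing the equivalence. The required exactness rests on the explicit description of the free contramodules $\fR[[X]]$ as $r$\+adic completions and on the homological analysis of flat $\fR$\+contramodules developed in~\cite[Sections~5-7]{PR}.
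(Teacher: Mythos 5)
Your proof goes a genuinely different route from the paper's. The paper does not build a quasi-inverse from scratch; instead it invokes the description from~\cite[Example~3.6(3)]{Pper} of the essential image of the (already-known-to-be fully faithful) forgetful functor $\fR\contra\rarrow R\modl$ as the class of $C\in R\modl_{r\ctra}$ for which the projection $\Delta_r(R[C])\rarrow C$ factors through $\Lambda_r(R[C])$, and then observes that this condition is equivalent to quotseparatedness (one direction because $\Lambda_r(R[C])$ is $r$\+separated, the other because the projective $\Delta_r(R[C])$ lifts along any epimorphism from a separated $r$\+contramodule and then factors through $\Lambda_r$ as the maximal separated quotient). Your first step --- identifying $\fR[[X]]$ with $\Lambda_r(R[X])$ and using exactness of the forgetful functor to conclude $\Phi(\fR\contra)\subset R\modl_{r\ctra}^\qs$ --- is correct and parallels one direction of the paper's argument.

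The gap is in the converse direction, and it is not just the exactness point you flag at the end. You assert that ``the general quotseparated case propagates through a two-term presentation $K\rarrow E\rarrow B$,'' but this does not go through as stated. Comparing the two left-exact sequences
$0\rarrow\Hom_{\fR\contra}(A,K)\rarrow\Hom_{\fR\contra}(A,E)\rarrow\Hom_{\fR\contra}(A,B)$ and
$0\rarrow\Hom_R(\Phi A,\Phi K)\rarrow\Hom_R(\Phi A,\Phi E)\rarrow\Hom_R(\Phi A,\Phi B)$, with the two left verticals isomorphisms by the separated case, yields only \emph{injectivity} of the comparison map $\Hom_{\fR\contra}(A,B)\rarrow\Hom_R(\Phi A,\Phi B)$; surjectivity (fullness) is precisely what is missing, and the natural fix --- lifting a given $\phi\colon\Phi A\rarrow\Phi B$ along $\Phi E\rarrow\Phi B$ --- fails because $\Phi A$ (say $\Lambda_r(R[X])$) is not a projective $R$\+module. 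Using a presentation of $A$ instead does not help either, since the target $\Phi B$ remains nonseparated. For the same reason, ``independence of the chosen presentation and functoriality follow from universal properties'' for your $\Psi$ is not justified: the uniqueness of the $\fR$\+contramodule structure lifting a given underlying $R$\+module is exactly the fullness you have not yet established. To repair the argument you would need a genuine proof of fullness of the forgetful functor on $R\modl_{r\ctra}^\qs$ (which is what~\cite{Pper} supplies), or some substitute such as an $\Ext^1$\+comparison across the two categories.
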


\begin{proof}
 For any $r$\+contramodule $R$\+module $C$, consider the $R$\+module
$R[C]$ freely generated by the elements of~$C$.
 By the adjunction property of the functor~$\Delta_r$,
the natural surjective $R$\+module morphism $R[C]\rarrow C$ induces
an $r$\+contramodule $R$\+module morphism $\Delta_r(R[C])\rarrow C$.
 According to the discussion in~\cite[Example~3.6(3)]{Pper},
the forgetful functor identifies the category $\fR\contra$ with
the full subcategory in $R\modl_{r\ctra}$ consisting of all
the $r$\+contramodule $R$\+modules $C$ for which the morphism
$\Delta_r(R[C])\rarrow C$ factorizes as $\Delta_r(R[C])\rarrow
\Lambda_r(R[C])\rarrow C$.

 Now the $r$\+contramodule $R$\+module $\Lambda_r(R[C])$ is
$r$\+separated, hence the latter condition implies that
the $r$\+contramodule $R$\+module $C$ is quotseparated.
 Conversely, if $C$ is the quotient of an $r$\+separated
$r$\+contramodule $R$\+module $B$, then the morphism
$\Delta_r(R[C])\rarrow C$ factorizes through $B$, as $\Delta_r(R[C])$
is a projective object of $R\modl_{r\ctra}$.
 Finally, for any $R$\+module $A$, any morphism from $\Delta_r(A)$ to
an $r$\+separated $R$\+module factorizes through $\Lambda_r(A)$,
as $\Lambda_r(A)=\Lambda_r(\Delta_r(A))$ is the maximal $r$\+separated
quotient $R$\+module of $\Delta_r(A)$.
\end{proof}

 Let us point out that both the abelian categories $R\modl_{r\ctra}$ and
$R\modl_{r\ctra}^\qs$ have enough projective objects~\cite{PR,Pper},
but the projective quotseparated $r$\+contramodule $R$\+modules are
quite different from the projective $r$\+contramodule $R$\+modules.

 The next proposition should be compared with the discussion
in Remark~\ref{bounded-torsion-weakened-remark}.

\begin{prop} \label{all-quotseparated-if-and-only-if}
 Let $R$ be a commutative ring and $r\in R$ be an element.
 Then all the $r$\+contramodule $R$\+modules are quotseparated
if and only if the $r$\+torsion in the ring $R/h_r(R)$ is bounded. 
\end{prop}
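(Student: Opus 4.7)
The plan is to reduce to the case $h_r(R) = 0$ and then deduce both implications, using Theorems~\ref{bounded-torsion-contramodule-category-equivalence} and~\ref{quotseparated-contramodule-category-equivalence} for the ``if'' direction and an explicit computation of a nonzero $\varprojlim^1$ for the ``only if'' direction.

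For the reduction, I would argue as in Remark~\ref{bounded-torsion-weakened-remark}: since $h_r(R)$ is $r$-divisible and any $r$-contramodule $R$-module contains no nonzero $r$-divisible submodule, every $r$-contramodule $R$-module $C$ is annihilated by $h_r(R)$ and hence is naturally a $\bar r$-contramodule over $\bar R = R/h_r(R)$, where $\bar r$ is the image of~$r$. By \cite[Remark~5.5]{Pcta} the property of being an $r$-contramodule depends only on the abelian group and the action of~$r$, and the same applies to $r$-separation and (hence) $r$-quotseparation, so the assertion is unchanged after passing from $R$ to $\bar R$. Assuming now $h_r(R) = 0$, the ``if'' direction is immediate: Theorem~\ref{bounded-torsion-contramodule-category-equivalence} gives $\fR\contra \simeq R\modl_{r\ctra}$, while Theorem~\ref{quotseparated-contramodule-category-equivalence} (valid without hypotheses on $R$) gives $\fR\contra \simeq R\modl_{r\ctra}^\qs$, so the two full subcategories of $R\modl_{r\ctra}$ coincide.

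For the ``only if'' direction, assume $h_r(R) = 0$ and the $r$-torsion in $R$ is unbounded. I would exhibit a non-quotseparated $r$-contramodule of the form $\Delta_r(R[X])$ for $X = \boZ_{\ge 1}$. Namely, $\Delta_r(R[X])$ is the free $r$-contramodule $R$-module on~$X$, whereas $\Lambda_r(R[X]) = \varprojlim_n (R/r^nR)[X]$ is the free $\fR$-contramodule on~$X$; if $\Delta_r(R[X])$ were quotseparated, then by Theorem~\ref{quotseparated-contramodule-category-equivalence} it would lie in the essential image of the fully faithful forgetful functor $\fR\contra\hookrightarrow R\modl_{r\ctra}$, and a comparison of universal properties would force $\Delta_r(R[X]) \simeq \Lambda_r(R[X])$. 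By Sublemma~\ref{delta-lambda-r-sequence} this isomorphism holds precisely when $\varprojlim^1_{n\ge 1}\,{}_{r^n} R[X] = 0$.

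The main obstacle is thus producing a nonzero class in $\varprojlim^1\,{}_{r^n} R[X]$ from unbounded $r$-torsion. To this end I would choose, by unboundedness, elements $a_n \in R$ with $r^n a_n = 0$ and $r^{n-1} a_n \ne 0$, and set $y_k = a_k e_k \in ({}_{r^k} R)^{(\boZ_{\ge 1})}$, where $e_n$ denotes the standard basis vector indexed by~$n$. A direct calculation with the coboundary map $\partial\colon \prod_k ({}_{r^k} R)^{(\boZ_{\ge 1})} \to \prod_k ({}_{r^k} R)^{(\boZ_{\ge 1})}$, $(x_k)_k \mapsto (x_k - r x_{k+1})_k$, shows that any hypothetical preimage $(x_k)_k$ of $(y_k)_k$ is forced by backward induction on~$k$ to satisfy $x_{k,n} = 0$ for $k > n$ and $x_{k,n} = r^{n-k} a_n$ for $1 \le k \le n$. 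Since $a_n$ has $r$-order exactly~$n$, every such coordinate $r^{n-k} a_n$ is nonzero, so each $x_k$ would have infinite support, contradicting $x_k$ lying in the direct sum $({}_{r^k} R)^{(\boZ_{\ge 1})}$. Therefore $(y_k)_k$ represents a nonzero class in $\varprojlim^1_{n\ge 1}\,{}_{r^n} R[X]$, and the argument is complete.
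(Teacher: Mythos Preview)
Your reduction to the case $h_r(R)=0$ and your ``if'' direction are correct and essentially match the paper's argument.  For the ``only if'' direction you take a different, more constructive route than the paper: the paper simply invokes \cite[Example~5.2(7)]{Pper} to conclude that the $r$\+torsion in $\Delta_r(R)$ is bounded and then observes that $R/h_r(R)\hookrightarrow\Delta_r(R)$, whereas you try to exhibit $\Delta_r(R[\boZ_{\ge1}])$ as an explicit non-quotseparated $r$\+contramodule.  Your reduction of this to the nonvanishing of $\varprojlim^1_{n}\,({}_{r^n}R)^{(\boZ_{\ge1})}$ via a comparison of universal properties is correct.

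However, the $\varprojlim^1$ computation has a genuine gap.  The claimed ``backward induction on~$k$'' forcing $x_{k,n}=0$ for $k>n$ has no base case: there is no largest~$k$, and the recursion $x_{k,n}=r\,x_{k+1,n}$ for $k>n$ admits nonzero solutions whenever $\varprojlim_{m}\,{}_{r^m}R\ne0$.  Finite support of each $x_k$ constrains the set $\{n:x_{k,n}\ne0\}$ for fixed~$k$, not the set $\{k:x_{k,n}\ne0\}$ for fixed~$n$.  Concretely, your argument never uses the hypothesis $h_r(R)=0$ in this step, yet without it the conclusion is false: in the ring $R=k[r,u_1,u_2,\dotsc]/(ru_1,\,ru_{n+1}-u_n,\,u_iu_j)$ one has $r^na_n=0$, $r^{n-1}a_n\ne0$ for $a_n=u_n$, but setting $x_{k,n}=-u_k$ for $k>n$ and $x_{k,n}=0$ for $k\le n$ gives a legitimate finite-support preimage of your $(y_k)$.

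The approach can be repaired, but one must actually use $h_r(R)=0$.  One way: show first that the tower $({}_{r^n}R,\,r)$ is not Mittag--Leffler, because if it were, the union of the stable images $J_n\subset{}_{r^n}R$ would be a nonzero $r$\+divisible submodule of~$R$; then choose the $a_n$ so that $r^{n-1}a_n$ lies outside a sufficiently deep image $r^{m-1}({}_{r^m}R)$, which yields the needed contradiction with finite support of~$x_1$.
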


\begin{proof}
 Notice that $\fR=\Lambda_r(R)=\Lambda_r(R/h_r(R))$.
 So if the $r$\+torsion in $R/h_r(R)$ is bounded, then
$R\modl_{r\ctra}=R/h_r(R)\modl_{\bar r\ctra}=\fR\contra$ by
Theorem~\ref{bounded-torsion-contramodule-category-equivalence}.
 Conversely, if $R\modl_{r\ctra}=\fR\contra$, then the $r$\+torsion
in $\Delta_r(R)$ is bounded according to the discussion
in~\cite[Example~5.2(7)]{Pper}, and $R/h_r(R)$ is a subring
in $\Delta_r(R)$.
\end{proof}

\begin{rem}
 The following aspect of the notion of a quotseparated
$r$\+contramodule $R$\+module is worth pointing out.
 All the conditions of $r$\+contraadjustedness, $r$\+contramoduleness,
$r$\+separatedness, and $r$\+completeness of an $R$\+module $C$ only
depend on the underlying abelian group of $C$ and the additive
operator $r\:C\rarrow C$.
 So they can be equivalently considered as properties of modules
over the ring of polynomials $\boZ[r]$ rather than over a ring~$R$.

 On the other hand, all $r$\+contramodule $\boZ[r]$\+modules are
quotseparated by Proposition~\ref{all-quotseparated-if-and-only-if},
but $r$\+contramodule $R$\+modules may be not.
 So an $r$\+contramodule $R$\+module $C$ always can be presented as
a quotient $\boZ[r]$\+module of an $r$\+separated $r$\+contramodule
$\boZ[r]$\+module $B$, but defining an action of the ring $R$ in $B$
compatible with the given action of $R$ in $C$ may be an impossible
task when the ring $R$ contains too much $r$\+torsion.
\end{rem}

\subsection{Very flat cotorsion theory in quotseparated
$r$-contramodule $R$\+modules}
\label{veryflat-cotorsion-theory-quotseparated-subsecn}
 The aim of this section is to prove the following generalization
of Corollaries~\ref{noetherian-cokernel-of-contraadjusted-separated}
and~\ref{bounded-torsion-cokernel-of-contraadjusted-separated} to
arbitrary commutative rings $R$ with an element $r\in R$.
 It claims that the quotseparatedness condition, which is necessary
as we explained in the beginning of Section~\ref{quotseparated-subsecn},
is in fact also sufficient.
{\hfuzz=3.2pt\par}

\begin{prop} \label{quotseparated-cokernel-of-contraadjusted-separated}
 Let $R$ be a commutative ring and $r\in R$ be an element.
 Then any $R$\+contraadjusted quotseparated $r$\+contramodule
$R$\+module can be presented as the cokernel of an injective
morphism of $R$\+contraadjusted $r$\+separated $r$\+complete
$R$\+modules.
\end{prop}

 The proof of
Proposition~\ref{quotseparated-cokernel-of-contraadjusted-separated}
uses an approximation sequence in the very flat cotorsion theory on
the category $R\modl_{r\ctra}^\qs=\fR\contra$, which is provided by
the results of~\cite[Sections~D.1 and~D.4]{Pcosh}.
 The exposition below in this section contains a sketch of some
arguments from~\cite[Section~D]{Pcosh} specialized to the situation
at hand.

 Let $R$ be a commutative ring and $r\in R$ be an element.
 We keep the notation $\fR=\varprojlim_{n\ge1}R/r^nR$.
 Recall from
Section~\ref{bounded-torsion-veryflat-cotorsion-theory-subsecn}
that an $\fR$\+contramodule $F$ is said to be \emph{flat} if
the $R/r^nR$\+module $F/r^nF$ is flat for every $n\ge1$.
 There are enough projective objects in $\fR\contra$, and all
the projective $\fR$\+contramodules are flat~\cite[Lemma~D.1.6]{Pcosh},
\cite[Lemma~6.9]{PR}.
 All the flat $\fR$\+contramodules are
$r$\+separated~\cite[Corollary~D.1.7]{Pcosh}, \cite[Corollary~6.15]{PR}.
 The class of all flat $\fR$\+contramodules is closed under extensions,
the kernels of epimorphisms, and filtered inductive limits in
$\fR\contra$ \cite[Lemmas~D.1.4\+-5]{Pcosh},
\cite[Corollary~7.1]{Pcosh}.

 A quotseparated $r$\+contramodule $R$\+module $F$ is said to be
\emph{very flat} if the $R/r^nR$\+module $F/r^nF$ is very flat
for every $n\ge1$ (cf.~\cite[Section~C.3]{Pcosh}) and
Remark~\ref{two-very-flat-cotorsion-theories-remark} above).
 This is equivalent to the $R/r^nR$\+module $F/r^nF$ being flat for
every $n\ge1$ and the $R/rR$\+module $F/rF$ being very
flat~\cite[Lemma~1.6.8(b)]{Pcosh}.
 All the projective quotseparated $r$\+contramodule $R$\+modules are
very flat; and the class of all very flat quotseparated
$r$\+contramodule $R$\+modules is closed under extensions and
the kernels of epimorphisms in $\fR\modl_{r\ctra}^\qs$.
 The projective dimension of any very flat quotseparated
$r$\+contramodule $R$\+module (as an object of $R\modl_{r\ctra}^\qs$)
does not exceed~1 \,\cite[Corollary~D.4.1]{Pcosh}.

 One can say that a quotseparated $r$\+contramodule $R$\+module $K$ is
\emph{contraadjusted} if $\Ext^1_{R\modl_{r\ctra}^\qs}(F,K)=0$ for all
very flat quotseparated $r$\+contramodule $R$\+modules~$F$.
 Then the claim that the pair of full subcategories (very flat
quotseparated $r$\+contramodule $R$\+modules, contraadjusted
quotseparated $r$\+contramodule $R$\+modules) is a complete cotorsion
theory in $R\modl_{r\ctra}^\qs$ follows already from the general
results of~\cite[Example~7.12(3)]{PR}.
 The trick is to understand what the contraadjusted quotseparated
$r$\+contramodule $R$\+modules are.

 Recall that an $R/rR$\+module is contraadjusted if and only if it is
contraadjusted as an $R$\+module~\cite[Lemma~1.6.6(a)]{Pcosh}
(cf.\ Lemma~\ref{contraadjustness-reflected-lem} below).

\begin{lem} \label{contraadjusted-r-projective-system}
 Let $K$ be an $R$\+module such that the $R/rR$\+module $K/rK$ is
contraadjusted.
 Then for any $n\ge1$ the kernel of the natural surjective map
$K/r^{n+1}K\rarrow K/r^n K$ is a contraadjusted $R/rR$\+module.
\end{lem}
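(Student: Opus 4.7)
The plan is as follows. First, identify the kernel: the natural surjection $K/r^{n+1}K \to K/r^nK$ has kernel $r^nK/r^{n+1}K$, so the assertion reduces to showing this quotient is contraadjusted as an $R/rR$-module. Note that it is automatically annihilated by $r$, hence an $R/rR$-module; and by \cite[Lemma~1.6.6(a)]{Pcosh} (cited in Remark~\ref{bounded-torsion-weakened-remark}), contraadjustedness as an $R/rR$-module coincides with contraadjustedness as an $R$-module, so it suffices to establish the latter.

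Next, exhibit $r^nK/r^{n+1}K$ as a quotient of $K/rK$. Multiplication by~$r^n$ on~$K$ descends to a well-defined $R$-module homomorphism
\[
 \mu_n \: K/rK \lrarrow r^nK/r^{n+1}K, \qquad x + rK \longmapsto r^nx + r^{n+1}K,
\]
since $x \in rK$ implies $r^nx \in r^{n+1}K$. The map $\mu_n$ is obviously surjective.

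Finally, invoke closure of the contraadjusted class under quotients. The very flat cotorsion theory $(\sF, \sC)$ on $R\modl$ is generated by the set of $R$-modules $R[s^{-1}]$, $s\in R$, all of which have projective dimension at most~$1$; consequently $\sF$ consists of modules of projective dimension at most~$1$, and so for any short exact sequence $0 \to A \to B \to C \to 0$ in $R\modl$ with $B$ contraadjusted and any $F \in \sF$, the fragment $\Ext^1_R(F,B) \to \Ext^1_R(F,C) \to \Ext^2_R(F,A) = 0$ of the long exact sequence forces $\Ext^1_R(F,C) = 0$. Thus the class of contraadjusted $R$-modules is closed under quotients. Applying this to $\mu_n$ with $K/rK$ contraadjusted, we conclude that $r^nK/r^{n+1}K$ is a contraadjusted $R$-module, and therefore a contraadjusted $R/rR$-module.

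There is no substantive obstacle here; the only point to verify is the closure under quotients statement, but this is a standard consequence of the projective-dimension-$\le 1$ generators of the very flat cotorsion theory and does not require any contramodule machinery.
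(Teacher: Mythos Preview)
Your proof is correct and follows essentially the same approach as the paper. The paper's argument is just a terser version of yours: it writes down the exact sequence $K/rK\overset{r^n}\rarrow K/r^{n+1}K\rarrow K/r^nK\rarrow 0$ and concludes that the kernel is a quotient $R/rR$\+module of $K/rK$, leaving the closure of contraadjustedness under quotients implicit.
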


\begin{proof}
 We have an exact sequence $K/rK\overset{r^n}\rarrow K/r^{n+1}K
\rarrow K/r^nK\rarrow0$, so the kernel of the map
$K/r^{n+1}K\rarrow K/r^nK$ is a quotient $R/rR$\+module of~$K/rK$.
\end{proof}

 Let us introduce the following piece of terminology relevant for
the argument in the rest of this section.
 We will say that an $R$\+module $K$ is \emph{$r$\+solid} if it is
$r$\+separated, $r$\+complete, and the $R/rR$\+module $K/rK$
is contraadjusted.
 Obviously, all the $r$\+solid $R$\+modules belong to the category
$R\modl_{r\ctra}^\qs$.

\begin{lem} \label{contraadjusted-as-module}
 Let $F$ be a flat $R$\+module such that the $R/rR$\+module $F/rF$
is very flat, and let $K$ be an $r$\+solid $R$\+module.
 Then\/ $\Ext_R^1(F,K)=0$.
\end{lem}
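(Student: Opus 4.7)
The plan is to use that $r$-separatedness and $r$-completeness give $K \simeq \varprojlim_n K/r^nK$ with surjective transition maps, so that the standard exact sequence $0 \rarrow K \rarrow \prod_n K/r^nK \rarrow \prod_n K/r^nK \rarrow 0$ (with second map $\mathrm{id} - \mathrm{shift}$) is available. Applying $\Hom_R(F, -)$ and using that $\Ext^1_R(F, \prod_n K/r^nK) = \prod_n \Ext^1_R(F, K/r^nK)$, the computation of $\Ext^1_R(F, K)$ will reduce to showing two things: (i)~$\Ext^1_R(F, K/r^nK) = 0$ for all $n \ge 1$, and (ii)~the tower $\bigl(\Hom_R(F, K/r^nK)\bigr)_{n}$ is Mittag-Leffler, so that the resulting $\varprojlim\nolimits^1$ vanishes.

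For~(i), I would invoke Lemma~\ref{change-of-scalars-vanishing-tor-lemma}(a): since $F$ is flat over $R$, we have $\Ext^1_R(F, K/r^nK) \simeq \Ext^1_{R/r^nR}(F/r^nF, K/r^nK)$. The module $F/r^nF$ is flat over $R/r^nR$ by base change, and $(F/r^nF)/r(F/r^nF) = F/rF$ is very flat over $R/rR$ by hypothesis, so by the equivalence cited in Section~\ref{veryflat-cotorsion-theory-quotseparated-subsecn} the $R/r^nR$-module $F/r^nF$ is very flat. On the other side, I would show by induction on $n$ that $K/r^nK$ is a contraadjusted $R/r^nR$-module: the case $n = 1$ is the hypothesis, and for the inductive step the kernel $r^nK/r^{n+1}K$ of the surjection $K/r^{n+1}K \rarrow K/r^nK$ is a quotient of $K/rK$, hence contraadjusted over $R/rR$, by Lemma~\ref{contraadjusted-r-projective-system}. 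Using that contraadjustedness is insensitive to the passage to a quotient ring (Lemma~1.6.6(a) of~\cite{Pcosh}) and is preserved by extensions, $K/r^{n+1}K$ is contraadjusted over $R/r^{n+1}R$. Heredity of the very flat cotorsion theory over $R/r^nR$ then yields the required vanishing.

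For~(ii), I would in fact prove that each transition $\Hom_R(F, K/r^{n+1}K) \rarrow \Hom_R(F, K/r^nK)$ is surjective. Given $\varphi\colon F/r^nF \rarrow K/r^nK$, precompose with the surjection $F/r^{n+1}F \rarrow F/r^nF$ and lift against $0 \rarrow r^nK/r^{n+1}K \rarrow K/r^{n+1}K \rarrow K/r^nK \rarrow 0$; the obstruction lies in $\Ext^1_{R/r^{n+1}R}(F/r^{n+1}F,\, r^nK/r^{n+1}K)$, which vanishes by the same very-flat-versus-contraadjusted argument as in~(i), applied to the quotient $r^nK/r^{n+1}K$ of $K/rK$. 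Combined with~(i), this forces $\Ext^1_R(F, K) = \varprojlim\nolimits^1_n \Hom_R(F, K/r^nK) = 0$. I expect the main obstacle to be careful bookkeeping of the base ring $R/r^mR$ for each contraadjustedness and very flatness assertion; the invariance of contraadjustedness under changing the base among quotient rings is what makes everything reduce to a single uniform Ext computation.
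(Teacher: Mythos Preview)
Your proof is correct.  The paper's own argument is shorter: it observes that $K=\varprojlim_n K/r^nK$ exhibits $K$ as an infinitely iterated extension (in the sense of the projective limit) of the successive kernels $L_n=\ker(K/r^{n+1}K\to K/r^nK)$, notes that each $L_n$ is a contraadjusted $R/rR$\+module by Lemma~\ref{contraadjusted-r-projective-system}, computes $\Ext_R^1(F,L_n)=\Ext_{R/rR}^1(F/rF,L_n)=0$ via Lemma~\ref{change-of-scalars-vanishing-tor-lemma}(a), and then invokes the dual Eklof Lemma (Lemma~\ref{dual-eklof-lemma}) directly.

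Your Milnor-sequence approach is essentially the dual Eklof Lemma unpacked by hand in the countable case, so the two arguments are close cousins.  The one genuine detour you take is in step~(i): you pass to $R/r^nR$ and appeal to the fact (cited in the paper as~\cite[Lemma~1.6.8(b)]{Pcosh}) that $F/r^nF$ is very flat over $R/r^nR$.  This is correct but unnecessary: since $K/r^nK$ is already a finite iterated extension of the $L_j$, $0\le j<n$, the vanishing $\Ext_R^1(F,K/r^nK)=0$ follows from the single computation $\Ext_R^1(F,L_j)=\Ext_{R/rR}^1(F/rF,L_j)=0$ over $R/rR$, exactly the computation you use in step~(ii).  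Staying over $R/rR$ throughout makes the argument cleaner and avoids the extra citation.
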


\begin{proof}
 Denoting the kernel of the morphism $K/r^{n+1}K\rarrow K/r^nK$ by
$L_n$, \,$n\ge0$, we have $\Ext_R^1(F,L_n)=\Ext^1_{R/rR}(F/rF,L_n)=0$
by Lemma~\ref{contraadjusted-r-projective-system}.
 Hence the assertion follows by virtue of the dual Eklof
Lemma~\cite[Proposition~18]{ET}.
\end{proof}

\begin{lem} \label{contraadjusted-as-quotseparated-contramodule}
 Let $F$ be a very flat quotseparated $r$\+contramodule $R$\+module
and $K$ be an $r$\+solid $R$\+module.
 Then\/ $\Ext_{R\modl_{r\ctra}^\qs}^1(F,K)=0$.
\end{lem}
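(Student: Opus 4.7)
The plan is to adapt the proof of Lemma~\ref{contraadjusted-as-module} to the abelian category $R\modl_{r\ctra}^\qs\simeq\fR\contra$, in which both $F$ and $K$ naturally live. Since $K$ is $r$\+separated and $r$\+complete, the natural morphism $K\rarrow\varprojlim_{n\ge1}K/r^nK$ is an isomorphism, presenting $K$ as a projective limit of a tower of surjections in $R\modl_{r\ctra}^\qs$ (the relevant products and kernels being computed identically in $R\modl$ and in the subcategory). The successive kernels $L_n=\ker(K/r^{n+1}K\rarrow K/r^nK)$ are contraadjusted $R/rR$\+modules by Lemma~\ref{contraadjusted-r-projective-system}. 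By the version of the dual Eklof Lemma valid in the Grothendieck abelian category $R\modl_{r\ctra}^\qs$ (cf.~\cite[Lemma~4.5]{PR}), it will suffice to show $\Ext^1_{R\modl_{r\ctra}^\qs}(F,L)=0$ for an arbitrary contraadjusted $R/rR$\+module~$L$.

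For such an $L$, the plan is to fix a length-one projective resolution $0\rarrow P_1\rarrow P_0\rarrow F\rarrow 0$ in $R\modl_{r\ctra}^\qs$, which exists because $F$ has projective dimension at most one by~\cite[Corollary~D.4.1]{Pcosh}. Each $P_i$ is a retract of a free $\fR$\+contramodule $\Lambda_r(R[X_i])$, so $P_i/rP_i$ is a retract of the free $R/rR$\+module $(R/rR)[X_i]$ and is in particular projective over $R/rR$. The crucial claim is that the induced sequence
$$
 0\lrarrow P_1/rP_1\lrarrow P_0/rP_0\lrarrow F/rF\lrarrow 0
$$
is exact in $R/rR\modl$. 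Granting this, the identifications $\Hom_{R\modl_{r\ctra}^\qs}(P_i,L)=\Hom_R(P_i,L)=\Hom_{R/rR}(P_i/rP_i,L)$ (the first by fullness of the subcategory inclusion, the second because any $R$\+linear map into an $R/rR$\+module factors through reduction modulo~$r$) produce a natural isomorphism
$$
 \Ext^1_{R\modl_{r\ctra}^\qs}(F,L)\simeq\Ext^1_{R/rR}(F/rF,L),
$$
and the right-hand side vanishes because $F/rF$ is very flat over $R/rR$ (by the very flatness of the quotseparated contramodule $F$) while $L$ is contraadjusted over $R/rR$.

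The main obstacle will be the exactness claim for the reduction modulo $r$: right exactness is automatic, but the injectivity of $P_1/rP_1\rarrow P_0/rP_0$ is the nontrivial point. It will rely on the fact that $F$, $P_0$, and $P_1$ are all flat $\fR$\+contramodules (the projective ones are flat, and every very flat quotseparated $r$\+contramodule is flat in the $\fR$\+contramodule sense). The plan is to establish injectivity by first showing the sequence $0\rarrow P_1/r^nP_1\rarrow P_0/r^nP_0\rarrow F/r^nF\rarrow 0$ is exact for every $n\ge1$ (this follows from the flatness of $F/r^nF$ over $R/r^nR$, i.e.\ from the vanishing of the relevant $\Tor$ over $R/r^nR$), and then comparing the $r$\+adic filtrations on $P_0$ and $P_1$ to deduce the case $n=1$ from the tower of the cases $n\ge1$; an alternative route is to invoke the ``contramodule $\Tor$'' formalism of~\cite[Section~D.1]{Pcosh}, \cite[Section~6]{PR}, in which the vanishing of the first contramodule $\Tor$ of a flat $\fR$\+contramodule against $\fR/r\fR$ is built into the very definition of flatness.
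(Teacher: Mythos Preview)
Your approach is essentially the paper's: reduce via the dual Eklof lemma to contraadjusted $R/rR$\+modules $L$, take a projective resolution of $F$ in $R\modl_{r\ctra}^\qs$, and use that reduction modulo~$r$ preserves short exact sequences of $\fR$\+contramodules whose cokernel is flat (this is precisely~\cite[Lemma~D.1.4]{Pcosh} or~\cite[Lemma~6.7]{PR}, your ``alternative route'', and makes your filtration argument unnecessary). One minor correction: $R\modl_{r\ctra}^\qs\simeq\fR\contra$ is generally not Grothendieck (filtered colimits need not be exact), but the dual Eklof lemma still applies by invoking~\cite[Lemma~4.5]{PR} in the opposite category, as the paper does.
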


\begin{proof}
 This is~\cite[Lemma~D.1.8]{Pcosh}.
 Applying the Eklof Lemma for abelian categories~\cite[Lemma~4.5]{PR}
to the category opposite to $R\modl_{r\ctra}^\qs$, it remains to
check that $\Ext_{R\modl_{r\ctra}^\qs}^1(F,L)=0$ for any contraadjusted
$R/rR$\+module~$L$.
 Let $0\rarrow H\rarrow G\rarrow F\rarrow0$ be a short exact sequence
of quotseparated $r$\+contramodule $R$\+modules with a projective
quotseparated $r$\+contramodule $R$\+module~$G$.
 By~\cite[Lemma~D.1.4]{Pcosh} or~\cite[Lemma~6.7]{PR}, the short
sequence of $R/rR$\+modules $0\rarrow H/rH\rarrow G/rG\rarrow F/rF
\rarrow0$ is exact.
 Since the $R/rR$\+module $F/rF$ is very flat, it follows that any
$R/rR$\+module morphism $H/rH\rarrow L$ can be extended to
an $R/rR$\+module morphism $G/rG\rarrow L$.
 Thus any $R$\+module morphism $H\rarrow L$ can be extended to
an $R$\+module morphism $G\rarrow L$.
\end{proof}

 Lemma~\ref{contraadjusted-as-module} tells that the $r$\+solid
$R$\+modules are contraadjusted as $R$\+modules, while
Lemma~\ref{contraadjusted-as-quotseparated-contramodule} shows they
are contraadjusted as quotseparated $r$\+contramodule $R$\+modules
(in the sense of the above definition).
 It follows, in particular, that the cokernel of any injective
morphism of $r$\+solid $R$\+modules is also contraadjusted both
in the sense of $R\modl$ and $R\modl_{r\ctra}^\qs$
(cf.\ Lemma~\ref{quotseparated-contraadjusted-preenvelope} below).

 The following construction plays a key role.

\begin{lem} \label{separated-contraadjusted-preenvelope-construction}
 Let $C$ be an $r$\+separated $r$\+contramodule $R$\+module.
 Then there exists a short exact sequence of quotseparated
$r$\+contramodule $R$\+modules\/ $0\rarrow C\rarrow K\rarrow F\rarrow0$
with a very flat quotseparated $r$\+contramodule $R$\+module $F$ and
an $r$\+solid $R$\+module~$K$.
\end{lem}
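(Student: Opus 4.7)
The plan is to construct the desired sequence by a tower indexed by $n\ge 0$, starting from a special preenvelope in the classical very flat cotorsion theory on $R/rR\modl$, inductively lifting it to extensions of $R/r^{n+1}R$\+modules, and then passing to the inverse limit. First, I would apply the complete very flat cotorsion theory on $R/rR\modl$ (the classical Eklof--Trlifaj result) to the $R/rR$\+module $C/rC$: this yields a short exact sequence
$$0\lrarrow C/rC\lrarrow L_0\lrarrow F_0\lrarrow 0$$
with $F_0$ very flat and $L_0$ contraadjusted as $R/rR$\+modules.

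Next, I would build inductively a compatible tower of short exact sequences of $R/r^{n+1}R$\+modules
$$0\lrarrow C/r^{n+1}C\lrarrow L_n\lrarrow F_n\lrarrow 0,$$
with $F_n$ flat over $R/r^{n+1}R$, \,$F_n/rF_n\simeq F_0$, and reduction maps $L_{n+1}\rarrow L_n$ and $F_{n+1}\rarrow F_n$ identifying $L_{n+1}/r^{n+1}L_{n+1}\simeq L_n$ and $F_{n+1}/r^{n+1}F_{n+1}\simeq F_n$. The inductive step has two halves. First, lift $F_n$ to $F_{n+1}$: by the Eklof--Trlifaj description, $F_0$ is a direct summand of a transfinitely iterated extension of modules $(R/rR)[s^{-1}]$, and each of these lifts canonically to the flat $R/r^{n+2}R$\+module $R[s^{-1}]/r^{n+2}R[s^{-1}]$; a transfinite induction in $R/r^{n+2}R\modl$ then produces the required $F_{n+1}$, with the summand issue handled by idempotent-splitting. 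Second, lift the extension class of $L_n$ to an extension $L_{n+1}$ of $F_{n+1}$ by $C/r^{n+2}C$; the obstruction lies in a certain $\Ext^2_{R/r^{n+2}R}$ term and vanishes by flatness of $F_{n+1}$.

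Finally, I pass to the inverse limit, setting $F=\varprojlim_n F_n$ and $K=\varprojlim_n L_n$. All transition maps are surjective, so $\varprojlim^1$ vanishes on both towers; combined with $C=\varprojlim_n C/r^{n+1}C$ (which holds because $C$, being an $r$\+separated $r$\+contramodule, is both $r$\+separated and $r$\+complete), this yields the short exact sequence
$$0\lrarrow C\lrarrow K\lrarrow F\lrarrow 0.$$
By construction $F/r^{n+1}F\simeq F_n$ is flat over $R/r^{n+1}R$ and $F/rF\simeq F_0$ is very flat over $R/rR$, so $F$ is a very flat quotseparated $r$\+contramodule $R$\+module by the characterization recalled in Section~\ref{veryflat-cotorsion-theory-quotseparated-subsecn} (that is, \cite[Lemma~1.6.8(b)]{Pcosh}). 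Likewise, $K$ is an $r$\+separated and $r$\+complete $r$\+contramodule, and $K/rK\simeq L_0$ is contraadjusted over $R/rR$; hence $K$ is $r$\+solid.

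The main obstacle is the inductive lifting step: compatibly producing both a flat lift $F_{n+1}$ of $F_n$ and an extension $L_{n+1}$ at each stage of the tower, in a way that assembles coherently under the inverse limit. The very flatness (rather than mere flatness) of $F_0$ is essential, as it provides the projective-dimension bound that kills the relevant $\Ext^2$ obstructions and furnishes the explicit building blocks $\Delta_r(R[s^{-1}])$ (reducing modulo $r^{n+2}$) out of which the lifts are assembled via transfinite induction.
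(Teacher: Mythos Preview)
Your approach is far more complicated than necessary, and it contains a genuine gap in the idempotent-splitting step.

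The paper's proof is almost trivial by comparison: take a special contraadjusted preenvelope $0\rarrow C\rarrow K'\rarrow F'\rarrow 0$ of $C$ in the very flat cotorsion theory on $R\modl$ (not on $R/rR\modl$). Since $F'$ is flat over $R$, tensoring with $R/r^nR$ preserves exactness, giving $0\rarrow C/r^nC\rarrow K'/r^nK'\rarrow F'/r^nF'\rarrow 0$ for every $n$. Pass to the inverse limit; the transition maps are surjective, and $C=\varprojlim_n C/r^nC$ by hypothesis, so one obtains $0\rarrow C\rarrow K\rarrow F\rarrow 0$ with $K=\varprojlim_n K'/r^nK'$ and $F=\varprojlim_n F'/r^nF'$. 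Then $K/rK=K'/rK'$ is a contraadjusted $R/rR$\+module (quotient of the contraadjusted $R$\+module~$K'$), and $F/r^nF=F'/r^nF'$ is very flat over $R/r^nR$ (extension of scalars). Done. The key insight you missed is that one can start from an approximation sequence in the ambient category $R\modl$ rather than building a tower up from~$R/rR\modl$.

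As for your gap: you assert that the direct-summand issue is ``handled by idempotent-splitting,'' but lifting an idempotent from $\End(G_n)$ to $\End(G_{n+1})$ requires the map $\End_{R/r^{n+2}R}(G_{n+1})\rarrow\End_{R/r^{n+1}R}(G_n)$ to be surjective (the kernel is indeed square-zero, so that part is fine). Surjectivity amounts to $\Ext^1_{R/r^{n+2}R}(G_{n+1},\,r^{n+1}G_{n+1})=0$, which by change of rings becomes $\Ext^1_{R/rR}(G_0,\,r^{n+1}G_{n+1})=0$; but $G_0$ being very flat only kills $\Ext^1$ against \emph{contraadjusted} $R/rR$\+modules, and there is no reason $r^{n+1}G_{n+1}$ should be contraadjusted. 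Your construction could perhaps be salvaged by choosing the initial preenvelope via the small object argument so that $F_0$ is literally a transfinitely iterated extension (no summand needed), but even then the inductive lifting of the transfinite filtration together with all its extension classes is delicate and would need more care than you give it.
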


\begin{proof}
 This is a very simple particular case of~\cite[Lemma~D.4.4]{Pcosh}.
 Let $0\rarrow C\rarrow K'\rarrow F'\rarrow0$ be a short exact sequence
of $R$\+modules with a very flat $R$\+module $F'$ and a contraadjusted
$R$\+module~$K'$.
 Then for every $n\ge1$ the short sequence of $R/r^nR$\+modules
$0\rarrow C/r^nC\rarrow K'/r^nK'\rarrow F'/r^nF'\rarrow0$ is exact.
 Passing to the projective limit as $n\to\infty$, we obtain a short
exact sequence $0\rarrow C\rarrow K\rarrow F\rarrow 0$ with
$C=\varprojlim_n C/r^nC$ (by assumption), $K=\varprojlim_n K'/r^nK'$
and $F=\varprojlim_n F'/r^nF'$.
 As $K/rK=K'/rK'$ is a contraadjusted $R/rR$\+module and
$F/r^nF=F'/r^nF'$ are very flat $R/r^nR$\+modules, all the desired
properties hold.
\end{proof}

\begin{lem} \label{quotseparated-very-flat-precover}
 Any quotseparated $r$\+contramodule $R$\+module $C$ can be included
into a short exact sequence of quotseparated $r$\+contramodule
$R$\+modules\/ $0\rarrow K\rarrow F\rarrow C\rarrow0$ with a very flat
quotseparated $r$\+contramodule $R$\+module $F$ and
an $r$\+solid $R$\+module~$K$.
\end{lem}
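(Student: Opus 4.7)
The plan is to apply Salce's trick: to promote the special preenvelope sequence provided by Lemma~\ref{separated-contraadjusted-preenvelope-construction} to the desired special precover by means of a pushout with a projective presentation of~$C$ in $R\modl_{r\ctra}^\qs$.

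First, since the abelian category $R\modl_{r\ctra}^\qs = \fR\contra$ has enough projective objects, I would choose a surjection $G\rarrow C$ from a projective object $G$, yielding a short exact sequence
$$
 0\lrarrow H\lrarrow G\lrarrow C\lrarrow 0
$$
in $R\modl_{r\ctra}^\qs$. By the facts recalled at the beginning of Section~\ref{veryflat-cotorsion-theory-quotseparated-subsecn}, every projective $\fR$\+contramodule is flat, and every flat $\fR$\+contramodule is $r$\+separated; moreover every projective quotseparated $r$\+contramodule is very flat. Hence $G$ is $r$\+separated and very flat, and the subobject $H\subset G$ is $r$\+separated as well.

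Second, apply Lemma~\ref{separated-contraadjusted-preenvelope-construction} to the $r$\+separated $r$\+contramodule $R$\+module $H$: this produces a short exact sequence of quotseparated $r$\+contramodule $R$\+modules
$$
 0\lrarrow H\lrarrow L\lrarrow F'\lrarrow 0
$$
with $F'$ very flat quotseparated and $L$ an $r$\+solid $R$\+module. Form the pushout $F = G\sqcup_H L$ in $R\modl_{r\ctra}^\qs$. This yields simultaneously the two short exact sequences
$$
 0\lrarrow L\lrarrow F\lrarrow C\lrarrow 0 \qquad\text{and}\qquad
 0\lrarrow G\lrarrow F\lrarrow F'\lrarrow 0.
$$
The first is the desired sequence, with $K = L$ an $r$\+solid $R$\+module. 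It remains only to check that $F$ is very flat quotseparated; but this is immediate from the second sequence together with the closure of very flat quotseparated $r$\+contramodule $R$\+modules under extensions in $R\modl_{r\ctra}^\qs$ recorded before Lemma~\ref{contraadjusted-as-module}, since $G$ and $F'$ are both very flat.

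No genuine obstacle remains: the entire argument is Salce's lemma adapted to the abelian category $R\modl_{r\ctra}^\qs$, and every ingredient has been prepared in Sections~\ref{quotseparated-subsecn}\+\ref{veryflat-cotorsion-theory-quotseparated-subsecn}. The subtlest point is the automatic $r$\+separatedness of projective objects of $\fR\contra$, which allows Lemma~\ref{separated-contraadjusted-preenvelope-construction} to be applied to $H$; this is what makes the pushout construction work in the absence of any bounded torsion hypothesis on the element $r\in R$.
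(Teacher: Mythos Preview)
Your proof is correct and follows essentially the same approach as the paper's own proof: take a projective presentation of $C$ in $R\modl_{r\ctra}^\qs$, apply Lemma~\ref{separated-contraadjusted-preenvelope-construction} to the ($r$\+separated) kernel, and then run the Salce pushout. The paper merely cites the Salce lemma rather than writing out the pushout explicitly as you do, but the argument is identical in substance.
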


\begin{proof}
 Let $0\rarrow B\rarrow P\rarrow C\rarrow0$ be a short exact
sequence of quotseparated $r$\+contramodule $R$\+modules with
a projective quotseparated $r$\+contramodule $R$\+module~$P$.
 Then the $R$\+module $B$ is $r$\+separated as a submodule of
an $r$\+separated $R$\+module $P$, so  the construction of
Lemma~\ref{separated-contraadjusted-preenvelope-construction} is
applicable to~$B$.
 It remains to use the construction from one of the Salce
Lemmas~\cite[second paragraph of the proof of Theorem~10]{ET}
in order to obtain the desired short exact sequence.
\end{proof}

\begin{lem} \label{quotseparated-contraadjusted-preenvelope}
 Any quotseparated $r$\+contramodule $R$\+module $C$ can be included
into a short exact sequence of quotseparated $r$\+contramodule
$R$\+modules\/ $0\rarrow C\rarrow K\rarrow F\rarrow0$ such that $F$
is a very flat quotseparated $r$\+contramodule $R$\+module and $K$ is
the cokernel of an injective morphism of $r$\+solid $R$\+modules.
\end{lem}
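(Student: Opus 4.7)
The plan is to construct the preenvelope by a Salce-type argument dual to the proof of Lemma~\ref{quotseparated-very-flat-precover}, reducing the problem to the $r$-separated case addressed by Lemma~\ref{separated-contraadjusted-preenvelope-construction}. First, I would present the given quotseparated $r$-contramodule $R$-module $C$ as a quotient $0 \to A \to B \to C \to 0$ with $B$ itself $r$-separated; such a $B$ exists by the very definition of quotseparatedness. Because $R\modl_{r\ctra}$ is closed under subobjects in $R\modl$ and every $r$-contramodule is $r$-complete, the submodule $A \subset B$ is both $r$-separated and $r$-complete, so Lemma~\ref{separated-contraadjusted-preenvelope-construction} applies to both $A$ and $B$.

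Next, I would arrange the two applications of Lemma~\ref{separated-contraadjusted-preenvelope-construction} compatibly by first setting up a dual horseshoe at the level of very flat cotorsion preenvelopes in $R\modl$ (which exist by the completeness of the very flat cotorsion theory in $R\modl$ recalled in Section~\ref{cotorsion-theories-abelian-categories-subsecn}). Starting from very flat preenvelopes $0 \to A \to K'_A \to F'_A \to 0$ and $0 \to C \to K'_C \to F'_C \to 0$, a dual horseshoe construction---carried out via a pushout along $A \hookrightarrow B$ followed by a further very flat preenvelope---produces a compatible very flat preenvelope $0 \to B \to K'_B \to F'_B \to 0$ fitting into a commutative ladder with injective vertical morphisms and cokernels isomorphic to $0 \to C \to K'_C \to F'_C \to 0$. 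Applying the completion functor $\Lambda_r = \varprojlim_n ({-}/r^n)$, and using that $A$ and $B$ are $r$-separated and $r$-complete (so $\Lambda_r$ recovers them) together with the flatness of $F'_A$, $F'_B$, $F'_C$ (so that reduction modulo $r^n$ preserves the short exact sequences), yields a commutative ladder of short exact sequences of quotseparated $r$-contramodule $R$-modules, $0 \to A \to K_A \to F_A \to 0$ and $0 \to B \to K_B \to F_B \to 0$, with $K_A$, $K_B$ $r$-solid, $F_A$, $F_B$ very flat quotseparated, and injective vertical morphisms. The snake lemma applied to this ladder then produces the desired short exact sequence $0 \to C \to K \to F \to 0$, in which $K := K_B/K_A$ is tautologically the cokernel of an injective morphism of $r$-solid $R$-modules and $F := F_B/F_A$ is very flat quotseparated, hence in particular flat.

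The main technical obstacle will be carrying out the dual horseshoe step correctly, because $C$ is not itself in the class of contraadjusted $R$-modules, so the naive ``injective envelope'' dual horseshoe (which would require $\Ext^1_R(C, K'_A) = 0$) does not apply directly; instead one passes to the pushout $P = K'_A \sqcup_A B$ and takes a further preenvelope of $P$ in $R\modl$ to produce $K'_B$, at the cost of making the cokernel identifications less transparent. A second subtlety is ensuring that the exactness of the horseshoe short exact sequences is preserved after reduction modulo~$r^n$, which is needed for the $\Lambda_r$-completion to produce the required injective morphism $K_A \hookrightarrow K_B$; this amounts to controlling the relevant $\Tor^R_1(R/r^nR, {-})$ obstructions, which can be handled by a suitable choice of the very flat preenvelopes, using the hereditariness of the very flat cotorsion theory in $R\modl$.
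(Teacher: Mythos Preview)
Your approach is workable but takes a substantially longer route than the paper. The paper's argument is a two-line Salce trick: apply Lemma~\ref{quotseparated-very-flat-precover} to $C$ to obtain $0\to K_0\to F_0\to C\to 0$ with $K_0$ $r$\+solid and $F_0$ very flat quotseparated; since very flat quotseparated $r$\+contramodules are $r$\+separated, Lemma~\ref{separated-contraadjusted-preenvelope-construction} applies to $F_0$, yielding $0\to F_0\to K_1\to F_1\to 0$ with $K_1$ $r$\+solid and $F_1$ very flat quotseparated. The composite $K_0\hookrightarrow F_0\hookrightarrow K_1$ is an injective morphism of $r$\+solid $R$\+modules, and passing to the quotient by $K_0$ gives the desired sequence $0\to C\to K_1/K_0\to F_1\to 0$. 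No horseshoe, no completion of ladders, no Tor obstructions.

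Your construction instead rebuilds compatible preenvelopes of $A$ and $B$ from scratch at the $R$\+module level and then completes, effectively redoing work already packaged into Lemma~\ref{quotseparated-very-flat-precover}. It can be made to go through, but two points in your write-up are off. First, $R\modl_{r\ctra}$ is \emph{not} closed under arbitrary subobjects in $R\modl$; the correct reason $A$ is an $r$\+contramodule is that it is the \emph{kernel} of the morphism $B\to C$ between $r$\+contramodules. Second, and more importantly, your ``second subtlety'' is both misdiagnosed and misresolved: injectivity of $K_A\to K_B$ does not hinge on $\Tor^R_1(R/r^nR,K'_C)=0$, and hereditariness of the very flat cotorsion theory in $R\modl$ is irrelevant here. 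What actually works is that $F'_C$ is \emph{flat}, so $0\to F'_A/r^n\to F'_B/r^n\to F'_C/r^n\to 0$ remains exact and hence $F_A\to F_B$ is injective after completion; then the snake lemma applied to the two completed rows, with $A\hookrightarrow B$ and $F_A\hookrightarrow F_B$ both injective, forces $K_A\to K_B$ to be injective automatically.
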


\begin{proof}
 Follows from Lemmas~\ref{quotseparated-very-flat-precover}
and~\ref{separated-contraadjusted-preenvelope-construction}
(the latter of which has to be applied to a very flat quotseparated
$r$\+contramodule $R$\+module---one needs to keep in mind that these
are $r$\+separated).
 We refer to~\cite[Corollary~D.4.6]{Pcosh} for the details.
\end{proof}

\begin{thm}
 The pair of full subcategories (very flat quotseparated
$r$\+contramodule $R$\+modules, $R$\+contraadjusted quotseparated
$r$\+contramodule $R$\+modules) is a hereditary complete cotorsion
theory in $R\modl_{r\ctra}^\qs$.
 Moreover, a quotseparated $r$\+contra\-module $R$\+module $K$ is
$R$\+contraadjusted if and only if the $R/rR$\+module $K/rK$
is contraadjusted.

 In other words, the three classes of quotseparated $r$\+contramodule
$R$\+modules $K$ defined by the conditions
\begin{enumerate}
\renewcommand{\theenumi}{\roman{enumi}}
\item $K$ is contraadjusted as a quotseparated $r$\+contramodule
$R$\+module;
\item $K$ is contraadjusted as an $R$\+module; and
\item the $R/rR$\+module $K/rK$ is contraadjusted
\end{enumerate}
coincide with each other.
\end{thm}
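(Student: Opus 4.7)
The plan is to first establish the cotorsion theory structure and then identify the three classes~(i), (ii), (iii) by a cyclic chain of implications. Writing $\sF_0$ for the class of very flat quotseparated $r$-contramodule $R$-modules, the cotorsion theory $(\sF_0,\sF_0^{\perp_1})$ in the locally presentable abelian category $R\modl_{r\ctra}^\qs=\fR\contra$ is generated by the set $\{\Delta_r(R[s^{-1}])\:s\in R\}$, consisting of objects of projective dimension at most~$1$ (as recalled just before the theorem). Completeness then follows from the generalization of Theorem~\ref{generated-by-set-of-flat-contra-thm} to locally presentable abelian categories, and heredity is automatic from the projective dimension bound. Moreover, the contraadjusted end of the approximation sequences of Lemmas~\ref{quotseparated-very-flat-precover} and~\ref{quotseparated-contraadjusted-preenvelope}---a cokernel $K''$ of an injective morphism of $r$-solid $R$-modules---lies simultaneously in all three target classes: in~(i) by Lemma~\ref{contraadjusted-as-quotseparated-contramodule} and heredity, in~(ii) by Lemma~\ref{contraadjusted-as-module} and closure of $R$-contraadjusted $R$-modules under cokernels of injections, and in~(iii) because $K''/rK''$ is a quotient of $B/rB$ for an $r$-solid~$B$.

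The implication (ii)$\Rightarrow$(iii) is immediate: if $K$ is $R$-contraadjusted then so is the quotient $K/rK$ (since $R[s^{-1}]$ has projective dimension $\le1$ over~$R$), and~\cite[Lemma~1.6.6(a)]{Pcosh} identifies $R$-contraadjusted $R/rR$-modules with $R/rR$-contraadjusted ones. For (i)$\Rightarrow$(ii), I apply the preenvelope $0\to K\to K''\to F'\to0$ of Lemma~\ref{quotseparated-contraadjusted-preenvelope}: since $K\in\sF_0^{\perp_1}$ and $F'\in\sF_0$, the sequence splits in $R\modl_{r\ctra}^\qs$, so $K$ is a direct summand of~$K''$ in $R\modl$ and inherits its $R$-contraadjustedness.

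The remaining implication (iii)$\Rightarrow$(i) is the delicate one. My strategy is to compare $K$ with its maximal $r$-separated quotient $\Lambda_r(K)=\varprojlim_n K/r^n K$. From $K/rK$ contraadjusted as $R/rR$-module, an induction on~$n$ using $0\to r^{n-1}K/r^n K\to K/r^n K\to K/r^{n-1}K\to0$---each $r^{n-1}K/r^n K$ being a quotient of $K/rK$---shows that $K/r^n K$ is contraadjusted as $R/r^n R$-module for every~$n$. The kernels $L_n=\ker(K/r^{n+1}K\to K/r^n K)$ are quotients of~$K/rK$, so $\Ext^1_{R\modl_{r\ctra}^\qs}(F,L_n)=0$ for every $F\in\sF_0$ by Lemma~\ref{contraadjusted-as-quotseparated-contramodule}; the dual Eklof Lemma applied in $R\modl_{r\ctra}^\qs$ to the projective-limit filtration of $\Lambda_r(K)$ yields $\Lambda_r(K)\in\sF_0^{\perp_1}$. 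From the short exact sequence $0\to\bigcap_n r^n K\to K\to\Lambda_r(K)\to0$, the claim reduces to showing that the kernel $\bigcap_n r^n K=\varprojlim\nolimits^1_n\,{}_{r^n}K$ also lies in $\sF_0^{\perp_1}$.

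The main obstacle is this last step: handling the non-separated part $\bigcap_n r^n K$ of~$K$. The quotseparatedness of~$K$ is essential here: presenting $K=B/B'$ with $B$ an $r$-separated $r$-contramodule, one identifies $\bigcap_n r^n K$ with the image in~$K$ of the $r$-adic closure of~$B'$ in~$B$, and combines this with a $\varprojlim^1$ analysis in the spirit of Sublemma~\ref{derived-projlim-obtainable} to exhibit $\bigcap_n r^n K$ as built from contraadjusted $R/r^n R$-modules by operations to which Lemma~\ref{contraadjusted-as-quotseparated-contramodule} and the dual Eklof Lemma can be iteratively applied, following the detailed constructions in~\cite[Section~D.4]{Pcosh}.
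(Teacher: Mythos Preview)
Your argument for (iii)$\Rightarrow$(i) has a genuine gap at the final step. You correctly handle the separated quotient $\Lambda_r(K)$ via the dual Eklof lemma applied to its filtration by quotients of $K/rK$, but the nonseparated part $\bigcap_n r^nK=\varprojlim^1_n\,{}_{r^n}K$ is not under control: the $R/r^nR$-modules ${}_{r^n}K$ have no reason to be contraadjusted, since hypothesis~(iii) constrains only $K/rK$, not the $r$-torsion in~$K$. The presentation $\bigcap_n r^nK\cong\overline{B'}/B'$ coming from quotseparatedness does not help either, as nothing links $B'$ or its $r$-adic closure to the contraadjustedness of $K/rK$. Your closing appeal to \cite[Section~D.4]{Pcosh} is a placeholder rather than an argument; that section builds the approximation sequences you already cited, it does not provide a separate analysis of $\bigcap_n r^nK$.

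The paper's route to (iii)$\Rightarrow$(i) bypasses the nonseparated part of~$K$ entirely. One takes the special precover $0\to L\to G\to K\to 0$ of Lemma~\ref{quotseparated-very-flat-precover}, with $L$ $r$-solid and $G$ very flat (hence $r$-separated). Tensoring with $R/rR$ yields the right-exact sequence $L/rL\to G/rG\to K/rK\to 0$; since $L/rL$ is contraadjusted ($L$ being $r$-solid) and $K/rK$ is contraadjusted by~(iii), the extension $G/rG$ is contraadjusted as well. Hence $G$ is itself $r$-solid, so $\Ext^1_{R\modl_{r\ctra}^\qs}(F,G)=0$ for every very flat quotseparated $r$-contramodule~$F$ by Lemma~\ref{contraadjusted-as-quotseparated-contramodule}. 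The long exact sequence together with $\Ext^2(F,L)=0$ (projective dimension~$\le1$) then gives $\Ext^1(F,K)=0$. The key insight you are missing is that the very flat cover~$G$, being automatically $r$-separated, is \emph{forced} by hypothesis~(iii) to become $r$-solid; this transfers the Ext-vanishing question from~$K$ to~$G$, where Lemma~\ref{contraadjusted-as-quotseparated-contramodule} applies directly, and the projective-dimension bound closes the gap back to~$K$.
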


\begin{proof}
 This is (an improvement over)~\cite[Corollary~C.4.8]{Pcosh}.
 The approximation sequences (with an $R$\+contraadjusted
quotseparated $r$\+con\-tramodule $R$\+module~$K$) are provided by
Lemmas~\ref{quotseparated-very-flat-precover}
and~\ref{quotseparated-contraadjusted-preenvelope}, so in order to
prove all the assertions of the theorem it suffices to show that
$\Ext^1_{R\modl_{r\ctra}^\qs}(F,K)=0$ for any very flat quotseparated
$r$\+contramodule $R$\+module $F$ and any quotseparated
$r$\+contramodule $R$\+module $K$ such that the $R/rR$\+module
$K/rK$ is contraadjusted.

 Indeed, applying Lemma~\ref{quotseparated-very-flat-precover} to
the quotseparated $r$\+contramodule $R$\+module $K$, we obtain
a short exact sequence of quotseparated $r$\+contramodules
$R$\+modules $0\rarrow L\rarrow G\rarrow K\rarrow 0$ with
an $r$\+solid $R$\+module $L$ and a very flat quotseparated
$r$\+contramodule $R$\+module~$G$.
 Tensoring with $R/rR$ over~$R$, we get an exact sequence of
$R/rR$\+modules $L/rL\rarrow G/rG\rarrow K/rK\rarrow0$.
 Since the class of contraadjusted $R/rR$\+modules is closed under
extensions and quotients, it follows that $G/rG$ is a contraadjusted
$R/rR$\+module.
 Hence $G$ is an $r$\+solid $R$\+module, so
$\Ext^1_{R\modl_{r\ctra}^\qs}(F,G)=0$ by
Lemma~\ref{contraadjusted-as-quotseparated-contramodule}.
 As the projective dimension of the object $F\in R\modl_{r\ctra}^\qs$
does not exceed~$1$, it follows that
$\Ext^1_{R\modl_{r\ctra}^\qs}(F,K)=0$.
\end{proof}

\begin{proof}[Proof of
Proposition~\ref{quotseparated-cokernel-of-contraadjusted-separated}]
 Let $C$ be an $R$\+contraadjusted quotseparated $r$\+contra\-module
$R$\+module.
 Applying Lemma~\ref{quotseparated-very-flat-precover}, we obtain
a short exact sequence $0\rarrow K\rarrow F\rarrow C\rarrow0$ with
an $r$\+solid $R$\+module $K$ and a very flat quotseparated
$r$\+contramodule $R$\+module~$F$.
 By Lemma~\ref{contraadjusted-as-module}, the $R$\+module $K$ is
contraadjusted, and it follows that so is the $R$\+module~$F$.
 As any flat quotseparated $r$\+contramodule $R$\+module,
the $R$\+module $F$ is $r$\+separated.
 Thus $K\rarrow F$ is an injective morphism of $R$\+contraadjusted
$r$\+separated $r$\+contramodules with the cokernel~$C$.
\end{proof}

\Section{Obtainable Modules II}  \label{obtainable-II-secn}

 In this section, as in Section~\ref{obtainable-I-secn} before, we
work over an associative ring~$R$.

 Let $\sE$ and $\sF$ be two classes of left $R$\+modules and
$n\ge1$ be an integer.
 Denote by ${}^{\perp_{\ge n}}\.\sE\subset R\modl$ the class of all left
$R$\+modules such that $\Ext_R^i(F,E)=0$ for all $E\in\sE$ and $i\ge n$,
and by $\sF^{\perp_{\ge n}}\subset R\modl$ the class of all left
$R$\+modules $C$ such that $\Ext_R^i(F,C)=0$ for all $F\in\sF$ and
$i\ge n$.

 Let $\sE\subset R\modl$ be a fixed class of left $R$\+modules.
 Set $\sF={}^{\perp_{\ge1}}\.\sE$ and $\sC_n=\sF^{\perp_{\ge n}}$,
\,$n\ge1$.
 Clearly, one has $\sC_n\subset\sC_{n+1}$ for all $n\ge1$.

 As in Section~\ref{obtainable-I-secn}, our aim is to describe
the class $\sC=\sC_1$ in terms of the class~$\sE$.
 In this section we develop a more powerful procedure of obtaining
objects of the class $\sC_1$ from objects of the class~$\sE$, which
provides for the possibility of making detours through the classes
$\sC_n$ (or, more specifically, $\sC_2$) when producing objects
of the class $\sC_1$ from objects of the ``seed'' class~$\sE$.

\begin{lem} \label{classes-c-n-closedness}
 For any class of left $R$\+modules\/ $\sF\subset R\modl$, the classes
of left $R$\+modules\/ $\sC_n=\sF^{\perp_{\ge n}}$ have the following
properties:
\begin{enumerate}
\renewcommand{\theenumi}{\roman{enumi}}
\item one has\/ $\sC_n\subset\sC_{n+1}$ for all $n\ge1$;
\item for every $n\ge1$, the class\/ $\sC_n\subset R\modl$ is closed
under the passages to direct summands, extensions, cokernels of
injective morphisms, infinite products, and transfinitely iterated
extensions in the sense of the projective limit;
\item the kernel of any surjective morphism from an object of\/
$\sC_{n+1}$ to an object of\/ $\sC_n$ belongs to\/~$\sC_{n+1}$;
\item the cokernel of any injective morphism from an object of\/
$\sC_{n+1}$ to an object of\/ $\sC_n$ belongs to\/~$\sC_n$.
\end{enumerate}
\end{lem}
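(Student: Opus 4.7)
The plan is to deduce all four parts mechanically from the long exact sequence of $\Ext_R^*(F,-)$ for $F \in \sF$, together with the dual Eklof Lemma~\ref{dual-eklof-lemma} for the single nontrivial closure property appearing in~(ii). No part should need ideas beyond those already used in the proof of Lemma~\ref{positive-ext-right-orthogonal-closedness}.

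Part~(i) holds by definition, since the set of indices $\{i \ge n+1\}$ is contained in $\{i \ge n\}$, so $\sC_n \subset \sC_{n+1}$. For part~(ii), closure under direct summands is immediate from additivity of $\Ext$, and closure under infinite products follows from the natural isomorphism $\Ext_R^i(F, \prod_\alpha C_\alpha) \simeq \prod_\alpha \Ext_R^i(F,C_\alpha)$. Given a short exact sequence $0 \to C' \to C \to C'' \to 0$, the long exact fragment $\Ext_R^i(F,C') \to \Ext_R^i(F,C) \to \Ext_R^i(F,C'') \to \Ext_R^{i+1}(F,C')$ yields closure under extensions (when $C',C'' \in \sC_n$ and $i \ge n$, the outer terms vanish, forcing the middle), and closure under cokernels of injective morphisms (when $C',C \in \sC_n$ and $i \ge n$, then $\Ext_R^i(F,C) = 0$ and $\Ext_R^{i+1}(F,C') = 0$). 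For transfinitely iterated extensions in the sense of the projective limit, I would repeat the syzygy trick from the proof of Lemma~\ref{positive-ext-right-orthogonal-closedness}: letting $\sG_n$ be the closure of $\sF$ under all syzygies of order at least $n-1$, the identity $\Ext_R^1(\Omega^k F, C) \simeq \Ext_R^{k+1}(F,C)$ gives $\sC_n = \sG_n^{\perp_1}$, and Lemma~\ref{dual-eklof-lemma} applies directly.

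Parts~(iii) and~(iv) are short index chases in the long exact sequence, differing only in which range of~$i$ is used. For~(iii), a short exact sequence $0 \to K \to A \to B \to 0$ with $A \in \sC_{n+1}$ and $B \in \sC_n$ yields $\Ext_R^{i-1}(F,B) \to \Ext_R^i(F,K) \to \Ext_R^i(F,A)$, and for $i \ge n+1$ both flanks vanish (the left because $i-1 \ge n$ and $B \in \sC_n$, the right because $A \in \sC_{n+1}$), whence $K \in \sC_{n+1}$. For~(iv), a short exact sequence $0 \to A \to B \to Q \to 0$ with $A \in \sC_{n+1}$ and $B \in \sC_n$ yields $\Ext_R^i(F,B) \to \Ext_R^i(F,Q) \to \Ext_R^{i+1}(F,A)$, and for $i \ge n$ both flanks vanish (the left because $B \in \sC_n$, the right because $i+1 \ge n+1$ and $A \in \sC_{n+1}$), whence $Q \in \sC_n$. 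I do not anticipate a substantive obstacle: the only non-bookkeeping input is Lemma~\ref{dual-eklof-lemma}, and everything else reduces to shifts of the numerical ranges $i \ge n$ and $i+1 \ge n+1$.
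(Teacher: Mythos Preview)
Your proposal is correct and follows essentially the same approach as the paper: parts~(i), (iii), (iv) come directly from the long exact $\Ext$ sequence, and part~(ii) is handled via the syzygy reduction to the $\perp_1$ (resp.\ $\perp_{\ge1}$) case followed by the dual Eklof Lemma. The only cosmetic difference is that the paper packages all of~(ii) by observing $\sF^{\perp_{\ge n}} = \sG_{n-1}^{\perp_{\ge1}}$ (taking one $(n{-}1)$-th syzygy per $F\in\sF$) and then invoking Lemma~\ref{positive-ext-right-orthogonal-closedness} wholesale, whereas you verify the finitary closure properties directly and reserve the syzygy trick for transfinitely iterated extensions.
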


\begin{proof}
 The property~(i) holds by the definition, and (iii\+iv)~follow
from the long exact sequence of $\Ext^*_R(F,{-})$, \ $F\in\sF$.
 To prove~(ii), pick for every module $F\in\sF$ one of its
$(n-\nobreak1)$\+th syzygy modules, that is a module $G_{n-1}$ in
an exact sequence $0\rarrow G_{n-1}\rarrow P_{n-1}\rarrow\dotsb\rarrow
P_1\rarrow F\rarrow0$ with projective $R$\+modules~$P_i$.
 Denote by $\sG_{n-1}$, \,$n\ge2$ the class of all the left $R$\+modules
$G_{n-1}$ so obtained (and set $\sG_0=\sF$).
 Then one has $\sF^{\perp_{\ge n}}=\sG_{n-1}^{\perp_{\ge1}}$ for every
$n\ge1$, and the assertion follows from
Lemma~\ref{positive-ext-right-orthogonal-closedness} applied to
the class $\sG_{n-1}\subset R\modl$.
\end{proof}

\begin{defn} \label{1-2-right-obtainable-def}
 The pair of classes of left $R$\+modules \emph{right\/ $1$\+obtainable}
and \emph{right\/ $2$\+obtainable} from a given class $\sE\subset
R\modl$ is defined as the (obviously, unique) minimal pair of classes
of left $R$\+modules satisfying the following generation rules:
\begin{enumerate}
\renewcommand{\theenumi}{\roman{enumi}}
\item all the $R$\+modules from $\sE$ are right $1$\+obtainable;
all the right $1$\+obtainable $R$\+modules are right $2$\+obtainable;
\item all the $R$\+modules simply right obtainable (in the sense of
Definition~\ref{simply-right-obtainable-def}) from
right $1$\+obtainable $R$\+modules are right $1$\+obtainable;
all the $R$\+modules simply right obtainable from right $2$\+obtainable
$R$\+modules are right $2$\+obtainable;
\item the kernel of any surjective morphism from a right
$2$\+obtainable $R$\+module to a right $1$\+obtainable $R$\+module is
right $2$\+obtainable;
\item the cokernel of any injective morphism from a right
$2$\+obtainable $R$\+module to a right $1$\+obtainable $R$\+module is
right $1$\+obtainable.
\end{enumerate}
\end{defn}

 The purpose of Definition~\ref{1-2-right-obtainable-def} is that
it describes a procedure according to which, starting from a seed
class $\sE$, we first obtain some objects of the class $\sC_1$, then
use them to produce some objects of the class $\sC_2$, and then use
the latter in order to obtain even more objects of the class~$\sC_1$,
etc.
 A similar obtainability procedure involving $n$\+obtainable
modules for all $n\ge\nobreak2$ would be even more powerful, in
the sense that it would possibly allow to obtain more modules
(at least, in principle); but we will not use any $n$\+obtainable
modules beyond the $2$\+obtainable ones in the arguments in this
paper, so we do not define them.

\begin{lem} \label{1-2-right-obtainable-orthogonal}
 For any class of left $R$\+modules\/ $\sE\subset R\modl$, all the left
$R$\+modules right\/ $1$\+obtainable from\/ $\sE$ belong to the class\/
$\sC_1=({}^{\perp_{\ge1}}\.\sE)^{\perp_{\ge1}}\subset R\modl$, and
all the left $R$\+modules right\/ $2$\+obtainable from\/ $\sE$
belong to the class\/ $\sC_2=({}^{\perp_{\ge1}}\.\sE)^{\perp_{\ge2}}
\subset R\modl$.
\end{lem}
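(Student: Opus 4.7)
The plan is to verify that the pair of classes $(\sC_1, \sC_2)$ satisfies the four generation rules listed in Definition~\ref{1-2-right-obtainable-def}, and then invoke the minimality clause of that definition to conclude. Throughout, write $\sF = {}^{\perp_{\ge1}}\.\sE$, so that $\sC_n = \sF^{\perp_{\ge n}}$.

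First I would check rule~(i). The inclusion $\sE \subset \sC_1$ holds tautologically, since by the definition of $\sF$ one has $\Ext_R^i(F,E)=0$ for all $F\in\sF$, $E\in\sE$, and $i\ge1$. The inclusion $\sC_1 \subset \sC_2$ is part~(i) of Lemma~\ref{classes-c-n-closedness}.

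Next I would verify rule~(ii). By part~(ii) of Lemma~\ref{classes-c-n-closedness}, each class $\sC_n$ is closed under direct summands, extensions, cokernels of injective morphisms, infinite products, and transfinitely iterated extensions in the sense of the projective limit; these are exactly the operations appearing in Definition~\ref{simply-right-obtainable-def}. Consequently, the minimal class containing $\sC_n$ and closed under simple right obtainability is $\sC_n$ itself, so any $R$\+module simply right obtainable from modules in $\sC_n$ is already in $\sC_n$. Applied for $n=1$ and $n=2$, this gives rule~(ii).

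Rules~(iii) and~(iv) of Definition~\ref{1-2-right-obtainable-def} match parts~(iii) and~(iv) of Lemma~\ref{classes-c-n-closedness} specialized to $n=1$: the kernel of a surjection from $\sC_2$ onto $\sC_1$ lies in $\sC_2$, and the cokernel of an injection from $\sC_2$ into $\sC_1$ lies in $\sC_1$. Thus all four generation rules are satisfied by $(\sC_1,\sC_2)$. By minimality, the class of right $1$\+obtainable modules is contained in $\sC_1$ and the class of right $2$\+obtainable modules is contained in $\sC_2$. No step presents a genuine obstacle; the entire argument is a bookkeeping exercise matching the operations in Definition~\ref{1-2-right-obtainable-def} against the closure properties already established in Lemma~\ref{classes-c-n-closedness}.
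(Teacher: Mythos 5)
Your proof is correct and matches the paper's approach exactly: the paper's own proof reads simply ``Follows from Lemma~\ref{classes-c-n-closedness},'' and what you have written is the careful unpacking of that one-line proof, matching each generation rule of Definition~\ref{1-2-right-obtainable-def} to the corresponding closure property in Lemma~\ref{classes-c-n-closedness} and then invoking the minimality clause.
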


\begin{proof}
 Follows from Lemma~\ref{classes-c-n-closedness}.
\end{proof}

\begin{rem} \label{1-2-obtainability-limitations-of-use}
 As with the simple right obtainability rules of
Definition~\ref{simply-right-obtainable-def}, we will not use the full
strength of Definition~\ref{1-2-right-obtainable-def} in this paper.
 In addition to the limitations mentioned in
Remark~\ref{simple-obtainability-limitations-of-use-remark},
the following one should be mentioned here: the second part of
the rule~(i) and the rule~(iii) from
Definition~\ref{1-2-right-obtainable-def} will be only used in
the following weak conjunction:
\begin{enumerate}
\renewcommand{\theenumi}{\roman{enumi}$'$}
\setcounter{enumi}{2}
\item the kernel of any surjective morphism between two right
$1$\+obtainable $R$\+modules is right $2$\+obtainable.
\end{enumerate}
 Besides, not nearly all the complicated possibilities of transfinite
iteration of the generation rules~(i\+iv) will be used in
the actual constructions.
\end{rem}

\Section{Noetherian Main Lemma}  \label{noetherian-mlemma-secn}

 The proof of Main Lemmas~\ref{noetherian-main-lemma}
and~\ref{bounded-torsion-main-lemma} in this section is based on
the results of
Corollaries~\ref{noetherian-cokernel-of-contraadjusted-separated}
and~\ref{bounded-torsion-cokernel-of-contraadjusted-separated},
respectively.
 The main lemmas are deduced from the following main propositions.

\begin{mpr} \label{noetherian-main-prop}
 Let $R$ be a Noetherian commutative ring and $r\in R$ be an element.
 Then an $R$\+module is contraadjusted if and only if it is right\/
$1$\+obtainable from contraadjusted $R/rR$\+modules and
contraadjusted $R[r^{-1}]$\+modules.
\end{mpr}

\begin{mpr} \label{bounded-torsion-main-prop}
 Let $R$ be a commutative ring and $r\in R$ be an element such that
the $r$\+torsion in $R$ is bounded.
 Then an $R$\+module is contraadjusted if and only if it is right\/
$1$\+obtainable from contraadjusted $R/rR$\+modules and
contraadjusted $R[r^{-1}]$\+modules. 
\end{mpr}

 As the bounded torsion assumption in
Corollary~\ref{bounded-torsion-cokernel-of-contraadjusted-separated}
can be weakened to the assumption that the $r$\+torsion in $R$ is
a sum of bounded $r$\+torsion and $r$\+divisible $r$\+torsion
(see Remark~\ref{bounded-torsion-weakened-remark}), this assumption
in Main Proposition~\ref{bounded-torsion-main-prop} and
Main Lemma~\ref{bounded-torsion-main-lemma} also can be similarly
weakened.

 The following lemma shows that there is no ambiguity in
the formulations of the main propositions.

\begin{lem} \label{contraadjustness-reflected-lem}
 Let $f\:R\rarrow R'$ be a morphism of commutative rings such that
for every element $r'\in R'$ there exist an element $r\in R$ and
an invertible element $u\in R'$ for which $r'=uf(r)$.
 Then an $R'$\+module is contraadjusted if and only if it is
contraadjusted as an $R$\+module.
\end{lem}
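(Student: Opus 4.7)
The plan is to reduce the claim to a direct application of Lemma~\ref{change-of-scalars-vanishing-tor-lemma}(a) together with the elementary observation that inverting a unit in a ring does not change the ring. The key point is that the hypothesis on $f\:R\rarrow R'$ precisely says that every principal localization of $R'$ arises by base change from a principal localization of $R$.

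First I would record the central observation: for any element $r\in R$, the $R$\+module $R[r^{-1}]$ is flat, so $\Tor^R_i(R',R[r^{-1}])=0$ for all $i\ge1$, and moreover $R'\ot_R R[r^{-1}]\simeq R'[f(r)^{-1}]$ as $R'$\+modules. Therefore, by Lemma~\ref{change-of-scalars-vanishing-tor-lemma}(a), for every $R'$\+module $C$ there is a natural isomorphism
$$
 \Ext^1_R(R[r^{-1}],C)\;\simeq\;\Ext^1_{R'}(R'[f(r)^{-1}],C).
$$

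The ``only if'' direction now follows immediately and does \emph{not} even require the surjectivity-up-to-units assumption on~$f$: if $C$ is a contraadjusted $R'$\+module, then for every $r\in R$ the right-hand side above vanishes (as $f(r)\in R'$), hence so does the left-hand side, and $C$ is $r$\+contraadjusted for every $r\in R$. For the ``if'' direction, take an arbitrary $r'\in R'$ and use the hypothesis to write $r'=uf(r)$ with $r\in R$ and $u\in R'$ invertible. Since $u$ is already a unit in $R'$, we have $R'[(r')^{-1}]=R'[(uf(r))^{-1}]=R'[f(r)^{-1}]$, and then the displayed isomorphism together with $r$\+contraadjustedness of $C$ as an $R$\+module yields $\Ext^1_{R'}(R'[(r')^{-1}],C)=0$, as required.

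There is really no obstacle here: the whole argument is a bookkeeping exercise once one notices (a)~the flatness of $R[r^{-1}]$ over $R$, which feeds Lemma~\ref{change-of-scalars-vanishing-tor-lemma}(a), and (b)~that localizing at a unit is trivial. The only thing to watch is the direction of the isomorphism in Lemma~\ref{change-of-scalars-vanishing-tor-lemma}(a) and the fact that one must apply it to the $R$\+module $F=R[r^{-1}]$ rather than to the module one is testing contraadjustedness against.
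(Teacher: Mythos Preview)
Your proof is correct and essentially identical to the paper's: both rest on the change-of-scalars isomorphism $\Ext^1_R(R[r^{-1}],C)\simeq\Ext^1_{R'}(R'[f(r)^{-1}],C)$ from Lemma~\ref{change-of-scalars-vanishing-tor-lemma}(a), together with the observation that $R'[(r')^{-1}]=R'[f(r)^{-1}]$ when $r'=uf(r)$ with $u$ a unit. The only cosmetic difference is that for the ``only if'' direction the paper cites the general fact from~\cite[Lemma~1.2.2(a)]{Pcosh} that contraadjustedness is preserved under restriction of scalars, whereas you deduce it directly from the same Ext isomorphism---which is more self-contained and perfectly fine.
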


\begin{proof}
 By~\cite[Lemma~1.2.2(a)]{Pcosh}, contraadjustness is preserved by
the restrictions of scalars, so all the contraadjusted $R'$\+modules
are contraadjusted as $R$\+modules.
 Conversely, let $C$ be an $R'$\+module that is contraadjusted
as an $R$\+module.
 We have to show that $\Ext_{R'}^1(R'[r'{}^{-1}],C)=0$ for all
$r'\in R'$.
 By assumption, we have $r'=uf(r)$, where $u\in R'$ is invertible
and $r\in R$.
 Hence $R'[r'{}^{-1}]=R'[f(r)^{-1}]=R'\ot_RR[r^{-1}]$ and
$\Ext_{R'}^1(R'[r'{}^{-1}],C)=\Ext_R^1(R[r^{-1}],C)=0$.
\end{proof}

 In particular, an $R/rR$\+module is contraadjusted if and only if
it is contraadjusted as an $R$\+module~\cite[Lemma~1.6.6(a)]{Pcosh},
and an $R[r^{-1}]$\+module is contraadjusted if and only if it is
contraadjusted as an $R$\+module.

\begin{lem} \label{contraadjusted-separated-obtainable-as-projlim}
 Let $R$ be a commutative ring and $r\in R$ be an element.
 Then any $R$\+contraadjusted $r$\+separated $r$\+complete $R$\+module
is an infinitely iterated extension, in the sense of the projective
limit, of contraadjusted $R/rR$\+modules.
\end{lem}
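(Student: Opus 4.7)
The plan is to write down the obvious $r$-adic tower of $C$ and check that it is an infinitely iterated extension (in the sense of the projective limit) with contraadjusted $R/rR$-modules as successive quotients. Concretely, I would set $C_0 = 0$, $C_n = C/r^nC$ for $1 \le n < \omega$, and $C_\omega = C$, with the transition maps $C_\beta \to C_\alpha$ being the natural surjections $C/r^\beta C \to C/r^\alpha C$ for $\alpha < \beta < \omega$ and the tautological projections $C \to C/r^n C$ at $\beta = \omega$. The commutativity of the triangle diagrams and the surjectivity of $C_{n+1}\to C_n$ are immediate; the only nontrivial axiom is the limit condition at $\beta = \omega$, namely that $C \to \varprojlim_{n<\omega} C/r^nC$ is an isomorphism, and this is exactly the hypothesis that $C$ is $r$-separated and $r$-complete.

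Next I would identify the successive quotients. The kernel $L_n$ of $C/r^{n+1}C \to C/r^nC$ equals $r^nC/r^{n+1}C$, is annihilated by $r$, and so is naturally an $R/rR$-module. To establish that each $L_n$ is a contraadjusted $R/rR$-module, I would invoke Lemma~\ref{contraadjusted-r-projective-system} applied to $K = C$, which yields the conclusion provided $C/rC$ is a contraadjusted $R/rR$-module.

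It therefore remains to verify that $C/rC$ is a contraadjusted $R/rR$-module. Since the very flat cotorsion theory in $R\modl$ is generated by the $R$-modules $R[s^{-1}]$, which have projective dimension at most $1$, the class of contraadjusted $R$-modules is closed under quotients; hence $C/rC$ is contraadjusted over $R$. By Lemma~\ref{contraadjustness-reflected-lem} applied to the surjection $R \to R/rR$, it is then contraadjusted over $R/rR$ as well. This completes the verification, and exhibits $C$ as an infinitely iterated extension, in the sense of the projective limit, of the contraadjusted $R/rR$-modules $L_0 = C/rC$ and $L_n = r^nC/r^{n+1}C$ for $n \ge 1$.

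There is no serious obstacle here: the argument is essentially bookkeeping once Lemma~\ref{contraadjusted-r-projective-system} and the closure of contraadjusted modules under quotients are available. The only mild subtlety worth flagging is the role of $r$-separatedness and $r$-completeness, which are used in exactly one place, namely to ensure that the projective limit of the truncation tower is $C$ itself, so that the tower genuinely qualifies as a transfinitely iterated extension in the sense of Section~\ref{obtainable-I-secn}.
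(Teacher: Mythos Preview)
Your proof is correct and follows exactly the same approach as the paper: the paper's proof is simply ``$K=\varprojlim_{n\ge1}K/r^nK$, and it remains to apply Lemma~\ref{contraadjusted-r-projective-system},'' and you have spelled out precisely the details that make this work, including the verification (left implicit in the paper) that $C/rC$ is contraadjusted over $R/rR$ via closure under quotients and Lemma~\ref{contraadjustness-reflected-lem}.
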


\begin{proof}
 Let $K$ be an $R$\+contraadjusted $r$\+separated $r$\+complete
$R$\+module.
 Then we have $K=\varprojlim_{n\ge1}K/r^nK$, and it remains to
apply Lemma~\ref{contraadjusted-r-projective-system}.
\end{proof}

\begin{lem} \label{hom-contraadjusted-lemma}
 Let $R$ be a commutative ring and $r\in R$ be an element.
 Then the $R[r^{-1}]$\+module\/ $\Hom_R(R[r^{-1}],C)$ is contraadjusted
for any contraadjusted $R$\+module~$C$.
\emergencystretch=0em\hfuzz=2.2pt
\end{lem}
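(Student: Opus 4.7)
The plan is to identify the $R[r^{-1}]$-Ext into $\Hom_R(R[r^{-1}], C)$ with an $R$-Ext into $C$ itself, which then vanishes by the contraadjustedness of $C$. Fix an element $u \in R[r^{-1}]$, and write $u = a/r^n$ with $a \in R$, $n \ge 0$. Setting $b = ra \in R$, I would first observe that inverting $u$ in $R[r^{-1}]$ is the same as inverting $a$ (since $r$ is already a unit in $R[r^{-1}]$), and inverting $b = ra$ in $R$ automatically inverts both $r$ and $a$; this gives $R[r^{-1}][u^{-1}] \cong R[b^{-1}]$ as $R$-algebras, or equivalently $R[b^{-1}] \otimes_R R[r^{-1}] \cong R[b^{-1}]$.

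Second, since $R[r^{-1}]$ is flat over $R$, Lemma~\ref{change-of-scalars-vanishing-tor-lemma}(a) applied to the morphism $R \to R[r^{-1}]$ with $F = R[r^{-1}][u^{-1}]$ (viewed as an $R$-module) and the $R[r^{-1}]$-module $\Hom_R(R[r^{-1}], C)$ as target, combined with the tautology $R[r^{-1}] \otimes_R R[r^{-1}][u^{-1}] = R[r^{-1}][u^{-1}]$, identifies
$$\Ext^1_{R[r^{-1}]}(R[r^{-1}][u^{-1}], \Hom_R(R[r^{-1}], C)) \cong \Ext^1_R(R[r^{-1}][u^{-1}], \Hom_R(R[r^{-1}], C)).$$

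Third, I would pass through tensor-hom adjunction $\Hom_R(N, \Hom_R(R[r^{-1}], C)) \cong \Hom_R(N \otimes_R R[r^{-1}], C)$ in the derived sense. Choose a projective resolution $P_\bullet \to N$ over $R$ with $N = R[r^{-1}][u^{-1}]$; the complex $P_\bullet \otimes_R R[r^{-1}]$ is exact by flatness of $R[r^{-1}]$, and each of its terms is a direct summand of a free $R[r^{-1}]$-module. Because $C$ is contraadjusted, $\Ext^j_R(R[r^{-1}], C) = 0$ for every $j \ge 1$ (using also that $R[r^{-1}]$ has $R$-projective dimension at most one), and this propagates via direct sums and summands to give $\Ext^j_R(P_i \otimes_R R[r^{-1}], C) = 0$ for $j \ge 1$. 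Hence $P_\bullet \otimes_R R[r^{-1}]$ is a $\Hom_R({-}, C)$-acyclic resolution of $N \otimes_R R[r^{-1}]$, yielding $\Ext^i_R(N, \Hom_R(R[r^{-1}], C)) \cong \Ext^i_R(N \otimes_R R[r^{-1}], C)$. Taking $i = 1$ and invoking the first step to rewrite $N \otimes_R R[r^{-1}] \cong R[b^{-1}]$, the right-hand side becomes $\Ext^1_R(R[b^{-1}], C)$, which vanishes by contraadjustedness of $C$ since $b \in R$.

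The main technical step is the derivation of the tensor-hom adjunction through an acyclic resolution; this is precisely where the contraadjustedness of $C$ enters decisively, by killing the higher $R$-Exts into $C$ from the terms of the resolution $P_\bullet \otimes_R R[r^{-1}]$. Everything else reduces to routine bookkeeping of localizations and change of scalars.
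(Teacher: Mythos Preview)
Your proof is correct. The paper's own proof is merely a citation to~\cite[Lemma~1.2.1(b) or Lemma~1.2.3(a)]{Pcosh}, so there is little to compare directly; but the mechanism you use---derived tensor--hom adjunction to identify $\Ext^1_R\bigl(N,\Hom_R(R[r^{-1}],C)\bigr)$ with $\Ext^1_R\bigl(N\otimes_R R[r^{-1}],C\bigr)$, combined with the observation $R[r^{-1}][u^{-1}]\cong R[b^{-1}]$ for $b=ra\in R$---is exactly the argument the paper gives via spectral sequences for the closely related Lemma~\ref{hom-s-contraadjusted}. Your explicit acyclic-resolution version is a valid and slightly more elementary packaging of the same idea.
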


\begin{proof}
 This is~\cite[Lemma~1.2.1(b) or Lemma~1.2.3(a)]{Pcosh}.
\end{proof}

\begin{lem} \label{2-obtainable-from-contraadjusted}
 For any commutative ring $R$, any $R$\+module is right $2$\+obtainable
from contraadjusted $R$\+modules.
\end{lem}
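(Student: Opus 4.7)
The plan is to realize an arbitrary $R$\+module $M$ as the kernel of a surjective morphism between two contraadjusted $R$\+modules, and then invoke rule~(iii$'$) from Remark~\ref{1-2-obtainability-limitations-of-use}, which says that such a kernel is automatically right~$2$\+obtainable from the class of contraadjusted $R$\+modules.

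First, I would embed $M$ into a contraadjusted $R$\+module $A$; the simplest choice is to take $A$ to be any injective $R$\+module containing $M$ (every module embeds into an injective, and every injective $R$\+module is contraadjusted). This yields a short exact sequence
$$
 0\lrarrow M\lrarrow A\lrarrow A/M\lrarrow 0.
$$
The key step is to check that the cokernel $A/M$ is \emph{also} contraadjusted. For this I would use the fundamental fact, recalled in Section~\ref{very-flat-qcoh-introd}, that every very flat $R$\+module has projective dimension at most~$1$; this follows from the telescope two-term resolution of~$R[r^{-1}]$ mentioned in Section~\ref{very-flat-definition-introd} together with the description of very flat modules as direct summands of transfinitely iterated extensions of copies of $R[r^{-1}]$. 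Applying $\Hom_R(F,{-})$ for an arbitrary very flat $R$\+module $F$ to the above short exact sequence yields the fragment
$$
 \Ext^1_R(F,A)\lrarrow\Ext^1_R(F,A/M)\lrarrow\Ext^2_R(F,M)
$$
of the long exact Ext sequence. The left-hand term vanishes because $A$ is contraadjusted and $F$ is very flat, while the right-hand term vanishes because $F$ has projective dimension at most~$1$. Hence $\Ext^1_R(F,A/M)=0$ for every very flat~$F$, i.e., $A/M$ is contraadjusted.

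With this in hand, both $A$ and $A/M$ lie in the seed class of contraadjusted $R$\+modules, so they are right~$1$\+obtainable by rule~(i) of Definition~\ref{1-2-right-obtainable-def}. The sequence $0\to M\to A\to A/M\to 0$ then exhibits $M$ as the kernel of a surjection between two right~$1$\+obtainable $R$\+modules, and rule~(iii$'$) of Remark~\ref{1-2-obtainability-limitations-of-use} concludes that $M$ is right~$2$\+obtainable. The main (and essentially only) conceptual point of the argument is the automatic contraadjustedness of~$A/M$; once one notices that the vanishing of $\Ext^2$ on very flat modules is exactly the input that propagates contraadjustedness through the quotient in a single step, no further obstacle arises, and in particular no transfinite iteration is needed.
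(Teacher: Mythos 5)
Your proof is correct and takes essentially the same approach as the paper's: embed $M$ into an injective (hence contraadjusted) module, observe the quotient is also contraadjusted, and invoke rule~(iii$'$). The paper simply cites the closure of the class of contraadjusted $R$\+modules under quotients rather than unwinding it via the $\Ext^1$\+to\+$\Ext^2$ argument, but your spelled-out justification (using that very flat modules have projective dimension at most~$1$) is exactly why that closure holds.
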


\begin{proof}
 Any $R$\+module $A$ can be embedded into a contraadjusted (e.~g.,
injective) $R$\+module $B$.
 The quotient $R$\+module $B/A$ is then also contraadjusted.
 Now $A$ is the kernel of the surjective morphism $B\rarrow B/A$,
so it is right $2$\+obtainable from $B$ and $B/A$ according to
the rules~(i) and~(iii) from
Definition~\ref{1-2-right-obtainable-def}, or according to
the rule~(iii$'$) from
Remark~\ref{1-2-obtainability-limitations-of-use}.
\end{proof}

\begin{proof}[Proof of Main Propositions~\ref{noetherian-main-prop}
and~\ref{bounded-torsion-main-prop}]
 The (trivial) proof of the ``if'' assertion does not depend on
the Noetherianity/bounded torsion assumption.
 Let $\sE$ be a class of contraadjusted $R$\+modules; then all the very
flat $R$\+modules $F$ belong to the class $\sF={}^{\perp_{\ge1}}\.\sE$.
 By Lemma~\ref{1-2-right-obtainable-orthogonal}, we have
$C\in\sF^{\perp_{\ge1}}\subset\{F\}^{\perp_{\ge1}}$ for all
the $R$\+modules $C$ right $1$\+obtainable from $\sE$; so all such
$R$\+modules $C$ are contraadjusted.

 To prove the ``only if'' in either of the respective assumptions on
the ring $R$ or the element~$r$, denote by $\sE\subset R\modl$
the class of all contraadjusted $R/rR$\+modules and contraadjusted
$R[r^{-1}]$\+modules (viewed as $R$\+modules via the restriction of
scalars).
 Let $C$ be a contraadjusted $R$\+module.
 Our argument is based on the exact
sequence~\eqref{r-contraadjusted-sequence}.
 In view of the rules~(ii) and~(iv) from
Definition~\ref{1-2-right-obtainable-def}, in order to show that
$C$ is right $1$\+obtainable from $\sE$ it suffices to check that
the $R$\+modules $\Hom_R(R[r^{-1}],C)$ and $\Delta_r(C)$ are
right $1$\+obtainable from $\sE$ and the $R$\+module
$\Hom_R(R[r^{-1}]/R,C)$ is right $2$\+obtainable from~$\sE$.
 Let us consider these three $R$\+modules one by one.

 The $R$\+module $\Hom_R(R[r^{-1}],C)$ is a contraadjusted
$R[r^{-1}]$\+module by Lemma~\ref{hom-contraadjusted-lemma}.
 So it already belongs to~$\sE$.

 The $R$\+module $\Delta_r(C)$ is contraadjusted as a quotient
module of a contraadjusted $R$\+module~$C$.
 Besides, $\Delta_r(C)$ is an $r$\+contramodule.
 Applying
Corollary~\ref{noetherian-cokernel-of-contraadjusted-separated}
or~\ref{bounded-torsion-cokernel-of-contraadjusted-separated},
we can present $\Delta_r(C)$ as the cokernel of an injective morphism
of $R$\+contraadjusted $r$\+separated $r$\+complete $R$\+modules
$K\rarrow L$.
 By Lemma~\ref{contraadjusted-separated-obtainable-as-projlim}, both
$K$ and $L$ are obtainable as infinitely iterated extensions of
contraadjusted $R/rR$\+modules.
 Thus the $R$\+module $\Delta_r(C)$ is simply right obtainable
from~$\sE$.

 The $R$\+module $\Hom_R(R[r^{-1}]/R,C)$ is an $r$\+contramodule, and
in fact even an $r$\+separated $r$\+complete $R$\+module
(see the proof of Toy Main Proposition~\ref{toy-main-proposition}).
 So it is simply right obtainable from $R/rR$\+modules by
Lemma~\ref{r-contramodule-obtainable}
(Sublemma~\ref{derived-projlim-obtainable}(a) is sufficient).
 By Lemma~\ref{2-obtainable-from-contraadjusted}, all
the $R/rR$\+modules are right $2$\+obtainable from contraadjusted
$R/rR$\+modules.
 Hence the $R$\+module $\Hom_R(R[r^{-1}]/R,C)$ is right
$2$\+obtainable from~$\sE$.
\end{proof}

\begin{proof}[Proof of Main Lemmas~\ref{noetherian-main-lemma}
and~\ref{bounded-torsion-main-lemma}]
 The ``only if'' assertion holds, because very flatness is preserved
by the extensions of scalars
(see Lemma~\ref{very-flatness-extension-of-scalars}(a)).
 The ``if'' assertion is the nontrivial part.

 Let $F$ be an $R$\+module such that the $R/rR$\+module $F/rF$ is
very flat and the $R[r^{-1}]$\+module $F[r^{-1}]$ is very flat.
 In order to show that the $R$\+module $F$ is very flat, we will
check that $\Ext_R^1(F,C)=0$ for every contraadjusted $R$\+module~$C$.

 As above, we denote by $\sE\subset R\modl$ the class of all
contraadjusted $R/rR$\+modules and contraadjusted $R[r^{-1}]$\+modules.
 Let us check that $\Ext^i_R(F,E)=0$ for all $E\in\sE$ and $i\ge1$.
 Indeed, if $E$ is a contraadjusted $R[r^{-1}]$\+module, then we have
$\Ext^i_R(F,E)=\Ext^i_{R[r^{-1}]}(F[r^{-1}],E)=0$ by
Lemma~\ref{change-of-scalars-vanishing-tor-lemma}(a), and if $E$ is
a contraadjusted $R/rR$\+module, we have
$\Ext^i_R(F,E)=\Ext^i_{R/rR}(F/rF,E)=0$, also by
Lemma~\ref{change-of-scalars-vanishing-tor-lemma}(a).
 Hence we have $F\in\sF={}^{\perp_{\ge1}}\.\sE$.

 According to Main Proposition~\ref{noetherian-main-prop}
or~\ref{bounded-torsion-main-prop}, the $R$\+module $C$ is right
$1$\+obtainable from~$\sE$.
 By Lemma~\ref{1-2-right-obtainable-orthogonal}, it follows that
$C\in\sF^{\perp_{\ge1}}$, so $\Ext_R^1(F,C)=0$.
\end{proof}

 The proof of Main Theorem~\ref{noetherian-module-main-theorem}
is now finished.

\Section{Finitely Very Flat Main Lemma}  \label{fvf-mlemma-secn}

 Let us recall the setting of
Sections~\ref{good-subsets-outline}\+-\ref{rtimes-vf-thm-outline}.
 We consider an arbitrary commutative ring $R$ and a finite set
of elements $\r=\{r_1,~\dotsc,r_m\}\subset R$.
 For every subset $J\subset\{1,\dotsc,m\}$, we denote by~$r_J$
the product $\prod_{j\in J}r_j\in R$.
 The set of all elements $\{r_J\}$ is denoted by~$\r^\times$.
 We put $K=\{1,\dotsc,m\}\setminus J$, and denote by $R_J$
the localization-quotient ring $(R/\sum_{k\in K}r_k R)[r_J^{-1}]$
obtained by annihilating all the elements $r_k$, \,$k\in K$,
and inverting all the elements $r_j$, \,$j\in J$ in the ring~$R$.

 An $R$\+module $C$ is said to be \emph{$\r$\+contraadjusted}
if $\Ext_R^1(R[r_j^{-1}],C)=0$ for all $1\le j\le m$, and
an $R$\+module $F$ is said to be \emph{$\r$\+very flat} if
$\Ext_R^1(F,C)=0$ for every $\r$\+contraadjusted $R$\+module~$C$
(see Section~\ref{fvf-outline} for a discussion).
 We are mostly interested in $\r^\times$\+contraadjusted and
$\r^\times$\+very flat $R$\+modules.

 In this section, we start with proving
Theorem~\ref{r-very-flat-theorem}, and then proceed to deduce
Main Lemma~\ref{fvf-main-lemma}.

\begin{mpr} \label{r-very-flat-main-prop}
 Let $R$ be a commutative ring and $r_1$,~\dots, $r_m\in R$ be
a finite collection of its elements.
 Then an $R$\+module is\/ $\r^\times$\+contraadjusted if and only if
it is right\/ $1$\+obtainable from $R_J$\+modules,
where $J\subset\{1,\dotsc,m\}$.
\end{mpr}

 Notice first of all that it follows from
Main Proposition~\ref{r-very-flat-main-prop}
(by means of Lemma~\ref{2-obtainable-from-contraadjusted})
that all $R$\+modules are right\/ $2$\+obtainable from $R_J$\+modules.

\begin{lem} \label{tuple-r-contramodule-obtainable-lemma}
 Let $R$ be a commutative ring and $r_1$,~\dots, $r_m\in R$ be
a finite set of elements.
 Let $C$ be an $R$\+module such that $C$ is an $r_k$\+contramodule
for every\/ $1\le k\le m$.
 Then $C$ is simply right obtainable from
$R/(r_1R+\dotsb+r_mR)$\+modules (viewed as $R$\+modules via
the restriction of scalars).
\end{lem}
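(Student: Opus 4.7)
The plan is to prove the lemma by induction on~$m$. The base case $m=1$ is precisely Lemma~\ref{r-contramodule-obtainable}. For the inductive step, assume the statement holds for any shorter tuple, and let $C$ be an $R$\+module that is an $r_k$\+contramodule for every $k=1,\dotsc,m$.

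First I would apply the argument of Lemma~\ref{r-contramodule-obtainable} to the single element $r=r_m$. Since $C=\Delta_{r_m}(C)$, Sublemma~\ref{delta-lambda-r-sequence} exhibits $C$ as an extension of $\varprojlim_n C/r_m^n C$ by $\varprojlim_n^1\,{}_{r_m^n}C$, and Sublemma~\ref{derived-projlim-obtainable} presents each of these in turn as simply right obtainable from the $R/r_m^n R$\+modules $C/r_m^n C$ and ${}_{r_m^n} C$, which are finally reduced to $R/r_m R$\+modules via the finite filtration of any $R/r_m^n R$\+module by its $r_m$\+power submodules. The key observation is that every intermediate $R$\+module that appears in this construction remains an $r_k$\+contramodule for every $k<m$: the full subcategory $R\modl_{r_k\ctra}\subset R\modl$ is closed under kernels, cokernels, extensions, and infinite products, and inspection shows that the modules $_{r_m^n}C$, $C/r_m^n C$, their products, projective limits and $\varprojlim^1$, the images $D'_n$ and kernels $\ker(D'_{n+1}\to D'_n)$ of Sublemma~\ref{derived-projlim-obtainable}(a\+b), as well as the successive $r_m$\+power subquotients of $R/r_m^n R$\+modules, are all built from $C$ using only these operations.

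Consequently, $C$ is simply right obtainable from a class of $R$\+modules $M$ each of which (i)~is annihilated by~$r_m$, i.e.\ is an $R/r_m R$\+module, and (ii)~is an $r_k$\+contramodule for every $k=1,\dotsc,m-1$. Since the property of being an $r_k$\+contramodule depends only on the underlying abelian group and the action of the operator~$r_k$, any such $M$ is the same thing as a $\bar r_k$\+contramodule $R/r_m R$\+module, where $\bar r_k\in R/r_m R$ denotes the image of~$r_k$. The inductive hypothesis, applied to the ring $\oR=R/r_m R$ and the tuple $\bar r_1,\dotsc,\bar r_{m-1}$, then shows that every such $M$ is simply right obtainable (as an $\oR$\+module, hence equivalently as an $R$\+module) from $\oR/(\bar r_1\oR+\dotsb+\bar r_{m-1}\oR)$\+modules, and this quotient ring is $R/(r_1R+\dotsb+r_mR)$. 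Composing the two successive obtainability constructions, using that the class of modules simply right obtainable from any fixed class is itself closed under the generating operations of Definition~\ref{simply-right-obtainable-def}, gives the desired conclusion.

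The main obstacle is thus the bookkeeping in the first step: one must check that \emph{every} module produced in the Sublemma~\ref{delta-lambda-r-sequence}/\ref{derived-projlim-obtainable} construction lies in $R\modl_{r_k\ctra}$ for each $k<m$ so that the inductive hypothesis applies. This is a routine verification but a necessary one, and it relies decisively on the abelian-subcategory properties of $r_k$\+contramodules recalled in Section~\ref{toy-secn}.
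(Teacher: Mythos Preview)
Your proof is correct and coincides with the second of the two arguments the paper offers for this lemma: the paper explicitly sketches the same induction on~$m$, making the same key observation that the constructions of Lemma~\ref{r-contramodule-obtainable} are functorial compositions of kernels, cokernels, and infinite products, and that these operations preserve the full subcategory $R\modl_{s\ctra}$ for any other element~$s$. The paper's primary argument is different, though: it works directly with the ideal $I=(r_1,\dotsc,r_m)$ and the reflector $\Delta_I\:R\modl\rarrow R\modl_{I\ctra}$, proving multi-element analogues of Sublemmas~\ref{delta-lambda-r-sequence} and~\ref{derived-projlim-obtainable} (a short exact sequence $0\to\varprojlim^1_nD_n(A)\to\Delta_I(A)\to\varprojlim_nA/I^nA\to0$ with $D_n(A)$ an $R/I_nR$\+module), which avoids the inductive bookkeeping entirely at the cost of importing results from~\cite{Pcta} about~$\Delta_I$. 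Your inductive route is more self-contained within the paper and arguably cleaner; the $\Delta_I$ route is more uniform and parallels the single-element case structurally.
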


\begin{proof}
 This is a generalization of
Lemma~\ref{r-contramodule-obtainable} to the case of a finite set
of elements in $R$ instead of just one element.
 One can prove it in a way similar to the proof of
Lemma~\ref{r-contramodule-obtainable}.
 Specifically, let $I\subset R$ denote the ideal generated by
the elements $r_1$,~\dots,~$r_m$.
 According to~\cite[Theorem~5.1]{Pcta}, the full subcategory of
all $R$\+modules that are $r_k$\+contramodules for all $1\le k\le m$
depends only on the ideal~$I$ (and, in fact, even only on its
radical $\sqrt{I}\subset R$) rather than on a specific set of its
generators~$r_k$.
 Such $R$\+modules are called \emph{$I$\+contramodules}, and
the full subcategory formed by them is denoted by $R\modl_{I\ctra}
\subset R\modl$.
 The embedding functor $R\modl_{I\ctra}\rarrow R\modl$ has a left
adjoint, which is denoted by $\Delta_I\:R\modl\rarrow R\modl_{I\ctra}$
\cite[Theorem~7.2]{Pcta}.

 For every $n\ge1$, denote by $I_n\subset R$ the ideal generated by
the elements $r_1^n$,~\dots, $r_m^n\in R$.
 So one has $I_n\subset I^n\subset R$, and for any integer $n\ge1$
there exists an integer $N > n$ such that $I^N\subset I_n$.

 The following two sublemmas allow to deduce
Lemma~\ref{tuple-r-contramodule-obtainable-lemma} in the same way
as Lemma~\ref{r-contramodule-obtainable} is deduced from
Sublemmas~\ref{delta-lambda-r-sequence}
and~\ref{derived-projlim-obtainable}.

\begin{subl} \label{delta-lambda-I-sequence}
 Let $R$ be a commutative ring, $r_1$,~\dots, $r_m\in R$ be a finite
set of its elements, and $I=(r_1,\dotsc,r_m)\subset R$ be the ideal
generated by them.
 Then for any $R$\+module $A$ there is a natural short exact sequence
of $R$\+modules
$$
 0\lrarrow\varprojlim\nolimits^1_{n\ge1}D_n(A)\lrarrow\Delta_I(A)
 \lrarrow\varprojlim\nolimits_{n\ge1}A/I^nA\lrarrow0,
$$
where $D_n(A)$ are certain $R/I_nR$\+modules forming a projective
system indexed by the positive integers (which depends functorially
on~$A$).
\end{subl}

\begin{proof}
 This is~\cite[Lemma~7.5]{Pcta}.
\end{proof}

\begin{subl} \label{tuple-derived-projlim-obtainable}
 Let $R$ be a commutative ring and $r_1$,~\dots, $r_m\in R$ be a finite
set of its elements.
 Let $D_1\larrow D_2\larrow D_3\larrow\dotsb$ be a projective system
of $R$\+modules such that $D_n$ is an $R/I_nR$\+module for every
$n\ge1$.
 Then the $R$\+modules \textup{(a)}~$\varprojlim_n D_n$ and
\textup{(b)}~$\varprojlim_n^1D_n$ are simply right obtainable from
$R/(r_1R+\dotsb +r_mR)$\+modules.
\end{subl}

\begin{proof}
 Similar to the proof of Sublemma~\ref{derived-projlim-obtainable}
(cf.~\cite[proof of Theorem~9.5]{Pcta}).
\end{proof}

 Alternatively, one can deduce
Lemma~\ref{tuple-r-contramodule-obtainable-lemma} from
(the proof of) Lemma~\ref{r-contramodule-obtainable} arguing by
induction in the cardinality~$m$ of the set of elements~$\{r_k\}$.
 Inspecting the proof of Lemma~\ref{r-contramodule-obtainable}, one
can observe that all the $R$\+modules involved in obtaining
an $r$\+contramodule $R$\+module $C$ from $R/rR$\+modules according
to this proof depend functorially on~$C$.
 Moreover, these functors are nothing but various compositions of
the basic operations of the passage to the kernel, cokernel, and
infinite product.

 For any second element $s\in R$, these operations preserve
the full subcategory of $s$\+contramodule $R$\+modules $R\modl_{s\ctra}
\subset R\modl$.
 Thus, given an $r$\+contramodule and $s$\+contramodule $R$\+module
$C$, the constructions of Lemma~\ref{r-contramodule-obtainable} show
that $C$ is simply right obtainable from $R/rR$\+modules that
are also $s$\+contramodules.

 Similarly, given an $R$\+module $C$ that is an $r_k$\+contramodule
for every $1\le k\le m$, the constructions of
Lemma~\ref{r-contramodule-obtainable} show that $C$ is simply right
obtainable from $R/r_mR$\+modules that are $r_k$\+contramodules
for every $1\le k\le m$.
 Passing to the ring $R/r_mR$ and proceeding by induction in~$m$,
one proves that $C$ is simply right obtainable from
$R/(r_1R+\dotsb+r_mR)$\+modules.
\end{proof}

\begin{lem} \label{hom-into-contramodule-is-contramodule}
 Let $R$ be a commutative ring and $r\in R$ be an element.
 Let $L^\bu\in\sD^\b(R\modl)$ be a bounded complex of $R$\+modules
and $C$ be an $r$\+contramodule $R$\+mod\-ule.
 Then the $R$\+modules\/ $\Hom_{\sD(R\modl)}(L^\bu,C[i])$, \
$i\in\boZ$ are also $r$\+contramodules.
\end{lem}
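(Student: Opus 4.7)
The plan is to exploit the closure properties of the full subcategory $R\modl_{r\ctra}\subset R\modl$ already invoked earlier in the text: it is an abelian subcategory (hence closed under kernels and cokernels in $R\modl$), and it is also closed under extensions and arbitrary infinite products in $R\modl$ (cf.~\cite[Proposition~1.1]{GL}, \cite[Theorem~1.2(a)]{Pcta}). Given these, the assertion reduces to routine d\'evissage.

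First I would treat the case in which $L^\bu=L$ is a single $R$\+module placed in degree~$0$. Choose a free resolution $P_\bu\rarrow L$ with $P_n=R^{(X_n)}$. Then
$$
 \Hom_{\sD(R\modl)}(L,C[i])=\Ext_R^i(L,C)=H^i(\Hom_R(P_\bu,C)),
$$
and each term $\Hom_R(P_n,C)\simeq C^{X_n}$ is an $r$\+contramodule, since $R\modl_{r\ctra}$ is closed under products. Being an abelian subcategory of $R\modl$, $R\modl_{r\ctra}$ is also closed under the subquotients in $R\modl$ used to form cohomology; hence every $\Ext_R^i(L,C)$ is an $r$\+contramodule. (For $i<0$ the module is zero, which is trivially an $r$\+contramodule.)

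Next, for a general bounded complex $L^\bu$, I would induct on the number of its nonzero terms. Let $n$ be the smallest integer with $L^n\ne0$; then the stupid truncation yields a distinguished triangle
$$
 L^n[-n]\lrarrow L^\bu\lrarrow\sigma^{>n}L^\bu\lrarrow L^n[-n+1]
$$
in $\sD^\b(R\modl)$. Applying $\Hom_{\sD(R\modl)}({-},C[i])$ produces a long exact sequence of $R$\+modules in which, apart from $\Hom_{\sD(R\modl)}(L^\bu,C[i])$ itself, every term is already known to be an $r$\+contramodule---either by the case handled in the previous paragraph (applied to the single module $L^n$, with a shift of indices) or by the inductive hypothesis (applied to $\sigma^{>n}L^\bu$, which has strictly fewer nonzero terms). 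Since $R\modl_{r\ctra}$ is closed under kernels, cokernels, and extensions in $R\modl$, it follows that $\Hom_{\sD(R\modl)}(L^\bu,C[i])$ is also an $r$\+contramodule.

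There is no real obstacle in this argument; it is a direct d\'evissage from the abelian-subcategory and product-closure properties of $R\modl_{r\ctra}$. The only mild point to watch is the use of a free (rather than merely projective) resolution in the first step so that $\Hom_R(P_n,C)$ is literally a product of copies of $C$; with a projective resolution one would additionally note that direct summands of products of copies of $C$ remain $r$\+contramodules, which is immediate from the definition.
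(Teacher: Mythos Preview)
Your argument is correct and self-contained, whereas the paper simply cites~\cite[Lemma~6.2(b)]{Pcta} without reproducing the proof. The reduction to the case of a single module via a free resolution (so that each $\Hom_R(P_n,C)\simeq C^{X_n}$ is a product of copies of~$C$, hence an $r$\+contramodule), followed by d\'evissage along stupid truncations, is the natural approach and presumably close to what the cited reference does.

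One small correction: with $n$ the lowest nonzero degree, $\sigma^{>n}L^\bu$ is the subcomplex and $L^n[-n]$ the quotient (there is no chain map $L^n[-n]\rarrow L^\bu$ unless $d^n=0$), so the distinguished triangle should read
$$
 \sigma^{>n}L^\bu\lrarrow L^\bu\lrarrow L^n[-n]\lrarrow(\sigma^{>n}L^\bu)[1].
$$
This does not affect the argument: the resulting long exact sequence contains the same terms, and your appeal to closure of $R\modl_{r\ctra}$ under kernels, cokernels, and extensions goes through unchanged.
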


\begin{proof}
 This is~\cite[Lemma~6.2(b)]{Pcta}.
\end{proof}

\begin{lem} \label{hom-s-contraadjusted}
 Let $R$ be a commutative ring and $r$, $s\in R$ be two elements.
 Assume that an $R$\+module $C$ is $(rs)$\+contraadjusted.
 Then the $R$\+module\/ $\Hom_k(R[r^{-1}],C)$ is $s$\+contraadjusted.
\end{lem}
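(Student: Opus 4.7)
The plan is to prove the lemma by standard Hom-tensor adjunction together with the fact that the $R$-module $R[s^{-1}]$ has projective dimension at most $1$, reducing the desired vanishing to the assumed $(rs)$-contraadjustedness of $C$.

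More precisely, I would begin by fixing a two-term projective (indeed free) resolution
\[
 0 \lrarrow P_1 \lrarrow P_0 \lrarrow R[s^{-1}] \lrarrow 0
\]
of $R[s^{-1}]$, obtained from the standard telescope construction. Then $\Ext^1_R(R[s^{-1}], M)$ is identified, for any $R$-module $M$, with the cokernel of $\Hom_R(P_0, M) \to \Hom_R(P_1, M)$. Taking $M = \Hom_R(R[r^{-1}], C)$ and applying Hom-tensor adjunction termwise gives
\[
 \Ext^1_R\bigl(R[s^{-1}],\,\Hom_R(R[r^{-1}], C)\bigr) \;=\; \coker\!\Bigl(\Hom_R(P_0 \ot_R R[r^{-1}], C) \lrarrow \Hom_R(P_1 \ot_R R[r^{-1}], C)\Bigr).
\]

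The next step is to exploit the flatness of $R[r^{-1}]$: tensoring the two-term resolution with $R[r^{-1}]$ yields a short exact sequence
\[
 0 \lrarrow P_1 \ot_R R[r^{-1}] \lrarrow P_0 \ot_R R[r^{-1}] \lrarrow R[(rs)^{-1}] \lrarrow 0,
\]
using that $R[s^{-1}] \ot_R R[r^{-1}] \simeq R[(rs)^{-1}]$. Applying $\Hom_R(-, C)$ produces a long exact sequence of $\Ext$ groups in which the cokernel computed above embeds into $\Ext^1_R(R[(rs)^{-1}], C)$. The hypothesis that $C$ is $(rs)$-contraadjusted says precisely that this latter $\Ext^1$ vanishes, so the cokernel is zero, as desired.

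No step here poses a serious obstacle; the main subtlety is simply that one must pass through a projective resolution of $R[s^{-1}]$ rather than attempt to compute the $\Ext$ directly via the flat but not necessarily projective resolution $P_\bu \ot_R R[r^{-1}]$ of $R[(rs)^{-1}]$. The two-term length of the resolution is what makes the LES argument deliver vanishing of the cokernel rather than only an injection into $\Ext^1$.
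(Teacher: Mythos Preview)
Your proof is correct and reaches the same conclusion as the paper's argument: the natural map
\[
\Ext_R^1\bigl(R[s^{-1}],\,\Hom_R(R[r^{-1}],C)\bigr)\lrarrow\Ext_R^1\bigl(R[(rs)^{-1}],C\bigr)
\]
is injective, so vanishing of the target forces vanishing of the source. The paper obtains this injection via the pair of spectral sequences $'\!E_2^{pq}=\Ext_R^p(G,\Ext_R^q(F,C))$ and $''\!E_2^{pq}=\Ext_R^p(\Tor^R_q(F,G),C)$ converging to the same limit, analyzing the low-degree edge maps. Your argument instead exploits the specific feature that $R[s^{-1}]$ has a two-term free resolution: this lets you compute the relevant $\Ext^1$ as an honest cokernel, apply Hom--tensor adjunction termwise, and then read off the injection directly from the long exact sequence for $\Hom_R(-,C)$ applied to the tensored resolution. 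Your approach is more elementary and self-contained for this particular lemma; the paper's spectral sequence argument is more general (it would apply to modules $F$ and $G$ without the projective-dimension-one hypothesis), but that generality is not needed here.
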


\begin{proof}
 For any three $R$\+modules $F$, $G$, and $C$ there is a pair of
spectral sequences
\begin{align*}
 '\!E_2^{pq}=\Ext_R^p(G,\Ext_R^q(F,C)) &\Longrightarrow
 H^{p+q}(\boR\Hom_R(G,\boR\Hom_R(F,C))) \\
 ''\!E_2^{pq}=\Ext_R^p(\Tor^R_q(F,G),C) &\Longrightarrow
 H^{p+q}(\boR\Hom_R(F\ot^\boL_RG,\>C))
\end{align*}
with the differentials $d_r^{pq}\:E_2^{pq}\rarrow E_2^{p+r,\.q-r+1}$,
converging to the same limit $E_\infty^{p+q}=
H^{p+q}(\boR\Hom_R(G,\boR\Hom_R(F,C)))
= H^{p+q}(\boR\Hom_R(F\ot^\boL_RG,\>C))$.
 In low degrees, each of them turns into an exact sequence
$$
 0\lrarrow E_2^{1,0}\lrarrow E_\infty^1\lrarrow E_2^{0,1}
 \lrarrow E_2^{2,0}\lrarrow E_\infty^2.
$$
 In particular, if $\Tor^R_1(F,G)=0$, then $''\!E_2^{0,1}=0$ implies
$''\!E_2^{1,0}=E_\infty^1$, leading to an injective morphism
$$
 \Ext_R^1(F,\Hom_R(G,C))\.=\.{}'\!E_2^{1,0}\lrarrow
 E_\infty^1\.=\.{}''\!E_2^{1,0}\.=\.\Ext_R^1(F\ot_RG,\>C).
$$

 Thus, in particular, for any $R$\+module $C$, there is a natural
injective $R$\+module morphism
$$
 \Ext_R^1(R[s^{-1}],\.\Hom_R(R[r^{-1}],C))\lrarrow
 \Ext_R^1(R[(rs)^{-1}],\.C),
$$
so the left-hand side vanishes whenever the right-hand side does
(cf.~\cite[proof of Lemma~1.2.1]{Pcosh}).
\end{proof}

\begin{proof}[Proof of Main Proposition~\ref{r-very-flat-main-prop}]
 Denote by\/ $\sE\subset R\modl$ the class of all $R_J$\+modules,
where $J$ runs over the subsets of\/ $\{1,\dotsc,m\}$, viewed
as $R$\+modules via the restriction of scalars.
 We have to show that an $R$\+module $C$ is $\r^\times$\+contraadjusted
if and only if it is right $1$\+obtainable from~$\sE$.

 To prove the ``if'', notice that all the $R$\+modules from
the class $\sE$ are $\r^\times$\+contra\-adjusted.
 Indeed, let $I$ and $J$ be two subsets in $\{1,\dotsc,m\}$,
and $E$ be an $R_J$\+module.
 Then we have
$$
 R_J\ot_R R[r_I^{-1}]=\begin{cases}
 R_J, &\text{if $I\subset J$,} \\
 0,   &\text{if $I\not\subset J$.} \end{cases} 
$$
 By Lemma~\ref{change-of-scalars-vanishing-tor-lemma}(a), it follows
that $\Ext_R^i(R[r_I^{-1}],E)=\Ext_{R_J}^i(R_J\ot_RR[r_I^{-1}],\>E)=0$
for $i\ge1$ in both cases.

 In other words, we have $R[r_I^{-1}]\in\sF={}^{\perp_{\ge1}}\.\sE$ for
all $I\subset\{1,\dotsc,m\}$.
 By Lemma~\ref{1-2-right-obtainable-orthogonal}, it follows that
$C\in\sF^{\perp_{\ge1}}\subset\{R[r_I^{-1}]\}^{\perp_{\ge1}}$ for every
$R$\+module $C$ right $1$\+obtainable from $\sE$, so
$\Ext^1_R(R[r_I^{-1}],C)=0$ and $C$ is $\r^\times$\+contraadjusted.

 The nontrivial part is the ``only if''.
 Before spelling out formally the induction procedure constituting
our argument, let explain the key ideas first in some informal terms.
 What we are doing here can be described as ``cutting modules into
pieces''.
 The main proposition claims that any $\r^\times$\+contraadjusted
$R$\+module can be cut into certain kind of pieces of simple nature.
  
 We have a finite collection of elements $r_1$,~\dots, $r_m$, and
so we work with them one by one.
 Using the exact sequence~\eqref{r-contraadjusted-sequence},
an $r_m$\+contraadjusted $R$\+module $C$ can be cut into three pieces
of two sorts (meaning that we need to show two of them to be
$1$\+obtainable from~$\sE$, while the third one can be only
$2$\+obtainable).
 One of these three pieces is an $R[r_m^{-1}]$\+module, while
the other two are $r_m$\+contramodule $R$\+modules.

 As with any such inductive transformation procedure, the trick is
to make sure that we do not lose important properties along the way.
 The original module $C$ is not just $r_m$\+contraadjusted; it is
$\r^\times$\+contraadjusted.
 The pieces into which we cut it should remain
$\r^\times$\+contraadjusted, or something close to that; or otherwise
our induction procedure would not work.
 To be more precise, when we get a piece about which we only need to
show that it is $2$\+obtainable, we are not concerned with its
contraadjustness properties anymore; but to show that a module is
$1$\+obtainable, we need to know that it is $r_J$\+contraadjusted
for various elements $r_J\in\r^\times$.

 The problem is that contraadjustness is a fragile property.
 It is destroyed by passages to the kernels of morphisms.
 Transforming modules using the exact
sequence~\eqref{r-contraadjusted-sequence} does not involve
passages to the kernels, but the constructions of
Sublemmas~\ref{delta-lambda-r-sequence}\+-%
\ref{derived-projlim-obtainable}
or~\ref{delta-lambda-I-sequence}\+-%
\ref{tuple-derived-projlim-obtainable} do.
 So we cannot just cut an $r_m$\+contramodule $R$\+module into
$R/r_mR$\+modules, using constructions of this kind, without possibly
destroying our contraadjustness properties with respect to the other
elements~$r_j$ or~$r_J$.

 The solution implemented in the argument below is to postpone all
the cutting of $r_j$\+contramodules into $R/r_jR$\+modules until
the moment after we have already applied
the exact sequence~\eqref{r-contraadjusted-sequence} along all
the elements $r=r_j$, \,$j=1$,~\dots,~$m$, replacing all
the contraadjustness with either invertibility of~$r_j$, or
$r_j$\+contramoduleness, for every $1\le j\le m$.
 (Or rather, strengthening all the contraadjustness to various
combinations of invertibility and contramoduleness.)
 Invertibility and contramoduleness are robust properties, preserved
by passages to the kernels, cokernels, and infinite products; so
applying the constructions of
Sublemmas~\ref{delta-lambda-I-sequence}\+-%
\ref{tuple-derived-projlim-obtainable} becomes possible at this point
and no vital information is destroyed.

 Here is the promised formal induction procedure.
 We will prove the following assertion by induction on $0\le n\le m$:
any $R$\+module that is $r_J$\+contraadjusted for all
$J\subset\{1,\dotsc,n\}$ and an $r_k$\+contramodule for all
$n+1\le k\le m$, is right $1$\+obtainable from $R_J$\+modules,
where $J\subset\{1,\dotsc,n\}\subset\{1,\dotsc,m\}$.
 For $n=m$, this is the assertion of the main proposition.

 Induction base: for $n=0$, this is the assertion of
Lemma~\ref{tuple-r-contramodule-obtainable-lemma}.

 Let $C$ be an $R$\+module that is $r_J$\+contraadjusted for all
$J\subset\{1,\dotsc,n\}$ and an $r_k$\+contramodule for all
$n+1\le k\le m$.
 Consider the exact sequence~\eqref{r-contraadjusted-sequence} for
$r=r_n$:
\begin{equation} \label{r-n-contraadjusted-sequence}
 0\lrarrow\Hom_R(R[r_n^{-1}]/R,C)\lrarrow\Hom_R(R[r_n^{-1}],C)\lrarrow C
 \\ \lrarrow\Delta_{r_n}(C)\lrarrow0.
\end{equation}
 Denote by $\sE_n\subset R\modl$ the class of all $R_J$\+modules
with $J\subset\{1,\dotsc,n\}$.
 To show that $C$ is right $1$\+obtainable from $\sE_n$, it suffices
to check that the $R$\+modules $\Hom_R(R[r_n^{-1}],C)$ and
$\Delta_{r_n}(C)$ are right $1$\+obtainable from $\sE_n$ and
the $R$\+module $\Hom_R(R[r_n^{-1}]/R,C)$ is right $2$\+obtainable
from~$\sE_n$.
 Let us consider these three $R$\+modules one by one.

 The $R$\+module $\Hom_R(R[r_n^{-1}],C)$ is an $R[r_n^{-1}]$\+module.
 By Lemma~\ref{hom-into-contramodule-is-contramodule}, it is also
an $r_k$\+contramodule for all $n+1\le k\le m$, and by
Lemma~\ref{hom-s-contraadjusted}, it is $r_J$\+contraadjusted for
all $J\subset\{1,\dotsc,n-1\}$.
 In order to conclude that the $R$\+module $\Hom_R(R[r_n^{-1}],C)$
is right $1$\+obtainable from $\sE_n$, it remains to apply
the induction assumption to the pair of integers $(n',m')=
(n-1,m-1)$, the ring $R'=R[r_n^{-1}]$, and the set of elements
$r_1'$,~\dots, $r_{m-1}'\in R'$ equal to the images of
the elements $r_1$,~\dots, $r_{n-1}$, $r_{n+1}$,~\dots, $r_m$
under the localization morphism $R\rarrow R[r_n^{-1}]$.

 The $R$\+module $\Delta_{r_n}(C)$ is $r_J$\+contraadjusted for all
$J\subset\{1,\dotsc,n-1\}$ as a quotient module of
an $r_J$\+contraadjusted $R$\+module~$C$.
 It is also an $r_n$\+contramodule (because the $R$\+module
$\Delta_r(A)$ is an $r$\+contramodule for any $R$\+module~$A$),
and it is an $r_k$\+contramodule for all $n+1\le k\le m$
by Lemma~\ref{hom-into-contramodule-is-contramodule}.
 So $\Delta_{r_n}(C)$ is an $r_k$\+contramodule for all $n\le k\le m$.
 The claim that the $R$\+module $\Delta_{r_n}(C)$ is right
$1$\+obtainable from $\sE_n$ (in fact, even from $\sE_{n-1}$) now
follows from the induction assumption applied to the pair of integers
$(n',m')=(n-1,m)$, the same ring $R'=R$, and the same set of elements
$r'_1=r_1$,~\dots, $r'_m=r_m$.

 The $R$\+module $\Hom_R(R[r_n^{-1}]/R,C)$ is an $r_n$\+contramodule
(in fact, even an $r_n$\+separated $r_n$\+complete $R$\+module) and
an $r_k$\+contramodule for all $n+1\le k\le m$
(by Lemma~\ref{hom-into-contramodule-is-contramodule}).
 By Lemma~\ref{tuple-r-contramodule-obtainable-lemma},
the $R$\+module $\Hom_R(R[r_n^{-1}]/R,C)$ is simply right obtainable
from $R/(r_nR+\dotsb+r_mR)$\+modules.

 Now we apply the induction assumption to the pair of integers
$(n',m')=(n-1,\allowbreak n-1)$, the ring $R'=R/(r_nR+\dotsb+r_mR)$,
and the sequence of elements $r'_1$,~\dots, $r'_{n-1}\in R'$ equal
to the images of the elements $r_1$,~\dots, $r_{n-1}$ under
the surjective ring homomorphism $R\rarrow R'$.
 This is simply the assertion of the main proposition for the ring
$R'$ with $n-1$ elements $r'_1$,~\dots,~$r'_{n-1}$.
 By Lemma~\ref{2-obtainable-from-contraadjusted}, this assertion implies
that every $R'$\+module is right $2$\+obtainable from~$\sE_{n-1}$.
 Thus the $R$\+module $\Hom_R(R[r_n^{-1}]/R,C)$ is right
$2$\+obtainable from~$\sE_{n-1}$.
\end{proof}

\begin{proof}[Proof of Theorem~\ref{r-very-flat-theorem}]
 As in the proof of Toy Main Lemma~\ref{toy-main-lemma} in
Section~\ref{toy-secn}, the ``only if'' assertion holds, because
the functors $F\longmapsto R_J\ot_RF$, \ $J\subset\{1,\dotsc,m\}$
preserve transfinitely iterated extensions, in the sense of
the inductive limit, of flat $R$\+modules, and take the $R$\+modules
$R[r_I^{-1}]$, \ $I\subset\{1,\dotsc,m\}$ to free $R_J$\+modules
(with $1$ or~$0$ generators).
 The ``if'' assertion is the nontrivial part.

 To show that the $R$\+module $F$ is $\r^\times$\+very flat, we need
to check that $\Ext_R^1(F,C)=0$ for all $\r^\times$\+contraadjusted
$R$\+modules~$C$.
 As in the proof of Main Proposition~\ref{r-very-flat-main-prop},
we denote by $\sE\subset R\modl$ the class of all $R_J$\+modules,
$J\subset\{1,\dotsc,m\}$.

 By Lemma~\ref{change-of-scalars-vanishing-tor-lemma}(a), it follows
from the assumption of flatness of the $R$\+module $F$ and
projectivity of the $R_J$\+modules $R_J\ot_RF$ that $\Ext^i_R(F,E)=0$
for all $E\in\sE$ and $i\ge1$.
 So $F\in{}^{\perp_{\ge1}}\.\sE$.
 By Main Proposition~\ref{r-very-flat-main-prop}, the $R$\+module $C$
is right $1$\+obtainable from~$\sE$.
 Applying Lemma~\ref{1-2-right-obtainable-orthogonal}, we can conclude
that $C\in({}^{\perp_{\ge1}}\.\sE)^{\perp_{\ge1}}\subset
\{F\}^{\perp_{\ge1}}$, hence $\Ext_R^1(F,C)=0$.
\end{proof}

\begin{proof}[Proof of Main Lemma~\ref{fvf-main-lemma}]
 The ``only if'' assertion holds, because finite very flatness is
preserved by the extensions of scalars
(see Lemma~\ref{very-flatness-extension-of-scalars}(b)).
 The ``if'' assertion is the nontrivial part.

 Let us recall the discussion from Section~\ref{good-subsets-outline},
with a slight change in notation.
 We have a finite set of elements $\s=\{s_1,\dotsc,s_p\}$ in
the quotient ring $R/rR$ such that the $R/rR$\+module $F/rF$
is $\s^\times$\+very flat, and a finite set $\t=\{t_1,\dotsc,t_q\}$
of elements in the ring of fractions $R[r^{-1}]$ such that
the $R[r^{-1}]$\+module $F[r^{-1}]$ is $\t^\times$\+very flat.
 Lift the elements $s_i\in R/rR$ to some elements $\tilde s_i\in R$,
and choose elements $\tilde t_l\in R$ such that $t_l=\tilde t_l/r^{n_l}$
in $R[r^{-1}]$ for some exponents $n_l\ge0$.

 Set $r_0=r$, \ $r_i=\tilde s_i$ for $1\le i\le p$, and $r_{p+l}=
\tilde t_l$ for $1\le l\le q$.
 Denote the set of all elements $r_0$,~\dots, $r_{p+q}\in R$ by~$\r$.
 We claim that the $R$\+module $F$ is $\r^\times$\+very flat.

 According to Theorem~\ref{r-very-flat-theorem}, it suffices to check
that the $R_J$\+module $R_J\ot_RF$ is projective for all subsets
$J\subset\{0,\dotsc,p+q\}$.
 Let us consider two cases separately.

 If $0\notin J$, then the ring homomorphism $R\rarrow R_J$ factorizes
as $R\rarrow R/rR\rarrow R_J$.
 Denote by $J'$ the set $J\cap\{1,\dotsc,p\}$.
 Then, moreover, the ring homomorphism $R/rR\rarrow R_J$ factorizes
as $R/rR\rarrow (R/rR)_{J'}\rarrow R_J$, where the notation is
$K'=\{1,\dotsc,p\}\setminus J'$, \ $s_{J'}=\prod_{j\in J'}s_j$, and
$(R/rR)_{J'}=((R/rR)/\sum_{k\in K'}s_k(R/rR))[s_{J'}^{-1}]$.

 Since the $R/rR$\+module $F/rF$ is $\s^\times$\+very flat,
by the ``only if'' assertion of Theorem~\ref{r-very-flat-theorem}
the $(R/rR)_{J'}$\+module $(R/rR)_{J'}\ot_RF=(R/rR)_{J'}\ot_{R/rR}F/rF$
is projective, and it follows that the $R_J$\+module $R_J\ot_RF$
is also projective.

 If $0\in J$, then the ring homomorphism $R\rarrow R_J$ factorizes
as $R\rarrow R[r^{-1}]\rarrow R_J$.
 Denote by $J''\subset\{1,\dotsc,q\}$ the set of all integers
$j-p$, where $j\in J\cap\{p+1,\allowbreak\dotsc,p+q\}$.
 Then, moreover, the ring homomorphism $R[r^{-1}]\rarrow R_J$
factorizes as $R[r^{-1}]\rarrow R[r^{-1}]_{J''}\rarrow R_J$, where
the notation is $K''=\{1,\dotsc,q\}\setminus J''$, \ 
$t_{J''}=\prod_{j\in J''}t_j$, and $R[r^{-1}]_{J''}=
(R[r^{-1}]/\sum_{k\in K''}t_kR[r^{-1}])[t_{J''}^{-1}]$.

 Since the $R[r^{-1}]$\+module $F[r^{-1}]$ is $\t^\times$\+very flat,
by the ``only if'' assertion of Theorem~\ref{r-very-flat-theorem}
the $R[r^{-1}]$\+module $R[r^{-1}]_{J''}\ot_RF=R[r^{-1}]_{J''}
\ot_{R[r^{-1}]}F[r^{-1}]$ is projective, and it follows that
the $R_J$\+module $R_J\ot_RF$ is also projective.
\end{proof}

 This finishes the proof of Main Theorem~\ref{fvf-module-main-theorem},
and consequently also of Main Theorems~\ref{general-module-main-theorem}
and~\ref{general-algebra-main-theorem}.

\Section{Examples and Applications}

 Let $R$ be a commutative ring.
 An $R$\+algebra is said to be \emph{flat} if it is a flat $R$\+module.
 A commutative $R$\+algebra $S$ is called \emph{very flat} if
the $R$\+module $S[s^{-1}]$ is very flat for every element $s\in S$
(see the discussion in Section~\ref{very-flat-morphisms-introd}).

\begin{cor} \label{very-flat-morphism-main-cor}
 Let $R$ be a commutative ring.
 Then any finitely presented, flat $R$\+algebra is very flat.
\end{cor}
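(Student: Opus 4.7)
The plan is to reduce the assertion directly to Main Theorem~\ref{general-algebra-main-theorem}, which already establishes that a finitely presented flat commutative $R$-algebra is very flat as an $R$-module. Unwinding the definition of very flat $R$-algebra from Section~\ref{very-flat-morphisms-introd}, what needs to be checked is that for every element $s\in S$, the $R$-module $S[s^{-1}]$ is very flat. Thus the whole task is to verify that the $R$-algebra $S[s^{-1}]$ itself satisfies the hypotheses of Main Theorem~\ref{general-algebra-main-theorem}.

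First I would observe that $S[s^{-1}]\simeq S[x]/(sx-1)$, which exhibits $S[s^{-1}]$ as a finitely presented $S$-algebra. Since $S$ is a finitely presented $R$-algebra by assumption, and the property of being finitely presented is transitive under composition of commutative algebra homomorphisms, this gives that $S[s^{-1}]$ is a finitely presented commutative $R$-algebra.

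Next I would check flatness of $S[s^{-1}]$ as an $R$-module. The localization morphism $S\rarrow S[s^{-1}]$ is always flat, so $S[s^{-1}]$ is a flat $S$-module. Combined with the hypothesis that $S$ is a flat $R$-module, transitivity of flatness under composition shows that $S[s^{-1}]$ is a flat $R$-module.

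With both conditions in place, Main Theorem~\ref{general-algebra-main-theorem} applies to the $R$-algebra $S[s^{-1}]$ and yields that $S[s^{-1}]$ is a very flat $R$-module. As $s\in S$ was arbitrary, this says precisely that $S$ is a very flat $R$-algebra. There is no real obstacle at this point: all the substantial work is packaged into Main Theorem~\ref{general-algebra-main-theorem}, and the corollary is simply a bookkeeping translation from the module version to the algebra version, using only that principal localizations of finitely presented flat $R$-algebras remain finitely presented and flat.
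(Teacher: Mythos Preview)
Your proposal is correct and follows essentially the same approach as the paper: both argue that for each $s\in S$ the localization $S[s^{-1}]$ is again a finitely presented flat $R$-algebra, and then invoke Main Theorem~\ref{general-algebra-main-theorem}. The only cosmetic difference is that the paper verifies finite presentation and flatness of $S[s^{-1}]$ over $R$ by writing down an explicit presentation and a direct-limit description, whereas you appeal to transitivity of finite presentation and of flatness along $R\to S\to S[s^{-1}]$; the content is the same.
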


\begin{proof}
 Let $S$ be a finitely presented flat $R$\+algebra and $s\in S$
be an element.
 Then the $R$\+module $S[s^{-1}]$ is flat, because it is a directed
inductive limit of copies of the free $S$\+module~$S$, which is
a flat $R$\+module by assumption.
 Furthermore, the $R$\+algebra $S[s^{-1}]$ is finitely presented,
since the $R$\+algebra $S$~is.
 Indeed, if $S$ is the quotient algebra of the algebra of polynomials
$R[x_1,\dotsc,x_m]$ by a finitely generated ideal~$I$, then
$S[s^{-1}]$ is the quotient algebra of $R[x_1,\dotsc,x_m,z]$ by
the finitely generated ideal~$J$ whose generators are the generators
of the ideal~$I$ and the element $zq(x_1,\dotsc,x_m)-\nobreak1$, where
$q\in R[x_1,\dotsc,x_m]$ is a preimage of the element $s\in S$.
 Applying Main Theorem~\ref{general-algebra-main-theorem}, we conclude
that $S[s^{-1}]$ is a very flat $R$\+module.
\end{proof}

 Let $R$ be a commutative ring.
 A commutative $R$\+algebra $S$ is said to be \emph{finitely very flat}
if the $R$\+module $S[s^{-1}]$ is finitely very flat for every
element $s\in S$.
 The following corollary is a stronger version of
Corollary~\ref{very-flat-morphism-main-cor}.

\begin{cor} \label{fvf-morphism-main-cor}
 Let $R$ be a commutative ring.
 Then any finitely presented, flat $R$\+algebra is finitely very flat.
\end{cor}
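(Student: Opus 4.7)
The plan is to imitate verbatim the argument for Corollary~\ref{very-flat-morphism-main-cor}, simply substituting Main Theorem~\ref{fvf-module-main-theorem} for Main Theorem~\ref{general-algebra-main-theorem} at the final step. Fix a finitely presented, flat commutative $R$\+algebra $S$ and an element $s\in S$; I must show that the $R$\+module $S[s^{-1}]$ is finitely very flat.

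First, I would verify that $S[s^{-1}]$ is a flat $R$\+module: it is the directed inductive limit of copies of the free $S$\+module $S$ (along multiplication by~$s$), and $S$ is flat over $R$ by hypothesis, so the limit is flat over~$R$ as well. Second, I would verify that $S[s^{-1}]$ is a finitely presented commutative $R$\+algebra: if $S=R[x_1,\dotsc,x_m]/I$ with $I$ finitely generated and $q\in R[x_1,\dotsc,x_m]$ is a lift of~$s$, then $S[s^{-1}]\simeq R[x_1,\dotsc,x_m,z]/(I,\.zq-1)$, where the new ideal is again finitely generated.

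With these two preparations in hand, I would apply Main Theorem~\ref{fvf-module-main-theorem} to the commutative ring~$R$, the finitely presented commutative $R$\+algebra $S':=S[s^{-1}]$, and the finitely presented $S'$\+module $F:=S'$ (cyclic free, hence trivially finitely presented). The flatness hypothesis of the theorem was verified in the first step; the theorem then concludes that $S[s^{-1}]$ is a finitely very flat $R$\+module. Since $s\in S$ was arbitrary, $S$ is a finitely very flat $R$\+algebra by definition.

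There is no real obstacle here: the corollary is a direct packaging of Main Theorem~\ref{fvf-module-main-theorem}, and the only work is the two standard observations above about flatness and finite presentation of the localization $S[s^{-1}]$ of a finitely presented flat algebra. I would note that this strengthening over Corollary~\ref{very-flat-morphism-main-cor} is exactly the improvement from the general-algebra form of the main theorem to the finitely very flat form, so in practice one could write the proof of both corollaries simultaneously.
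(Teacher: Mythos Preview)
Your proposal is correct and follows essentially the same approach as the paper: the paper likewise reduces to showing $S[s^{-1}]$ is a finitely presented flat $R$\+algebra (deferring to the proof of Corollary~\ref{very-flat-morphism-main-cor} for this) and then applies Main Theorem~\ref{fvf-module-main-theorem} to the $R$\+algebra $S[s^{-1}]$ with the module $F=S[s^{-1}]$.
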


\begin{proof}
 Let $S$ be a finitely presented flat $R$\+algebra and $s\in S$ be
an element.
 As it was explained in the proof of
Corollary~\ref{very-flat-morphism-main-cor}, \,$S[s^{-1}]$ is
a finitely presented flat $R$\+algebra, too.
 Applying Main Theorem~\ref{fvf-module-main-theorem} to the $R$\+algebra
$S[s^{-1}]$ and the $S[s^{-1}]$\+module $F=S[s^{-1}]$, we conclude that
the $R$\+module $S[s^{-1}]$ is finitely very flat.
\end{proof}

 The next lemma explains the importance of (finitely) very flat
morphisms of commutative rings.

\begin{lem} \label{very-flat-restriction-of-scalars-lemma}
 Let $R$ be a commutative ring and $S$ be a commutative $R$\+algebra.
 Then \par
\textup{(a)} the $R$\+algebra $S$ is very flat if and only if every
very flat $S$\+module is a very flat $R$\+module; \par
\textup{(b)} the $R$\+algebra $S$ is finitely very flat if and only if
every finitely very flat $S$\+module is a finitely very flat
$R$\+module.
\end{lem}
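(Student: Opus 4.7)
The plan is to handle both parts uniformly using the characterization of (finitely) very flat modules as direct summands of transfinitely iterated extensions of the generating modules $R[r^{-1}]$ (respectively $R[r_j^{-1}]$ for $r_j$ in a fixed finite set), which was recalled in Sections~\ref{very-flat-definition-introd} and~\ref{fvf-outline}.

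For the "if" direction of both parts, the key observation is that $S[s^{-1}]$, viewed as an $S$-module, is very flat; in fact it is $\{s\}$-very flat by the definition in Section~\ref{fvf-outline}, hence finitely very flat. So if the hypothesis of the "if" direction holds, then $S[s^{-1}]$ is very flat (resp.\ finitely very flat) as an $R$-module for every $s\in S$, which is the definition of a very flat (resp.\ finitely very flat) $R$-algebra. This step is essentially immediate.

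For the "only if" direction of part~(a), let $F$ be a very flat $S$-module. By the $S$-module version of the characterization, $F$ is a direct summand of a transfinitely iterated extension, in the sense of the inductive limit, of $S$-modules of the form $S[s_\alpha^{-1}]$. By hypothesis, each $S[s_\alpha^{-1}]$ is very flat as an $R$-module. The class of very flat $R$-modules is the left class of a hereditary complete cotorsion pair in $R\modl$, and in particular it is closed under transfinitely iterated extensions (Eklof's Lemma~\cite[Lemma~1]{ET}, which is also invoked several times in Section~\ref{obtainable-I-secn}) and under direct summands. Restriction of scalars along $R\to S$ preserves such transfinitely iterated extensions, so $F$ is a very flat $R$-module.

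For the "only if" direction of part~(b), the extra subtlety is that one must produce a single finite subset $\r\subset R$ witnessing $\r$\+very flatness of $F$. Starting from a finite subset $\s=\{s_1,\dots,s_n\}\subset S$ with $F$ being $\s$\+very flat, invoke the hypothesis to choose, for each $j$, a finite subset $\r_j\subset R$ such that the $R$\+module $S[s_j^{-1}]$ is $\r_j$\+very flat, and do the same for $S$ itself (which equals $S[1^{-1}]$). Set $\r=\r_1\cup\dots\cup\r_n\cup\r_0$. Since the generating set of $\r$\+very flat $R$\+modules grows with $\r$, each $S[s_j^{-1}]$ and $S$ are $\r$\+very flat as $R$\+modules. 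Because $\r$\+very flat $R$\+modules form the left class of a cotorsion pair generated by a set of modules of projective dimension at most~$1$, this class is closed under transfinitely iterated extensions and direct summands, and the same argument as in part~(a) yields that $F$ is $\r$\+very flat as an $R$\+module, hence finitely very flat. The only place requiring any care is this uniformization of the finite set $\r$; there is no serious obstacle, merely the bookkeeping of taking a finite union.
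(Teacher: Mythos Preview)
Your proof is correct and takes essentially the same approach as the paper. The paper's proof of the ``only if'' in part~(a) simply cites \cite[Lemma~1.2.3(b)]{Pcosh}, which is exactly the transfinitely-iterated-extension argument you spell out; and your part~(b) bookkeeping (taking the union of the finite sets $\r_j$, including one for $S=S[1^{-1}]$) matches the paper's construction with $s_0=1$ verbatim.
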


\begin{proof}
 Part~(a): to prove the ``if'', it suffices to consider the very flat
$S$\+module $S[s^{-1}]$.
 The assertion ``only if'' is~\cite[Lemma~1.2.3(b)]{Pcosh}.
 Part~(b): once again, to prove the ``if'', it suffices to consider
the finitely very flat $S$\+module $S[s^{-1}]$.
 ``Only if'': let $F$ be a finitely very flat $S$\+module, and let
$\s=\{s_1,\dotsc,s_m\}$ be a finite set of elements in $S$ such
that the $S$\+module $F$ is $\s$\+very flat.
 Set $s_0=1\in S$, and for every $0\le j\le m$ choose a finite
set of elements $\r_j=\{r_{j,1},\dotsc,r_{j,n_j}\}\subset R$ such that
the $R$\+module $S[s_j^{-1}]$ is $\r_j$\+very flat.
 Denote by $\r\subset R$ the finite set of all elements $\{r_{j,k}\}$,
\,$0\le j\le m$, \,$1\le k\le n_j$.
 Then the $R$\+module $F$ is $\r$\+very flat.
\end{proof}

\begin{ex}
 This example, suggested to us by Jan Trlifaj, shows that the finite
presentability condition on the $R$\+algebra $S$ or the $S$\+module
$F$ cannot be replaced by a finite generatedness condition in
the formulations of Main Theorems~\ref{general-algebra-main-theorem}
and~\ref{general-module-main-theorem}.

 Let $R$ be a von Neumann regular commutative ring.
 Then all $R$\+modules are flat, while all very flat $R$\+modules
are projective~\cite[Example~2.9]{ST}.
 Let $I\subset R$ be an infinitely generated ideal.
 Then $S=R/I$ is a finitely generated flat $R$\+algebra which is
\emph{not} a very flat $R$\+module.
 Alternatively, set $S=R$ and $F=R/I$; then $S$ is a finitely
presented $R$\+algebra and $F$ is a finitely generated $S$\+module
which is a flat, but \emph{not} a very flat $R$\+module.

 Main Theorem~\ref{general-module-main-theorem} tells, on the other
hand, that if $R$ is a von Neumann regular commutative ring, $S$
is a finitely presented $R$\+algebra, and $F$ is a finitely
presented $S$\+module, then $F$ is a projective $R$\+module.
\end{ex}

 The following corollary is an application of
Main Theorem~\ref{general-module-main-theorem}
or~\ref{fvf-module-main-theorem}.

\begin{cor} \label{inverting-operator-cor}
 Let $R$ be a commutative ring, $P$ be a finitely generated projective
$R$\+module, and $x\:P\rarrow P$ be an $R$\+linear map.
 Denote by $P[x^{-1}]$ the inductive limit of the sequence of maps
\begin{equation}
 P\overset{x}\lrarrow P\overset{x}\lrarrow P\overset{x}\lrarrow\dotsb
\end{equation}
 Then $P[x^{-1}]$ is a finitely very flat $R$\+module.
\end{cor}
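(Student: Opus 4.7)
The plan is to apply Main Theorem~\ref{fvf-module-main-theorem} with the finitely presented commutative $R$\+algebra $S=R[t,t^{-1}]\simeq R[t,u]/(tu-1)$ and the $S$\+module $F=P[x^{-1}]$, endowed with the $S$\+module structure in which $t$ acts as the (now invertible) extension of $x$ to $P[x^{-1}]$. Two properties of $F$ then need to be established: that $F$ is flat as an $R$\+module, and that $F$ is finitely presented as an $S$\+module.

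The flatness over $R$ is immediate: $P$ is finitely generated projective hence flat, and $P[x^{-1}]$ is by definition the filtered colimit of copies of $P$, hence flat as a filtered colimit of flat $R$\+modules.

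For the finite presentation over $S$, the approach is to exhibit $P[x^{-1}]$ as the cokernel of the $S$\+linear endomorphism $\psi\:S\ot_RP\rarrow S\ot_RP$ given by $\psi(s\ot p)=ts\ot p-s\ot x(p)$. The module $S\ot_RP$ is finitely generated projective over $S$ (since $P$ is such over $R$), so $\coker(\psi)$ is automatically a finitely presented $S$\+module. The canonical $S$\+linear map $S\ot_RP\rarrow P[x^{-1}]$ sending $s\ot p\mapsto s\cdot\iota(p)$, where $\iota\:P\rarrow P[x^{-1}]$ is the structural map of the colimit, visibly vanishes on $\im(\psi)$ and so induces an $S$\+linear surjection $\coker(\psi)\rarrow P[x^{-1}]$.

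The main obstacle is verifying that this induced map is injective. Here the key observation is that in $\coker(\psi)$ the identity $t^n\ot p=t^{n-k}\ot x^k(p)$ holds for all $n\in\boZ$ and $k\ge0$, so every element of the cokernel reduces to the form $t^{-N}\ot q$ for some $N\ge0$ and $q\in P$. If such an element maps to $q/x^N=0$ in $P[x^{-1}]$, then $x^M(q)=0$ in $P$ for some $M\ge0$ by definition of the colimit, and the displayed identity then gives $t^{-N}\ot q=t^{-N-M}\ot x^M(q)=0$ in $\coker(\psi)$. With the isomorphism $\coker(\psi)\simeq P[x^{-1}]$ established, Main Theorem~\ref{fvf-module-main-theorem} applies directly to conclude that $P[x^{-1}]$ is finitely very flat over $R$.
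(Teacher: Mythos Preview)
Your proof is correct and follows essentially the same strategy as the paper: apply Main Theorem~\ref{fvf-module-main-theorem} to the finitely presented $R$\+algebra $S=R[t,t^{-1}]$ and the $S$\+module $F=P[x^{-1}]$, after checking that $F$ is $R$\+flat and finitely presented over~$S$. The only difference lies in the finite-presentation step: the paper first shows that $P$ is finitely presented as an $R[x]$\+module (writing down explicit generators and relations coming from a splitting of $P$ as a summand of a free module together with the action relations $xe_i-\sum_j x_{ij}e_j$) and then base-changes to $R[x,x^{-1}]$, whereas you exhibit $P[x^{-1}]$ directly as the cokernel of the $S$\+linear endomorphism $t\otimes\id_P-\id_S\otimes x$ of the finitely generated projective $S$\+module $S\otimes_RP$. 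Your route is slightly more streamlined, since it bypasses the intermediate $R[x]$\+module presentation and the separate verification that finitely generated projectives are finitely presented; the paper's route, on the other hand, makes the connection with the standard fact $P[x^{-1}]\simeq R[x,x^{-1}]\otimes_{R[x]}P$ more explicit.
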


\begin{proof}
 Consider the algebra of polynomials $R[x]$ in one variable~$x$ over
the ring~$R$.
 Then $P$ can be viewed as an $R[x]$\+module in which the element
$x\in R[x]$ acts by the operator $x\:P\rarrow P$.
 Notice that $P$ is a finitely presented $R[x]$\+module.

 Indeed, let $p_1$,~\dots, $p_m\in P$ be a set of generators of
the $R$\+module~$P$.
 A finitely generated projective $R$\+module is finitely presentable;
specifically, if $f\:R^m\rarrow P$ is the surjective map taking
the free generator $e_i\in R^m$ to the generator $p_i\in P$
and $g\:P\rarrow R^m$ is a splitting of the $R$\+module morphism~$f$,
then $P$ is the cokernel of the $R$\+module morphism $\id_{R^m}-gf\:
R^m\rarrow R^m$.
 So the elements $e_i-gf(e_i)\in R^m$ form a finite set of relations
defining the $R$\+module $P$ as a quotient $R$\+module of the free
$R$\+module~$R^m$.
 Now let $x_{ij}\in R$, \ $1\le i$, $j\le m$ be some elements such
that $x(p_i)=\sum_{j=1}^mx_{ij}p_j$ for all $1\le i\le m$.
 Then the elements $e_i-gf(e_i)\in R^m\subset R[x]^m$ and
$xe_i-\sum_{j=1}^mx_{ij}e_j\in R[x]^m$ can be used as a finite set
of relations defining the $R[x]$\+module $P$ as a quotient module
of the free $R[x]$\+module~$R[x]^m$.

 Denoting by $R[x,x^{-1}]$ the algebra of Laurent polynomials in
the variable~$x$ over the ring $R$, it follows that $P[x^{-1}]\simeq
R[x,x^{-1}]\ot_{R[x]}P$ is a finitely presented $R[x,x^{-1}]$\+module.
 Clearly, $P[x^{-1}]$ is also a flat $R$\+module, while $R[x,x^{-1}]$
is a finitely presented $R$\+algebra.
 Applying Main Theorem~\ref{general-module-main-theorem} to
the $R$\+algebra $S=R[x,x^{-1}]$ and the $S$\+module $F=P[x^{-1}]$,
we conclude that $P[x^{-1}]$ is a very flat $R$\+module.
 Applying Main Theorem~\ref{fvf-module-main-theorem}, we see
that $P[x^{-1}]$ is even a finitely very flat $R$\+module.

 The above discussion of finite presentability of a finitely generated
projective module could be avoided by replacing a projective
$R$\+module $P$ with an $R$\+linear map $x\:P\rarrow P$ by a direct
sum $(P,x)\oplus (Q,y)$, where $Q$ is a finitely generated projective
$R$\+module such that the $R$\+module $P\oplus Q$ is free, and
$y\:Q\rarrow Q$ is an arbitrary $R$\+linear map.
 Then the $R$\+module $P[x^{-1}]$ is a direct summand of
$(P\oplus Q)[(x\oplus y)^{-1}]$, and the passage to a direct summand
preserves (finite) very flatness.
 Taking $y=0$, one even gets $(P\oplus Q)[(x\oplus y)^{-1}]=P[x^{-1}]$.
 Denote the finitely generated free $R$\+module $P\oplus Q$ by $R^m$
and the $R$\+linear operator $x\oplus y$ by $z\:R^m\rarrow R^m$.
 Then one can define the $R[x]$\+module $R^m$, with the element
$x\in R[x]$ acting in it by the operator~$z$, by the finite set of
relations $xe_i-\sum_{z_{ij}e_j}z_{ij}e_j\in R[x]^m$, where $e_i\in R^m$, \
$1\le i\le m$ are the free generators of the $R$\+module $R^m$ and
$z_{ij}\in R$, \ $1\le i$, $j\le m$ are the elements such that
$z(e_i)=\sum_{j=1}^mz_{ij}e_j$ for all $1\le i\le m$.

 Alternatively, one could say that there is a subring $\oR\subset R$
finitely generated over $\boZ$ and a finitely generated projective
$\oR$\+module $\oP$ with an $\oR$\+linear map $\bar x\:\oP\rarrow\oP$
such that $P=R\ot_\oR\oP$ and $x=R\ot_\oR\bar x$ (because only
a finite set of elements of $R$ are involved as the matrix entries
of an idempotent linear operator $R^m\rarrow R^m$ defining $P$
and the operator~$x\:P\rarrow P$).
 Then $\oP$ is a finitely generated module over a Noetherian ring
$\oR[x]$ and $\oP[\bar x^{-1}]$ is a finitely generated/presented
module over $\oR[x,x^{-1}]$, so one can apply
Main Theorem~\ref{fvf-module-main-theorem} to the $\oR$\+algebra
$\oR[x,x^{-1}]$ and the $\oR[x,x^{-1}]$\+module $\oP[\bar x^{-1}]$
in order to show that the $\oR$\+module $\oP[\bar x^{-1}]$ is
finitely very flat.
 Hence the $R$\+module $P[x^{-1}]=R\ot_\oR\oP[\bar x^{-1}]$ is finitely
very flat.
\end{proof}

\begin{ex} \label{inverting-operator-counterex}
 The following counterexample shows that the condition that
the projective $R$\+module $P$ should be finitely generated cannot
be dropped in Corollary~\ref{inverting-operator-cor}.
 Let $R=\boZ$ be the ring of integers and $P$ be a free abelian group
with an infinite set of generators $p_1$, $p_2$, $p_3$,~\dots{}
 Let the additive operator $x\:P\rarrow P$ act by the rule
$x(p_n)=(n+1)p_{n+1}$, \,$n\ge1$.
 Then the abelian group $P[x^{-1}]$ is a vector space of countable
dimension over the field of rational numbers~$\boQ$, which is
\emph{not} a very flat $\boZ$\+module~\cite[Example~1.7.7]{Pcosh}.
\end{ex}

\begin{ex} \label{very-flat-algebra-counterex}
 This is a counterexample showing that a commutative $R$\+algebra
$S$ can be a free $R$\+module without being a very flat
$R$\+algebra (cf.\ Section~\ref{very-flat-morphisms-introd}).
 Once again, set $R=\boZ$, and let $S$ be the ``trivial ring
extension'' $S=\boZ[x]\oplus P$, where $P$ is the abelian group from
Example~\ref{inverting-operator-counterex}.
 The multiplication in $S$ is defined by the rule $(f,p)(g,q)=
(fg,\>g(p)+f(q))$, where $f$, $g\in\boZ[x]$, \ $p$, $q\in P$, and
the $\boZ[x]$\+module structure on $P$ is given by the operator
$x\:P\rarrow P$ from Example~\ref{inverting-operator-counterex}.
 This formula means that, by the definition, the product of any two
elements of $P$ is zero in~$S$.
 So $S$ is a free abelian group and a commutative $\boZ$\+algebra
with an element $x\in S$, but $S[x^{-1}]=\boZ[x,x^{-1}]\oplus P[x^{-1}]$
is not a very flat $\boZ$\+module.
\end{ex}

 Finally, let us prove the last property~(VF16) announced in
Section~\ref{VF-new-properties-introd}.

\begin{cor} \label{universally-very-flat-cor}
 Let $k$~be a field, $R$ and $S$ be commutative $k$\+algebras, and
$t\in R\ot_kS$ be an element.
 Then the $R$\+module $(R\ot_kS)[t^{-1}]$ is very flat.
\end{cor}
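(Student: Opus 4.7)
First, since $t = \sum_{i=1}^n r_i\ot s_i$ involves only finitely many $s_i\in S$, I set $S_0 = k[s_1,\dotsc,s_n]\subseteq S$. Because $k$ is a field, Hilbert's basis theorem gives that $S_0$ is Noetherian and finitely presented as a $k$-algebra, so $R\ot_k S_0$ is a finitely presented, $R$-free $R$-algebra. Corollary~\ref{very-flat-morphism-main-cor} then shows that $A := (R\ot_k S_0)[t^{-1}]$ is a very flat $R$-module; it is also flat as an $S_0$-module, since $R\ot_k S_0$ is $S_0$-free. My plan is to realize $(R\ot_k S)[t^{-1}]$ as a transfinitely iterated extension, in the inductive limit sense, of very flat $R$-modules built from $A$, and then invoke closure of the very flat class under such extensions.

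Using the identity $R\ot_k S = (R\ot_k S_0)\ot_{S_0}S$, I have $(R\ot_k S)[t^{-1}] = A\ot_{S_0} S$, so it suffices to produce an $S_0$-submodule filtration of $S$ whose cokernels, upon tensoring with $A$ over $S_0$, are very flat over $R$. To construct one, I choose a $k$-basis of $S$ extending a $k$-basis of $S_0$ (possible because $k$ is a field), well-order the additional basis vectors as $\{e_\alpha\}_{\alpha<\gamma}$, and set $W_0 = S_0$, $W_{\alpha+1} = W_\alpha + S_0\cdot e_\alpha$, and $W_\beta = \bigcup_{\alpha<\beta}W_\alpha$ at limit ordinals. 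Since $k\subseteq S_0$, every element of $S$ lies in some $W_\beta$, so $W_\gamma = S$. Tensoring with the $S_0$-flat module $A$ then produces a smooth continuous chain $F_\beta := A\ot_{S_0} W_\beta$ with $F_0 = A$ and $F_\gamma = (R\ot_k S)[t^{-1}]$.

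The crucial step is to show each successive quotient $F_{\alpha+1}/F_\alpha$ is very flat over $R$. By construction $W_{\alpha+1}/W_\alpha$ is cyclic as an $S_0$-module and isomorphic to $S_0/J_\alpha$ for the ideal $J_\alpha := \{s\in S_0 : s\cdot e_\alpha\in W_\alpha\}$; this ideal is finitely generated because $S_0$ is Noetherian, so $S_0/J_\alpha$ is finitely presented over $k$, and $R\ot_k(S_0/J_\alpha)$ is again a finitely presented, $R$-free $R$-algebra. A second application of Corollary~\ref{very-flat-morphism-main-cor} then shows its localization $(R\ot_k(S_0/J_\alpha))[\bar t^{-1}]$ at the image $\bar t$ of $t$ is very flat over $R$; and one computes $F_{\alpha+1}/F_\alpha = A\ot_{S_0}(S_0/J_\alpha) = A/J_\alpha A = (R\ot_k(S_0/J_\alpha))[\bar t^{-1}]$, so the cokernel is as desired. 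With such a transfinite filtration of $(R\ot_k S)[t^{-1}]$ in hand, the Eklof Lemma~\cite[Lemma~1]{ET} finishes the argument. The main difficulty will be the identification of the cokernels $F_{\alpha+1}/F_\alpha$ as finitely presented localizations to which Corollary~\ref{very-flat-morphism-main-cor} applies; this relies essentially on $k$ being a field, both to secure the $k$-basis splitting of $S_0\hookrightarrow S$ and, via Hilbert's basis theorem, the Noetherianity of $S_0$.
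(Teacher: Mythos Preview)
Your proof is correct and follows essentially the same route as the paper's: both reduce to the finitely generated $k$\+subalgebra $S_0=\oS$ generated by the $s_i$, filter $S$ as an $S_0$\+module by a transfinite chain with cyclic quotients $S_0/J$, and apply Corollary~\ref{very-flat-morphism-main-cor} to each $(R\ot_k S_0/J)[\bar t^{-1}]$. The paper phrases the filtration step abstractly (``any module over $\oS$ is a transfinitely iterated extension of the $\oS$\+modules $\oS/I$'') and packages the passage to $R$ via the exact, colimit-preserving functor $M\longmapsto(R\ot_kM)[t^{-1}]$, whereas you construct the filtration explicitly from a $k$\+basis and use $S_0$\+flatness of $A$; these are the same argument in slightly different clothing. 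One cosmetic point: your chain starts at $F_0=A$ rather than $0$, so to invoke the Eklof Lemma you should prepend $0$ at the bottom (the first quotient is then $A$, which you have already shown to be very flat).
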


\begin{proof}
 Notice that the assertion of the corollary can be generalized as
follows.
 Let $M$ be an $S$\+module.
 Then $R\ot_kM$ is an $(R\ot_kS)$\+module, and one can consider
the $(R\ot_kS)[t^{-1}]$\+module $(R\ot_kM)[t^{-1}]=
(R\ot_kS)[t^{-1}]\ot_{R\ot_kS}(R\ot_kM)$.
 We claim that $(R\ot_kM)[t^{-1}]$ is a very flat $R$\+module.

 Let $r_1$,~\dots, $r_m\in R$ and $s_1$,~\dots, $s_m\in S$ be some
elements such that $t=r_1\ot s_1+\dotsb+r_m\ot s_m\in R\ot_kS$.
 Denote by $\oS$ the $k$\+subalgebra generated by $s_1$,~\dots,
$s_m$ in~$S$.
 Then $t\in R\ot_k\oS\subset R\ot_kS$, and the $R$\+module
$(R\ot_kM)[t^{-1}]$ only depends on the $\oS$\+module structure on $M$,
rather than on the whole $S$\+module structure.
 Replacing $S$ with $\oS$, for any $\oS$\+module $M$ we can consider
the $(R\ot_k\oS)[t^{-1}]$\+module $(R\ot_k M)[t^{-1}]$.
 Let us show that $(R\ot_k M)[t^{-1}]$ is a very flat $R$\+module
for any $\oS$\+module~$M$.

 Any module over a commutative ring $\oS$ is a transfinitely iterated
extension, in the sense of the inductive limit, of the $\oS$\+modules
$\oS/I$, where $I\subset R$ are ideals.
 The functor $M\longmapsto (R\ot_kM)[t^{-1}]$ is exact and preserves
inductive limits, so it preserves transfinitely iterated extensions.
 Thus the $(R\ot_k\oS)[t^{-1}]$\+module $(R\ot_k M)[t^{-1}]$ is
a transfinitely iterated extension of the $(R\ot_k\oS)[t^{-1}]$\+modules
$(R\ot_k\oS/I)[t^{-1}]$.

 It remains to show that the $R$\+module $(R\ot_k\oS/I)[t^{-1}]$ is
very flat for every ideal $I\subset\oS$.
 Now $\oS/I$ is a finitely generated $k$\+algebra, hence
$R\ot_k\oS/I$ is a finitely presented $R$\+algebra.
 Furthermore, $R\ot_k\oS/I$ is a free $R$\+module.
 Applying Corollary~\ref{very-flat-morphism-main-cor}, we conclude
that $R\ot_k\oS/I$ is a very flat $R$\+algebra, so the $R$\+module
$(R\ot_k\oS/I)[t^{-1}]$ is very flat.
 (In fact, the $R$\+module $(R\ot_k\oS/I)[t^{-1}]$ is even finitely
very flat, by Corollary~\ref{fvf-morphism-main-cor}, but finite very
flatness as such is not preserved by transfinitely iterated extensions,
so the $R$\+module $(R\ot_kS)[t^{-1}]$ is still only very flat.)
\end{proof}

\Section{Descent for Surjective Morphisms}  \label{descent-secn}

 The proofs of Theorems~\ref{noetherian-surj-descent-thm}
and~\ref{fvf-surj-descent-thm} are similar to (but slightly more
complicated than) the argument deducing the main theorems from
the main lemmas in Section~\ref{implies-main-theorem-secn}.

 The most important technical property of the class of all morphisms
of commutative rings inducing surjective maps of the spectra is its
stability under the base change.
 Let us start with the following trivial observation: the spectrum
of a commutative ring $R$ is empty if and only if $R=0$.

\begin{lem} \label{surjectivity-base-change}
 Let $R\rarrow S$ be a homomorphism of commutative rings such that
the induced map of the spectra\/ $\Spec S\rarrow\Spec R$ is surjective.
 Let $R\rarrow R'$ be an arbitrary morphism of commutative rings.
 Then the map\/ $\Spec(R'\ot_RS)\rarrow\Spec R'$ induced by
the homomorphism of commutative rings $R'\rarrow R'\ot_RS$
is surjective.
\end{lem}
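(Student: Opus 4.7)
The plan is to check surjectivity fiberwise on $\Spec R'$, reducing to the standard criterion that for a ring homomorphism $A\rarrow B$, the induced map $\Spec B\rarrow\Spec A$ is surjective if and only if for every prime $\p\subset A$ the fiber ring $k(\p)\ot_AB$ is nonzero, where $k(\p)=A_\p/\p A_\p$ denotes the residue field of $\p$.

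Fix a prime $\p'\in\Spec R'$, and let $\p\in\Spec R$ be its image under the map\/ $\Spec R'\rarrow\Spec R$ induced by $R\rarrow R'$. The natural ring homomorphism $R\rarrow k(\p')$ kills $\p$, so it factors through the residue field $k(\p)$, giving a field extension $k(\p)\hookrightarrow k(\p')$. Using the standard identification of fibers with base changes to residue fields, I would compute
\[
 k(\p')\ot_{R'}(R'\ot_RS)\.\simeq\. k(\p')\ot_RS\.\simeq\.
 k(\p')\ot_{k(\p)}\bigl(k(\p)\ot_RS\bigr).
\]

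Now the hypothesis that\/ $\Spec S\rarrow\Spec R$ is surjective, applied to the prime $\p$, says exactly that $k(\p)\ot_RS\ne0$. Since $k(\p)\rarrow k(\p')$ is a field extension and hence faithfully flat, tensoring any nonzero $k(\p)$\+module with $k(\p')$ over $k(\p)$ again yields a nonzero module. Therefore $k(\p')\ot_RS\ne0$, i.e., the fiber of\/ $\Spec(R'\ot_RS)\rarrow\Spec R'$ over $\p'$ is nonempty. As $\p'$ was arbitrary, this proves surjectivity.

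There is essentially no obstacle here: the argument is the standard fact that surjectivity on spectra is preserved by arbitrary base change, and the only inputs are the fiber formula for\/ $\Spec$ of a tensor product and the nonvanishing of tensor products of nonzero modules with a field extension. No use of the main lemmas or of very flatness is needed.
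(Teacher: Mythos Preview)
Your proof is correct and follows essentially the same route as the paper's. The only cosmetic difference is that the paper picks an explicit prime $\q\in\Spec S$ lying over $\p$, forms the nonzero ring $k(\p')\ot_{k(\p)}k(\q)$, chooses a maximal ideal there, and pulls it back to $R'\ot_R S$; you instead work with the entire fiber ring $k(\p)\ot_R S$ and invoke the surjectivity criterion via nonvanishing of fibers. Both arguments rest on the same elementary fact that a tensor product of nonzero vector spaces over a field is nonzero.
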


\begin{proof}
 Let $\p'$ be a prime ideal in the ring $R'$ and $\p$ be its image
under the map $\Spec R'\rarrow\Spec R$.
 By assumption, there exists a prime ideal $\q$ in the ring $S$ whose
image under the map $\Spec S\rarrow \Spec R$ is equal to~$\p$.
 Let $k_{\p'}(R')$, \,$k_\p(R)$, and $k_\q(S)$ denote the residue fields
of the prime ideals $\p'\subset R'$, \ $\p\subset R$, and $\q\subset S$.
 Then the ring $k_{\p'}(R')\ot_{k_\p(R)}k_\q(S)$ is nonzero (as the tensor
product of nonzero vector spaces over a field), so it has a maximal
ideal $\m\subset k_{\p'}(R')\ot_{k_\p(R)}k_\q(S)$.
 There is a natural ring homomorphism $R'\ot_RS\rarrow
k_{\p'}(R')\ot_{k_\p(R)}k_\q(S)$; denote the full preimage of~$\m$ under
this ring homomorphism by $\q'\subset R'\ot_RS$.
 Then $\q'\in\Spec(R'\ot_RS)$ is a point whose image under the map
$\Spec(R'\ot_RS)\rarrow\Spec R'$ is equal to~$\p'$.
\end{proof}

 Let $R$ be a commutative ring.
 A projective $R$\+module $P$ is said to be \emph{faithfully projective}
if it is a projective generator of the abelian category $R\modl$, that
is $\Hom_R(P,M)=0$ implies $M=0$ for any $R$\+module~$M$.
 Equivalently, it means that the free $R$\+module $R$ is a direct
summand of a (finite) direct sum of copies of~$P$.

\begin{lem} \label{general-faithfully-projective-descent}
 Let $R$ be a commutative ring and $S$ be a commutative $R$\+algebra
such that $S$ is a faithfully projective $R$\+module.
 Let $F$ be an $R$\+module. \par
\textup{(a)} Assume that the $R$\+algebra $S$ is very flat.
 Then the $R$\+module $F$ is very flat if and only if
the $S$\+module $S\ot_RF$ is very flat. \par
\textup{(b)} Assume that the $R$\+algebra $S$ is finitely very flat.
 Then the $R$\+module $F$ is finitely very flat if and only if
the $S$\+module $S\ot_RF$ is finitely very flat.
\end{lem}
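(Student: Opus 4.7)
The plan is to prove the ``if'' direction, since the ``only if'' direction is immediate from Lemma~\ref{very-flatness-extension-of-scalars}. The key observation is that faithful projectivity of $S$ as an $R$-module means precisely that $R$ is a direct summand of $S^n$ as an $R$-module for some integer $n\ge1$. Tensoring such a splitting with $F$ over $R$ produces a splitting
$$F\lrarrow S^n\ot_RF\.=\.(S\ot_RF)^n\lrarrow F$$
of $R$-modules whose composition is the identity. So $F$ is a direct summand of the $R$-module $(S\ot_RF)^n$.

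For part~(a), suppose that $S\ot_RF$ is very flat as an $S$-module. Since the $R$-algebra $S$ is assumed to be very flat, Lemma~\ref{very-flat-restriction-of-scalars-lemma}(a) tells us that $S\ot_RF$ is then very flat as an $R$-module. The finite direct sum $(S\ot_RF)^n$ is very flat as an $R$-module by property~(VF4) (closure under direct sums), and its direct summand $F$ is very flat by the same property~(VF4) (closure under direct summands). For part~(b), the identical argument works, using Lemma~\ref{very-flat-restriction-of-scalars-lemma}(b) to pass from finite very flatness over $S$ to finite very flatness over $R$; here one needs to know that the class of finitely very flat $R$-modules is closed under finite direct sums and direct summands. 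Closure under direct summands is automatic from the characterization of $\r$-very flat modules as direct summands of transfinitely iterated extensions, and closure under finite direct sums follows because the union $\r_1\cup\r_2$ of two finite subsets is again finite, and any $\r_1$-very flat or $\r_2$-very flat module is automatically $(\r_1\cup\r_2)$-very flat.

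I do not anticipate any serious obstacle: the entire proof amounts to applying the splitting coming from faithful projectivity and invoking Lemma~\ref{very-flat-restriction-of-scalars-lemma} together with the closure properties of the two classes under direct sums and direct summands. The subtlety lurking in the background is merely that ``very flat'' and ``finitely very flat'' are not a~priori local notions over a faithfully flat base, which is why the hypothesis that $S$ is very flat (respectively, finitely very flat) as an $R$-algebra must be included explicitly in the statement.
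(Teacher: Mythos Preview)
Your proof is correct and follows essentially the same approach as the paper: use faithful projectivity to exhibit $F$ as a direct summand of a finite direct sum of copies of $S\ot_RF$, then invoke Lemma~\ref{very-flat-restriction-of-scalars-lemma} to descend (finite) very flatness from $S$ to $R$. Your additional justification of the closure of finitely very flat modules under finite direct sums and direct summands is a welcome detail that the paper leaves implicit.
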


\begin{proof}
 This is a version of~\cite[Lemma~1.7.12]{Pcosh}.
 Part~(a): the ``only if'' holds by
Lemma~\ref{very-flatness-extension-of-scalars}(a).
 To prove the ``if'', recall that all the very flat $S$\+modules are
very flat $R$\+modules by
Lemma~\ref{very-flat-restriction-of-scalars-lemma}(a).
 In particular, the very flat $S$\+module $S\ot_RF$ is a very
flat $R$\+module.
 It remains to use the assumption that the $R$\+module $R$ is a direct
summand of a finite direct sum of copies of the $R$\+module $S$, so
the $R$\+module $F$ is a direct summand of a finite direct sum
of copies of the $R$\+module $S\ot_RF$.

 Part~(b): the ``only if'' holds by
Lemma~\ref{very-flatness-extension-of-scalars}(b).
 To prove the ``if'', recall that  all the finitely very flat
$S$\+modules are finitely very flat $R$\+modules by
Lemma~\ref{very-flat-restriction-of-scalars-lemma}(b).
 In particular, the finitely very flat $S$\+module $S\ot_RF$ is
a finitely very flat $R$\+module.
 The argument finishes in the same way as the proof of part~(a).
\end{proof}

\begin{lem} \label{fin-pres-faithfully-projective-descent}
 Let $R$ be a commutative ring and $S$ be a finitely presented
commutative $R$\+algebra such that $S$ is a faithfully projective
$R$\+module.
 Let $F$ be an $R$\+module.
 Then \par
\textup{(a)} the $R$\+module $F$ is very flat if and only if
the $S$\+module $S\ot_RF$ is very flat; \par
\textup{(b)} the $R$\+module $F$ is finitely very flat if and only if
the $S$\+module $S\ot_RF$ is finitely very flat.
\end{lem}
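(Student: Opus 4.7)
The plan is to reduce this lemma directly to Lemma~\ref{general-faithfully-projective-descent} by establishing the missing hypothesis there, namely that the $R$\+algebra $S$ is very flat (respectively finitely very flat). Since both parts of Lemma~\ref{general-faithfully-projective-descent} already give the desired biconditional once this hypothesis is verified, essentially all the work has been done in the preceding sections; what remains is to observe that the present hypotheses put us in position to apply the main theorems of the paper.

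First I would observe that any faithfully projective module is in particular projective, hence flat. Thus $S$ is a finitely presented commutative $R$\+algebra which is flat as an $R$\+module. For part~(a), Corollary~\ref{very-flat-morphism-main-cor} then tells us that the $R$\+algebra $S$ is very flat, i.e., $S[s^{-1}]$ is a very flat $R$\+module for every $s\in S$. For part~(b), Corollary~\ref{fvf-morphism-main-cor} tells us analogously that the $R$\+algebra $S$ is finitely very flat. These are exactly the missing hypotheses needed to invoke Lemma~\ref{general-faithfully-projective-descent}(a) and~(b) respectively.

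Invoking Lemma~\ref{general-faithfully-projective-descent} now yields both claimed biconditionals. For completeness I would note how the two directions proceed inside that lemma: the ``only if'' direction is Lemma~\ref{very-flatness-extension-of-scalars}(a)--(b), while the ``if'' direction uses Lemma~\ref{very-flat-restriction-of-scalars-lemma} to transfer (finite) very flatness from $S$\+modules to $R$\+modules through the now-established (finitely) very flat $R$\+algebra structure on $S$, together with the faithful projectivity assumption, which provides an $R$\+module direct summand embedding of $R$ into a finite direct sum of copies of $S$, and hence an $R$\+module direct summand embedding of $F$ into a finite direct sum of copies of $S\otimes_R F$, so that (finite) very flatness of $F$ follows from that of $S\otimes_R F$ via closure under direct sums and direct summands.

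There is no real obstacle here beyond the bookkeeping of hypotheses: the substantive content lies entirely in Main Theorems~\ref{general-algebra-main-theorem} and~\ref{fvf-module-main-theorem} (via their corollaries) and in Lemma~\ref{general-faithfully-projective-descent}. The only point worth double-checking is the implication ``faithfully projective $\Rightarrow$ flat,'' which is immediate, and the fact that the very flat (respectively finitely very flat) $R$\+algebra hypothesis of Lemma~\ref{general-faithfully-projective-descent} matches precisely the conclusion of Corollary~\ref{very-flat-morphism-main-cor} (respectively~\ref{fvf-morphism-main-cor}); both matches are verbatim, so the proof is essentially a three-line concatenation of cited results.
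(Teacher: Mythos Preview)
Your proposal is correct and follows essentially the same approach as the paper: both apply Corollary~\ref{very-flat-morphism-main-cor} (respectively Corollary~\ref{fvf-morphism-main-cor}) to verify that $S$ is a (finitely) very flat $R$\+algebra, and then invoke Lemma~\ref{general-faithfully-projective-descent}. Your additional remarks unpacking the internals of Lemma~\ref{general-faithfully-projective-descent} and the observation that faithfully projective implies flat are accurate but not strictly needed, as the paper's proof is the bare two-line concatenation you anticipated.
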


\begin{proof}
 Part~(a): the $R$\+algebra $S$ is very flat
by Corollary~\ref{very-flat-morphism-main-cor}, so
Lemma~\ref{general-faithfully-projective-descent}(a) can be applied.
 Part~(b): the $R$\+algebra $S$ is finitely very flat
by Corollary~\ref{fvf-morphism-main-cor}, so
Lemma~\ref{general-faithfully-projective-descent}(b) can be applied.
\end{proof}

 The following proposition is a particular case of (the ``if''
assertion of) Theorem~\ref{noetherian-surj-descent-thm} from which
the general case is deduced.

\begin{prop} \label{noetherian-surj-descent-proof-prop}
 Let $R$ be a Noetherian commutative ring and $S$ be a finitely
generated commutative $R$\+algebra such that the induced map of
the spectra\/ $\Spec S\rarrow\Spec R$ is surjective.
 Let $F$ be a flat $R$\+module.
 Assume that the $S$\+module $S\ot_RF$ is very flat, and
the $R/rR$\+module $F/rF$ is very flat for every nonzero element
$r\in R$.
 Then $F$ is a very flat $R$\+module.
\end{prop}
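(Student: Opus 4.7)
The plan is to follow the two-case structure of the proof of Proposition~\ref{noetherian-main-theorem-proof-prop}, the goal in each case being to produce a nonzero element $r\in R$ such that both $F/rF$ is a very flat $R/rR$-module and $F[r^{-1}]$ is a very flat $R[r^{-1}]$-module; once this is arranged, Main Lemma~\ref{noetherian-main-lemma} concludes that $F$ is very flat over $R$.

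Case I: if $R$ has a pair of nonzero elements $a$, $b$ with $ab=0$, the argument is a verbatim repeat of Case I of Proposition~\ref{noetherian-main-theorem-proof-prop}. The $R/aR$-module $F/aF$ and the $R/bR$-module $F/bF$ are very flat by hypothesis; since the localization $R\rarrow R[a^{-1}]$ factorizes through $R/bR$, Lemma~\ref{very-flatness-extension-of-scalars}(a) applied to $F/bF$ makes $F[a^{-1}]$ into a very flat $R[a^{-1}]$-module, and the Main Lemma applies. The surjectivity hypothesis and the assumption on $S\ot_RF$ are not needed here.

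Case II: suppose $R$ is an integral domain. The key observation is that surjectivity of $\Spec S\rarrow\Spec R$ forces $\ker(R\rarrow S)\subseteq\bigcap_{\p\in\Spec R}\p=0$, so $R$ embeds into $S$. Apply Lemma~\ref{generic-freeness-lemma} to the finitely generated $S$-module $S$ itself to obtain a nonzero $a\in R$ for which the $R[a^{-1}]$-module $S[a^{-1}]$ is free. Since $a\in R\hookrightarrow S$, an equality $a^n=0$ in $S$ would force $a^n=0$ in $R$, contradicting $R$ being a domain; hence $a$ is not nilpotent in $S$, so $S[a^{-1}]$ is a nonzero free, hence faithfully projective, $R[a^{-1}]$-module. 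As $R$ is Noetherian, $S$ is finitely presented over $R$, so $S[a^{-1}]$ is a finitely presented $R[a^{-1}]$-algebra. The $S[a^{-1}]$-module $(S\ot_RF)[a^{-1}]=S[a^{-1}]\ot_{R[a^{-1}]}F[a^{-1}]$ is very flat by Lemma~\ref{very-flatness-extension-of-scalars}(a), so Lemma~\ref{fin-pres-faithfully-projective-descent}(a) applied to the extension $R[a^{-1}]\rarrow S[a^{-1}]$ yields that $F[a^{-1}]$ is a very flat $R[a^{-1}]$-module. Combined with very flatness of $F/aF$ over $R/aR$ (granted by the blanket hypothesis, since $a\ne 0$), Main Lemma~\ref{noetherian-main-lemma} concludes the proof.

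The essential new ingredient beyond the argument of Section~\ref{implies-main-theorem-secn} is the descent step provided by Lemma~\ref{fin-pres-faithfully-projective-descent}(a). The delicate point that I expect to be the main obstacle---or at least the only subtlety---is the extraction of a nonzero $a\in R$ that remains non-nilpotent in $S$, i.e., for which $S[a^{-1}]$ is a genuinely nonzero free $R[a^{-1}]$-module; the surjectivity hypothesis on $\Spec S\rarrow\Spec R$ is precisely what supplies this, by first forcing $R\hookrightarrow S$ and thereby ruling out nilpotency of $a$ in $S$.
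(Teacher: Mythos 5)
Your proposal is correct and follows the paper's proof essentially verbatim: the same Case~I/Case~II split, the same appeal to generic freeness, the same reduction to faithfully projective descent via Lemma~\ref{fin-pres-faithfully-projective-descent}(a), and the same conclusion via Main Lemma~\ref{noetherian-main-lemma}. The only small difference is in verifying that $S[a^{-1}]\ne0$: you argue that surjectivity of $\Spec S\rarrow\Spec R$ together with $R$ being a domain forces $R\hookrightarrow S$, so $a$ cannot become nilpotent in $S$, whereas the paper instead invokes Lemma~\ref{surjectivity-base-change} to conclude that $\Spec S[a^{-1}]\rarrow\Spec R[a^{-1}]$ remains surjective; both routes are valid.
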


\begin{proof}
 Case~I: the ring $R$ contains zero-divisors.
 Let $a\ne0\ne b$ be two elements in $R$ such that $ab=0$ in~$R$.
 By assumption, $F/aF$ is a very flat $R/aR$\+module and $F/bF$ is
a very flat $R/bR$\+module.
 As in the proof of
Proposition~\ref{noetherian-main-theorem-proof-prop}, it follows
from the latter that $F[a^{-1}]$ is a very flat $R[a^{-1}]$\+module.
 By Main Lemma~\ref{noetherian-main-lemma}, we conclude that $F$ is
a very flat $R$\+module.

 Case~II: the ring $R$ is an integral domain.
 By Lemma~\ref{generic-freeness-lemma}, there exists a nonzero element
$a\in R$ such that the $R[a^{-1}]$\+module $S[a^{-1}]$ is free.
 By Lemma~\ref{surjectivity-base-change}, the map of spectra
$\Spec S[a^{-1}]\rarrow\Spec R[a^{-1}]$ is surjective, so the ring
$S[a^{-1}]$ is nonzero whenever the ring $R[a^{-1}]$ is nonzero.
 Hence $S[a^{-1}]$ is a faithfully projective $R[a^{-1}]$\+module.
 The $S[a^{-1}]$\+module $S[a^{-1}]\ot_RF$ is very flat, since
the $S$\+module $S\ot_RF$ is very flat.
 Applying Lemma~\ref{fin-pres-faithfully-projective-descent}(a) to
the ring $R[a^{-1}]$, the algebra $S[a^{-1}]$ over it, and
the module $F[a^{-1}]$, we conclude that the $R[a^{-1}]$\+module
$F[a^{-1}]$ is very flat.

 By assumption, the $R/aR$\+module $F/aF$ is very flat.
 By Main Lemma~\ref{noetherian-main-lemma}, it follows that $F$ is
a very flat $R$\+module.
\end{proof}

\begin{proof}[Proof of Theorem~\ref{noetherian-surj-descent-thm}]
 The assertion ``only if'' is a particular case of
Lemma~\ref{very-flatness-extension-of-scalars}(a).
 The assertion ``if'' is the nontrivial part.

 Assume that the $R$\+module $F$ is not very flat.
 By Proposition~\ref{noetherian-surj-descent-proof-prop}, it follows
that there exists a nonzero element $r\in R$ such that
the $R/rR$\+module $F/rF$ is not very flat.
 Now $R_1=R/rR$ is a Noetherian commutative ring, $S_1=S/rS$ is
a finitely generated commutative $R_1$\+algebra, and the map
$\Spec S_1\rarrow\Spec R_1$ is surjective by
Lemma~\ref{surjectivity-base-change}.
 Furthermore, $F_1=F/rF$ is a flat $R/rR$\+module.

 So Proposition~\ref{noetherian-surj-descent-proof-prop} can be
applied again in order to produce a nonzero element $r_1\in R_1$
such that the $R_1/r_1R_1$\+module $F_1/r_1F_1$ is not very flat, etc.
 The argument finishes in the same way as the proof of
Main Theorem~\ref{noetherian-module-main-theorem}
in Section~\ref{implies-main-theorem-secn}.
\end{proof}

 Similarly, the next proposition is a particular case of
Theorem~\ref{fvf-surj-descent-thm} from which the general case is
deduced.

\begin{prop} \label{fvf-surj-descent-proof-prop}
 Let $\oR$ be a Noetherian commutative ring and $\oS$ be a finitely
generated commutative $\oR$\+algebra.
 Let $R$ be a commutative $\oR$\+algebra; set $S=R\ot_\oR\nobreak\oS$
and assume that the induced map of spectra\/ $\Spec S\rarrow \Spec R$
is surjective.
 Let $F$ be a flat $R$\+module.
 Assume that the $S$\+module $S\ot_RF$ is finitely very flat, and
the $R/rR$\+module $F/rF$ is finitely very flat for every nonzero
element $r\in\oR$.
 Then $F$ is a finitely very flat $R$\+module.
\end{prop}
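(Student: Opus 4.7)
The plan is to mirror the proof of Proposition~\ref{noetherian-surj-descent-proof-prop}, running the Noetherian case-split on the auxiliary ring $\oR$ rather than on $R$, using Main Lemma~\ref{fvf-main-lemma} in place of Main Lemma~\ref{noetherian-main-lemma}, and invoking Lemma~\ref{fin-pres-faithfully-projective-descent}(b) for descent of finite very flatness along a faithfully projective, finitely presented algebra. The argument splits into the same two cases as in the Noetherian version.

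\emph{Case~I:} Suppose $\oR$ has zero-divisors, and choose $a$, $b\in\oR$ with $a$, $b\ne0$ in $\oR$ and $ab=0$; write $\bar a$, $\bar b$ for their images in~$R$. By hypothesis, $F/aF$ is a finitely very flat $R/aR$\+module and $F/bF$ is a finitely very flat $R/bR$\+module. Since $\bar a\bar b=0$ in $R$, the localization map $R\rarrow R[\bar a^{-1}]$ factorizes through $R/\bar bR$, so Lemma~\ref{very-flatness-extension-of-scalars}(b) shows that $F[\bar a^{-1}]$ is a finitely very flat $R[\bar a^{-1}]$\+module. Main Lemma~\ref{fvf-main-lemma} applied to the element $\bar a\in R$ then yields that $F$ is finitely very flat; in the degenerate subcase $\bar a=0$ the hypothesis directly gives $F/\bar a F=F$ finitely very flat.

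\emph{Case~II:} Suppose $\oR$ is an integral domain. Apply Lemma~\ref{generic-freeness-lemma} to the finitely generated $\oS$\+module $\oS$ over the Noetherian domain $\oR$ to obtain a nonzero $a\in\oR$ such that $\oS[a^{-1}]$ is a free $\oR[a^{-1}]$\+module. Writing $\bar a\in R$ for the image of~$a$, one has $S[\bar a^{-1}]\simeq R[\bar a^{-1}]\ot_{\oR[a^{-1}]}\oS[a^{-1}]$, which is a free $R[\bar a^{-1}]$\+module. If $R[\bar a^{-1}]=0$, then $F[\bar a^{-1}]=0$ is trivially finitely very flat, and the hypothesis on $F/aF$ combined with Main Lemma~\ref{fvf-main-lemma} finishes the case. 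Otherwise, the surjectivity of\/ $\Spec S[\bar a^{-1}]\rarrow\Spec R[\bar a^{-1}]$ (by Lemma~\ref{surjectivity-base-change}) forces $S[\bar a^{-1}]\ne0$, so $S[\bar a^{-1}]$ is free of positive rank over $R[\bar a^{-1}]$ and therefore faithfully projective. As $\oR$ is Noetherian, $\oS$ is a finitely presented $\oR$\+algebra, whence $S$ is a finitely presented $R$\+algebra and $S[\bar a^{-1}]$ is a finitely presented $R[\bar a^{-1}]$\+algebra. The $S[\bar a^{-1}]$\+module $(S\ot_RF)[\bar a^{-1}]$ is finitely very flat by hypothesis and Lemma~\ref{very-flatness-extension-of-scalars}(b), so Lemma~\ref{fin-pres-faithfully-projective-descent}(b) gives finite very flatness of $F[\bar a^{-1}]$ over $R[\bar a^{-1}]$. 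Combined with the hypothesis on $F/aF$, Main Lemma~\ref{fvf-main-lemma} completes the proof.

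The main obstacle I anticipate is the careful bookkeeping of degenerate situations created by the fact that finite very flatness is hypothesized module-theoretically over $R$, while the Noetherian case-split happens over $\oR$: elements nonzero in $\oR$ may have zero or nilpotent images in $R$, so one must verify at each step that neither the faithful projectivity of $S[\bar a^{-1}]$ nor the surjectivity of spectra survives only vacuously. Once these checks are done, the proof is a direct transposition of the Noetherian surjective descent argument into the finitely very flat setting.
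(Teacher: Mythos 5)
Your proof is correct and follows essentially the same strategy as the paper's, which gives its proof largely by reference to the Noetherian analogue (Proposition~\ref{noetherian-surj-descent-proof-prop}) and to Case~I of Proposition~\ref{fvf-main-theorem-proof-prop}: a Noetherian case-split on $\oR$, generic freeness for $\oS$ over $\oR$, Lemma~\ref{surjectivity-base-change} to get faithfulness of $S[\bar a^{-1}]$ over $R[\bar a^{-1}]$, Lemma~\ref{fin-pres-faithfully-projective-descent}(b) for the descent step, and Main Lemma~\ref{fvf-main-lemma} to conclude. Your explicit attention to the degenerate possibilities ($\bar a=0$ in Case~I, $R[\bar a^{-1}]=0$ in Case~II) is sound, though not strictly necessary: in each of those situations Main Lemma~\ref{fvf-main-lemma} applies uniformly, since the only module over the zero ring is the zero module, which is trivially finitely very flat; nevertheless it is good that you noticed the point, as the case-split on $\oR$ produces elements whose images in $R$ may degenerate.
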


\begin{proof}
 Case~I: the ring $\oR$ contains zero-divisors.
 The argument in this case is similar to the proofs of Case~I
in Propositions~\ref{fvf-main-theorem-proof-prop}
and~\ref{noetherian-surj-descent-proof-prop}.
 It does not use the rings $\oS$ and $S$ and the assumptions
related to them (but uses
Lemma~\ref{very-flatness-extension-of-scalars}(b)
and Main Lemma~\ref{fvf-main-lemma}).

 Case~II: the ring $\oR$ is an integral domain.
 By Lemma~\ref{generic-freeness-lemma}, there exists a nonzero
element $a\in\oR$ such that the $\oR[a^{-1}]$\+module $\oS$ is free.
 Then the $R[a^{-1}]$\+module $S[a^{-1}]$ is free, too.
 Arguing as in the proof of
Proposition~\ref{noetherian-surj-descent-proof-prop}, we see that
$S[a^{-1}]$ is a faithfully projective $R[a^{-1}]$\+module.
 The $S[a^{-1}]$\+module $S[a^{-1}]\ot_RF$ is finitely very flat,
since the $S$\+module $S\ot_RF$ is finitely very flat.
 Applying Lemma~\ref{fin-pres-faithfully-projective-descent}(b), we
conclude that the $R[a^{-1}]$\+module $F[a^{-1}]$ is finitely very flat;
and the argument finishes as in the proof of
Proposition~\ref{noetherian-surj-descent-proof-prop},
using Main Lemma~\ref{fvf-main-lemma}.
\end{proof}

\begin{proof}[Proof of Theorem~\ref{fvf-surj-descent-thm}]
 The assertion ``only if'' is a particular case of
Lemma~\ref{very-flatness-extension-of-scalars}(b).
 ``If'' is the nontrivial part.
 First of all, we apply Lemma~\ref{finitely-generated-ring-lemma}
in order to produce a Noetherian commutative ring $\oR$ with
a ring homomorphism $\oR\rarrow R$ and a finitely generated
commutative $\oR$\+algebra $\oS$ such that $S=R\ot_\oR\oS$.

 Assume that the $R$\+module $F$ is not finitely very flat.
 By Proposition~\ref{fvf-surj-descent-proof-prop}, it then follows
that there exists a nonzero element $r\in\oR$ such that
the $R/rR$\+module $F/rF$ is not finitely very flat.
 The argument finishes in the way similar to the proof of
Main Theorem~\ref{fvf-module-main-theorem} in
Section~\ref{implies-main-theorem-secn} and the above proof
of Theorem~\ref{noetherian-surj-descent-thm}.
\end{proof}

 Let $R$ be a commutative ring.
 A flat $R$\+module $F$ is said to be \emph{faithfully flat} if
$F\ot_RN=0$ implies $N=0$ for any $R$\+module~$N$.
 The \emph{P\+support} $\PSupp_RM\subset\Spec R$ of an $R$\+module $M$
is the set of all prime ideals $\p\subset R$ such that
$k_\p(R)\ot_RM\ne0$, where $k_\p(R)$ denotes the residue field of
the prime ideal~$\p$ in~$R$ (cf.\ Section~\ref{very-flat-qcoh-introd},
\cite[Section~1.7]{Pcosh}, and~\cite[Definition~2.7]{ST}).
 A flat $R$\+module $F$ is faithfully flat if and only if its
P\+support $\PSupp_RF$ coincides with the whole of $\Spec R$.

 An $R$\+algebra is said to be \emph{faithfully flat} if it is
faithfully flat as an $R$\+module.
 A flat commutative $R$\+algebra $S$ is faithfully flat if and only if
the induced map of the spectra $\Spec S\rarrow\Spec R$ is surjective.

\begin{lem} \label{flat-descent-of-flatness}
 Let $R$ be a commutative ring and $S$ be a faithfully flat
commutative $R$\+algebra.
 Then an $R$\+module $F$ is flat if and only if the $S$\+module
$S\ot_RF$ is flat.
\end{lem}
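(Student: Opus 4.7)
The ``only if'' direction is routine: flatness is preserved by base change of scalars, so if $F$ is a flat $R$\+module then $S\ot_RF$ is a flat $S$\+module. The substance of the lemma is the ``if'' direction, and I would prove it by the standard faithfully flat descent trick: translating a vanishing statement about $\Tor^R_*(-,F)$ into a vanishing statement about $\Tor^S_*(-,S\ot_RF)$.

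Concretely, let $M$ be an arbitrary $R$\+module; my aim is to show that $\Tor^R_1(M,F)=0$. First, since $S$ is flat as an $R$\+module, the $R$\+module $S\ot_RM$ carries a natural $S$\+module structure, and one has $\Tor^R_i(S,F)=0$ for all $i\ge1$. Hence Lemma~\ref{change-of-scalars-vanishing-tor-lemma}(b), applied to the ring homomorphism $R\rarrow S$ and the $S$\+module $N=S\ot_RM$, yields a natural isomorphism
$$
 \Tor^R_i(S\ot_RM,\,F)\.\simeq\.\Tor^S_i(S\ot_RM,\,S\ot_RF)
$$
for all $i\ge0$. By hypothesis the $S$\+module $S\ot_RF$ is flat, so the right-hand side vanishes for $i\ge1$, and in particular $\Tor^R_1(S\ot_RM,\,F)=0$.

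Next, $R$\+flatness of $S$ also gives an isomorphism $\Tor^R_1(S\ot_RM,\,F)\simeq S\ot_R\Tor^R_1(M,F)$, obtained by applying $S\ot_R{-}$ to the first two terms of a projective resolution of $M$ over $R$ (this is again a direct instance of Lemma~\ref{change-of-scalars-vanishing-tor-lemma}(b), now with $R'=R$ and the roles of the two factors interchanged, or simply the flat base change formula for $\Tor$). Combining the two isomorphisms gives $S\ot_R\Tor^R_1(M,F)=0$, and faithful flatness of $S$ over $R$ then forces $\Tor^R_1(M,F)=0$, as desired.

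There is no real obstacle here; the only point requiring a bit of care is to invoke the change-of-scalars isomorphism in the correct direction (one uses $R$\+flatness of $S$ twice, once to convert $\Tor^R_*(S\ot_RM,F)$ into $\Tor^S_*(S\ot_RM,\,S\ot_RF)$ via a projective resolution of $F$, and once to identify $\Tor^R_*(S\ot_RM,F)$ with $S\ot_R\Tor^R_*(M,F)$ via a projective resolution of $M$). The decisive input beyond basic homological algebra is the defining property of faithful flatness of $S$ over $R$, namely that $S\ot_RN=0$ implies $N=0$.
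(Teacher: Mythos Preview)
Your proof is correct and follows essentially the same route as the paper's: both establish the chain of isomorphisms $S\ot_R\Tor_i^R(M,F)\simeq\Tor_i^R(S\ot_RM,F)\simeq\Tor_i^S(S\ot_RM,\,S\ot_RF)$ coming from $R$\+flatness of $S$, and then invoke faithful flatness to deduce $\Tor_1^R(M,F)=0$. Your only slip is the remark that the second isomorphism is Lemma~\ref{change-of-scalars-vanishing-tor-lemma}(b) ``with $R'=R$'', which is vacuous; but you immediately correct this by citing the flat base change formula for $\Tor$, which is indeed all that is needed.
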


\begin{proof}
 The ``only if'' holds because flatness of modules is preserved by
extensions of scalars.
 To prove the ``if'', notice the isomorphisms
$S\ot_R\Tor_i^R(F,N)\simeq\Tor_i^R(S\ot_RF,\>N)\simeq
\Tor_i^S(S\ot_RF,\>S\ot_RN)$, which hold for any flat $R$\+algebra $S$,
any $R$\+modules $F$ and~$N$, and all $i\ge0$.
 In particular, if $S$ is a faithfully flat $R$\+algebra, then
$\Tor_1^S(S\ot_RF,\>S\ot_RN)=0$ implies $\Tor_1^R(F,N)=0$.
\end{proof}

\begin{cor} \label{flat-descent-of-very-flatness}
 Let $R$ be a Noetherian commutative ring and $S$ be a finitely
generated faithfully flat commutative $R$\+algebra.
 Then an $R$\+module $F$ is very flat if and only if the $S$\+module
$S\ot_RF$ is very flat.
\end{cor}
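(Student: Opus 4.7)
The plan is to reduce Corollary~\ref{flat-descent-of-very-flatness} directly to Theorem~\ref{noetherian-surj-descent-thm}. The ``only if'' direction is immediate from Lemma~\ref{very-flatness-extension-of-scalars}(a), so the content lies entirely in the ``if'' direction.

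For the ``if'' direction, I would first verify the two hypotheses needed to invoke Theorem~\ref{noetherian-surj-descent-thm} for the pair $(R,S)$ and the module $F$. The surjectivity of $\Spec S\rarrow\Spec R$ is built into the definition of faithful flatness of a flat $R$-algebra, as recalled in the paragraph preceding Lemma~\ref{flat-descent-of-flatness}. Thus only the flatness of $F$ as an $R$-module requires comment: by hypothesis $S\ot_RF$ is a very flat $S$-module, hence in particular a flat $S$-module by property~(VF1), and $S$ is a faithfully flat $R$-algebra, so Lemma~\ref{flat-descent-of-flatness} yields that $F$ is flat over~$R$.

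Now Theorem~\ref{noetherian-surj-descent-thm} applies: $R$ is Noetherian, $S$ is a finitely generated commutative $R$-algebra inducing a surjective map of spectra, and $F$ is a flat $R$-module whose extension of scalars $S\ot_RF$ is very flat over~$S$. The conclusion of that theorem is precisely that $F$ is very flat over~$R$, which is what we wanted to prove.

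There is no essential obstacle, since all the real work has been done in Theorem~\ref{noetherian-surj-descent-thm} (which itself rests on Main Lemma~\ref{noetherian-main-lemma} together with the Noetherian induction machinery of Section~\ref{implies-main-theorem-secn}) and in Lemma~\ref{flat-descent-of-flatness}. The only thing to check is that the hypotheses line up, and the only slightly non-formal point is the descent of flatness along a faithfully flat extension, which is precisely Lemma~\ref{flat-descent-of-flatness}.
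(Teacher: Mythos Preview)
Your proposal is correct and follows essentially the same approach as the paper's own proof: the ``only if'' via Lemma~\ref{very-flatness-extension-of-scalars}(a), and the ``if'' by first descending flatness of $F$ through Lemma~\ref{flat-descent-of-flatness} (using that very flat implies flat) and then invoking Theorem~\ref{noetherian-surj-descent-thm}. Your write-up merely makes explicit the surjectivity of $\Spec S\rarrow\Spec R$ and the reference to~(VF1), which the paper leaves implicit.
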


\begin{proof}
 The ``only if'' holds by
Lemma~\ref{very-flatness-extension-of-scalars}(a).
 To prove the ``if'', assume that $F$ is an $R$\+module such that
the $S$\+module $S\ot_RF$ is very flat.
 Then, in particular, the $S$\+module $S\ot_RF$ is flat, hence by
Lemma~\ref{flat-descent-of-flatness} the $R$\+module $F$ is flat.
 Now we can apply Theorem~\ref{noetherian-surj-descent-thm}
in order to conclude that the $R$\+module $F$ is very flat.
\end{proof}

\begin{cor} \label{flat-descent-of-finite-very-flatness}
 Let $R$ be a commutative ring and $S$ be a finitely presented
faithfully flat commutative $R$\+algebra.
 Then an $R$\+module $F$ is finitely very flat if and only if
the $S$\+module $S\ot_RF$ is finitely very flat. 
\end{cor}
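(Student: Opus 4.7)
The plan is to follow the very same template as the proof of Corollary~\ref{flat-descent-of-very-flatness}, only replacing the Noetherian descent theorem by its finitely-very-flat counterpart. The corollary has two directions, and the ``only if'' is entirely routine: it is an immediate application of Lemma~\ref{very-flatness-extension-of-scalars}(b), which says that finite very flatness is preserved by extension of scalars along any ring homomorphism~$R\rarrow S$.

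For the nontrivial ``if'' direction, suppose that the $S$\+module $S\ot_RF$ is finitely very flat. I would first observe that a finitely very flat module is in particular flat (directly from the definition in Section~\ref{fvf-outline} as a direct summand of a transfinitely iterated extension of modules of the form $R$ or $R[r_j^{-1}]$). So $S\ot_RF$ is a flat $S$\+module. Since $S$ is a faithfully flat $R$\+algebra by hypothesis, Lemma~\ref{flat-descent-of-flatness} then yields that $F$ is a flat $R$\+module.

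Next, I would recall that a faithfully flat commutative $R$\+algebra is exactly a flat $R$\+algebra for which the induced morphism $\Spec S\rarrow\Spec R$ is surjective (this is noted in the paragraph preceding Lemma~\ref{flat-descent-of-flatness}). Since $S$ is moreover finitely presented over $R$ by assumption, the hypotheses of Theorem~\ref{fvf-surj-descent-thm} are all satisfied: $R$ is an arbitrary commutative ring, $S$ is a finitely presented commutative $R$\+algebra with surjective spectrum map, $F$ is a flat $R$\+module, and $S\ot_RF$ is a finitely very flat $S$\+module. Applying Theorem~\ref{fvf-surj-descent-thm} gives directly that $F$ is a finitely very flat $R$\+module, completing the proof.

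There is essentially no obstacle, as the heavy lifting has already been done in Theorem~\ref{fvf-surj-descent-thm}; the only point to verify carefully is that the finite-presentation hypothesis on $S$ in the corollary matches the hypothesis of Theorem~\ref{fvf-surj-descent-thm} (which it does), and that faithful flatness supplies both the surjectivity of $\Spec S\rarrow\Spec R$ needed for that theorem and the descent of flatness from $S\ot_RF$ to $F$ needed to produce the flat $R$\+module input to that theorem. Thus the corollary is a clean combination of Lemma~\ref{flat-descent-of-flatness} with Theorem~\ref{fvf-surj-descent-thm}, in parallel with the way Corollary~\ref{flat-descent-of-very-flatness} combines the same flat descent lemma with Theorem~\ref{noetherian-surj-descent-thm}.
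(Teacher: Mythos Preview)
Your proof is correct and follows essentially the same approach as the paper's own proof, which simply says it is similar to the proof of Corollary~\ref{flat-descent-of-very-flatness}, using Lemma~\ref{very-flatness-extension-of-scalars}(b) for the ``only if'' direction and Lemma~\ref{flat-descent-of-flatness} together with Theorem~\ref{fvf-surj-descent-thm} for the ``if'' direction. Your write-up spells out the details more explicitly, but the logical structure is identical.
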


\begin{proof}
 Similar to the proof of Corollary~\ref{flat-descent-of-very-flatness},
using Lemma~\ref{very-flatness-extension-of-scalars}(b)
(for the ``only if'') and Lemma~\ref{flat-descent-of-flatness} with
Theorem~\ref{fvf-surj-descent-thm} (for the ``if'').
\end{proof}

 Notice that Corollary~\ref{flat-descent-of-finite-very-flatness}
is a generalization of
Lemma~\ref{fin-pres-faithfully-projective-descent}(b),
but Corollary~\ref{flat-descent-of-very-flatness} does not imply
Lemma~\ref{fin-pres-faithfully-projective-descent}(a)
(because there is a Noetherianity assumption in
Corollary~\ref{flat-descent-of-very-flatness}, which was not needed
in Lemma~\ref{fin-pres-faithfully-projective-descent}(a)).

\bigskip


\begin{thebibliography}{99}
\smallskip

\bibitem{Ba}
 H.~Bass.
   Big projective modules are free.
\textit{Illinois Journ.\ of Math.}\ \textbf{7}, \#1, p.~24--31, 1963.

\bibitem{BS}
 S.~Bazzoni, L.~Salce.
   Strongly flat covers.
\textit{Journ.\ London Math.\ Soc.}\ \textbf{66}, \#2,
p.~276--294, 2002.

\bibitem{BS2}
 S.~Bazzoni, L.~Salce.
   On strongly flat modules over integral domains.
\textit{Rocky Mountain Journ.\ of Math.}\ \textbf{34}, \#2,
p.~417--439, 2004.

\bibitem{BBE}
 L.~Bican, R.~El Bashir, E.~Enochs.
   All modules have flat covers.
\textit{Bulletin of the London Math.\ Society} \textbf{33}, \#4,
p.~385--390, 2001.

\bibitem{EP}
 A.~I.~Efimov, L.~Positselski.
   Coherent analogues of matrix factorizations and relative
singularity categories.
\textit{Algebra and Number Theory} \textbf{9}, \#5, p.~1159--1292,
2015.  \texttt{arXiv:1102.0261 [math.CT]}

\bibitem{ET}
 P.~C.~Eklof, J.~Trlifaj.
   How to make Ext vanish.
\textit{Bulletin of the London Math.\ Society} \textbf{33}, \#1,
p.~41--51, 2001.

\bibitem{En}
 E.~Enochs.
   Flat covers and flat cotorsion modules.
\textit{Proceedings of the American Math. Society} \textbf{92}, \#2,
p.~179--184, 1984.

\bibitem{ES}
 S.~Estrada, A.~Sl\'avik.
   Quillen equivalent models for the derived category of flats
and the resolution property.
Electronic preprint \texttt{arXiv:1708.05913 [math.AT]}.

\bibitem{GL}
 W.~Geigle, H.~Lenzing.
   Perpendicular categories with applications to representations
and sheaves.
\textit{Journ.\ of Algebra} \textbf{144}, \#2, p.~273--343, 1991.

\bibitem{GT}
 R.~G\"obel, J.~Trlifaj.
   Approximations and endomorphism algebras of modules.
Second Revised and Extended Edition.
De Gruyter Expositions in Mathematics 41,
De Gruyter, Berlin--Boston, 2012.

\bibitem{Groth}
 A.~Grothendieck, J.~Dieudonn\'e.
   \'El\'ements de g\'eom\'etrie alg\'ebrique~IV.
\'Etude locale des sch\'emas et des morphismes des sch\'emas,
Seconde partie.
\textit{Publications Mathematiques de l'IH\'ES} \textbf{24},
p.~5--231, 1965.

\bibitem{Mats}
 H.~Matsumura.
   Commutative ring theory.
Translated by M.~Reid.  Cambridge University Press, 1986--2006.

\bibitem{M-n}
 D.~Murfet.
   Derived categories of quasi-coherent sheaves.
Notes, October~2006.
Available from \texttt{http://www.therisingsea.org/notes}\,.

\bibitem{M-th}
 D.~Murfet.
   The mock homotopy category of projectives and Grothendieck duality.
Ph.~D.\ Thesis, Australian National University, September 2007.
Available from \texttt{http://www.therisingsea. org/thesis.pdf}\,.

\bibitem{N-f}
 A.~Neeman.
   The homotopy category of flat modules, and Grothendieck duality.
\textit{Inventiones Math.}\ \textbf{174}, p.~225--308, 2008.

\bibitem{Or}
 D.~Orlov.
   Matrix factorizations for nonaffine LG-models.
\textit{Mathematische Annalen} \textbf{353}, \#1, p.~95--108,
2012.  \texttt{arXiv:1101.4051 [math.AG]}

\bibitem{Pe}
 A.~Perry.
   Faithfully flat descent for projectivity of modules.
Electronic preprint \texttt{arXiv:1011.0038 [math.AC]}.
{\hbadness=5800\par}

\bibitem{PSY}
 M.~Porta, L.~Shaul, A.~Yekutieli.
   On the homology of completion and torsion.
\textit{Algebras and Represent.\ Theory} \textbf{17}, \#1, p.~31--67,
2014.  \texttt{arXiv:1010.4386 [math.AC]}.
Erratum in \textit{Algebras and Represent.\ Theory} \textbf{18},
\#5, p.~1401--1405, 2015.  \texttt{arXiv:1506.07765 [math.AC]}

\bibitem{Psemi}
 L.~Positselski.
   Homological algebra of semimodules and semicontramodules:
Semi-infinite homological algebra of associative algebraic structures.
 Appendix~C in collaboration with D.~Rumynin; Appendix~D in
collaboration with S.~Arkhipov.  Monografie Matematyczne, vol.~70,
Birkh\"auser/Springer Basel, 2010. xxiv+349~pp.
\texttt{arXiv:0708.3398 [math.CT]}

\bibitem{Pkoszul}
 L.~Positselski.
   Two kinds of derived categories, Koszul duality, and
comodule-contramodule correspondence.
\textit{Memoirs of the Amer.\ Math.\ Soc.}\ \textbf{212}, \#996,
2011, vi+133~pp.  \texttt{arXiv:0905.2621 [math.CT]}

\bibitem{Pweak}
 L.~Positselski.
   Weakly curved A${}_\infty$-algebras over a topological local ring.
\textit{M\'emoires de la Soci\'et\'e Math\'ematique de France}
\textbf{159}, 2018.  vi+206~pp.  \texttt{arXiv:1202.2697 [math.CT]}

\bibitem{Pcosh}
 L.~Positselski.
   Contraherent cosheaves.
Electronic preprint \texttt{arXiv:1209.2995 [math.CT]}.

\bibitem{Pmgm}
 L.~Positselski.
   Dedualizing complexes and MGM duality.
\textit{Journ.\ of Pure and Appl.\ Algebra} \textbf{220}, \#12,
p.~3866--3909, 2016.  \texttt{arXiv:1503.05523 [math.CT]}

\bibitem{Pcta}
 L.~Positselski.
   Contraadjusted modules, contramodules, and reduced cotorsion modules.
\textit{Moscow Math.\ Journ.}\ \textbf{17}, \#3, p.~385--455, 2017.
\texttt{arXiv:1605.03934 [math.CT]}

\bibitem{Pper}
 L.~Positselski.
   Abelian right perpendicular subcategories in module categories.
Electronic preprint \texttt{arXiv:1705.04960 [math.CT]}.

\bibitem{PR}
 L.~Positselski, J.~Rosick\'y.
   Covers, envelopes, and cotorsion theories in locally presentable
abelian categories and contramodule categories.
\textit{Journ.\ of Algebra} \textbf{483}, p.~83--128, 2017.
\texttt{arXiv:1512.08119 [math.CT]}

\bibitem{PSl}
 L.~Positselski, A.~Sl\'avik.
   On strongly flat and weakly cotorsion modules.
\textit{Math.\ Zeitschrift} \textbf{291}, \#3--4, p.~831--875, 2019.
\texttt{arXiv:1708.06833 [math.AC]}

\bibitem{RG}
 M.~Raynaud, L.~Gruson.
   Crit\`eres de platitude et de projectivit\'e: Techniques
de ``platification'' d'un module.
\textit{Inventiones Math.} \textbf{13}, \#1--2, p.~1--89, 1971.

\bibitem{ST}
 A.~Sl\'avik, J.~Trlifaj.
   Very flat, locally very flat, and contraadjusted modules.
\textit{Journ.\ of Pure and Appl.\ Algebra} \textbf{220}, \#12,
p.~3910--3926, 2016.  \texttt{arXiv:1601.00783 [math.AC]}

\bibitem{St}
 J.~\v St\!'ov\'\i\v cek.
   Exact model categories, approximation theory, and cohomology of
quasi-coherent sheaves.
 In: Advances in representation theory of algebras, EMS Ser.\ Congr.\
Rep., Eur.\ Math.\ Soc., Z\"urich, 2013, p.~297--367. 
\texttt{arXiv:1301.5206 [math.CT]}

\bibitem{To}
 B.~Totaro.
   The resolution property for schemes and stacks.
\textit{Journ.\ f\"ur die reine und angew.\ Mathematik}
\textbf{577}, p.~1--22, 2004.  \texttt{arXiv:math.AG/0207210}

\bibitem{Yek}
 A.~Yekutieli.
   A separated cohomologically complete module is complete.
\textit{Communicat.\ in Algebra} \textbf{43}, \#2, p.~616--622, 2015.
\texttt{arXiv:1312.2714 [math.AC]}

\bibitem{Yek2}
 A.~Yekutieli.
   Flatness and completion revisited.
\textit{Algebras and Represent.\ Theory} \textbf{21}, \#4, p.~717--736,
2018.  \texttt{arXiv:1606.01832 [math.AC]}

\end{thebibliography}
\end{document}